\newtheorem{theorem}{Theorem}
\newtheorem{definition}[theorem]{Definition}
\newtheorem{lemma}[theorem]{Lemma}
\newtheorem{proposition}[theorem]{Proposition}
\newtheorem{remark}[theorem]{Remark}
\numberwithin{equation}{section}
\numberwithin{theorem}{section}
\renewcommand{\epsilon}{\varepsilon}
\renewcommand{\rho}{\varrho}
\renewcommand{\widetilde}{\tilde}
\DeclareMathOperator{\supp}{supp}
\DeclareMathOperator*{\esssup}{ess\,sup}
\DeclareMathOperator*{\essinf}{ess\,inf}
\DeclareMathOperator*{\essosc}{ess\,osc}
\DeclareMathOperator*{\loc}{loc}
\def\XXint#1#2#3{{\setbox0=\hbox{$#1{#2#3}{\int}$ }
\vcenter{\hbox{$#2#3$ }}\kern-.6\wd0}}
\def\YYint#1#2#3{{\setbox0=\hbox{$#1{#2#3}{\iint}$}
    \vcenter{\hbox{$#2#3$}}\kern-.51\wd0}}
\numberwithin{equation}{section}
\begin{document}

\title[Continuity estimates for parabolic double phase equations]%
{Continuity estimates for degenerate parabolic double-phase equations via nonlinear potentials
}
\author[Qifan Li]%
{Qifan Li*}

\newcommand{\acr}{\newline\indent}

\address{\llap{*\,}Department of Mathematics\acr
                   School of Mathematics and Statistics\acr
                   Wuhan University of Technology\acr
                   430070, 122 Luoshi Road,
                   Wuhan, Hubei\acr
                   P. R. China}
\email{qifan\_li@yahoo.com,
qifan\_li@whut.edu.cn}

\subjclass[2010]{35K10, 35K59, 35K65, 35K92.} 
\keywords{Parabolic double-phase equation, Quasilinear parabolic equation, Regularity theory, Potential theory.}

\begin{abstract}
In this article, we study regularity properties for
degenerate parabolic double-phase equations.
We establish continuity estimates for
bounded  weak solutions
in terms of elliptic Riesz potentials on the right-hand side of the equation.
\end{abstract}
\maketitle
\section{Introduction}
In this work, we are concerned with the regularity problem of bounded weak solutions to inhomogeneous parabolic double-phase equations of the type
\begin{equation}\label{zeroequation}\partial_t u-\operatorname{div}[|Du|^{p-2}Du+a(x,t)|Du|^{q-2}Du]
=f\end{equation}
in a space-time domain $\Omega_T=\Omega\times(0,T)$, where $\Omega\subset\mathbb{R}^n$ is a bounded open
set and $T>0$.
Here, the coefficient function $a(x,t)$ is nonnegative and locally H\"older continuous in $\Omega_T$. Double-phase problem
is a classical topic in partial differential equations,
and it is motivated by physical problems.
The elliptic double-phase equations
play a crucial role in the study of nonlinear elasticity and
the homogenization of strongly anisotropic materials
(see, for example \cite{zhikov1, zhikov2}). Moreover, regularity theory such as Harnack's inequality,
H\"older continuity, higher integrability and Calder\'on-Zygmund type estimates
for elliptic double-phase problems have been
studied in \cite{BCM, CM1,CM2,CM3, FM}. In recent years, there has been tremendous interest in developing regularity theory for
parabolic double-phase equations. The existence of weak solutions to the parabolic double-phase equations has been proved by
Chlebicka, Gwiazda, and Zatorska-Goldstein \cite{CGZ}. The gradient higher integrability results have been established by
Kim, Kinnunen, Moring and S\"arki\"o \cite{KKM, KS}; see also \cite{Asen} for the degenerate/singular parabolic
multi-phase problems. Furthermore, the Calder\'on-Zygmund type estimate has been studied in \cite{K}.
Recently, Kim, Moring and S\"arki\"o \cite{KMS} and independently Buryachenko and Skrypnik \cite{BS} proved the H\"older continuity
of weak solutions to parabolic double-phase equations. In \cite{BS}, the authors also established the Harnack's inequality.

On the other hand,
it is natural to try to relate the potential theory to the double-phase problems. However, limited work has been done in this field.
The first regularity result for nonlinear elliptic equations with measure data and
general growth was established by Baroni \cite{B}. In \cite{BS1}, the authors established local boundedness and Harnack's inequality
for double-phase elliptic equations in terms of the Wolff potentials. Subsequently, Buryachenko \cite{Bu} proved the local boundedness
of weak solutions to parabolic double-phase equations in terms of the time-dependent parabolic potentials.
Motivated by this work, we are interested in the continuity estimate
for the degenerate parabolic double-phase equations of the type \eqref{zeroequation}
 in terms of elliptic Riesz
potentials of the function $f$. Our main result states that any weak solution to degenerate parabolic double-phase equations
is continuous provided that $f$ belongs to a Kato-type class.

Our proof is in the spirit of \cite{BDG1, Bu, BS1, BS, KMS, LS, Qifanli}, which uses a phase analysis,
estimates in the intrinsic geometries, De Giorgi method and Kilpel\"ainen-Mal\'y technique. More precisely, our approach could be incorporated in the framework of
De Giorgi's method developed in \cite{Di93, KMS, U}.
In the context of degenerate parabolic double-phase problems, we have found
the intrinsic cylinder in the form
\begin{equation*}Q_{R,\Theta_\omega(R)}(x_0,t_0)=B_R(x_0)\times\left(t_0-\Theta_\omega(R),t_0\right],\quad\text{where}\quad
\Theta_\omega(R)=\omega^2\left(\left(\frac{\omega}{R}\right)^p+a(x_0,t_0)\left(\frac{\omega}{R}\right)^q\right)^{-1}
\end{equation*}
to be expedient, see \cite{KMS}.  It is essential in the proof of our main result. The strategy of the proofs of De Giorgi-type lemmas
makes use of the Kilpel\"ainen-Mal\'y technique. This technique was first invented by Kilpel\"ainen and Mal\'y \cite{KM}
in the study of the Wiener criteria of the $p$-Laplace equations. This technique has been devoted to the study of parabolic equations such as parabolic $p$-Laplace equations \cite{LS, LSS, Sk}, porous medium type equations \cite{BDG2, BDG1, z1, z2}, and doubly degenerate parabolic equations \cite{SS, Qifanli}. The treatment of the parabolic double-phase problem is considerably more delicate.
We have to work with the non-standard version of the intrinsic geometry and it would be necessary to distinguish between the cases $p$-phase and $(p,q)$-phase in the proof.

An outline of this paper is as follows. In Section 2, we set up notations and state the main result. We also discuss the Kato-type classes ensuring continuity of the weak solutions. Section 3 is devoted to the proof of the
Caccioppoli inequalities. Moreover, we set up an alternative argument in this section.
Section 4 deals with the analysis of the first alternative. Subsequently, Section 5 is devoted to the study of the second alternative.
Finally, in Section 6, we finish the proof of the main result.
\section{Statement of the main result}
In this section, we fix the notations and present our main result. In the following, we assume that
$\Omega$ is an open bounded set in $\mathbb{R}^n$ with $n\geq2$. For $T>0$, we set $\Omega_T=\Omega\times(0,T)$.
 Let $\rho>0$, $\theta>0$ and $z_0=(x_0,t_0)\in\Omega_T$ be a fixed point.
 We define $Q_{\rho,\theta}(z_0)=B_\rho(x_0)\times(t_0-\theta,t_0)$, where $B_\rho(x)=\left\{y\in\mathbb{R}^n:|y-x|\leq \rho\right\}$.
If $x_0$ is the origin, then we omit in our notation the point $x_0$ and write $B_\rho$ for $B_\rho(x_0)$.
Moreover, if $z_0=(0,0)$, then we abbreviate $Q_{\rho,\theta}:=Q_{\rho,\theta}(0,0)$.
Next, let us denote by
$$\partial_PQ_{\rho,\theta}(z_0)=
[\partial B_\rho(x_0)\times (t_0-\theta,t_0)]\cup [B_\rho(x_0)\times\{t_0-\theta\}]$$
the parabolic boundary of
$Q_{\rho,\theta}(z_0)$. Here, $\partial B_\rho(x_0)=\{y\in\mathbb{R}^n:|y-x_0|=\rho\}$.
In this paper, we consider quasilinear parabolic equations of the form
\begin{equation}\label{parabolic}\partial_t u-\operatorname{div}A(x,t,u,Du)=f,\end{equation}
in a bounded domain $\Omega_T\subset\mathbb{R}^{n+1}$.
Here, we assume that $f=f(x)\in L_{\loc}^1(\Omega)$.
The vector field $A(x,t,u,\xi)$ is measurable in $\Omega_T\times\mathbb{R}\times\mathbb{R}^n$
and satisfies the growth and ellipticity conditions:
 \begin{equation}\label{A}
	\begin{cases}
	 \big\langle A(x,t,u,\xi),\xi\big \rangle\geq C_0\left(|\xi|^p+a(x,t)|\xi|^q\right),\\
	|A(x,t,u,\xi)|\leq C_1\left(|\xi|^{p-1}+a(x,t)|\xi|^{q-1}\right)+g,
	\end{cases}
\end{equation}
where $C_0$, $C_1$ are given positive constants, $g\geq0$ and $g=g(x)\in L_{\loc}^1(\Omega)$. Throughout the paper,
we assume that $a(x,t)\geq0$ and $a(x,t)\in C_{\loc}^{\alpha,\frac{\alpha}{2}}(\Omega_T)$. More precisely, there exists a constant $[a]_\alpha>0$ such that
\begin{equation*}|a(x,t)-a(y,s)|\leq [a]_\alpha\left(|x-y|^\alpha+|t-s|^\frac{\alpha}{2}\right)\end{equation*}
holds for every $(x,t)\in\Omega_T$ and $(y,s)\in\Omega_T$. We now give the definition of a weak solution to the parabolic double-phase
equation \eqref{parabolic}-\eqref{A}.
\begin{definition}\label{weak solution}
A measurable function $u:\Omega_T\to \mathbb{R}$ is
said to be a local weak solution to \eqref{parabolic}-\eqref{A} if
$u\in C_{\loc}(-T,0;L_{\loc}^2(\Omega))\cap L_{\loc}^q(-T,0;W_{\loc}^{1,q}(\Omega))$
and
for every open set
$U\Subset \Omega$ and every subinterval $(t_1,t_2)\subset(-T,0)$
the identity
\begin{equation}\begin{split}\label{weaksolution}
\int_U &u(\cdot,t)\varphi(\cdot,t)dx\bigg|_{t=t_1}^{t_2}+\iint_{U\times(t_1,t_2)}-u\partial_t
 \varphi+\big\langle A(x,t,u,Du), D\varphi\big\rangle
\,\mathrm {d}x\mathrm {d}t
\\&=\iint_{U\times(t_1,t_2)}  f \varphi\,\mathrm {d}x\mathrm {d}t
\end{split}\end{equation}
holds for
all any function
$\varphi\in W_{\loc}^{1,2}(-T,0;L^2(U))\cap L_{\loc}^q(-T,0;W_0^{1,q}(U)).$
 \end{definition}
 We remark that the technical assumption $|Du|\in L^q_{\text{loc}}(\Omega_T)$ will be needed for the proof of Caccioppoli inequalities
 in Section 3. However, it is more natural to define the function space for weak solutions to the parabolic double-phase
equation \eqref{parabolic}-\eqref{A} by
\begin{equation*}\left\{u\in C_{\loc}\left(-T,0;L_{\loc}^2(\Omega)\right)\cap L_{\loc}^1\left(-T,0;W_{\loc}^{1,1}(\Omega)\right):
\iint_{\Omega_T}|Du|^p+a(x,t)|Du|^q\,\mathrm {d}x\mathrm {d}t<+\infty \right\}.\end{equation*}
It is possible to derive Caccioppoli inequalities
 in Section 3 under this function space assumption
by a parabolic Lipschitz truncation technique, see \cite{KKS}. However, this topic exceeds the scope of this paper.
 In this work, we assume that any weak solution to \eqref{parabolic}-\eqref{A} is locally bounded. More precisely, we can write
$\|u\|_\infty=\esssup_{\Omega_T}|u|<+\infty.$
The statement that a constant  $\gamma$ depends only upon the data means that it can be determined a priori only in terms
of $\left\{n,p,q,C_0,C_1,[a]_\alpha,\|u\|_\infty\right\}$.
For $1<s<n$, $\rho>0$ and $x\in\Omega$, we introduce the following quantities
\begin{equation*}\begin{split}
G_s(x,\rho)=\int_0^\rho\left(\frac{1}{r^{n-s}}\int_{B_r(x)\cap\Omega}g(y)^{\frac{s}{s-1}}
\,\mathrm {d}y\right)^{\frac{1}{s}}\frac{1}{r}\,\mathrm {d}r
\end{split}\end{equation*}
and
\begin{equation*}\begin{split}
F_s(x,\rho)=\int_0^\rho\left(\frac{1}{r^{n-s}}\int_{B_r(x)\cap\Omega}f(y)
\,\mathrm {d}y\right)^{\frac{1}{s-1}}\frac{1}{r}\,\mathrm {d}r,
\end{split}\end{equation*}
where $B_r(x)\cap\Omega=\left\{y\in\Omega:|y-x|\leq r\right\}$.
Such quantities are called elliptic Riesz potentials (see \cite{z1,z2}).
Furthermore, we define
\begin{equation*}\begin{split}
G_s(\rho)=\sup_{x\in\Omega}G_s(x,\rho)\qquad\text{and}\qquad F_s(\rho)=\sup_{x\in\Omega}F_s(x,\rho).
\end{split}\end{equation*}
We follow the terminology used in \cite{Bir1, LS, Sk, Qifanli} and introduce the definitions of the nonlinear Kato classes $K_s$ and $\tilde K_s$.
 \begin{definition}\label{Katoclass}
For $1<s<n$, the nonlinear Kato classes $K_s$ and $\tilde K_s$ are defined by
\begin{equation*}K_s=\left\{f\in L_{\loc}^1(\Omega):\lim_{\rho\to0}F_s(\rho)=0\right\}
\end{equation*}
and
\begin{equation*}\widetilde K_s=\left\{g\in L_{\loc}^1(\Omega):\lim_{\rho\to0}G_s(\rho)=0\right\}.
\end{equation*}
 \end{definition}
 Here, the condition $s<n$ is necessary in the study of the Kato class. In fact, this condition is always related to the
 use of fractional maximal functions in the study of the regularity problems for the quasilinear equation with measure data (see, for instance,
\cite{AD, Bir1}).
 Before giving the precise statement of the main result, we discuss an important property of Kato classes.
\begin{remark}We claim that if $1<p<q<n$ then $K_p\subset K_q$ and $\widetilde K_p\subset \widetilde K_q$.
For a fixed point $x\in\Omega$ and $\rho>0$, we use H\"older's inequality with
exponents $t=\frac{q-1}{p-1}$ and $t^\prime=\frac{q-1}{q-p}$ to obtain
\begin{equation*}\begin{split}
F_q(x,\rho)&=\int_0^\rho\left(\frac{1}{r^{n-p}}\int_{B_r(x)}f(y)
\,\mathrm {d}y\right)^{\frac{1}{q-1}}r^{\frac{q-p}{q-1}}\frac{1}{r}\,\mathrm {d}r
\leq \left[F_p(x,\rho)\right]^{\frac{p-1}{q-1}}\rho^{\frac{q-p}{q-1}}\leq F_p(\rho)^{\frac{p-1}{q-1}}.
\end{split}\end{equation*}
This proves the inclusion $K_p\subset K_q$. Next, we proceed to show that $\widetilde K_p\subset \widetilde K_q$.
We use H\"older's inequality with exponents $\nu=\frac{p(q-1)}{q(p-1)}$ and $\nu^\prime=\frac{p(q-1)}{q-p}$ to get
\begin{equation*}\begin{split}
\left(r^{q-n}\int_{B_r(x)}g(y)^\frac{q}{q-1}\,\mathrm {d}y\right)^\frac{1}{q}
\leq r^\sigma\left(r^{p-n}\int_{B_r(x)}g(y)^\frac{p}{p-1}\,\mathrm {d}y\right)^\frac{p-1}{p(q-1)},
\end{split}\end{equation*}
where
\begin{equation*}\begin{split}\sigma=\frac{n}{q\nu^\prime}+\frac{(p-1)(n-p)}{p(q-1)}+\frac{q-n}{q}
=\frac{q-p}{q-1}>0.
\end{split}\end{equation*}
From this inequality, we apply H\"older's inequality with exponents $t=\frac{q-1}{p-1}$ and $t^\prime=\frac{q-1}{q-p}$ to obtain
\begin{equation*}\begin{split}
G_q(x,\rho)&\leq\int_0^\rho\left(\frac{1}{r^{n-p}}\int_{B_r(x)}g(y)^\frac{p}{p-1}
\,\mathrm {d}y\right)^{\frac{p-1}{p(q-1)}}r^{\frac{q-p}{q-1}}\frac{1}{r}\,\mathrm {d}r
\leq \left[G_p(x,\rho)\right]^{\frac{p-1}{q-1}}\rho^{\frac{q-p}{q-1}}\leq G_p(\rho)^{\frac{p-1}{q-1}}.
\end{split}\end{equation*}
This proves that the inclusion $\widetilde K_p\subset \widetilde K_q$ holds.
\end{remark}
This result indicates that if $p<q$, then the conditions $K_p$ and $\widetilde K_p$ are stronger than $K_q$ and $\widetilde K_q$, respectively.
This motivates us to impose the assumptions that $f\in K_p$ and $g\in \widetilde K_p$ in the study of
inhomogeneous parabolic double-phase problems.
We are now in a position to state our main theorem, which also presents a continuity estimate for the weak solutions.
\begin{theorem}\label{main1}
 Let $u$ be a locally bounded weak solution to the parabolic double-phase equation \eqref{parabolic}
 in the sense of Definition \ref{weak solution}, where the vector field $A$ fulfills the structure conditions \eqref{A}.
 Assume that $2<p<q\leq p+\alpha$, $2<p<n$, $f\in K_p$ and $g\in \widetilde K_p$.
 Then, $u$ is locally continuous in $\Omega_T$. More precisely, for any fixed $z_0\in \Omega_T$, $R_0>0$, if $Q_{R_0,R_0^2}(z_0)\Subset \Omega_T$,
 then there exists a constant $\delta_1=\delta_1(\text{data},R_0)<1$, such that the oscillation
estimate
  \begin{equation}\begin{split}\label{theorem1oscillation}
 &\essosc_{Q_{r,b_0r^q}(z_0)}u\leq \gamma\left(\frac{r}{R_0}\right)^{\alpha_1}+\gamma
\left(F_p(cr^{\alpha_2}
)+G_p(cr^{\alpha_2})+r^{\alpha_2}\right)
 \end{split}\end{equation}
 holds for any $r<\delta_1R_0$. Here, the constants $b_0$, $\alpha_1$, $\alpha_2$, $\gamma$ and $c$
 depend only upon the data.
 \end{theorem}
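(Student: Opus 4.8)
The plan is to derive the oscillation estimate \eqref{theorem1oscillation} by a De Giorgi-type iteration on a nested family of intrinsic cylinders and to read off continuity from it. Fix $z_0=(x_0,t_0)\in\Omega_T$ with $Q_{R_0,R_0^2}(z_0)\Subset\Omega_T$; after dividing the equation by a suitable constant we may assume $\|u\|_\infty\le1$. I would construct inductively a geometric sequence of radii $R_j=\sigma^jR_0$, with $\sigma=\sigma(\mathrm{data})\in(0,1)$ fixed later, and a sequence $\omega_j\le1$ of oscillation bounds, together with the intrinsic cylinders $Q_j:=Q_{R_j,\Theta_{\omega_j}(R_j)}(z_0)$ (with $\Theta_\omega(R)$ as in the introduction), satisfying
\begin{equation*}\essosc_{Q_j}u\le\omega_j,\qquad \omega_{j+1}=\eta\,\omega_j+\lambda_j,\end{equation*}
where $\eta=\eta(\mathrm{data})\in(0,1)$ and $\lambda_j\simeq F_p(x_0,cR_j)+G_p(x_0,cR_j)+R_j^{\kappa}$ collects the contribution of the data $f,g$ at scale $R_j$, for structural constants $c,\kappa$. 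For this induction to be self-consistent one has to check at each step that $Q_j$ is sub-parabolic and contained in $Q_{j-1}$; this is where a stopping-time dichotomy enters, namely one distinguishes whether $\omega_j$ is essentially governed by the geometric factor $\eta^{j}\omega_0$ or by the accumulated potential $\sum_{i<j}\eta^{\,j-1-i}\lambda_i$, continuing the geometric iteration only while the former dominates.

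The heart of the argument is the single reduction step: assuming $\essosc_{Q_j}u\le\omega_j$, produce $\essosc_{Q_{j+1}}u\le\eta\omega_j+\lambda_j$. One first performs the phase analysis on $Q_j$ according to the size of $a(z_0)\big(\omega_j/R_j\big)^{q-p}$. If this quantity lies below a fixed threshold (the $p$-phase), then since $a\in C^{\alpha,\alpha/2}_{\loc}$ and $q\le p+\alpha$ one controls $a(x,t)(\omega_j/R_j)^{q-p}\lesssim1$ on all of $Q_j$, so $\Theta_{\omega_j}(R_j)\simeq\omega_j^{2-p}R_j^p$ and the equation behaves on $Q_j$ as a perturbation of the parabolic $p$-Laplacian. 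If it lies above the threshold (the $(p,q)$-phase), then the same Hölder estimate together with $q\le p+\alpha$ gives $a(x,t)\simeq a(z_0)$ throughout $Q_j$, so $\Theta_{\omega_j}(R_j)\simeq a(z_0)^{-1}\omega_j^{2-q}R_j^q$ and the equation behaves like the model operator $\xi\mapsto|\xi|^{p-2}\xi+a(z_0)|\xi|^{q-2}\xi$. In either phase I would run the two-alternative De Giorgi scheme: on an intermediate sub-cylinder of $Q_j$ either the super-level set $\{u>\esssup_{Q_j}u-\tfrac12\omega_j\}$ or the sub-level set $\{u<\essinf_{Q_j}u+\tfrac12\omega_j\}$ occupies only a small fraction of the measure, and in the corresponding alternative the De Giorgi-type lemma from Section~4 or Section~5 — proved by the Kilpel\"ainen--Mal\'y technique, i.e. with the truncation levels chosen adaptively so that the right-hand side is absorbed into an elliptic Riesz potential rather than estimated crudely — forces $u$ below $\esssup_{Q_j}u-\tfrac14\omega_j$ (respectively above $\essinf_{Q_j}u+\tfrac14\omega_j$) on $Q_{j+1}$, up to the correction $\lambda_j$. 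This yields the claimed one-step reduction and closes the induction, once one also verifies that the measure alternative propagates in time, which is the genuinely parabolic ingredient supplied by the Caccioppoli inequalities of Section~3.

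Iterating the recursion gives $\omega_j\le\eta^j\omega_0+\sum_{i<j}\eta^{\,j-1-i}\lambda_i$. Using $f\in K_p$, $g\in\widetilde K_p$, so that $F_p(\rho),G_p(\rho)\to0$ as $\rho\to0$, and splitting the sum at an index comparable to $\tfrac12\log_{1/\sigma}(R_0/r)$, one bounds the low-index part by $\gamma(r/R_0)^{\alpha_1}$ with $\alpha_1=\log(1/\eta)/\log(1/\sigma)$ and the high-index part by $\gamma\big(F_p(cr^{\alpha_2})+G_p(cr^{\alpha_2})+r^{\alpha_2}\big)$; the appearance of a power $r^{\alpha_2}$, rather than $r$ itself, in the arguments of $F_p,G_p$ is forced by the intrinsic scaling, since $\Theta_{\omega_j}(R_j)$ may be much smaller than $R_j^2$ and the effective ``potential radius'' of $Q_j$ differs from its spatial radius by a structural power. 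Finally, for $\omega_j$ small the intrinsic cylinder $Q_j$ contains a cylinder $Q_{r,b_0r^q}(z_0)$ for a structural $b_0$ and $r\simeq R_j$ (the $r^q$-scaling in time being the most degenerate, hence the smallest, case), which turns the estimate on $Q_j$ into \eqref{theorem1oscillation}; letting $r\to0$ and noting that the right-hand side vanishes in the limit gives continuity of $u$ at $z_0$. The main obstacle is the reduction step in the $(p,q)$-phase: keeping the intrinsic geometry consistent when the phase may switch between consecutive steps, ensuring the structural relation $\omega_j\gtrsim R_j^{\kappa}$ survives the stopping-time dichotomy, and transporting the Riesz-potential terms through the Kilpel\"ainen--Mal\'y iteration under non-standard growth — precisely the points where $2<p$, $q\le p+\alpha$ and the Kato-class hypotheses are needed, and where the argument leaves the classical parabolic $p$-Laplacian setting.
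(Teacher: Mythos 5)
Your outline matches the paper's Section~6 proof of Theorem~\ref{main1} almost exactly: an intrinsic-scaling De~Giorgi iteration on the cylinders $Q_{R,\Theta_\omega(R)}(z_0)$, a $p$-phase versus $(p,q)$-phase dichotomy driven by the size of $a(z_0)$ against $[a]_\alpha R^\alpha$, the two-alternative scheme (Propositions~\ref{1st proposition} and~\ref{2nd proposition}) with Kilpel\"ainen--Mal\'y-type adaptive truncation levels absorbing $f$ and $g$ into $F_p,G_p$, the recursion $\omega_{j+1}=\eta\,\omega_j+\lambda_j$, and the final conversion to cylinders $Q_{r,b_0 r^q}(z_0)$. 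One small caution: the opening normalization $\|u\|_\infty\le1$ is not harmless for the double-phase operator, since rescaling $u\mapsto u/M$ replaces $a(x,t)$ by $M^{q-p}a(x,t)$ and so shifts both the phase threshold and $[a]_\alpha$; the paper avoids this by carrying $\|u\|_\infty$ explicitly in the data rather than normalizing, but otherwise your route is the same.
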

 It is possible that this result also holds for the case $p=2$ and $q\leq 2+\alpha$ but we will not develop this point here.
 Moreover, it is not our purpose to study time-dependent parabolic potentials. For a treatment of the time-dependent parabolic potentials,
 we refer the reader to \cite{BDG2,BDG1,Bu, LSS, SS, z1, z2}.
 \begin{remark}\label{remarka>0}
 Let $f\in K_q$ and $g\in \widetilde K_q$.
 Let $(x_0,t_0)\in\Omega_T$ be a point such that $a_0=a(x_0,t_0)>0$. Since $a(x,t)$ is continuous, there exists $r>0$ such that
 \begin{equation*}\frac{a_0}{2}\leq \inf_{Q_{r,r^2}(x_0,t_0)}a(x,t)\leq \sup_{Q_{r,r^2}(x_0,t_0)}a(x,t)\leq 2a_0.\end{equation*}
 By Young's inequality, the structure condition \eqref{A} for the vector field $A$ becomes
  \begin{equation}\label{Avariant}
	\begin{cases}
	 \big\langle A(x,t,u,\xi),\xi\big \rangle\geq \frac{1}{2}C_0a_0|\xi|^q,\\
	|A(x,t,u,\xi)|\leq C_1^2(1+2a_0)|\xi|^{q-1}+1+g,
	\end{cases}
\end{equation}
where $(x,t)\in Q_{r,r^2}(x_0,t_0)$. In this case the quasilinear parabolic equation
\begin{equation*}\partial_t u-\operatorname{div}A(x,t,u,Du)=f,\end{equation*}
in the cylinder $Q_{r,r^2}(x_0,t_0)$
is actually the parabolic $q$-Laplace equation. Noting that for any $x\in\Omega$ there holds
\begin{equation*}\begin{split}
\int_0^\rho\left(\frac{1}{r^{n-q}}\int_{B_r(x)}1^{\frac{q}{q-1}}
\,\mathrm {d}y\right)^{\frac{1}{q}}\frac{1}{r}\,\mathrm {d}r=c(n,q)\rho\to0,\quad\text{as}\quad\rho\to0.
\end{split}\end{equation*}
It follows that $1+g\in\widetilde K_q$, since $g\in \widetilde K_q$. We conclude from \cite[Theorem 1.2]{LS} that the weak solution
$u$ is locally continuous in $Q_{r,r^2}(x_0,t_0)$. However, in the context of parabolic double-phase problem the continuity of $u$ in the level set $\left\{a(x,t)>0\right\}$
is not uniformly with respect to the point $(x,t)\in \left\{a(x,t)>0\right\}$ as $a(x,t)\to0^+$.
 \end{remark}
 Contrary to Remark \ref{remarka>0}, the continuity estimate \eqref{theorem1oscillation} indicates that under the stronger assumptions
 $f\in K_p$ and $g\in \widetilde K_p$, the continuity of $u$ is uniformly with respect to all the points in $Q_{R_0,R_0^2}(z_0)$. Finally,
 we will derive a continuity estimate similar to \eqref{theorem1oscillation} for the conditions $a(x_0,t_0)>0$, $f\in K_q$ and $g\in \tilde K_q$
 at the end of Section 6. It should be pointed out that the optimal conditions for $f$ and $g$
 which ensure the continuity of $u$
 are still unclear. We leave this problem for future study.
\section{Caccioppoli inequalities}
In this section, we provide some preliminary lemmas and establish the Caccioppoli inequalities for the weak solutions. To start with,
we follow the notations used in \cite{LSS} and introduce some auxiliary functions. Throughout the paper, we define
\begin{equation}\label{G}
	G(u)=\begin{cases}
u,&\quad \text{for}\quad u>1,\\
	u^2,&\quad \text{for}\quad 0\leq u\leq1
\\
	0,&\quad \text{for}\quad u<0.
	\end{cases}
\end{equation}
Let $s\in\mathbb{R}$ and $s_+=\max\{s,0\}$. We observe that $G(s)=
\min\{s_+,s_+^2\}$. Fix $0<\lambda<p-1$, we introduce the
function
\begin{equation}\label{ph+}\phi_+(s)=\int_0^{s_+}(1+\tau)^{-1-\lambda}\,\mathrm {d}\tau=\frac{1}{\lambda}
\left(1-\left(1+s_+\right)^{-\lambda}\right).
\end{equation}
It can be easily seen that there exist $\gamma_1$ and $\gamma_2$ depending only upon $\lambda$ such that $\gamma_1<\gamma_2$,
\begin{equation}\label{phiprime1}\gamma_1\min\{s_+,1\}\leq\phi_+(s)
\leq \gamma_2\min\{s_+,1\}\end{equation}
and
\begin{equation}\label{phi}
\gamma_1\frac{s_+}{s_++1}\leq\phi_+(s)
\leq\gamma_2\frac{s_+}{s_++1}.
\end{equation}
Next, we set
$\Phi(s)=\int_0^{s_+}\phi_+(\tau)\,\mathrm {d}\tau$
and it follows from \eqref{phiprime1} that
\begin{equation}\label{PhiG}
\gamma_1G(s)=\gamma_1\min\{s_+,s_+^2\}\leq \Phi(s)\leq\gamma_2 \min\{s_+,s_+^2\}=\gamma_2G(s).
\end{equation}
Furthermore, for $\nu>1$ and $0<\lambda<\nu-1$, we introduce the function
\begin{equation*}
\widetilde\psi_\nu(s)=\int_0^{s_+}(1+\tau)^{-\frac{1}{\nu}-\frac{\lambda}{\nu}}\,\mathrm {d}\tau=\frac{\nu}
{\nu-1-\lambda}\left[(1+s_+)^{1-\frac{1+\lambda}{\nu}}-1\right].
\end{equation*}
Let $d>0$, $l>0$ and $u$ be the weak solution
as in the Definition \ref{weak solution}. We introduce the quantity
\begin{equation}\label{psi-}
\psi_\nu=\left(\frac{1}{d}\int_u^l\left(1+\frac{l-s}{d}\right)^{-\frac{1}{\nu}-\frac{\lambda}{\nu}}
\,\mathrm {d}s\right)_+=\widetilde\psi_\nu\left(\frac{l-u}{d}\right).
\end{equation}
At this point, we state the following lemma which provides a characterization of the quantity $\psi_\nu$.
  \begin{lemma}\label{lemmainequalitypsi-}
  Let $\psi_\nu$ be the quantity defined in \eqref{psi-} and $0<\lambda<\nu-1$. Then, we have
    \begin{equation}\label{inequalitypsi-}
\left(\frac{\epsilon_1}{1+\epsilon_1}\right)^\frac{1+\lambda}{\nu}\left(\frac{l-u}{d}\right)^{\frac{\nu-1-\lambda}{\nu}}\leq\psi_\nu\leq  \frac{\nu}{\nu-1-\lambda}
\left(\frac{l-u}{d}\right)^{\frac{\nu-1-\lambda}{\nu}}
\qquad\text{if}\qquad \frac{l-u}{d}\geq \epsilon_1.
\end{equation}
\end{lemma}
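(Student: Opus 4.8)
The plan is to work directly with both the explicit representation of $\widetilde\psi_\nu$ and the integral defining $\psi_\nu$. First I would abbreviate $w:=\frac{l-u}{d}$ and observe that under the hypothesis $w\geq\epsilon_1>0$ we have $w_+=w$, so that
\begin{equation*}
\psi_\nu=\widetilde\psi_\nu(w)=\int_0^w(1+\tau)^{-\frac{1+\lambda}{\nu}}\,\mathrm{d}\tau
=\frac{\nu}{\nu-1-\lambda}\left[(1+w)^{\frac{\nu-1-\lambda}{\nu}}-1\right].
\end{equation*}
I would then set $\beta:=\frac{1+\lambda}{\nu}$; since $0<\lambda<\nu-1$ one has $\beta\in(0,1)$ and $1-\beta=\frac{\nu-1-\lambda}{\nu}\in(0,1)$, which is precisely the exponent occurring on the right-hand side of \eqref{inequalitypsi-}. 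Thus the whole statement reduces to two-sided estimates for $w(1+w)^{-\beta}$-type quantities.

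For the upper bound I would argue that the integrand satisfies $(1+\tau)^{-\beta}\leq\tau^{-\beta}$ for $\tau>0$ (because $1+\tau\geq\tau$ and $-\beta<0$), so
\begin{equation*}
\psi_\nu\leq\int_0^w\tau^{-\beta}\,\mathrm{d}\tau=\frac{w^{1-\beta}}{1-\beta}=\frac{\nu}{\nu-1-\lambda}\,w^{\frac{\nu-1-\lambda}{\nu}},
\end{equation*}
which is the claimed upper estimate; note this holds for every $w\geq0$, so the hypothesis $w\geq\epsilon_1$ is not needed here. (Equivalently, one may use the subadditivity of the concave map $t\mapsto t^{1-\beta}$ on the explicit formula, via $(1+w)^{1-\beta}\leq 1+w^{1-\beta}$.)

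For the lower bound I would exploit the monotonicity of the integrand: since $\tau\mapsto(1+\tau)^{-\beta}$ is decreasing, $(1+\tau)^{-\beta}\geq(1+w)^{-\beta}$ for $0\leq\tau\leq w$, whence
\begin{equation*}
\psi_\nu\geq w(1+w)^{-\beta}=w^{1-\beta}\left(\frac{w}{1+w}\right)^{\beta}.
\end{equation*}
The function $w\mapsto\frac{w}{1+w}=1-\frac{1}{1+w}$ is increasing on $(0,\infty)$, so $w\geq\epsilon_1$ forces $\frac{w}{1+w}\geq\frac{\epsilon_1}{1+\epsilon_1}$, and therefore
\begin{equation*}
\psi_\nu\geq\left(\frac{\epsilon_1}{1+\epsilon_1}\right)^{\frac{1+\lambda}{\nu}}w^{\frac{\nu-1-\lambda}{\nu}},
\end{equation*}
which is exactly the asserted lower estimate. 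I do not expect a genuine obstacle in this lemma; the only point requiring a little care is not to leave the crude bound $\psi_\nu\geq w(1+w)^{-\beta}$ as is, but to rewrite it in the homogeneous form $w^{1-\beta}\bigl(w/(1+w)\bigr)^{\beta}$, so that the monotonicity argument produces the constant $\bigl(\epsilon_1/(1+\epsilon_1)\bigr)^{\beta}$ with the correct exponent $\beta=\frac{1+\lambda}{\nu}$.
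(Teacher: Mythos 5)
Your proof is correct and complete: the upper bound via $(1+\tau)^{-\beta}\leq\tau^{-\beta}$ and the lower bound via the rewriting $w(1+w)^{-\beta}=w^{1-\beta}\bigl(w/(1+w)\bigr)^{\beta}$ together with the monotonicity of $t\mapsto t/(1+t)$ give exactly \eqref{inequalitypsi-}. The paper omits the proof of this lemma as ``not difficult,'' so there is no argument to compare against; your reasoning is the natural one and I see no gap.
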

The proof of this result is not difficult and so is omitted.
We denote by
$\varphi$ a piecewise smooth function in $Q_{\rho,\Theta}(z_0)$ such that
\begin{equation}\label{def zeta}0\leq\varphi\leq1, \quad|D\varphi|<\infty\quad\text{and}\quad\varphi=0\quad\text{
on}\quad\partial B_\rho(x_0).\end{equation}
Let $k$ be a fixed real number. For a function $v\in L_{\loc}^1(\Omega_T)$, the truncations are defined by
\begin{equation*}\begin{split}&(v-k)_+=\max\{v-k;0\}\\
&(v-k)_-=\max\{-(v-k);0\}.\end{split}\end{equation*}
The crucial result in our development of regularity property of weak solutions
to the parabolic double-phase equation will be the following Caccioppoli-type estimate.
  \begin{lemma}\label{Cac1}
  Let $d>0$, $l>0$, $\lambda<(q-1)^{-1}$ and let
  $u$ be a weak solution to the parabolic double-phase equation \eqref{parabolic}
 in the sense of Definition \ref{weak solution}.
 There exists a constant $\gamma$ depending only upon the data and $\lambda$, such that
 for every piecewise smooth cutoff function $\varphi$ satisfying \eqref{def zeta},
 we have
  \begin{equation}\begin{split}\label{Cacformula1}
  \esssup_{t_0-\Theta<t<t_0}&\int_{L^-(t)}G\left(\frac{l-u}{d}\right)\varphi^M\,\mathrm {d}x+
  \iint_{L^-}\left(d^{p-2}|D\psi_p|^p+a(x,t)d^{q-2}|D\psi_q|^q\right)\varphi^M\,\mathrm {d}x\mathrm {d}t
  \\
 \leq &\gamma\int_{L^-(t_0-\Theta)}G\left(\frac{l-u}{d}\right)\varphi^M\,\mathrm {d}x
 +\gamma \iint_{L^-}\frac{l-u}{d}\left|\partial_t\varphi\right|\varphi^{M-1}\,\mathrm {d}x\mathrm {d}t
  \\&+\gamma  d^{p-2}\iint_{L^-}\left(\frac{l-u}{d}\right)^{(1+\lambda)(p-1)}
  \varphi^{M-p}|D\varphi|^p\,\mathrm {d}x\mathrm {d}t
  \\&+\gamma  d^{q-2}\iint_{L^-}a(x,t)\left(\frac{l-u}{d}\right)^{(1+\lambda)(q-1)}
  \varphi^{M-q}|D\varphi|^q\,\mathrm {d}x\mathrm {d}t
\\&+\gamma \frac{\Theta}{d^2}\int_{B_\rho(x_0)}g^{\frac{p}{p-1}}\,\mathrm {d}x
  +\gamma \frac{\Theta}{d}\int_{B_\rho(x_0)}|f|\,\mathrm {d}x,
   \end{split}\end{equation}
   where $M>q$, $L^-=Q_{\rho,\Theta}(z_0)\cap \{u\leq l\}$ and $L^-(t)=B_\rho(x_0)\cap \{u(\cdot,t)\leq l\}$.
  \end{lemma}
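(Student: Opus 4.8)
Write $w=\frac{l-u}{d}$, so that $w_+>0$ exactly on $L^-=Q_{\rho,\Theta}(z_0)\cap\{u\le l\}$ and $Dw=-\frac1d Du$ there. The plan is to test the weak formulation \eqref{weaksolution} on $B_\rho(x_0)\times(t_0-\Theta,\tau)$, for a.e.\ $\tau\in(t_0-\Theta,t_0)$, with $v=\phi_+(w)\varphi^M$, where $\phi_+$ is the function in \eqref{ph+}; this $v$ is nonnegative, vanishes on $\partial B_\rho(x_0)$, and is bounded because $u$ is locally bounded and $\phi_+$ is bounded. Since $u\in C_{\loc}(-T,0;L^2_{\loc})$ need not be weakly differentiable in time, this step must be performed on Steklov averages (or a time mollification) of $u$ and then passed to the limit; this is standard and I would only sketch it. After taking $\esssup$ over $\tau$ and dividing through by $d$ at the end, the left-hand side of \eqref{Cacformula1} will emerge from two contributions, described next.

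For the parabolic part I would use the primitive $\Phi(s)=\int_0^{s_+}\phi_+(\tau)\diff\tau$ and the pointwise identity $\phi_+(w)\,\partial_t u=-d\,\partial_t[\Phi(w)]$. Integrating by parts in $t$ produces the boundary slice $d\int_{L^-(\tau)}\Phi(w)\varphi^M\diff x$ — which, by \eqref{PhiG}, dominates $\gamma_1 d\int_{L^-(\tau)}G(w)\varphi^M\diff x$ and yields the first left-hand term of \eqref{Cacformula1} after $\esssup_\tau$ — the initial slice $d\int_{L^-(t_0-\Theta)}\Phi(w)\varphi^M\diff x$ (first term on the right, again by \eqref{PhiG}), and the term $d\iint_{L^-}\Phi(w)\,\partial_t(\varphi^M)\diff x\diff t$, which is bounded by $\gamma\iint_{L^-}\frac{l-u}{d}\,|\partial_t\varphi|\,\varphi^{M-1}\diff x\diff t$ since $\Phi(w)\le\gamma_2 G(w)\le\gamma_2 w_+$ on $L^-$ (second term on the right). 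For the diffusion part, expanding $D[\phi_+(w)\varphi^M]=-\frac1d\phi_+'(w)\varphi^M Du+M\phi_+(w)\varphi^{M-1}D\varphi$ isolates the coercive contribution $\frac1{d^2}\iint\phi_+'(w)\varphi^M\langle A(x,t,u,Du),Du\rangle\diff x\diff t$. The crucial algebraic observation is that, since $\widetilde\psi_\nu'(s)=(1+s)^{-(1+\lambda)/\nu}$ while $\phi_+'(s)=(1+s)^{-1-\lambda}$, one has the exact identity $d^{\nu-2}|D\psi_\nu|^\nu=\frac1{d^2}\phi_+'(w)|Du|^\nu$ on $L^-$ for $\nu\in\{p,q\}$; together with the lower bound in \eqref{A} this shows that the coercive contribution dominates $C_0\iint_{L^-}(d^{p-2}|D\psi_p|^p+a(x,t)\,d^{q-2}|D\psi_q|^q)\varphi^M\diff x\diff t$, the second left-hand term of \eqref{Cacformula1}.

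It remains to control the residual diffusion term $\frac Md\iint\phi_+(w)\varphi^{M-1}\langle A(x,t,u,Du),D\varphi\rangle\diff x\diff t$ together with $-\frac1d\iint f\,\phi_+(w)\varphi^M\diff x\diff t$. Using the upper bound in \eqref{A}, I would split the first into a $|Du|^{p-1}$-piece, an $a|Du|^{q-1}$-piece and a $g$-piece, and apply Young's inequality in each with exponents $\frac p{p-1},p$ (respectively $\frac q{q-1},q$ for the middle piece) arranged so that the high-order factor is precisely the coercive integrand $\frac1{d^2}\phi_+'(w)\varphi^M|Du|^p$ (respectively its $q$-analogue carrying the weight $a$), which is then reabsorbed into the left-hand side once $\epsilon$ is chosen small. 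The complementary low-order factors are $d^{p-2}\phi_+(w)^p(\phi_+'(w))^{-(p-1)}\varphi^{M-p}|D\varphi|^p$ and its $q$-counterpart. This is where the hypothesis $\lambda<(q-1)^{-1}$ enters essentially: by \eqref{phi} one has, for $\nu\in\{p,q\}$,
\[
\phi_+(w)^\nu(\phi_+'(w))^{-(\nu-1)}\le\gamma\Big(\tfrac{w_+}{1+w_+}\Big)^\nu(1+w_+)^{(1+\lambda)(\nu-1)}\le\gamma\,w_+^{(1+\lambda)(\nu-1)},
\]
the last inequality because $\nu\ge(1+\lambda)(\nu-1)$ and $\frac{w_+}{1+w_+}\le1$; this produces exactly the third and fourth terms on the right of \eqref{Cacformula1}. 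The $g$-piece, treated with exponents $\frac p{p-1},p$ so that its $g$-factor becomes $\frac1{d^2}g^{p/(p-1)}$, and the $f$-term, bounded by $\frac{\gamma_2}d|f|$ using $\phi_+\le\gamma_2$ and $\varphi\le1$, contribute terms free of $D\varphi$; since $f=f(x)$ and $g=g(x)$ depend only on the spatial variable, integrating in time over $(t_0-\Theta,t_0)$ simply produces the factor $\Theta$ and gives the last two terms on the right of \eqref{Cacformula1}.

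I expect the main obstacle to lie in this last block: the powers of $d$, $\varphi$ and $w_+$ must be tracked so that, phase by phase, the reabsorbed terms are genuinely the energies $d^{p-2}|D\psi_p|^p\varphi^M$ and $a\,d^{q-2}|D\psi_q|^q\varphi^M$, and the leftover terms coincide exactly with those in \eqref{Cacformula1}; this balancing is precisely what forces the restriction $\lambda<(q-1)^{-1}$, sharper than the condition $\lambda<p-1$ merely needed to define $\phi_+$ and $\psi_q$. The time-regularization justifying the integration by parts in $t$, and the passage from the sub-cylinder identities to the $\esssup$ bound, are by contrast routine.
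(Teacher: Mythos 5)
Your proposal is correct and mirrors the paper's own argument essentially line for line: test with $\phi_+((l-u)/d)\,\varphi^M$ (up to an inessential factor $d$), handle the time derivative via the primitive $\Phi$ and \eqref{PhiG} after Steklov averaging, recognise the coercive integrand as $d^{\nu-2}|D\psi_\nu|^\nu\varphi^M$ through the identity $\psi_\nu'=(\phi_+')^{1/\nu}$, and absorb the $T_1$-type terms by Young's inequality exactly where $\lambda<(q-1)^{-1}$ is needed. The only cosmetic difference is that the paper pairs the $g$-piece against a $|D\varphi|^p$ factor (contributing to the third term on the right) rather than against the coercive integrand as your ``in each'' phrasing momentarily suggests, but your subsequent treatment of the $g$-piece makes clear you do the same thing.
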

  \begin{proof}
 For simplicity of notation, we write $B=B_\rho(x_0)$ and $Q=Q_{\rho,\Theta}(z_0)$.
 In the weak formulation \eqref{weaksolution} we choose the testing function
  \begin{equation*}\begin{split}\varphi_-=\left[\int_u^l\left(1+\frac{l-s}{d}\right)^{-1-\lambda}\,\mathrm {d}s\right]_+
  \varphi^M=d\phi_+\left(\frac{l-u}{d}\right)\varphi^M,\end{split}\end{equation*}
  where $\phi_+$ is the function defined in \eqref{ph+} and $M>q$.
Since $|Du|\in L_{\loc}^q(\Omega_T)$, the use of $\varphi_-$ as a testing function is justified, modulus a time mollification technique.
The weak formulation \eqref{weaksolution} can be rewritten as
\begin{equation}\begin{split}\label{weakformulaa}
   \iint_Q\varphi_-\partial_tu \,\mathrm {d}x\mathrm {d}t
   + \iint_Q\big\langle A(x,t,u,Du), D\varphi_-\big\rangle
\,\mathrm {d}x\mathrm {d}t=\iint_Qf \varphi_-\,\mathrm {d}x\mathrm {d}t.
    \end{split}\end{equation}
 We now proceed formally for the term involving the time derivative by assuming that the time derivative exists.
 According to \eqref{PhiG}, we find that
 for any $t\in (t_0-\Theta,t_0)$
   \begin{equation*}\begin{split}
   \int_{t_0-\Theta}^t&\int_B\varphi_-\partial_tu \,\mathrm {d}x\mathrm {d}t
 \\ & =-\int_{t_0-\Theta}^{t}\int_B\frac{\partial}{\partial t} \left[\left(\int_u^l\,\mathrm {d}w
   \int_w^l\left(1+\frac{l-s}{d}\right)^{-1-\lambda}\,\mathrm {d}s\right)_+\right]\varphi^M\,\mathrm {d}x\mathrm {d}t
  \\ &\leq -\gamma d^2\int_{L^-(t)}G\left(\frac{l-u}{d}\right)\varphi^M\,\mathrm {d}x
  \\&\quad+\gamma d^2
  \iint_{L^-}\frac{l-u}{d}\varphi^{M-1}\left|\partial_t\varphi\right|\,\mathrm {d}x\mathrm {d}t
  +\gamma d^2\int_{L^-(t_0-\Theta)}G\left(\frac{l-u}{d}\right)\varphi^M\,\mathrm {d}x.
   \end{split}\end{equation*}
   Next, we consider the second term on the left-hand side of \eqref{weakformulaa}. To this aim, we decompose
   \begin{equation*}\begin{split}
   \iint_Q&\big\langle A(x,t,u,Du), D\varphi_-\big\rangle
\,\mathrm {d}x\mathrm {d}t
\\= &Md\iint_Q A(x,t,u,Du)\varphi^{M-1}
\phi_+\left(\frac{l-u}{d}\right)D\varphi
\,\mathrm {d}x\mathrm {d}t
\\&-\iint_{L^-} A(x,t,u,Du)\cdot Du
\left(1+\frac{l-u}{d}\right)^{-1-\lambda}
\varphi^M
\,\mathrm {d}x\mathrm {d}t
=:T_1+T_2.
\end{split}\end{equation*}
We first consider the estimate for $T_2$. From the structure condition \eqref{A}$_1$ and \eqref{psi-}, we conclude that
 \begin{equation*}\begin{split}
 T_2&\leq-C_0\iint_{L^-}\left(|Du|^p+a(x,t)|Du|^q\right)\left(1+\frac{l-u}{d}\right)^{-1-\lambda}\varphi^M\,\mathrm {d}x\mathrm {d}t\\&=
  -C_0\iint_{L^-}\left(d^p|D\psi_p|^p+a(x,t)d^q|D\psi_q|^q\right)\varphi^M\,\mathrm {d}x\mathrm {d}t.
 \end{split}\end{equation*}
 To estimate $T_1$, we use the structure condition \eqref{A}$_2$ and \eqref{phi} to deduce that
  \begin{equation*}\begin{split}
 T_1&\leq d\iint_Q (|Du|^{p-1}+a(x,t)|Du|^{q-1}+g)\varphi^{M-1} |D\varphi|
\left(\frac{l-u}{d}\right)\left(1+\frac{l-u}{d}\right)^{-1}
\,\mathrm {d}x\mathrm {d}t
\\&=:T_3+T_4+T_5.
 \end{split}\end{equation*}
Next, we consider the estimate for $T_3$.
We apply Young's inequality to conclude that for any fixed
 $\epsilon>0$ there holds
  \begin{equation*}\begin{split}T_3\leq &\epsilon d^p\iint_{L^-}|D\psi_p|^p\varphi^M\,\mathrm {d}x\mathrm {d}t
\\ &+\gamma d^p\iint_{L^-}\varphi^{M-p}|D\varphi|^p\left(1+\frac{l-u}{d}\right)
^{\lambda(p-1)-1}\left(\frac{l-u}{d}\right)^{p}
\,\mathrm {d}x\mathrm {d}t
 \\ \leq &\epsilon d^p\iint_{L^-}|D\psi_p|^p\varphi^M\,\mathrm {d}x\mathrm {d}t
\\ &+\gamma(\lambda,\epsilon) d^p\iint_{L^-}\varphi^{M-p}|D\varphi|^p\left(\frac{l-u}{d}\right)^{(1+\lambda)(p-1)}
\,\mathrm {d}x\mathrm {d}t,
\end{split}\end{equation*}
since $\lambda<(q-1)^{-1}<(p-1)^{-1}$. Similarly, we conclude from $\lambda<(q-1)^{-1}$ that for any fixed
 $\epsilon>0$ there holds
 \begin{equation*}\begin{split}T_4\leq &\epsilon d^q\iint_{L^-}a(x,t)|D\psi_q|^q\varphi^M\,\mathrm {d}x\mathrm {d}t
\\ &+\gamma(\lambda,\epsilon) d^q\iint_{L^-}a(x,t)\varphi^{M-q}|D\varphi|^q\left(\frac{l-u}{d}\right)^{(1+\lambda)(q-1)}
\,\mathrm {d}x\mathrm {d}t.
\end{split}\end{equation*}
To estimate $T_5$, we apply Young's inequality to conclude that
 \begin{equation*}\begin{split}T_5=&\gamma \iint_{L^-}\left[d\varphi^{\frac{M}{p}-1}  \left(\frac{l-u}{d}\right)\left(1+\frac{l-u}{d}\right)^{-\frac{1}{p}
 +\frac{\lambda}{p^\prime}} |D\varphi|\right]
  \\&
  \qquad\times\left[\left(1+\frac{l-u}{d}\right)^{-\frac{1}{p^\prime}-\frac{\lambda}{p^\prime}}g\varphi^{\frac{M}{p^\prime}}\right]
\,\mathrm {d}x\mathrm {d}t
  \\ \leq &\gamma d^p\iint_{L^-}\varphi^{M-p}|D\varphi|^p\left(\frac{l-u}{d}\right)^{(1+\lambda)(p-1)}
\,\mathrm {d}x\mathrm {d}t
+\gamma\Theta\int_Bg^{\frac{p}{p-1}}
  \,\mathrm {d}x,
\end{split}\end{equation*}
since $\lambda<(p-1)^{-1}$.
Taking into account that $\varphi_-\leq \lambda^{-1} d$,
we obtain an estimate for the right-hand side of \eqref{weakformulaa} by
\begin{equation*}\begin{split}
\iint_Qf\varphi_-\,\mathrm {d}x\mathrm {d}t\leq \gamma d\Theta\int_B|f|
  \,\mathrm {d}x.
\end{split}\end{equation*}
Combining the above estimates and dividing by
$d^2$, we obtain the desired estimate \eqref{Cacformula1}. This completes the proof of the lemma.
\end{proof}
  We now turn to consider the logarithmic estimates for the weak solutions to the parabolic double-phase equations. To this end,
  we introduce the logarithmic function
  \begin{equation}\begin{split}\label{ln}\Psi^{\pm}(u)&=\ln^+\left(\frac{H_k^{\pm}}{H_k^{\pm}-(u-k)_{\pm}+c}\right),
  \qquad 0<c<H_k^{\pm},
\end{split}\end{equation}
where $H_k^{\pm}$ is a constant chosen such that $H_k^{\pm}\geq \esssup_{Q_{\rho,\Theta}}(u-k)_{\pm}$.
We are now in a position to state the following lemma.
\begin{lemma}\label{logestimatelemma}Let
  $u$ be a bounded weak solution to the parabolic double-phase equation \eqref{parabolic}
 in the sense of Definition \ref{weak solution}. Assume that $\Theta\leq \rho^2$.
 There exists a  constant $\gamma$ that can be determined a priori only in terms of the data such that
 \begin{equation}\begin{split}\label{lnCac}
&\esssup_{t_0-\Theta<t<t_0}\int_{B_\rho(x_0)\times\{t\}}[\Psi^{\pm}(u)]^2\phi^q\,\mathrm {d}x
\\ \leq &\int_{B_\rho(x_0)\times\{t_0-\Theta\}}[\Psi^{\pm}(u)]^2\phi^q\,\mathrm {d}x+\gamma
\iint_{Q_{\rho,\Theta}(z_0)}a(z_0)\Psi^{\pm}(u)|D\phi|^q\left[(\Psi^{\pm})^\prime(u)\right]^{2-q}
\,\mathrm {d}x\mathrm {d}t
\\&+\gamma
\iint_{Q_{\rho,\Theta}(z_0)}\Psi^{\pm}(u)\left(|D\phi|^p+\rho^\alpha|D\phi|^q\right)\left[(\Psi^{\pm})^\prime(u)\right]^{2-p}
\,\mathrm {d}x\mathrm {d}t
\\&+\gamma\ln\left(\frac{H}{c}\right)\left(\frac{1}{c}\iint_{Q_{\rho,\Theta}(z_0)}|f|\,\mathrm {d}x\mathrm {d}t+
\frac{1}{c^2}\iint_{Q_{\rho,\Theta}(z_0)}g^{\frac{p}{p-1}}\,\mathrm {d}x\mathrm {d}t\right),
\end{split}\end{equation}
where $\phi=\phi(x)$ is independent of $t$ and satisfies \eqref{def zeta}.
\end{lemma}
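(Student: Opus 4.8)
The plan is to test the weak formulation \eqref{weaksolution} with a function of the form $\varphi = \big[(\Psi^{\pm})^2\big]'(u)\,\phi^q = 2\Psi^{\pm}(u)(\Psi^{\pm})'(u)\phi^q$, which is the standard Di~Benedetto-type choice for logarithmic estimates, adapted here so that the gradient term produces the correct double-phase quantities. As in the proof of Lemma \ref{Cac1}, this testing function is admissible modulo a time mollification because $|Du|\in L^q_{\loc}(\Omega_T)$; I would suppress the mollification and argue formally. For the parabolic term, one uses that $\partial_t\big[\tfrac12(\Psi^{\pm}(u))^2\big] = \Psi^{\pm}(u)(\Psi^{\pm})'(u)\,\partial_t u$ and that $\phi$ is independent of $t$, so integrating in time over $(t_0-\Theta,t)$ yields exactly the difference of the two boundary integrals $\int[\Psi^{\pm}(u)]^2\phi^q\,dx$ appearing on the left and the first term on the right, with no cutoff-in-time error term (this is why $\phi=\phi(x)$ is assumed).

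Next I would expand the spatial term $\langle A(x,t,u,Du),D\varphi\rangle$. Differentiating, $D\varphi$ splits into a "good" part $\big[\big((\Psi^{\pm})^2\big)''(u)\big]\phi^q Du$ — which, paired with the ellipticity bound \eqref{A}$_1$ and the fact that $\big((\Psi^{\pm})^2\big)''\geq 2[(\Psi^{\pm})']^2\geq 0$ near where it matters, gives a nonnegative term that can be discarded (or kept) — and a "bad" part $2\Psi^{\pm}(u)(\Psi^{\pm})'(u)\,q\phi^{q-1}D\phi$ coming from the derivative of $\phi^q$. The bad part is estimated using the growth bound \eqref{A}$_2$: one gets terms controlled by $(|Du|^{p-1}+a(x,t)|Du|^{q-1}+g)\,\Psi^{\pm}(u)(\Psi^{\pm})'(u)\,\phi^{q-1}|D\phi|$. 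Each of these is handled by Young's inequality, splitting off a small multiple of the good gradient term $\big(|Du|^p+a|Du|^q\big)[(\Psi^{\pm})']^2\phi^q$ (which is absorbed) and leaving, after using the elementary inequality $\Psi^{\pm}(u)\,[(\Psi^{\pm})'(u)]^{2}\le \gamma [(\Psi^{\pm})'(u)]^{?}$ together with the bookkeeping on powers, the terms $\Psi^{\pm}(u)(|D\phi|^p+\rho^\alpha|D\phi|^q)[(\Psi^{\pm})']^{2-p}$ and $a(z_0)\Psi^{\pm}(u)|D\phi|^q[(\Psi^{\pm})']^{2-q}$. Here one must replace $a(x,t)$ by the value $a(z_0)$ at the vertex, paying a cost $[a]_\alpha(\rho^\alpha+\Theta^{\alpha/2})\le \gamma\rho^\alpha$ since $\Theta\le\rho^2$; this is the source of the $\rho^\alpha|D\phi|^q$ correction in the $p$-type line. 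Finally, the right-hand side term $\iint f\varphi$ is bounded using $|\varphi|\le 2\phi^q\Psi^{\pm}(u)(\Psi^{\pm})'(u)\le \gamma\ln(H/c)\cdot c^{-1}$, because $0\le\Psi^{\pm}\le\ln(H/c)$ and $(\Psi^{\pm})'(u)\le c^{-1}$ by the definition \eqref{ln}; the term involving $g$ coming from \eqref{A}$_2$ is likewise absorbed into a Young's inequality producing $\gamma\ln(H/c)c^{-2}\iint g^{p/(p-1)}$.

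The main obstacle is the power bookkeeping in the double-phase setting: one has to carry out the Young's inequality splitting so that the $p$-growth pieces and the $q$-growth pieces of $A$ each land on the correct quantity — $[(\Psi^{\pm})']^{2-p}$ versus $[(\Psi^{\pm})']^{2-q}$ — while simultaneously absorbing the correct intrinsic gradient quantity $\big(|Du|^p+a|Du|^q\big)[(\Psi^{\pm})']^2\phi^q$, which is what the good part of the spatial term supplies. The delicate point is that the good term has weight $[(\Psi^{\pm})']^2$, not $[(\Psi^{\pm})']^p$ or $[(\Psi^{\pm})']^q$, so the exponents $2-p$ and $2-q$ (both negative, since $p,q>2$) arise precisely from the Young's inequality conjugate of dividing the $|Du|^{p-1}$ and $|Du|^{q-1}$ factors by the appropriate power of $(\Psi^{\pm})'(u)$; getting these conjugate exponents to close is the heart of the computation, and the treatment of the coefficient $a(x,t)$ via its Hölder continuity (to freeze it at $z_0$) is the second technical step requiring care.
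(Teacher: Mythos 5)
Your approach is correct and is the one the paper itself references: the paper omits the proof and points to \cite[Lemma 3.2]{KMS}, whose argument is exactly the DiBenedetto-type logarithmic estimate you outline. The one place where your plan has a real hole is the inequality you flag with a question mark, $\Psi^{\pm}[(\Psi^{\pm})']^{2}\leq\gamma[(\Psi^{\pm})']^{?}$. No such pointwise inequality closes the argument. If you Young-split $|Du|^{p-1}\Psi^{\pm}(\Psi^{\pm})'\phi^{q-1}|D\phi|$ absorbing only into $[(\Psi^{\pm})']^{2}|Du|^{p}\phi^{q}$, the leftover is $(\Psi^{\pm})^{p}[(\Psi^{\pm})']^{2-p}|D\phi|^{p}$, and to reduce the power of $\Psi^{\pm}$ you would have to pay an extra $(\ln(H/c))^{p-1}$ factor, which is not present in \eqref{lnCac}. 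The correct mechanism is hidden in the good term: since $(\Psi^{\pm})''=[(\Psi^{\pm})']^{2}$ for the function \eqref{ln}, one has $\big((\Psi^{\pm})^{2}\big)''=2\big(1+\Psi^{\pm}\big)[(\Psi^{\pm})']^{2}$, so the term supplied by ellipticity is $2(1+\Psi^{\pm})[(\Psi^{\pm})']^{2}\big(|Du|^{p}+a|Du|^{q}\big)\phi^{q}$, and you should absorb into its $\Psi^{\pm}$-piece. Splitting
$|Du|^{p-1}\Psi^{\pm}(\Psi^{\pm})'\phi^{q-1}|D\phi|
=\big[|Du|^{p-1}(\Psi^{\pm})^{\frac{p-1}{p}}[(\Psi^{\pm})']^{\frac{2(p-1)}{p}}\phi^{\frac{q(p-1)}{p}}\big]
\cdot\big[(\Psi^{\pm})^{\frac1p}[(\Psi^{\pm})']^{\frac{2-p}{p}}\phi^{\frac{q-p}{p}}|D\phi|\big]$
and applying Young with exponents $p/(p-1)$ and $p$ leaves exactly $\gamma\,\Psi^{\pm}[(\Psi^{\pm})']^{2-p}|D\phi|^{p}$, linearly in $\Psi^{\pm}$; the $q$-piece is handled identically. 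Incidentally, the parenthetical remark ``can be discarded (or kept)'' contradicts the absorption you invoke two sentences later; the good term must be retained.

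Two further bookkeeping remarks. After freezing $a(x,t)\leq a(z_{0})+2[a]_{\alpha}\rho^{\alpha}$ (using $\Theta\leq\rho^{2}$), the $\rho^{\alpha}$-correction comes out attached to $[(\Psi^{\pm})']^{2-q}|D\phi|^{q}$, while \eqref{lnCac} places it with $[(\Psi^{\pm})']^{2-p}$; to switch exponents use the lower bound $(\Psi^{\pm})'\geq(H_{k}^{\pm}+c)^{-1}\geq(2H_{k}^{\pm})^{-1}$, which gives $[(\Psi^{\pm})']^{p-q}\leq(2H_{k}^{\pm})^{q-p}$, a constant bounded by the data once $H_{k}^{\pm}\lesssim\|u\|_{\infty}$. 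Finally, for the $g$-contribution, the Young split should be done against the $|D\phi|^{p}$-term rather than against the gradient term: write $g\,\Psi^{\pm}(\Psi^{\pm})'\phi^{q-1}|D\phi|\leq\epsilon\,\Psi^{\pm}[(\Psi^{\pm})']^{2-p}\phi^{q-p}|D\phi|^{p}+c(\epsilon)\,g^{p/(p-1)}\Psi^{\pm}[(\Psi^{\pm})']^{2}\phi^{q}$ and then bound $\Psi^{\pm}\leq\ln(H/c)$, $(\Psi^{\pm})'\leq c^{-1}$ in the second term; this produces precisely $\gamma\ln(H/c)c^{-2}\iint g^{p/(p-1)}$. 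With these adjustments your plan is complete.
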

The proof of Lemma \ref{logestimatelemma} is quite similar to that of \cite[Lemma 3.2]{KMS} and so is omitted.
Next, we state in Lemma \ref{caclemma} an energy estimate, which will be used in the proof of Lemma \ref{DeGiorgi3}.
\begin{lemma}\label{caclemma}
Let $u$ be a weak solution to the parabolic double-phase equation \eqref{parabolic}
 in the sense of Definition \ref{weak solution}.
 There exists a positive constant
$\gamma$ depending only upon the data, such that for every piecewise smooth cutoff function
$\varphi$ and satisfying \eqref{def zeta}, there holds
\begin{equation}\begin{split}\label{Caccioppoli}
&\esssup_{t_0-\Theta<t<t_0}\int_{B_\rho(x_0)\times\{t\}}(u-k)_{\pm}^2\varphi^q \,\mathrm {d}x
\\&+\iint_{Q_{\rho,\Theta}(z_0)}\left(|D(u-k)_{\pm}|^p+a(x,t)|D(u-k)_{\pm}|^q \right)\varphi^q\,\mathrm {d}x\mathrm {d}t\\
\leq &\int_{B_\rho(x_0)\times\{t_0-\Theta\}}(u-k)_{\pm}^2\varphi^q \,\mathrm {d}x+\gamma
\iint_{Q_{\rho,\Theta}(z_0)} (u-k)_{\pm}^2|\partial_t\varphi^q|\,\mathrm {d}x\mathrm {d}t
\\&+\gamma\iint_{Q_{\rho,\Theta}(z_0)
} (u-k)_{\pm}^p|D\varphi|^p+a(x,t)(u-k)_{\pm}^q|D\varphi|^q
\,\mathrm {d}x\mathrm {d}t
\\&
+\gamma
\iint_{Q_{\rho,\Theta}(z_0)} |f|(u-k)_{\pm}\,\mathrm {d}x\mathrm {d}t+\gamma
\iint_{Q_{\rho,\Theta}(z_0)} g^{\frac{p}{p-1}}\,\mathrm {d}x\mathrm {d}t.
\end{split}\end{equation}
\end{lemma}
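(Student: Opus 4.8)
\textbf{Proof proposal for Lemma \ref{caclemma}.}
The plan is to derive \eqref{Caccioppoli} by testing the weak formulation \eqref{weaksolution} with the function $\varphi_{\pm}=\pm(u-k)_{\pm}\varphi^q$, in the spirit of the proof of Lemma \ref{Cac1} above, which is essentially the special (formal) case corresponding to the logarithmic/power substitutions replaced by the identity. Since $|Du|\in L^q_{\loc}(\Omega_T)$ and $u$ is bounded, this test function lies in the admissible class $W^{1,2}_{\loc}(-T,0;L^2(U))\cap L^q_{\loc}(-T,0;W^{1,q}_0(U))$, so its use is justified after a standard time-mollification (Steklov averaging) argument, which I would invoke without carrying out the routine details.

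First I would handle the parabolic term: writing $\varphi=\varphi(x,t)$ but noting $\partial_t[(u-k)_{\pm}^2/2]=\pm(u-k)_{\pm}\partial_t u$ away from the level set, one obtains after integration by parts in time
\begin{equation*}
\int_{t_0-\Theta}^{t}\int_B \pm(u-k)_{\pm}\varphi^q\,\partial_t u\,\dd x\,\dd t
=\tfrac12\int_{B\times\{t\}}(u-k)_{\pm}^2\varphi^q\,\dd x-\tfrac12\int_{B\times\{t_0-\Theta\}}(u-k)_{\pm}^2\varphi^q\,\dd x
-\tfrac12\iint (u-k)_{\pm}^2\,\partial_t\varphi^q\,\dd x\,\dd t,
\end{equation*}
which produces the first boundary term and the first error term on the right of \eqref{Caccioppoli}, up to the absolute value $|\partial_t\varphi^q|$. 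Next, for the elliptic term I would expand $D\varphi_{\pm}=\pm\varphi^q D(u-k)_{\pm}\pm q(u-k)_{\pm}\varphi^{q-1}D\varphi$. The diffusive part, using the ellipticity \eqref{A}$_1$, gives a lower bound $C_0\iint(|D(u-k)_{\pm}|^p+a(x,t)|D(u-k)_{\pm}|^q)\varphi^q$, which is absorbed onto the left. The lower-order part is estimated via the growth bound \eqref{A}$_2$: the terms $|Du|^{p-1}(u-k)_{\pm}\varphi^{q-1}|D\varphi|$ and $a(x,t)|Du|^{q-1}(u-k)_{\pm}\varphi^{q-1}|D\varphi|$ are handled by Young's inequality with exponents $(p,p')$ and $(q,q')$ respectively, splitting off a small multiple of the diffusive integral (absorbed) plus the terms $(u-k)_{\pm}^p|D\varphi|^p$ and $a(x,t)(u-k)_{\pm}^q|D\varphi|^q$; here one must carefully distribute the powers $\varphi^{q-1}=\varphi^{(q-1)/p'}\cdot\varphi^{(q-1)/p}$ etc.\ so that the absorbed pieces carry $\varphi^q$ and the remaining pieces carry a nonnegative power of $\varphi$. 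The term with $g$: the contribution $g(u-k)_{\pm}\varphi^{q-1}|D\varphi|$ is split by Young into $(u-k)_{\pm}^p|D\varphi|^p$ plus $g^{p/(p-1)}$, using $0\le\varphi\le1$ to drop excess powers of $\varphi$. Finally, the right-hand side of the equation contributes $\iint f\,\varphi_{\pm}\,\dd x\,\dd t\le\iint|f|(u-k)_{\pm}\,\dd x\,\dd t$ directly. Taking the essential supremum over $t\in(t_0-\Theta,t_0)$ and combining the boundary and error terms yields \eqref{Caccioppoli}.

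The main obstacle is purely bookkeeping rather than conceptual: because the double-phase structure forces two different Young-inequality exponents, one must arrange the powers of the cutoff $\varphi$ so that \emph{every} term to be absorbed appears with exactly $\varphi^q$ and every remainder appears with $\varphi$ to a nonnegative power, while keeping the coefficient of $a(x,t)$ intact through the $q$-growth estimates (the hypothesis $a\in C^{\alpha,\alpha/2}_{\loc}$ is not needed here since $a(x,t)$ appears on both sides of the same cylinder, unlike in the intrinsic scaling arguments of later sections). The assumption $\Theta\le\rho^2$ is also not required for this lemma. Everything else — the Steklov averaging justification, the handling of the level set $\{u=k\}$ where $D(u-k)_{\pm}=0$ a.e., and the passage to the supremum in time — is standard, so I would state those steps briefly and refer to the analogous computation in the proof of Lemma \ref{Cac1} and to \cite{KMS, KKM}.
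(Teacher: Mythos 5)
Your proposal matches the paper's own treatment: the paper simply remarks that Lemma \ref{caclemma} "can be proved by choosing the testing function $\pm(u-k)_{\pm}\varphi^q$ into \eqref{weaksolution}" and omits the details, and your write-up correctly carries out precisely that computation (Steklov averaging for the time term, ellipticity \eqref{A}$_1$ for the diffusive lower bound, Young's inequality with the two exponent pairs for the $D\varphi$ and $g$ contributions, and the trivial bound $\varphi\le1$ for the $f$ term). No gaps; your side remarks about $\Theta\le\rho^2$ and the H\"older continuity of $a$ being unnecessary here are also correct.
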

This is a standard result that can be proved by choosing the testing function $\pm(u-k)_{\pm}\varphi^q$
into \eqref{weaksolution} and the proof will be omitted.

We now turn our attention to the proof of Theorem \ref{main1}.
The continuity of the weak solution $u$ at a point $z_0$
will be a consequence of the following assertion. There exists a family of nested and shrinking
cylinders $Q_n$,
with vertex at $z_0$,
such that the essential oscillation of $u$ in $Q_n$ converges to
zero as the cylinders shrink to
the point $z_0$.

Without loss of generality
we may assume that $z_0=(0,0)$. We define $a_0=a(z_0)=a(0,0)$.
Let $R_0>0$ be such that $Q_{R_0,R_0^2}=B_{R_0}\times (-R_0^2,0)\Subset\Omega_T$.
We set
\begin{equation*}\begin{split}
\mu_+=\esssup_{Q_{R_0,R_0^2}}u,\qquad \mu_-=\essinf_{Q_{R_0,R_0^2}}u\qquad\text{and}\qquad\essosc_{Q_{R_0,R_0^2}}u=\mu_+-\mu_-.
\end{split}\end{equation*}
Henceforth, let $A>10$ be a constant which will be determined later.
At this stage, we set $R=\frac{9}{10}R_0$,
\begin{equation}\begin{split}\label{defomega}\omega=\max\left\{\mu_+-\mu_-,A^{\frac{q-2}{p-2}}R\right\},\qquad
\Theta_\omega(r)=\omega^2\left[\left(\frac{\omega}{r}
\right)^p+a_0\left(\frac{\omega}{r}\right)^q\right]^{-1},
\\
\Theta_A=\left(\frac{\omega}{A}\right)^2\left[\left(\frac{\omega}{AR}
\right)^p+a_0\left(\frac{\omega}{AR}\right)^q\right]^{-1}\quad\text{and}\quad Q_A=B_R\times(-\Theta_A,0),\end{split}\end{equation}
where $0<r\leq R$. Specifically, if $r=R$, we abbreviate $\Theta_\omega(R)$ to $\Theta_\omega$.
It follows from \eqref{defomega} that $A^{p-2}\Theta_\omega\leq \Theta_A\leq A^{q-2}\Theta_\omega$,
$\Theta_A\leq A^{p-q}R^2<R^2$ and hence that $Q_A\subset Q_{R,R^2}$.
For any fixed time level $-\frac{8}{9}\Theta_A\leq t\leq0$ we introduce
the intrinsic cylinder of the type
$Q_r^-(t)=B_r\times(t-\Theta_\omega(r),t)$, where $0<r\leq R$.
We remark that $Q_{R}^-(t)\subset Q_A$ for all $-\frac{8}{9}\Theta_A\leq t\leq0$, provided that we choose $A>9^{\frac{1}{p-2}}$.
Our task now is to establish a decay estimate of the type
\begin{equation}\label{osc1st}\essosc_{Q^\prime}u\leq \eta \omega+\gamma\left(R+G_p(R)+F_p(R)\right),\end{equation}
where $Q^\prime\subset Q_A$ is a smaller cylinder, $\frac{1}{2}\leq\eta<1$ and the constant $\gamma$ depends only upon the data and $A$. To this end,
we need only consider the case
\begin{equation}\label{initialupperbound}\mu_+-\mu_-\geq \frac{\omega}{2}.\end{equation}
Motivated by the work of DiBenedetto \cite[chapter III]{Di93}, we consider two complementary cases.
For a fixed constant $\nu_0>0$, we see that either
  \begin{itemize}
 \item[$\bullet$]
 \textbf{The first alternative}. There exists $-\frac{8}{9}\Theta_A\leq \bar t\leq0$ such that
\begin{equation}\label{1st}\left|\left\{(x,t)\in Q_{R}^-(\bar t)
:u\leq \mu_-+\frac{\omega}{4}\right\}\right|\leq \nu_0|Q_{R}^-(\bar t)|\end{equation}
\end{itemize}
or this does not hold. More precisely, if \eqref{1st} does not hold, we infer from \eqref{initialupperbound} that the
 following second alternative holds.
  \begin{itemize}
 \item[$\bullet$]
 \textbf{The second alternative}. For any $-\frac{8}{9}\Theta_A\leq \bar t\leq0$, there holds
\begin{equation}\label{2nd}\left|\left\{(x,t)\in Q_{R}
^-(\bar t):u>\mu_+-\frac{\omega}{4}\right\}\right|\leq (1-\nu_0)|Q_{R}^-(\bar t)|.\end{equation}
\end{itemize}
The constant $\nu_0>0$ will be determined in the course of the proof of Lemma \ref{lemmaDeGiorgi1} in Section 4,
while the value of $A$ will be
fixed during the proof of Lemma \ref{DeGiorgi3} in Section 5.
\begin{remark}\label{phase analysis 1}
Since $Q_A\subset Q_{R,R^2}$, we have $|a(x,t)-a(y,s)|\leq 2[a]_\alpha R^\alpha$ holds for all $(x,t)$, $(y,s)\in Q_A$.
In the $(p,q)$-phase, i.e., $a_0\geq 10[a]_\alpha R^\alpha$. For any $(x,t)\in Q_A$, we have $a(x,t)\leq a(0,0)+|a(x,t)-a(0,0)|\leq \frac{6}{5}a_0$ and
$a(x,t)\geq a(0,0)-|a(x,t)-a(0,0)|\geq \frac{4}{5}a_0$.
In the $p$-phase, i.e., $a_0< 10[a]_\alpha R^\alpha$. For any $(x,t)\in Q_A$, we have $a(x,t)\leq a(0,0)+|a(x,t)-a(0,0)|\leq 12[a]_\alpha R^\alpha$.
In summary, we conclude that the implications
 \begin{equation}\label{phase analysis}
	\begin{cases}
	(p,q)-\text{phase}\ (a_0\geq 10[a]_\alpha R^\alpha)\ \Rightarrow \ \frac{4}{5}a_0\leq a(x,t)\leq \frac{6}{5}a_0 ,\\
	p-\text{phase}\ (a_0< 10[a]_\alpha R^\alpha)\ \Rightarrow \ a(x,t)\leq 12[a]_\alpha R^\alpha,
	\end{cases}
\end{equation}
hold for all $(x,t)\in Q_A$.
\end{remark}
\section{The first alternative}
The aim of this section is to establish a decay estimate for essential oscillation of weak solutions similar to \eqref{osc1st} for the first alternative.
A key ingredient in the proof is
a De Giorgi type lemma. In principle the argument
of the proof of the De Giorgi type lemma is based on Kilpel\"ainen-Mal\'y technique from \cite{KM,LS}.
Moreover, we shall use phase analysis to address the double-phase problem.
 \begin{lemma}\label{lemmaDeGiorgi1}
 Let $u$ be a bounded weak solution to \eqref{parabolic}-\eqref{A} in $\Omega_T$.
 There exist constants $\nu_0\in(0,1)$ and $B>1$, depending only on the data, such that if
\begin{equation}\label{1st assumption}\left|\left\{(x,t)\in Q_{R}^-(\bar t):u\leq\mu_-+\frac{\omega}{4}\right\}\right
|\leq \nu_0| Q_{R}^-(\bar t)|,\end{equation}
then either
\begin{equation}
\label{DeGiorgi1}u(x,t)>\mu_-+\frac{\omega}{2^5}\qquad\text{for}\ \ \text{a.e.}\ \ (x,t)\in Q_{\frac{1}{4}R}^-(\bar t)\end{equation}
or
\begin{equation}
\label{omega1}\omega\leq B\left(F_p(R)+G_p(R)+100R\right).\end{equation}
Here, the time level $\bar t$ satisfies $-\frac{8}{9}\Theta_A\leq \bar t\leq0$.
 \end{lemma}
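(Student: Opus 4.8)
The plan is to run a De Giorgi iteration organized by the Kilpel\"ainen--Mal\'y scheme inside the intrinsic geometry $Q_r^-(\bar t)=B_r\times(\bar t-\Theta_\omega(r),\bar t]$, in the spirit of \cite{KM,LS,KMS,Qifanli}. If \eqref{omega1} holds there is nothing to prove, so one fixes a large constant $B=B(\text{data})$, to be pinned down at the very end, and argues under the standing assumption \eqref{1st assumption} together with the hypothesis that \eqref{omega1} fails; in particular, keeping $B$ large forces $\omega$ to dominate $F_p(\rho)$, $G_p(\rho)$ and the floor $A^{(q-2)/(p-2)}R$ from \eqref{defomega} by arbitrarily large factors. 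I would introduce shrinking radii $\rho_j=\tfrac R4\bigl(1+3\cdot 2^{-j}\bigr)\downarrow\tfrac R4$ with $\rho_0=R$, the intrinsic cylinders $Q_j=B_{\rho_j}\times(\bar t-\Theta_\omega(\rho_j),\bar t]\subset Q_A$, cutoffs $\varphi_j$ vanishing on $\partial_P Q_j$ (so in particular satisfying \eqref{def zeta}) with $\varphi_j\equiv1$ on $Q_{j+1}$, $|D\varphi_j|\lesssim 2^j/R$ and $|\partial_t\varphi_j|\lesssim 4^j/\Theta_\omega(R)$, and decreasing levels $k_j\downarrow k_\infty:=\mu_-+\tfrac\omega{2^5}$ with $k_0=\mu_-+\tfrac\omega4$, whose successive decrements $d_j:=k_{j-1}-k_j$ are chosen — in the Kilpel\"ainen--Mal\'y manner — to be comparable to $2^{-j}\omega$ and at the same time to dominate the dyadic annular $p$-potentials of $f$ and $g$ at scale $\rho_j$; this is compatible with the fixed total $\sum_j d_j=\tfrac{7\omega}{32}$ precisely because \eqref{omega1} fails (the potential contributions sum to at most a small multiple of $B^{-1}\omega$). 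The quantity to be iterated is the normalized sub-level mass
\[
\mathbf Y_j:=\left(\frac{1}{|Q_j|}\iint_{Q_j}G\!\left(\frac{(k_j-u)_+}{d_j}\right)\diff x\diff t\right)^{\!\tau}+\frac{\bigl|Q_j\cap\{u\le k_j\}\bigr|}{|Q_j|},
\]
for a suitable exponent $\tau=\tau(n,p)$; the hypothesis \eqref{1st assumption} with $\nu_0$ small makes $\mathbf Y_0$ small.

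For the energy input I would apply Lemma \ref{Cac1} on each $Q_j$ with $l=k_{j-1}$, $d=d_j$, cutoff $\varphi_j$, a fixed $\lambda<(q-1)^{-1}$ and $M>q$, getting an $L^\infty_t L^1_x$ bound for $G\bigl((k_{j-1}-u)/d_j\bigr)\varphi_j^M$ and the intrinsic energy bounds for $d_j^{p-2}|D\psi_p|^p$ and $a\,d_j^{q-2}|D\psi_q|^q$ over $Q_j$. Each term on the right-hand side of \eqref{Cacformula1} is then matched to the scaling of $Q_j$: the initial-slice term vanishes because $\varphi_j$ also vanishes on the bottom of $Q_j$; the $|\partial_t\varphi_j|$-term is tamed since $\Theta_\omega(\rho_j)^{-1}=\omega^{-2}\bigl((\omega/\rho_j)^p+a_0(\omega/\rho_j)^q\bigr)$ is precisely the balance encoded in the cylinder; the two $|D\varphi_j|$-terms (the $p$-scaled one and the $a$-weighted $q$-scaled one) are compatible with the same scaling once Lemma \ref{lemmainequalitypsi-} is used to turn powers of $(k_{j-1}-u)/d_j$ into $\psi_p$ and $\psi_q$; and the last two terms $\tfrac{\Theta_\omega(\rho_j)}{d_j^2}\int_{B_{\rho_j}}g^{p/(p-1)}$ and $\tfrac{\Theta_\omega(\rho_j)}{d_j}\int_{B_{\rho_j}}|f|$, after division by $|Q_j|\simeq\rho_j^n\Theta_\omega(\rho_j)$, reduce to elliptic integral averages of $g$ and $f$ over the annulus $B_{\rho_j}\setminus B_{\rho_{j+1}}$ — hence to the $p$-Riesz potentials $G_p$, $F_p$ restricted to that annulus, divided by a suitable power of $d_j$; since $d_j$ dominates those annular potentials by construction, each contribution is $\lesssim1$ and their sum over $j$ is $\lesssim B^{-1}$.

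The De Giorgi gain is where the phase analysis of Remark \ref{phase analysis 1} is needed. In the $(p,q)$-phase ($a_0\ge10[a]_\alpha R^\alpha$) one has $\tfrac45a_0\le a(x,t)\le\tfrac65a_0$ on $Q_A$, so the Caccioppoli estimate behaves like that of a weighted parabolic equation; combining the $L^\infty_t L^1_x$ bound on $G(\cdot)$ with the gradient bounds for $\psi_p$, $\psi_q$ via the parabolic Sobolev inequality and H\"older, and passing between $\mathbf Y_j$ and the super-level measure by Lemma \ref{lemmainequalitypsi-} and Chebyshev, yields a recursion $\mathbf Y_{j+1}\le C\,\boldsymbol{b}^{\,j}\,\mathbf Y_j^{\,1+\delta}+\varepsilon_j$ with $\delta=\delta(n,p,q)>0$, $\boldsymbol{b}=\boldsymbol{b}(\text{data})>1$ and $\sum_j\varepsilon_j\lesssim B^{-1}$ absorbing all potential contributions. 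In the $p$-phase ($a_0<10[a]_\alpha R^\alpha$) one instead uses $a(x,t)\le12[a]_\alpha R^\alpha$ together with $\omega\ge A^{(q-2)/(p-2)}R\ge R$ and the hypothesis $q\le p+\alpha$ to dominate the $a$-weighted $q$-scaled quantities by the $p$-scaled ones (schematically $a\,d_j^{q-2}\lesssim R^\alpha d_j^{q-2}\lesssim d_j^{p-2}$ on the range of $d_j$ in play), so the recursion closes with the same structure. A potential-modified fast geometric convergence lemma, as in \cite{LS,Qifanli}, then gives $\mathbf Y_j\to0$ once $\nu_0$ is small and $B$ large enough that $\mathbf Y_0+\sum_j\varepsilon_j$ lies below the threshold $\boldsymbol{b}^{-1/\delta^2}$; since $Q_{R/4}^-(\bar t)\subset Q_j$ for every $j$ and $d_j\to0$, this forces $(k_j-u)_+\to0$ a.e.\ on $Q_{R/4}^-(\bar t)$, i.e.\ $u\ge k_\infty=\mu_-+\tfrac\omega{2^5}$ a.e.\ there, which is \eqref{DeGiorgi1}.

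The principal obstacle is carrying this single iteration uniformly through both phases while staying inside one intrinsic geometry: the time-length $\Theta_\omega(r)$ has to balance a pure $p$-term and an $a_0$-weighted $q$-term simultaneously, and every remainder coming out of the Caccioppoli inequality must be re-expressed against exactly that balance. In the $p$-phase the genuinely delicate point is the control of the $q$-growth contributions, for which the hypotheses $q\le p+\alpha$ and $\omega\gtrsim R$ are indispensable; in the $(p,q)$-phase one must check that the oscillation of $a$ over $Q_A$, which is at most $2[a]_\alpha R^\alpha\le\tfrac15a_0$, is negligible against $a_0$. A second delicate point is the Kilpel\"ainen--Mal\'y bookkeeping of the two right-hand-side terms of \eqref{Cacformula1}, so that the summation over $j$ reproduces precisely $F_p(R)$, $G_p(R)$ and the harmless term $100R$ in \eqref{omega1}, and nothing larger; this is where the precise shapes of $\phi_+$, $\psi_\nu$ and the choice $\lambda<(q-1)^{-1}$ — which legitimizes every Young-inequality absorption — together with the adaptive choice of the levels $k_j$ are essential.
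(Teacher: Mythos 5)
Your scheme is a DiBenedetto--style De Giorgi iteration (cylinders $Q_j$ shrinking from $Q_R^-(\bar t)$ to $Q_{R/4}^-(\bar t)$), but the paper's proof is a genuine Kilpel\"ainen--Mal\'y argument: it \emph{fixes a Lebesgue point} $z_1=(x_1,t_1)\in Q_{R/4}^-(\bar t)$, works with balls $B_j=B_{r_j}(x_1)$ of radius $r_j=4^{-j}C^{-1}R\to 0$ centred at $x_1$, and proves the pointwise inequality $u(x_1,t_1)>\mu_-+2^{-5}\omega$ for that point. This difference is not cosmetic: it is where the Riesz potentials in \eqref{omega1} come from. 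The Caccioppoli inequality \eqref{Cacformula1} produces the terms $\tfrac{\Theta}{d^2}\int_{B_\rho(x_0)}g^{p/(p-1)}$ and $\tfrac{\Theta}{d}\int_{B_\rho(x_0)}|f|$ as integrals over the \emph{full} ball $B_\rho$, not over an annulus, contrary to what you assert. In your geometry all the balls $B_{\rho_j}$ contain $B_{R/4}$ and have comparable radii $\sim R$, so the quantities $\rho_j^{p-n}\int_{B_{\rho_j}}|f|$ do not decay in $j$ and cannot telescope into $F_p(\cdot,R)$. Consequently the constraint that $d_j$ ``dominate the dyadic annular $p$-potentials'' is incompatible with $\sum_j d_j=\tfrac{7\omega}{32}$ as soon as $\int_{B_R}|f|>0$: each potential-generated lower bound on $d_j$ is of unit order while $d_j\sim2^{-j}\omega\to0$. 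For the same reason your $\varepsilon_j$ do not decay, so the recursion $\mathbf Y_{j+1}\le C\boldsymbol b^j\mathbf Y_j^{1+\delta}+\varepsilon_j$ does not drive $\mathbf Y_j$ to zero, and the fast geometric convergence threshold argument does not close.

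The missing idea is precisely the one that makes this a potential estimate rather than a plain De Giorgi lemma: shrink to a point. With $r_j\to0$ centred at $x_1$, the quantities $\bigl(r_j^{p-n}\int_{B_{r_j}(x_1)}g^{p/(p-1)}\bigr)^{1/p}$ and $\bigl(r_j^{p-n}\int_{B_{r_j}(x_1)}|f|\bigr)^{1/(p-1)}$ are dyadic slices of $G_p(x_1,R)$ and $F_p(x_1,R)$, and the paper encodes them into a decreasing sequence $\{\alpha_j\}$ satisfying $\alpha_{j-1}-\alpha_j\gtrsim$ these slices plus $4^{-j}\omega/B+r_j$; the levels $l_{j+1}$ are then chosen adaptively, via the intermediate value theorem for the functional $A_j(l)$ and the cut-off rule \eqref{definitionlj+1}, so that $A_j(l_{j+1})\le\chi$ while $d_j\le\frac14 d_{j-1}+\gamma(\alpha_{j-1}-\alpha_j)$ holds. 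Summing the latter telescopes exactly to $F_p(R)+G_p(R)+R$, giving the sharp alternative \eqref{omega1}. A useful further observation on the phase analysis: the dichotomy in the iteration is between $a_1<10[a]_\alpha r_j^\alpha$ and $a_1\ge 10[a]_\alpha r_j^\alpha$ at the \emph{local} centre $a_1=a(x_1,t_1)$ and scale $r_j$, not the global one at scale $R$ as in Remark \ref{phase analysis 1}; this is another feature of the point-centred scheme that your formulation does not capture. As written, the proposal therefore has a substantive gap and would not yield the Riesz-potential alternative \eqref{omega1}.
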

 \begin{proof}To start with, we set $\tilde u=u-\mu_-$ and observe that $\tilde u$ is nonnegative in $Q_A$. Note that $\tilde u$ is a bounded weak solution to the
parabolic double-phase equation
\begin{equation*}\partial_t \tilde u-\operatorname{div}A(x,t,\tilde u+\mu_-,D\tilde u)=f,\end{equation*}
where the vector field $A(x,t,\tilde u+\mu_-,\xi)$ satisfies the structure condition \eqref{A}. Moreover, we abbreviate $\tilde u$ by $u$.
 Let $B>10$ to be determined in the course of the proof.
 We first assume that \eqref{omega1} is violated, that is,
 \begin{equation}
\label{omega1violated}\omega>\frac{3}{4}B\left(\frac{1}{3B}\omega+F_p(R)+G_p(R)+100R
\right).\end{equation}
Fix $z_1=(x_1,t_1)\in Q_{\frac{1}{4}R}^-(\bar t)$ and assume that $(x_1,t_1)$ is a Lebesgue point of $u$.
The lemma will be proved by  showing that $u(x_1,t_1)>2^{-5}\omega$.  The proof of this inequality will be divided into four steps.

Step 1: \emph{Setting up the notations.}
Let $a_1=a(x_1,t_1)$, $r_j=4^{-j}C^{-1}R$
 and $B_j=B_{r_j}(x_1)$ where $C>4$ is to be determined.
For a sequence $\{l_j\}_{j=0}^\infty$ and a fixed $l>0$, we set $d_{j}(l)=l_j-l$. If $d_{j}(l)>0$, then we define
a parabolic cylinder $Q_j(l)=B_j\times (t_1-\Theta_j(l),t_1)$, where
 \begin{equation*}\Theta_j(l)=d_{j}(l)^2\left[\left(\frac{d_{j}(l)}{r_j}\right)^p+a_1\left(\frac{d_{j}(l)}{r_j}\right)^q\right]^{-1}.\end{equation*}
 Furthermore, we define $\varphi_j(l)=\phi_j(x)\theta_{j,l}(t)$, where
$\phi_j\in C_0^\infty(B_j)$, $\phi_j=1$ on $B_{j+1}$, $|D\phi_j|\leq r_j^{-1}$
 and $\theta_{j,l}(t)$ is a Lipschitz function
satisfies $\theta_{j,l}(t)=1$ in $t\geq t_1-\frac{4}{9}\Theta_j(l)$, $\theta_{j,l}(t)=0$ in $t\leq t_1-\frac{5}{9}\Theta_j(l)$
and
  \begin{equation*}
 \theta_{j,l}(t)=\frac{t-t_1-\frac{5}{9}\Theta_j(l)}{\frac{1}{9}\Theta_j(l)}\qquad\text{in}\qquad
 t_1-\frac{5}{9}\Theta_j(l)\leq t\leq t_1-\frac{4}{9}\Theta_j(l).
 \end{equation*}
From the definition of $\varphi_j(l)$, we see that $\varphi_j(l)=0$ on $\partial_PQ_j(l)$.
 Next, for  $j=-1,0,1,2,\cdots$, we define the sequence $\{\alpha_j\}$ by
  \begin{equation}\begin{split}\label{alpha}\alpha_j=&\frac{4^{-j-100
  }}{3B}\omega+ \frac{3}{4}\int_0^{r_j}\left(r^{p-n}\int_{B_r(x_1)}
  g(y)^{\frac{p}{p-1}} \,\mathrm {d}y
  \right)^{\frac{1}{p}}\frac{\mathrm {d}r}{r}\\&+
  \frac{3}{4}\int_0^{r_j}\left(r^{p-n}\int_{B_r(x_1)}|f(y)| \,\mathrm {d}y
  \right)^{\frac{1}{p-1}}\frac{\mathrm {d}r}{r}+75r_j.
  \end{split}\end{equation}
  According to the definition of $\alpha_j$, we see that $\alpha_j\to 0$ as $j\to\infty$ and there holds $B\alpha_{j-1}\leq\omega$,
  \begin{equation}\begin{split}\label{alpha1}
  \alpha_{j-1}-\alpha_j\geq &\frac{4^{-j-100}}{B}\omega+\gamma\left(r_j^{p-n}\int_{B_j} g(y)^{\frac{p}{p-1}}
  \,\mathrm {d}y\right)^{\frac{1}{p}}\\&+\gamma\left(r_j^{p-n}\int_{B_j}|f(y)|
  \,\mathrm {d}y\right)^{\frac{1}{p-1}}+200 r_j
  \end{split}\end{equation}
  and
   \begin{equation}\begin{split}\label{alpha2}
  \alpha_{j-1}-\alpha_j\leq & \frac{4^{-j-100}}{B}\omega+\gamma\left(r_{j-1}^{p-n}
  \int_{B_{j-1}} g(y)^{\frac{p}{p-1}}
  \,\mathrm {d}y\right)^{\frac{1}{p}}\\&+\gamma\left(r_{j-1}^{p-n}\int_{B_{j-1}}|f(y)|
  \,\mathrm {d}y\right)^{\frac{1}{p-1}}+\gamma r_{j-1}
  \end{split}\end{equation}
  for all $j=0,1,2,\cdots$, where the constant $\gamma$ depends only upon the data.
  Moreover, we define a quantity $A_j(l)$ by
 \begin{equation}\begin{split}\label{A_j}
 A_j(l)=&\frac{d_j(l)^{p-2}}{r_j^{n+p}}\iint_{L_j(l)}\left(\frac{l_j-u}{d_j(l)}\right)^{(1+\lambda)(p-1)}\varphi_j(l)^{M-p}
 \,\mathrm {d}x\mathrm {d}t
 \\&+\frac{d_j(l)^{q-2}}{r_j^{n+q}}\iint_{L_j(l)}a(x,t)\left(\frac{l_j-u}{d_j(l)}\right)^{(1+\lambda)(q-1)}\varphi_j(l)^{M-q}
 \,\mathrm {d}x\mathrm {d}t
 \\&+\esssup_t\frac{1}{r_j^n}\int_{B_j\times\{t\}}G\left(\frac{l_j-u}{d_j(l)}\right)\varphi_j(l)^M
 \,\mathrm {d}x,
 \end{split}\end{equation}
 where $k>q$, $G$ is defined in \eqref{G} and $L_j(l)=Q_j(l)\cap \{u\leq l_j\}\cap \Omega_T$.
 Throughout the proof, we keep $\lambda=\frac{p}{nq}$.
 It is easy to check that $A_j(l)$ is continuous in $l<l_j$. In the following, the sequence $\{l_j\}_{j=0}^\infty$ we constructed will satisfy $l_j\geq\frac{1}{16}\omega$
 for all $j=0,1,2,\cdots$. At this point, we assert that either $u(x_1,t_1)\geq\frac{1}{16}\omega$ or $A_j(l)\to+\infty$ as $l\to l_j$.
 For a fixed $j\geq0$, if $d_j(l)<1$, then $\Theta_j(l)\geq\left(r_j^{-p}+a_1r_j^{-q}\right)^{-1}=:\theta_j$ and we infer from \eqref{A_j} that
  \begin{equation}\begin{split}\label{assumption for Aj}
  A_j(l)\geq& \frac{d_j(l)^{p-2-(1+\lambda)(p+1)}}{r_j^{n+p}}\iint_{\tilde Q_j}(l_j-u)_+^{(1+\lambda)(p-1)}\,\mathrm {d}x\mathrm {d}t
  \\&+ \frac{d_j(l)^{q-2-(1+\lambda)(q+1)}}{r_j^{n+q}}\iint_{\tilde Q_j}a(x,t)(l_j-u)_+^{(1+\lambda)(q-1)}\,\mathrm {d}x\mathrm {d}t,
  \end{split}\end{equation}
  where $\tilde Q_j=B_{j+1}\times \left(t_1-\frac{4}{9}\theta_j,t_1\right)$. Since $\theta_j=\left(r_j^{-p}+a_1r_j^{-q}\right)^{-1}\leq r_j^p\leq r_j^2$,
  we see that $\tilde Q_j\subseteq Q_{r_j,r_j^2}(z_1)$.
 In the case $a_1<10[a]_\alpha r_j^{q-p}$, we have $\theta_j\geq (1+10[a]_\alpha)^{-1}r_j^p$. Now, we set $Q_j^\prime=B_{j+1}\times\left(t_1-\frac{4}{9}(1+10[a]_\alpha)^{-1}r_j^p,t_1\right)$.
 If
  \begin{equation*}\begin{split}
  \iint_{Q_j^\prime}(l_j-u)_+^{(1+\lambda)(p-1)}\,\mathrm {d}x\mathrm {d}t=0,
   \end{split}\end{equation*}
   then $u(x_1,t_1)\geq l_j\geq \frac{1}{16}\omega$, since $(x_1,t_1)$ is a Lebesgue point of $u$. On the other hand, if
    \begin{equation*}\begin{split}
  \iint_{Q_j^\prime}(l_j-u)_+^{(1+\lambda)(p-1)}\,\mathrm {d}x\mathrm {d}t>0,
   \end{split}\end{equation*}
   then we infer from \eqref{assumption for Aj} that
    \begin{equation*}\begin{split}
  A_j(l)\geq& \frac{d_j(l)^{p-2-(1+\lambda)(p+1)}}{r_j^{n+p}}\iint_{Q_j^\prime}(l_j-u)_+^{(1+\lambda)(p-1)}\,\mathrm {d}x\mathrm {d}t\to+\infty
  \end{split}\end{equation*}
  as $l\to l_j$ and this proves the assertion for the case $a_1<10[a]_\alpha r_j^{q-p}$. In the case $a_1\geq10[a]_\alpha r_j^{q-p}$, we have
  $\theta_j\geq (1+[a]_\alpha^{-1})^{-1}a_1^{-1}r_j^q$. Noting that $q\leq p+\alpha$
  and $r_j<1$, we have $a_1\geq10[a]_\alpha r_j^\alpha$ and therefore $\frac{4}{5}a_1\leq a(x,t)\leq \frac{6}{5}a_1$ for all $(x,t)\in \tilde Q_j$. At this point, we set
  $Q_j^{\prime\prime}=B_{j+1}\times\left(t_1-\frac{4}{9}(1+[a]_\alpha^{-1})^{-1}a_1^{-1}r_j^q,t_1\right)$.
  If
  \begin{equation*}\begin{split}
  \iint_{Q_j^{\prime\prime}}(l_j-u)_+^{(1+\lambda)(q-1)}\,\mathrm {d}x\mathrm {d}t=0,
   \end{split}\end{equation*}
   then $u(x_1,t_1)\geq l_j\geq \frac{1}{16}\omega$, since $(x_1,t_1)$ is a Lebesgue point of $u$. On the other hand, if
 \begin{equation*}\begin{split}
  \iint_{Q_j^{\prime\prime}}(l_j-u)_+^{(1+\lambda)(q-1)}\,\mathrm {d}x\mathrm {d}t>0,
   \end{split}\end{equation*}
   then we conclude from $Q_j^{\prime\prime}\subseteq \tilde Q_j$ and \eqref{assumption for Aj} that
    \begin{equation*}\begin{split}
  A_j(l)\geq& \frac{4}{5}a_1\frac{d_j(l)^{q-2-(1+\lambda)(q+1)}}{r_j^{n+q}}\iint_{Q_j^{\prime\prime}}(l_j-u)_+^{(1+\lambda)(q-1)}\,\mathrm {d}x\mathrm {d}t\to+\infty
  \end{split}\end{equation*}
  as $l\to l_j$. This proves the assertion for all the cases. It is obvious that the lemma holds if $u(x_1,t_1)\geq \frac{1}{16}\omega$. In the following, we assume that
  $A_j(l)\to+\infty$ as $l\to l_j$ holds for all $j=0,1,2,\cdots$.

 Step 2: \emph{Determine the values of $l_0$ and $l_1$.} Initially, we set $l_0=\frac{1}{4}\omega$ and $\bar l=\frac{1}{2}l_0+\frac{1}{16}
 B\alpha_0+\frac{1}{32}\omega$.
 Recalling that $B\alpha_0< \omega$, we deduce $\frac{1}{32}\omega\leq d_0(\bar l)\leq\frac{3}{32}\omega$ and
 this also implies that $d_0(\bar l)\leq\frac{3}{16}\|u\|_\infty$. Next, we claim that
 $Q_0(\bar l)\subseteq Q_R^-(\bar  t).$
In the $(p,q)$-phase, i.e., $a_0\geq 10[a]_\alpha R^\alpha$, we infer from \eqref{phase analysis} that
$\frac{4}{5}a_0\leq a_1\leq \frac{6}{5}a_0$. Recalling that $r_0=C^{-1}R$, we have
\begin{equation}\begin{split}\label{thetaupper}
\Theta_0(\bar l)&\leq d_{0}(\bar l)^2\left[\left(\frac{Cd_{0}(\bar l)}{R}\right)^p+\frac{4}{5}a_0\left(\frac{Cd_{0}(\bar l)}{R}\right)^q\right]^{-1}
\leq \left(\frac{1}{32}\right)^{2-q}\frac{5}{4}C^{-p}\Theta_\omega,
\end{split}\end{equation}
where $\Theta_\omega=\Theta_\omega(R)$ is defined in \eqref{defomega}. Since $4^{-q}\Theta_\omega\leq\Theta_\omega(\frac{1}{4}R)\leq 4^{-p}\Theta_\omega$, we infer
from \eqref{thetaupper} that
$\Theta_0(\bar l)\leq \Theta_\omega-\Theta_\omega(\frac{1}{4}R)$ and also $Q_0(\bar l)\subseteq Q_R^-(\bar  t)$, provided that we choose
\begin{equation}\label{C1condition}C\geq \max\left\{4,\ \ \left(\frac{5}{4}\frac{32^{q-2}}{1-4^{-p}}\right)^\frac{1}{p}\right\}.\end{equation}
Next, we consider the case of $p$-phase, i.e., $a_0<10[a]_\alpha R^\alpha$. It follows from \eqref{phase analysis} that $a_1\leq 12[a]_\alpha R^\alpha$.
Since $\omega\leq 2\|u\|_\infty$, $R<1$ and $q\leq p+\alpha$, we deduce that
\begin{equation*}a_0\left(\frac{\omega}{R}\right)^q\leq 10[a]_\alpha2^{q-p}\|u\|_\infty^{q-p}\frac{\omega^p}{R^{q-\alpha}}
\leq \tilde\gamma_1\left(\frac{\omega}{R}\right)^p.
\end{equation*}
Here, we set
$\tilde\gamma_1=10[a]_\alpha2^{q-p}\|u\|_\infty^{q-p}$.
Consequently, we infer that
$(1+\tilde\gamma_1)^{-1}\omega^{2-p}R^p\leq\Theta_\omega\leq\omega^{2-p}R^p$.
Since $d_0(\bar l)\geq \frac{1}{32}\omega$, we have
 \begin{equation*}\begin{split}
\Theta_0(\bar l)&\leq C^{-p}d_{0}(\bar l)^{2-p}R^p\leq C^{-p}\left(\frac{1}{32}\right)^{2-p}(1+\tilde\gamma_1)\Theta_\omega.
\end{split}\end{equation*}
At this stage, we choose
\begin{equation}\label{C}C=\max\left\{4,\ \ \left[(1+\tilde\gamma_1)\left(\frac{1}{32}\right)^{2-p}\left(\frac{1}{1-4^{-p}}\right)\right]^\frac{1}{p},
\ \ \left(\frac{5}{4}\frac{32^{q-2}}{1-4^{-p}}\right)^\frac{1}{p}\right\}.\end{equation}
This choice of $C$ implies that $\Theta_0(\bar l)\leq \Theta_\omega-\Theta_\omega(\frac{1}{4}R)$ and satisfies \eqref{C1condition}. Therefore, we see that the inclusion
$Q_0(\bar l)\subseteq Q_R^-(\bar  t)$ holds
in the case of $p$-phase.

At this point, we fix a number $\chi\in(0,1)$ which will be chosen later in a universal way.
We claim that $A_0(\bar l)\leq\frac{1}{2}\chi$, provided that we determine $\nu_0=\nu_0(\text{data},\chi)<1$ and $B=B(\text{data},\chi)>1$ in a suitable way.
Once again, we consider two cases: $p$-phase and $(p,q)$-phase. In the case of $p$-phase, i.e., $a_0< 10[a]_\alpha R^\alpha$,
we have $a(x,t)\leq a_0+|a(x,t)-a_0|\leq 12[a]_\alpha R^\alpha$ for all $(x,t)\in Q_0(\bar l)$, since $Q_0(\bar l)\subseteq Q_R^-(\bar  t)\subseteq Q_A\subseteq Q_{R,R^2}$.
In view of $l_0-u\leq \frac{1}{4}\omega$, $\frac{1}{32}\omega\leq d_0(\bar l)\leq\frac{3}{32}\omega\leq\frac{3}{16}\|u\|_\infty$ and $q\leq p+\alpha$, we infer from \eqref{1st assumption}
that
 \begin{equation}\begin{split}\label{A01}
&\frac{d_0(\bar l)^{p-2}}{r_0^{n+p}}\iint_{L_0(\bar l)}\left(\frac{l_0-u}{d_0(\bar l)}\right)^{(1+\lambda)(p-1)}\varphi_0(\bar l)^{M-p}
 \,\mathrm {d}x\mathrm {d}t
 \\&+\frac{d_0(\bar l)^{q-2}}{r_0^{n+q}}\iint_{L_0(\bar l)}a(x,t)\left(\frac{l_0-u}{d_0(\bar l)}\right)^{(1+\lambda)(q-1)}\varphi_0(\bar l)^{M-q}
 \,\mathrm {d}x\mathrm {d}t
 \\&\leq \gamma C^{n+q}\frac{|Q_0(\bar l)\cap \{u\leq l_0\}|}{\omega^{2-p}R^{n+p}}
 \leq \gamma C^{n+q}\frac{|Q_R^-(\bar  t)\cap \{u\leq l_0\}|}{|Q_R^-(\bar  t)|}\leq \gamma C^{n+q}\nu_0,
 \end{split}\end{equation}
 since $|Q_R^-(\bar  t)|\leq c_n\omega^{2-p}R^{n+p}$ and $Q_0(\bar l)\subseteq Q_R^-(\bar  t)$. In the case of $(p,q)$-phase, i.e., $a_0\geq10[a]_\alpha R^\alpha$,
 we have $\frac{4}{5}a_0\leq a(x,t)\leq \frac{6}{5}a_0$ for all $(x,t)\in Q_0(\bar l)$, since $Q_0(\bar l)\subseteq Q_{R,R^2}$. According to
 $l_0-u\leq \frac{1}{4}\omega$ and $\frac{1}{32}\omega\leq d_0(\bar l)\leq\frac{3}{32}\omega$, we infer from \eqref{1st assumption}
that
  \begin{equation}\begin{split}\label{A02}
&\frac{d_0(\bar l)^{p-2}}{r_0^{n+p}}\iint_{L_0(\bar l)}\left(\frac{l_0-u}{d_0(\bar l)}\right)^{(1+\lambda)(p-1)}\varphi_0(\bar l)^{M-p}
 \,\mathrm {d}x\mathrm {d}t
 \\&+\frac{d_0(\bar l)^{q-2}}{r_0^{n+q}}\iint_{L_0(\bar l)}a(x,t)\left(\frac{l_0-u}{d_0(\bar l)}\right)^{(1+\lambda)(q-1)}\varphi_0(\bar l)^{M-q}
 \,\mathrm {d}x\mathrm {d}t
 \\&\leq \gamma C^{n+q}d_0(\bar l)^{-2}\left(\frac{d_0(\bar l)^p}{R^{n+p}}+a_0\frac{d_0(\bar l)^q}{R^{n+q}}\right)|Q_0(\bar l)\cap \{u\leq l_0\}|
 \\&\leq \gamma C^{n+q}\frac{|Q_R^-(\bar  t)\cap \{u\leq l_0\}|}{|Q_R^-(\bar  t)|}\leq \gamma C^{n+q}\nu_0,
 \end{split}\end{equation}
 since $Q_0(\bar l)\subseteq Q_R^-(\bar  t)$ and
\begin{equation*}|Q_R^-(\bar  t)|=|B_R|\times\Theta_\omega\leq \gamma R^nd_0(\bar l)^2\left(\left(\frac{d_0(\bar l)}{R}\right)^p
+a_0\left(\frac{d_0(\bar l)}{R}\right)^q\right)^{-1}.\end{equation*}
According to the proofs of \eqref{A01} and \eqref{A02}, we conclude that
\begin{equation}\label{timederivativeL_0}\left(\frac{d_0(\bar l)^{p-2}}{r_0^{n+p}}+a_1\frac{d_0(\bar l)^{q-2}}{r_0^{n+q}}\right)
|L_0(\bar l)|\leq \gamma C^{n+q}\nu_0,\end{equation}
where $a_1=a(x_1,t_1)$. Furthermore, we use the Caccioppoli estimate \eqref{Cacformula1} with $(l,d,\Theta)$ replaced by $(l_0,d_0(\bar l),\Theta_0(\bar l))$
to deduce that
 \begin{equation*}\begin{split}
    \esssup_t& \frac{1}{r_0^n}\int_{B_0\times\{t\}}G\left(\frac{l_0-u}{d_0(\bar l)}\right)\varphi_0(\bar l)^M
 \,\mathrm {d}x \\ \leq &\gamma\frac{d_0(\bar l)^{p-2}}{r_0^{p+n}}\iint_{L_0(\bar l)}\left(\frac{l_0-u}{d_0(\bar l)}
 \right)^{(1+\lambda)(p-1)}
 \varphi_0(\bar l)^{M-p}\,\mathrm {d}x\mathrm {d}t
 \\ &+\gamma\frac{d_0(\bar l)^{q-2}}{r_0^{q+n}}\iint_{L_0(\bar l)}a(x,t)\left(\frac{l_0-u}{d_0(\bar l)}
 \right)^{(1+\lambda)(q-1)}
 \varphi_0(\bar l)^{M-q}\,\mathrm {d}x\mathrm {d}t
 \\&+\gamma\frac{1}{r_0^n} \iint_{L_0(\bar l)}\frac{l_0-u}{d_0(\bar l)}\varphi_0(\bar l)^{M-1}|\partial_t\varphi_0(\bar l)|\,\mathrm {d}x\mathrm {d}t
   \\&+\gamma \frac{\Theta_0(\bar l)}{r_0^nd_0(\bar l)^2}\int_{B_0}g^{\frac{p}{p-1}}\,\mathrm {d}x
  +\gamma \frac{\Theta_0(\bar l)}{r_0^nd_0(\bar l)}\int_{B_0}|f|\,\mathrm {d}x
  \\ =&L_1+L_2+L_3+L_4+L_5,
 \end{split}\end{equation*}
 since $|D\varphi_0(\bar l)|\leq r_0^{-1}$, $\varphi_0(\bar l)=0$ on $\partial_PQ_0(\bar l)$
 and the first term on the right-hand side of \eqref{Cacformula1} vanishes. We infer from \eqref{A01} and \eqref{A02} that $L_1+L_2\leq  \gamma C^{n+q}\nu_0$.
To estimate $L_3$, we use \eqref{timederivativeL_0} and
$|\partial_t\varphi_0(\bar l)|\leq 9\Theta_0(\bar l)^{-1}$ to deduce that
 \begin{equation*}\begin{split}
 L_3&\leq \gamma\frac{1}{r_0^n}\Theta_0(\bar l)^{-1}|L_0(\bar l)|
\leq
 \gamma\left(\frac{d_0(\bar l)^{p-2}}{r_0^{n+p}}+a_1\frac{d_0(\bar l)^{q-2}}{r_0^{n+q}}\right)
|L_0(\bar l)|\leq \gamma C^{n+q}\nu_0.
  \end{split}\end{equation*}
Noting that $\Theta_0(\bar l)\leq d_0(\bar l)^{2-p}r_0^p$, we infer from $d_0(\bar l)\geq \frac{1}{32}\omega$ and \eqref{omega1violated} that
 \begin{equation*}\begin{split}
 L_4\leq \gamma\frac{d_0(\bar l)^{-p}}{r_0^{n-p}}\int_{B_0}g^\frac{p}{p-1}\,\mathrm {d}x\leq \gamma C^{n-p}
 \frac{\omega^{-p}}{R^{n-p}}\int_{B_0}g^\frac{p}{p-1}\,\mathrm {d}x\leq
 \gamma C^{n-p}\frac{1}{B^p}
  \end{split}\end{equation*}
  and
  \begin{equation*}\begin{split}
 L_5\leq \gamma\frac{d_0(\bar l)^{1-p}}{r_0^{n-p}}\int_{B_0}|f|\,\mathrm {d}x\leq \gamma C^{n-p}
 \frac{\omega^{1-p}}{R^{n-p}}\int_{B_0}|f|\,\mathrm {d}x\leq
 \gamma C^{n-p}\frac{1}{B^{p-1}},
  \end{split}\end{equation*}
  where the constant $\gamma$ depends only upon the data.
  Consequently, we conclude from \eqref{A01}, \eqref{A02} and the estimates for $L_1$-$L_5$ that there exists a constant $\gamma^\prime$ such that
  \begin{equation}\begin{split}\label{gammaprime}
  A_0(\bar l)\leq \gamma^\prime C^{n+q}\nu_0+\gamma^\prime C^{n-p}\left(B^{-p}+B^{-(p-1)}\right),
   \end{split}\end{equation}
   where the constant $C$ is defined in \eqref{C}.
    At this point, we  fix a number $\chi\in(0,1)$ which will be chosen later in a universal way.
    Then, we choose $\nu_0=\nu_0(\text{data},\chi)<1$ and $B=B(\text{data},\chi)>1$ such that
     \begin{equation}\begin{split}\label{nu0}
  \gamma^\prime C^{n+q}\nu_0=\frac{\chi}{4}\qquad\text{and}\qquad\gamma^\prime C^{n-p}(B^{-p}+B^{1-p})<\frac{\chi}{4}.
    \end{split}\end{equation}
    This leads to $A_0(\bar l)\leq\frac{1}{2}\chi$. Taking into account that $A_0(l)\to+\infty$
    as $l\to l_0$ and $A_0(l)$ is continuous in $(\bar l, l_0)$, then there exists a number $\tilde l\in (\bar l, l_0)$ such that $A_0(\tilde l)=\chi$.
    From \eqref{omega1violated}, we infer that for $B>8$ there holds
  $l_0-\bar l\geq \frac{1}{32}\omega\geq \frac{1}{4
    B}\omega>\frac{1}{4}(\alpha_{-1}-\alpha_0),$
    since $B\alpha_{-1}<\omega$.
    At this point, we set
    \begin{equation}\label{l1}
	l_1=\begin{cases}
\tilde l,&\quad \text{if}\quad \tilde l<l_0-\frac{1}{4}(\alpha_{-1}-\alpha_0),\\
	l_0-\frac{1}{4}(\alpha_{-1}-\alpha_0),&\quad \text{if}\quad \tilde l\geq l_0-\frac{1}{4}(\alpha_{-1}-\alpha_0).
	\end{cases}
\end{equation}
Moreover, we define $Q_0=Q_0(l_1)$ and $d_0=l_0-l_1$. Since $B\alpha_0<\omega$, we have $l_1\geq  \bar l>\frac{1}{8}B\alpha_0+\frac{1}{16}\omega$.

    Step 3: \emph{Determine the sequence $\{l_j\}_{j=0}^{+\infty}$.} Assume that we have chosen two
   sequences $l_1,\cdots,l_j$ and $d_0,\cdots,d_{j-1}$ such that for $i=1,\cdots,j$, there holds
    \begin{equation}\label{li}\frac{1}{32}\omega+\frac{1}{2}l_{i-1}+\frac{1}{16}B\alpha_{i-1}<l_i\leq
    l_{i-1}-\frac{1}{4}(\alpha_{i-2}-\alpha_{i-1}),
    \end{equation}
    \begin{equation}\label{Aj-1}
    A_{i-1}(l_i)\leq \chi,
     \end{equation}
      \begin{equation}\label{lj}
      l_i>\frac{1}{8}B\alpha_{i-1}+\frac{1}{16}\omega.
       \end{equation}
       Then, we abbreviate $Q_i=Q_i(l_{i+1})$, $d_i=d_i(l_{i+1})=l_i-l_{i+1}$, $L_i=L_i(l_{i+1})$, $\varphi_i=\varphi_i(l_{i+1})$ and
       $\Theta_i=\Theta_i(l_{i+1})$ for $i=1,2,\cdots,j-1$. Moreover, we claim that
       \begin{equation}\label{Aj}
       A_j(\bar l)\leq \frac{1}{2}\chi,\qquad\text{where}\qquad \bar l=\frac{1}{2}l_j+\frac{1}{16}B\alpha_j+\frac{1}{32}\omega.
        \end{equation}
        Before proving \eqref{Aj}, we first claim that the inclusions
        $Q_i\subseteq Q_A$ and $Q_i\subseteq Q_{r_i,r_i^2}(z_1)$ hold for $i=0,1,\cdots,j-1$, where
        $Q_A$ is the cylinder defined in \eqref{defomega}.
        In the case of $p$-phase, i.e., $a_0<10[a]_\alpha R^\alpha$, we have
        \begin{equation}\begin{split}\label{thetastep2lower}
        \Theta_A\geq A^{p-2}\omega^2\left[\left(\frac{\omega}{R}
\right)^p+10[a]_\alpha R^\alpha\left(\frac{\omega}{R}\right)^q\right]^{-1}\geq \hat\gamma_1A^{p-2}\omega^{2-p}R^p,
        \end{split}\end{equation}
        where $\hat \gamma_1=(1+10[a]_\alpha\|u\|_\infty^{q-p})^{-1}$.
        In view of \eqref{alpha1}, we
        see that $\alpha_{i-1}-\alpha_i\geq\frac{1}{B}4^{-i-100}\omega$ and hence $d_i\geq \frac{1}{B}4^{-i-100}\omega$.
        It follows that
        \begin{equation*}\begin{split}
        \Theta_i=d_i^2\left[\left(\frac{d_i}{r_i}\right)^p+a_1\left(\frac{d_i}{r_i}\right)^q\right]^{-1}\leq d_i^{2-p}r_i^p\leq B^{p-2}4^{100(p-2)}\omega^{2-p}R^p,
        \end{split}\end{equation*}
        since $r_i\leq 4^{-i}r_0\leq4^{-i}R$. Since $-\frac{8}{9}\Theta_A\leq \bar t\leq0$ and $\bar t-t_1\leq\Theta_\omega$, we have $t_1-\Theta_i\geq-\Theta_A$,
        provided that we choose
         \begin{equation}\begin{split}\label{Afirstassumption}
         A>4^{100}\left(18\hat\gamma_1^{-1}\right)^{\frac{1}{p-2}}B.
        \end{split}\end{equation}
        This implies that $Q_i\subseteq Q_A$ for the $p$-phase. In the case of $(p,q)$-phase, i.e., $a_0\geq10[a]_\alpha R^\alpha$, we have
        $\frac{4}{5}a_0\leq a_1\leq \frac{6}{5}a_0$ and hence
         \begin{equation*}\begin{split}
        \Theta_i&\leq \frac{5}{4}d_i^2\left[\left(\frac{d_i}{r_i}\right)^p+a_0\left(\frac{d_i}{r_i}\right)^q\right]^{-1}
       \\&\leq\frac{5}{4}\left(\frac{1}{B}4^{-i-100}\omega\right)^2\left[\left(\frac{4^{-i-100}\omega}{4^{-i}BR}\right)^p+a_0
        \left(\frac{4^{-i-100}\omega}{4^{-i}BR}\right)^q\right]^{-1}
        \leq \frac{5}{4}(4^{100}B)^{q-2}\Theta_\omega,
        \end{split}\end{equation*}
        since $d_i\geq \frac{1}{B}4^{-i-100}\omega$. Taking into account that $\Theta_A\geq A^{p-2}\Theta_\omega$, we have $t_1-\Theta_i\geq-\Theta_A$,
        provided that we choose
         \begin{equation}\begin{split}\label{Asecondassumption}
         A>25^{\frac{1}{p-2}}(4^{100}B)^\frac{q-2}{p-2}.
        \end{split}\end{equation}
         This leads to $Q_i\subseteq Q_A$ for the $(p,q)$-phase. On the other hand, we infer from \eqref{alpha1} and \eqref{li} that
          $d_i=l_i-l_{i+1}\geq \frac{1}{4}(\alpha_{i-1}-\alpha_i)\geq 50r_i.$ This leads to
         $\Theta_i\leq(50r_i)^2\left(50^p+a_150^q\right)^{-1}\leq 50^{2-p}r_i^2<r_i^2$ and hence $Q_i\subseteq Q_{r_i,r_i^2}(z_1)$.
       We now turn our attention to the proof of \eqref{Aj}.
        To this end,
       we deduce from \eqref{li} and \eqref{lj} that
        \begin{equation}\begin{split}\label{upper bound for l}
        d_j(\bar l)\geq\frac{1}{4}d_{j-1}+\frac{1}{16}B(\alpha_{j-1}-\alpha_j).
         \end{split}\end{equation}
         At this point, we claim that the inclusions $Q_j(\bar l)\subseteq Q_{j-1}$ and $Q_j(\bar l)\subseteq Q_{r_j,r_j^2}(z_1)$ hold. In view of \eqref{upper bound for l},
         we find that
          \begin{equation*}\begin{split}
        \Theta_j(\bar l)=d_j(\bar l)^2\left[\left(\frac{d_j(\bar l)}{r_i}\right)^p+a_1\left(\frac{d_j(\bar l)}{r_i}\right)^q\right]^{-1}\leq
       4^{-2} d_{j-1}^2\left[\left(\frac{d_{j-1}}{r_{j-1}}\right)^p+a_1\left(\frac{d_{j-1}}{r_{j-1}}\right)^q\right]^{-1}=\frac{1}{16}\Theta_{j-1},
        \end{split}\end{equation*}
        which proves the inclusion $Q_j(\bar l)\subseteq Q_{j-1}$. Moreover, we infer from $\alpha_{j-1}-\alpha_j\geq 200r_j$ and \eqref{upper bound for l} that
        $d_j(\bar l)\geq 12r_j$. This leads to
       $\Theta_j(\bar l)\leq d_j(\bar l)^{2-p}r_j^p<r_j^2$ and hence $Q_j(\bar l)\subseteq Q_{r_j,r_j^2}(z_1)$.
       In view of $\Theta_j(\bar l)\leq\frac{1}{16}\Theta_{j-1}$, we see that $\varphi_{j-1}(x,t)=1$ for $(x,t)\in Q_j(\bar l)$.
       Since $u\leq l_j$ on $L_j(\bar l)$, we infer from \eqref{Aj-1} that
        \begin{equation}\begin{split}\label{step1 initial estimate}
        &\left(\frac{d_j(\bar l)^{p-2}}{r_j^{n+p}}+a_1\frac{d_j(\bar l)^{q-2}}{r_j^{n+q}}\right)|L_j(\bar l)|\leq \frac{1}{r_j^n}\esssup_t\int_{L_j(t)}\varphi_{j-1}^k(\cdot,t)
        \,\mathrm {d}x
        \\&\leq \frac{4^n}{r_{j-1}^n}\esssup_t\int_{B_{j-1}}G\left(\frac{l_{j-1}-u}{d_{j-1}}\right)\varphi_{j-1}^k \,\mathrm {d}x
        \leq 4^nA_{j-1}(l_j)\leq 4^n\chi,
         \end{split}\end{equation}
         where $L_j(t)=\{x\in B_j:u(\cdot,t)\leq l_j\}$.
         For a fixed $\epsilon_1>0$, we decompose
   $L_j(\bar l)=L^\prime_j(\bar l)\cup L^{\prime\prime}_j(\bar l)$, where
   \begin{equation}\begin{split}\label{Ldecomposition}
   L^\prime_j(\bar l)=L_j(\bar l)\cap \left\{\frac{l_j-u}{d_j(\bar l)}\leq\epsilon_1\right\}\qquad\text{and}\qquad
   L^{\prime\prime}_j(\bar l)=L_j(\bar l)\setminus L^\prime_j(\bar l).
   \end{split}\end{equation}
   To proceed further, we shall again distinguish two cases: $p$-phase ($a_1<10[a]_\alpha r_j^\alpha$) and $(p,q)$-phase ($a_1\geq10[a]_\alpha r_j^\alpha$).

We first consider the case of the $(p,q)$-phase, i.e., $a_1\geq10[a]_\alpha r_j^\alpha$. Since $Q_j(\bar l)\subseteq Q_{r_j,r_j^2}(z_1)$, we
follow the arguments in Remark \ref{phase analysis 1} to conclude that $\frac{4}{5}a_1\leq a(x,t)\leq \frac{6}{5}a_1$ for all $(x,t)\in Q_j(\bar l)$.
Furthermore, we apply \eqref{step1 initial estimate} to obtain
   \begin{equation}\begin{split}\label{step1 second estimate}
   &\frac{d_j(\bar l)^{p-2}}{r_j^{n+p}}\iint_{L^\prime_j(\bar l)}\left(\frac{l_j-u}{d_j(\bar l)}\right)^{(1+\lambda)(p-1)}\varphi_j(\bar l)^{M-p}
 \,\mathrm {d}x\mathrm {d}t
 \\&+\frac{d_j(\bar l)^{q-2}}{r_j^{n+q}}\iint_{L^\prime_j(\bar l)}a(x,t)\left(\frac{l_j-u}{d_j(\bar l)}\right)^{(1+\lambda)(q-1)}\varphi_j(\bar l)^{M-q}
 \,\mathrm {d}x\mathrm {d}t
 \\&\leq \epsilon_1^{(p-1)(1+\lambda)}\left(\frac{d_j(\bar l)^{p-2}}{r_j^{n+p}}+a_1\frac{d_j(\bar l)^{q-2}}{r_j^{n+q}}\right)|L_j(\bar l)|
 \leq 4^n\epsilon_1^{(p-1)(1+\lambda)}\chi.
 \end{split}\end{equation}
For a fixed $\epsilon_2>0$, we claim that there exists a constant $\gamma>0$ depending only upon the data, $\epsilon_1$ and $\epsilon_2$ such that
  \begin{equation}\begin{split}\label{double prime claim}
 &\frac{d_j(\bar l)^{p-2}}{r_j^{n+p}}\iint_{L^{\prime\prime}_j(\bar l)}\left(\frac{l_j-u}{d_j(\bar l)}\right)^{(1+\lambda)(p-1)}\varphi_j(\bar l)^{M-p}
 \,\mathrm {d}x\mathrm {d}t
 \\&+\frac{d_j(\bar l)^{q-2}}{r_j^{n+q}}\iint_{L^{\prime\prime}_j(\bar l)}a(x,t)\left(\frac{l_j-u}{d_j(\bar l)}\right)^{(1+\lambda)(q-1)}\varphi_j(\bar l)^{M-q}
 \,\mathrm {d}x\mathrm {d}t
 \\&\leq
  4^n\epsilon_2\chi+\gamma\left(\chi+B^{-p}+B^{-(p-1)}\right)^{1+\frac{p}{n}}
  +\gamma\left(\chi+B^{-p}+B^{-(p-1)}\right)^{1+\frac{q}{n}}.
  \end{split}\end{equation}
 In order to prove \eqref{double prime claim}, we treat the two integrals on the left-hand side of \eqref{double prime claim} separately.
To deal with the integral for the $p$-Laplace type, we first note that $2<p<n$.
 For the fixed $\epsilon_2>0$, we infer from Young's inequality and \eqref{step1 initial estimate}
 to obtain
  \begin{equation}\begin{split}\label{step1 second estimate p}
   &\frac{d_j(\bar l)^{p-2}}{r_j^{n+p}}\iint_{L^{\prime\prime}_j(\bar l)}\left(\frac{l_j-u}{d_j(\bar l)}\right)^{(1+\lambda)(p-1)}\varphi_j(\bar l)^{M-p}
 \,\mathrm {d}x\mathrm {d}t
 \\&\leq \epsilon_2\frac{d_j(\bar l)^{p-2}}{r_j^{n+p}}|L^{\prime\prime}_j(\bar l)|+c(\epsilon_2)
 \frac{d_j(\bar l)^{p-2}}{r_j^{n+p}}\iint_{L^{\prime\prime}_j(\bar l)}\left(\frac{l_j-u}{d_j(\bar l)}\right)^{(1+\lambda)(p-1)p_1}\varphi_j(\bar l)^{(M-p)p_1}
 \,\mathrm {d}x\mathrm {d}t
  \\&\leq 4^n\epsilon_2\chi+c(\epsilon_2)
 \frac{d_j(\bar l)^{p-2}}{r_j^{n+p}}\iint_{L^{\prime\prime}_j(\bar l)}\left(\frac{l_j-u}{d_j(\bar l)}\right)^{p\frac{n+h_1}{nh_1}}\varphi_j(\bar l)^{(M-p)p_1}
 \,\mathrm {d}x\mathrm {d}t,
 \end{split}\end{equation}
 where $M>q$, $h_1=\tfrac{p}{p-1-\lambda}>1$ and
$p_1=p\tfrac{n+h_1}{nh_1(1+\lambda)(p-1)}=\tfrac{p-1-
\lambda+\frac{p}{n}}{p-1+\lambda p-\lambda}>1,$
since $\lambda=\frac{p}{nq}<\frac{1}{n}$. At this stage, we use Lemma \ref{lemmainequalitypsi-} to estimate the integral on the right-hand side of \eqref{step1 second estimate p} by
\begin{equation}\begin{split}\label{step1secondestimatepvp}
&\frac{d_j(\bar l)^{p-2}}{r_j^{n+p}}\iint_{L^{\prime\prime}_j(\bar l)}\left(\frac{l_j-u}{d_j(\bar l)}\right)^{p\frac{n+h_1}{nh_1}}\varphi_j(\bar l)^{(M-p)p_1}
 \,\mathrm {d}x\mathrm {d}t
 \\&\leq \gamma\frac{d_j(\bar l)^{p-2}}{r_j^{n+p}}\iint_{L^{\prime\prime}_j(\bar l)} \psi_p(x,t)^{p\frac{n+h_1}{n}}
 \varphi_j(\bar l)^{(M-p)p_1}
 \,\mathrm {d}x\mathrm {d}t
 \\&=\gamma\frac{d_j(\bar l)^{p-2}}{r_j^{n+p}}\iint_{L^{\prime\prime}_j(\bar l)}v_p^{p\frac{n+h_1}{n}} \,\mathrm {d}x\mathrm {d}t,
 \end{split}\end{equation}
 where $v_p=\psi_p\varphi_j(\bar l)^{k_1}$, $k_1=\frac{(M-p)np_1}{p(n+h_1)}$ and $\psi_p$ is defined in \eqref{psi-} with $\nu=p$. To proceed further, we apply Sobolev's inequality
 and Lemma \ref{lemmainequalitypsi-}
 to deduce
 \begin{equation}\begin{split}\label{T22}
   \iint_{L_j^{\prime\prime}(\bar l)}&
v_p^{p+\frac{ph_1}{n}} \,\mathrm {d}x\mathrm {d}t
\leq \int_{t_1- \Theta_j(\bar l)}^{t_1}\left(\int_{B_j}
v_p^{\frac{np}{n-p}} \,\mathrm {d}x\right)^{\frac{n-p}{n}}
\left(\int_{L_j^{\prime\prime}(t)}
v_p^{h_1} \,\mathrm {d}x\right)^{\frac{p}{n}}
\mathrm {d}t
\\&\leq  \gamma\esssup_t\left(\int_{L_j^{\prime\prime}(t)}
\frac{l_j-u}{d_j(\bar l)}\varphi_j(\bar l)^{k_1h_1} \,\mathrm {d}x\right)^{\frac{p}{n}}\iint_{Q_j(\bar l)}
|Dv_p|^p \,\mathrm {d}x\mathrm {d}t,
   \end{split}\end{equation}
   where
 \begin{equation*}\begin{split}L_j^{\prime\prime}(t)=\{x\in B_j:u(\cdot,t)\leq l_j\}\cap \left\{x\in B_j:\frac{l_j-u(\cdot,t)}{d_j(\bar l)}>
   \epsilon_1\right\}.\end{split}\end{equation*}
     At this point, we apply \cite[Lemma 3.4]{Qifanli} and Lemma \ref{Cac1}
    with $(\rho,l,d,\theta)$ replaced by  $(r_j,l_j,d_j(\bar l),\Theta_j(\bar l))$
    to conclude that for any $t\in(t_1-\Theta_j(\bar l),t_1)$ there holds
\begin{equation}\begin{split}\label{T23}
 \frac{1}{r_j^n} \int_{L_j^{\prime\prime}(t)}&
\frac{l_j- u}{d_j(\bar l)}\varphi_j(\bar l)^{k_1h_1} \,\mathrm {d}x
\leq c(\epsilon_1)\frac{1}{r_j^n}
\int_{B_j}
G\left(\frac{l_j-u}{d_j(\bar l)} \right)\varphi_j(\bar l)^{k_1h_1}\,\mathrm {d}x
  \\
 \leq &\gamma \frac{1}{r_j^n}\iint_{L_j(\bar l)}\frac{l_j-u}{d_j(\bar l)}\left|\partial_t\varphi_j(\bar l)\right|\varphi_j(\bar l)^{k_1h_1-1}\,\mathrm {d}x\mathrm {d}t
  \\&+\gamma  \frac{d_j(\bar l)^{p-2}}{r_j^n}\iint_{L_j(\bar l)}\left(\frac{l_j-u}{d_j(\bar l)}\right)^{(1+\lambda)(p-1)}
  \varphi_j(\bar l)^{k_1h_1-p}|D\varphi_j(\bar l)|^p\,\mathrm {d}x\mathrm {d}t
  \\&+\gamma  \frac{d_j(\bar l)^{q-2}}{r_j^n}\iint_{L_j(\bar l)}a(x,t)\left(\frac{l_j-u}{d_j(\bar l)}\right)^{(1+\lambda)(q-1)}
  \varphi_j(\bar l)^{k_1h_1-q}|D\varphi_j(\bar l)|^q\,\mathrm {d}x\mathrm {d}t
\\&+\gamma \frac{\Theta_j(\bar l)}{r_j^nd_j(\bar l)^2}\int_{B_j}g^{\frac{p}{p-1}}\,\mathrm {d}x
  +\gamma \frac{\Theta_j(\bar l)}{r_j^nd_j(\bar l)}\int_{B_j}|f|\,\mathrm {d}x
  \\ =:&T_1+T_2+T_3+T_4+T_5,
    \end{split}\end{equation}
    since $\varphi_j(\bar l)=0$ on $\partial_PQ_j(\bar l)$.
We first consider the estimates for $T_2$ and $T_3$. Recalling that
$|D\varphi_j(\bar l)|\leq \gamma r_j^{-1}$, $Q_j(\bar l)\subseteq Q_{j-1}$,
$\varphi_{j-1}\equiv 1$ on $\supp \varphi_j(\bar l)$ and $d_j(\bar l)\geq\frac{1}{4}d_{j-1}$, we find that
\begin{equation}\begin{split}\label{T2T3}
T_2+T_3\leq &\gamma  \frac{d_{j-1}^{p-2}}{r_{j-1}^{n+p}}\iint_{L_{j-1}}\left(\frac{l_{j-1}-u}{d_{j-1}}\right)^{(1+\lambda)(p-1)}
  \varphi_{j-1}^{M-p}\,\mathrm {d}x\mathrm {d}t
  \\&+\gamma  \frac{d_{j-1}^{q-2}}{r_{j-1}^{n+q}}\iint_{L_{j-1}}a(x,t)\left(\frac{l_{j-1}-u}{d_{j-1}}\right)^{(1+\lambda)(q-1)}
  \varphi_{j-1}^{M-q}\,\mathrm {d}x\mathrm {d}t
  \\ \leq &\gamma A_{j-1}(l_j)\leq\gamma\chi.
\end{split}\end{equation}
We now turn our attention to the estimate of $T_1$. Taking into account that $p>2$, $q>2$,
$|\partial_t\varphi_j(\bar l)|\leq 9\Theta_j(\bar l)^{-1}$ and
$\frac{4}{5}a_1\leq a(x,t)\leq \frac{6}{5}a_1$
for all $(x,t)\in Q_j(\bar l)$,
we apply Young's inequality, \eqref{step1 initial estimate} and \eqref{T2T3} to conclude that
\begin{equation}\begin{split}\label{T1T2T3}
T_1\leq& \gamma \iint_{L_j(\bar l)}\left(\frac{l_j-u}{d_j(\bar l)}\right)\left(\frac{d_j(\bar l)^{p-2}}{r_j^{p+n}}+a_1
\frac{d_j(\bar l)^{q-2}}{r_j^{q+n}}\right)\,\mathrm {d}x\mathrm {d}t
\\ \leq &\left(\frac{d_j(\bar l)^{p-2}}{r_j^{p+n}}+a_1
\frac{d_j(\bar l)^{q-2}}{r_j^{q+n}}\right)|L_j(\bar l)|
 \\&+\gamma  \frac{d_j(\bar l)^{p-2}}{r_j^{n+p}}\iint_{L_j(\bar l)}\left(\frac{l_j-u}{d_j(\bar l)}\right)^{(1+\lambda)(p-1)}
 \,\mathrm {d}x\mathrm {d}t
  \\&+\gamma  a_1\frac{d_j(\bar l)^{q-2}}{r_j^{n+q}}\iint_{L_j(\bar l)}\left(\frac{l_j-u}{d_j(\bar l)}\right)^{(1+\lambda)(q-1)}
  \,\mathrm {d}x\mathrm {d}t
  \\ \leq &4^n\chi+\gamma  \frac{d_{j-1}^{p-2}}{r_{j-1}^{n+p}}\iint_{L_{j-1}}\left(\frac{l_{j-1}-u}{d_{j-1}}\right)^{(1+\lambda)(p-1)}
  \varphi_{j-1}^{M-p}\,\mathrm {d}x\mathrm {d}t
  \\&+\gamma  \frac{d_{j-1}^{q-2}}{r_{j-1}^{n+q}}\iint_{L_{j-1}}a(x,t)\left(\frac{l_{j-1}-u}{d_{j-1}}\right)^{(1+\lambda)(q-1)}
  \varphi_{j-1}^{M-q}\,\mathrm {d}x\mathrm {d}t
  \leq \gamma\chi.
\end{split}\end{equation}
To estimate $T_4$ and $T_5$, we
observe that $\Theta_j(\bar l)\leq d_j(\bar l)^{2-p}r_j^p$. Then, we infer from $d_0(\bar l)\geq \frac{1}{16}B(\alpha_{j-1}-\alpha_j)$ and \eqref{alpha1} that
there exists a constant $\gamma$ depending only upon the data such that
 \begin{equation}\begin{split}\label{T4}
 T_4\leq \gamma\frac{d_j(\bar l)^{-p}}{r_j^{n-p}}\int_{B_j}g^\frac{p}{p-1}\,\mathrm {d}x\leq \gamma B^{-p}
 \frac{(\alpha_{j-1}-\alpha_j)^{-p}}{r_j^{n-p}}\int_{B_j}g^\frac{p}{p-1}\,\mathrm {d}x\leq
 \gamma \frac{1}{B^p}
  \end{split}\end{equation}
  and
  \begin{equation}\begin{split}\label{T5}
 T_5\leq \gamma\frac{d_j(\bar l)^{1-p}}{r_j^{n-p}}\int_{B_j}|f|\,\mathrm {d}x\leq \gamma B^{-(p-1)}
 \frac{(\alpha_{j-1}-\alpha_j)^{1-p}}{r_j^{n-p}}\int_{B_j}|f|\,\mathrm {d}x\leq
 \gamma \frac{1}{B^{p-1}}.
  \end{split}\end{equation}
 Combining the estimates \eqref{T23}-\eqref{T5}, we obtain the estimate
  \begin{equation}\begin{split}\label{T1-T5}
   \frac{1}{r_j^n} \int_{L_j^{\prime\prime}(t)}
\frac{l_j- u}{d_j(\bar l)}\varphi_j(\bar l)^{k_1h_1} \,\mathrm {d}x\leq \gamma\chi+\gamma(B^{-p}+B^{-(p-1)}),
    \end{split}\end{equation}
   holds for all $t\in (t_1-\Theta_j(\bar l),t_1)$. Next, we proceed to treat the term involving the gradient. To this end, we decompose
    \begin{equation}\begin{split}\label{Dpsi}
    \frac{d_j(\bar l)^{p-2}}{r_j^n}&\iint_{Q_j(\bar l)}
|Dv_p|^p \,\mathrm {d}x\mathrm {d}t\leq  \frac{d_j(\bar l)^{p-2}}{r_j^n}\iint_{Q_j(\bar l)}|D\psi_p|^p\varphi_j(\bar l)^{k_1p}\,\mathrm {d}x\mathrm {d}t
\\& +k_1^p\frac{d_j(\bar l)^{p-2}}{r_j^n}\iint_{Q_j(\bar l)}\varphi_j(\bar l)^{(k_1-1)p}|D\varphi_j(\bar l)|^p\psi_p(x,t)^p\,\mathrm {d}x\mathrm {d}t
\\&=U_1+U_2,
    \end{split}\end{equation}
    with the obvious meaning of $U_1$ and $U_2$. To estimate $U_1$, we use the Caccioppoli estimate \eqref{Cacformula1} with $(\rho,l,d,\theta)$ replaced by
    $(r_j,l_j,d_j(\bar l),\Theta_j(\bar l))$.
    It follows from the estimates \eqref{T23}-\eqref{T5} that
    \begin{equation}\begin{split}\label{DpsiU1}
    U_1\leq \gamma\chi+\gamma(B^{-p}+B^{-(p-1)}),
    \end{split}\end{equation}
  where the constant $\gamma$ depends only upon the data, $\epsilon_1$ and $\epsilon_2$. Finally, we come to the estimate of $U_2$.
To this end, we observe that $\psi_p\leq \gamma d_j(\bar l)^{-\frac{1}{p^\prime}}(l_j-u)_+^{\frac{1}{p^\prime}}$, where $p^\prime=\tfrac{p}{p-1}$.
We now use $|D\varphi_j(\bar l)|\leq \gamma r_j^{-1}$, Young's inequality, \eqref{step1 initial estimate} and \eqref{T2T3} to get
 \begin{equation}\begin{split}\label{DpsiU2}
    U_2\leq &\gamma\frac{d_j(\bar l)^{p-2}}{r_j^{n+p}}\iint_{L_j(\bar l)}\left(\frac{l_j-u}{d_j(\bar l)}\right)^{p-1}\varphi_j(\bar l)^{(k_1-1)p}\,\mathrm {d}x\mathrm {d}t
    \\ \leq &\gamma\frac{d_j(\bar l)^{p-2}}{r_j^{n+p}}|L_j(\bar l)|
    \\&+\gamma\frac{d_j(\bar l)^{p-2}}{r_j^{n+p}}
    \iint_{L_j(\bar l)}\left(\frac{l_j-u}{d_j(\bar l)}\right)^{(p-1)(1+\lambda)}\varphi_j(\bar l)^{(k_1-1)p}\,\mathrm {d}x\mathrm {d}t
    \\ \leq&\gamma\chi.
    \end{split}\end{equation}
 Inserting the estimates \eqref{DpsiU1} and \eqref{DpsiU2} into \eqref{Dpsi}, we conclude that
  \begin{equation}\begin{split}\label{Dpsiresult}
   \frac{d_j(\bar l)^{p-2}}{r_j^n}\iint_{Q_j(\bar l)}
|Dv_p|^p \,\mathrm {d}x\mathrm {d}t\leq \gamma\chi+\gamma(B^{-p}+B^{-(p-1)}).
    \end{split}\end{equation}
    Combining the inequalities \eqref{step1 second estimate p}, \eqref{step1secondestimatepvp}, \eqref{T22}, \eqref{T1-T5} and \eqref{Dpsiresult}, we arrive at
 \begin{equation}\begin{split}\label{estimate for pintegral}
 \frac{d_j(\bar l)^{p-2}}{r_j^{n+p}}&\iint_{L^{\prime\prime}_j(\bar l)}\left(\frac{l_j-u}{d_j(\bar l)}\right)^{(1+\lambda)(p-1)}\varphi_j(\bar l)^{M-p}
 \,\mathrm {d}x\mathrm {d}t\\&\leq  4^n\epsilon_2\chi+\gamma\left(\chi+B^{-p}+B^{-(p-1)}\right)^{1+\frac{p}{n}},
  \end{split}\end{equation}
  where the constant $\gamma$ depends only upon the data, $\epsilon_1$ and $\epsilon_2$.
In order to treat the integral for the $q$-Laplace type, we distinguish between two cases $q<n$ and $q\geq n$.
In the case $q<n$, we use Young's inequality, \eqref{step1 initial estimate} and $\lambda<\frac{1}{n}$ to deduce an estimate similar to \eqref{step1 second estimate p},
  \begin{equation}\begin{split}\label{step1secondestimate q<n}
   &\frac{d_j(\bar l)^{q-2}}{r_j^{n+q}}\iint_{L^{\prime\prime}_j(\bar l)}a(x,t)\left(\frac{l_j-u}{d_j(\bar l)}\right)^{(1+\lambda)(q-1)}\varphi_j(\bar l)^{M-q}
 \,\mathrm {d}x\mathrm {d}t
 \\ \leq &\epsilon_2a_1\frac{d_j(\bar l)^{q-2}}{r_j^{n+q}}|L^{\prime\prime}_j(\bar l)|
 \\&+c(\epsilon_2)
 \frac{d_j(\bar l)^{q-2}}{r_j^{n+q}}\iint_{L^{\prime\prime}_j(\bar l)}a(x,t)\left(\frac{l_j-u}{d_j(\bar l)}\right)^{(1+\lambda)(q-1)q_1}\varphi_j(\bar l)^{(M-q)q_1}
 \,\mathrm {d}x\mathrm {d}t
  \\ \leq& 4^n\epsilon_2\chi+c(\epsilon_2)
 \frac{d_j(\bar l)^{q-2}}{r_j^{n+q}}\iint_{L^{\prime\prime}_j(\bar l)}a(x,t)\left(\frac{l_j-u}{d_j(\bar l)}\right)^{q\frac{n+h_2}{nh_2}}\varphi_j(\bar l)^{(M-q)q_1}
 \,\mathrm {d}x\mathrm {d}t,
 \end{split}\end{equation}
 where $M>q$,
$h_2=\tfrac{q}{q-1-\lambda}>1$ and $q_1=q\tfrac{n+h_2}{nh_2(1+\lambda)(q-1)}=\tfrac{q-1-
\lambda+\frac{q}{n}}{q-1+\lambda q-\lambda}>1.$ Taking into account that
$\frac{4}{5}a_1\leq a(x,t)\leq \frac{6}{5}a_1$
for all $(x,t)\in Q_j(\bar l)$, we use Lemma \ref{lemmainequalitypsi-} to estimate the integral on the right-hand side of \eqref{step1secondestimate q<n} by
\begin{equation}\begin{split}\label{step1secondestimatepvq}
 \frac{d_j(\bar l)^{q-2}}{r_j^{n+q}}&\iint_{L^{\prime\prime}_j(\bar l)}a(x,t)\left(\frac{l_j-u}{d_j(\bar l)}\right)^{q\frac{n+h_2}{nh_2}}\varphi_j(\bar l)^{(M-q)q_1}
 \,\mathrm {d}x\mathrm {d}t
 \\&\leq \gamma a_1\frac{d_j(\bar l)^{q-2}}{r_j^{n+q}}\iint_{L^{\prime\prime}_j(\bar l)}v_q^{q\frac{n+h_2}{n}} \,\mathrm {d}x\mathrm {d}t,
 \end{split}\end{equation}
 where $v_q=\psi_q\varphi_j(\bar l)^{k_2}$, $k_2=\frac{(M-q)nq_1}{q(n+h_2)}$ and $\psi_q$ is defined in \eqref{psi-} with $\nu=q$. Next, we use
 H\"older's inequality,
 Sobolev embedding theorem
 and Lemma \ref{lemmainequalitypsi-}
 to deduce
 \begin{equation}\begin{split}\label{T22q}
   \iint_{L_j^{\prime\prime}(\bar l)}&
v_q^{q+\frac{qh_2}{n}} \,\mathrm {d}x\mathrm {d}t
\leq \int_{t_1- \Theta_j(\bar l)}^{t_1}\left(\int_{B_j}
v_q^{\frac{nq}{n-q}} \,\mathrm {d}x\right)^{\frac{n-q}{n}}
\left(\int_{L_j^{\prime\prime}(t)}
v_q^{h_2} \,\mathrm {d}x\right)^{\frac{q}{n}}
\mathrm {d}t
\\&\leq  \gamma\esssup_t\left(\int_{L_j^{\prime\prime}(t)}
\frac{l_j-u}{d_j(\bar l)}\varphi_j(\bar l)^{k_2h_2} \,\mathrm {d}x\right)^{\frac{q}{n}}\iint_{Q_j(\bar l)}
|Dv_q|^q \,\mathrm {d}x\mathrm {d}t,
   \end{split}\end{equation}
since $q<n$. We now proceed along the lines of the proof of \eqref{T1-T5}
to conclude that the inequality
  \begin{equation}\begin{split}\label{T1-T5qlaplace}
   \frac{1}{r_j^n} \int_{L_j^{\prime\prime}(t)}
\frac{l_j- u}{d_j(\bar l)}\varphi_j(\bar l)^{k_2h_2} \,\mathrm {d}x\leq \gamma\chi+\gamma(B^{-p}+B^{-(p-1)}),
    \end{split}\end{equation}
   holds for all $t\in (t_1-\Theta_j(\bar l),t_1)$.
  To deal with the term involving the gradient of $v_q$. we decompose
    \begin{equation}\begin{split}\label{Dpsiq}
    a_1\frac{d_j(\bar l)^{q-2}}{r_j^n}&\iint_{Q_j(\bar l)}
|Dv_q|^q \,\mathrm {d}x\mathrm {d}t\leq  \frac{d_j(\bar l)^{q-2}}{r_j^n}\iint_{Q_j(\bar l)}a(x,t)|D\psi_q|^q\varphi_j(\bar l)^{k_2q}\,\mathrm {d}x\mathrm {d}t
\\& +\gamma a_1\frac{d_j(\bar l)^{q-2}}{r_j^n}\iint_{Q_j(\bar l)}\varphi_j(\bar l)^{(k_2-1)q}|D\varphi_j(\bar l)|^q\psi_p(x,t)^q\,\mathrm {d}x\mathrm {d}t
\\&=U_1^\prime+U_2^\prime,
    \end{split}\end{equation}
   since $\frac{4}{5}a_1\leq a(x,t)\leq \frac{6}{5}a_1$
for all $(x,t)\in Q_j(\bar l)$.
 To estimate $U_1^\prime$, we use the Caccioppoli estimate \eqref{Cacformula1} with $(\rho,l,d,\theta)$ replaced by
    $(r_j,l_j,d_j(\bar l),\Theta_j(\bar l))$. This together with the estimates \eqref{T23}-\eqref{T5}, we get
    \begin{equation}\begin{split}\label{DpsiU1q}
    U_1^\prime\leq \gamma\chi+\gamma(B^{-p}+B^{-(p-1)}).
    \end{split}\end{equation}
To estimate $U_2^\prime$, we observe that $\psi_q\leq \gamma d_j(\bar l)^{-\frac{1}{q^\prime}}(l_j-u)_+^{\frac{1}{q^\prime}}$, where $q^\prime=\tfrac{q}{q-1}$.
We now apply $|D\varphi_j(\bar l)|\leq \gamma r_j^{-1}$, Young's inequality, \eqref{step1 initial estimate} and \eqref{T2T3} to deduce
 \begin{equation}\begin{split}\label{DpsiU2q}
    U_2^\prime\leq &\gamma a_1\frac{d_j(\bar l)^{q-2}}{r_j^{n+q}}\iint_{L_j(\bar l)}\left(\frac{l_j-u}{d_j(\bar l)}\right)^{q-1}\varphi_j(\bar l)^{(k_2-1)q}\,\mathrm {d}x\mathrm {d}t
    \\ \leq &\gamma a_1\frac{d_j(\bar l)^{q-2}}{r_j^{n+q}}|L_j(\bar l)|
    \\&+\gamma \frac{d_j(\bar l)^{q-2}}{r_j^{n+q}}
    \iint_{L_j(\bar l)}a(x,t)\left(\frac{l_j-u}{d_j(\bar l)}\right)^{(q-1)(1+\lambda)}\varphi_j(\bar l)^{(k_2-1)q}\,\mathrm {d}x\mathrm {d}t
    \\ \leq&\gamma\chi,
    \end{split}\end{equation}
     since $\frac{4}{5}a_1\leq a(x,t)\leq \frac{6}{5}a_1$
for all $(x,t)\in Q_j(\bar l)$.
 Inserting the estimates \eqref{DpsiU1q} and \eqref{DpsiU2q} into \eqref{Dpsiq}, we infer that
  \begin{equation}\begin{split}\label{Dpsiresultq}
   a_1\frac{d_j(\bar l)^{q-2}}{r_j^n}\iint_{Q_j(\bar l)}
|Dv_q|^q \,\mathrm {d}x\mathrm {d}t\leq \gamma\chi+\gamma(B^{-p}+B^{-(p-1)}).
    \end{split}\end{equation}
    Combining the inequalities \eqref{step1secondestimate q<n}, \eqref{step1secondestimatepvq}, \eqref{T22q}, \eqref{T1-T5qlaplace} and \eqref{Dpsiresultq}, we arrive at
 \begin{equation}\begin{split}\label{estimate for q<nintegral}
 \frac{d_j(\bar l)^{q-2}}{r_j^{n+q}}&\iint_{L^{\prime\prime}_j(\bar l)}a(x,t)\left(\frac{l_j-u}{d_j(\bar l)}\right)^{(1+\lambda)(q-1)}\varphi_j(\bar l)^{M-q}
 \,\mathrm {d}x\mathrm {d}t\\&\leq  4^n\epsilon_2\chi+\gamma\left(\chi+B^{-p}+B^{-(p-1)}\right)^{1+\frac{q}{n}},
  \end{split}\end{equation}
  where the constant $\gamma$ depends only upon the data, $\epsilon_1$ and $\epsilon_2$.
In the case $q\geq n$, we apply Lemma \ref{lemmainequalitypsi-} and $\lambda q=\frac{p}{n}$ to deduce that
 \begin{equation}\begin{split}\label{step1secondestimate q>n}
   &\frac{d_j(\bar l)^{q-2}}{r_j^{n+q}}\iint_{L^{\prime\prime}_j(\bar l)}a(x,t)\left(\frac{l_j-u}{d_j(\bar l)}\right)^{(1+\lambda)(q-1)}\varphi_j(\bar l)^{M-q}
 \,\mathrm {d}x\mathrm {d}t
 \\ \leq &\frac{6a_1}{5} \frac{d_j(\bar l)^{q-2}}{r_j^{n+q}}\iint_{L^{\prime\prime}_j(\bar l)}\left(\frac{l_j-u}{d_j(\bar l)}\right)^{q-1-\lambda+\lambda q}\varphi_j(\bar l)^{M-q}
 \,\mathrm {d}x\mathrm {d}t
  \\ \leq &\frac{6a_1}{5} \frac{d_j(\bar l)^{q-2}}{r_j^{n+q}}\iint_{L^{\prime\prime}_j(\bar l)}\left(\frac{l_j-u}{d_j(\bar l)}\right)^{\frac{p}{n}}\psi_q(x,t)^q\varphi_j(\bar l)^{M-q}
 \,\mathrm {d}x\mathrm {d}t,
  \end{split}\end{equation}
  where $\psi_q$ is defined in \eqref{psi-} with $\nu=q$. Noting that $q\geq n$, we use Sobolev's inequality slice-wise (see for instance \cite[page 51, Corollary 3.7]{KLV}) to conclude that
  for all $t\in(t_1-\Theta_j(\bar l), t_1)$ there holds
   \begin{equation}\begin{split}\label{Sobolevq>n}
   \left(\int_{B_j}\left(\psi_q\varphi_j(\bar l)^s\right)(\cdot,t)^{\delta q}\,\mathrm {d}x\right)^\frac{1}{\delta q}
   \leq \gamma r_j^{1-\frac{n}{q}+\frac{n}{\delta q}} \left(\int_{B_j}|D(\psi_q\varphi_j(\bar l)^s)(\cdot,t)|^{ q}\,\mathrm {d}x\right)^\frac{1}{ q},
   \end{split}\end{equation}
   where $\delta=\frac{n}{n-p}>1$ and $s=\frac{\frac{1}{2}M-q}{q}>1$. We now proceed to estimate \eqref{step1secondestimate q>n}. To this end, we use H\"older's inequality with
   $\delta=\frac{n}{n-p}$ and $\delta^\prime=\frac{n}{p}$ to obtain
    \begin{equation}\begin{split}\label{step1secondestimateq>nq}
&a_1\frac{d_j(\bar l)^{q-2}}{r_j^{n+q}}\iint_{L^{\prime\prime}_j(\bar l)}\left(\frac{l_j-u}{d_j(\bar l)}\right)^{\frac{p}{n}}\psi_q(x,t)^q\varphi_j(\bar l)^{M-q}
 \,\mathrm {d}x\mathrm {d}t
 \\=&a_1\frac{d_j(\bar l)^{q-2}}{r_j^{n+q}}\iint_{L^{\prime\prime}_j(\bar l)}\left[\psi_q\varphi(\bar l)^s\right]^q
\left[\left(\frac{l_j-u}{d_j(\bar l)}\right)^{\frac{p}{n}}\varphi_j(\bar l)^{\frac{M}{2}}\right]
 \,\mathrm {d}x\mathrm {d}t
 \\ \leq& a_1\frac{d_j(\bar l)^{q-2}}{r_j^{n+q}}\int_{t_1-\Theta_j(\bar l)}^{t_1}
 \left(\int_{B_j}\left(\psi_q\varphi_j(\bar l)^s\right)(\cdot,t)^{\delta q}\,\mathrm {d}x\right)^\frac{1}{\delta }
 \\&\times\left(\int_{L^{\prime\prime}_j(t)}\left(\frac{l_j-u}{d_j(\bar l)}\right)\varphi_j(\bar l)
 ^{\frac{M}{2}\frac{n}{p}}\,\mathrm {d}x\right)^\frac{p}{n}\,\mathrm {d}t
 \\ \leq & a_1\frac{d_j(\bar l)^{q-2}}{r_j^{n+q}}r_j^{q-n+\frac{n}{\delta}}
 \esssup_t\left(\int_{L_j^{\prime\prime}(t)}
\frac{l_j-u}{d_j(\bar l)}\varphi_j(\bar l)^{\frac{Mn}{2p}} \,\mathrm {d}x\right)^{\frac{p}{n}}
\\&\times \int_{t_1-\Theta_j(\bar l)}^{t_1}\int_{B_j}|D(\psi_q\varphi_j(\bar l)^s)|^q \,\mathrm {d}x\mathrm {d}t.
  \end{split}\end{equation}
  In the last line, we have used the Sobolev's inequality \eqref{Sobolevq>n}. Since $\delta=\frac{n}{n-p}$, we observe that
  $r_j^{q-n+\frac{n}{\delta}-n-q}=r_j^{-p-n}$ and the inequality \eqref{step1secondestimateq>nq} reads as follows:
   \begin{equation}\begin{split}\label{step1secondestimateq>nq1}
&a_1\frac{d_j(\bar l)^{q-2}}{r_j^{n+q}}\iint_{L^{\prime\prime}_j(\bar l)}\left(\frac{l_j-u}{d_j(\bar l)}\right)^{\frac{p}{n}}\psi_q(x,t)^q\varphi_j(\bar l)^{M-q}
 \,\mathrm {d}x\mathrm {d}t
 \\ \leq & \gamma
 \esssup_t\left(\frac{1}{r_j^n}\int_{L_j^{\prime\prime}(t)}
\frac{l_j-u}{d_j(\bar l)}\varphi_j(\bar l)^{\frac{Mn}{2p}} \,\mathrm {d}x\right)^{\frac{p}{n}}
\times\left[ a_1\frac{d_j(\bar l)^{q-2}}{r_j^{n}}\iint_{Q_j(\bar  l)}|D\tilde v_q|^q \,\mathrm {d}x\mathrm {d}t\right],
  \end{split}\end{equation}
  where $\tilde v_q=\psi_q\varphi_j(\bar l)^s$. We now proceed along the lines of the proofs of \eqref{T1-T5qlaplace} and \eqref{Dpsiresultq}. This yields that
  \begin{equation}\begin{split}\label{T1-T5qlaplaceq>n}
  \esssup_t \frac{1}{r_j^n} \int_{L_j^{\prime\prime}(t)}
\frac{l_j- u}{d_j(\bar l)}\varphi_j(\bar l)^{\frac{Mn}{2p}} \,\mathrm {d}x\leq \gamma\chi+\gamma(B^{-p}+B^{-(p-1)})
    \end{split}\end{equation}
  and
   \begin{equation}\begin{split}\label{Dpsiresultqq>n}
   a_1\frac{d_j(\bar l)^{q-2}}{r_j^n}\iint_{Q_j(\bar l)}
|D\tilde v_q|^q \,\mathrm {d}x\mathrm {d}t\leq \gamma\chi+\gamma(B^{-p}+B^{-(p-1)}).
    \end{split}\end{equation}
    Combining \eqref{step1secondestimate q>n}, \eqref{step1secondestimateq>nq1}, \eqref{T1-T5qlaplaceq>n} and \eqref{Dpsiresultqq>n}, we infer that in the case $q\geq n$ the estimate
    \begin{equation}\begin{split}\label{estimate for q>nintegral}
 \frac{d_j(\bar l)^{q-2}}{r_j^{n+q}}&\iint_{L^{\prime\prime}_j(\bar l)}a(x,t)\left(\frac{l_j-u}{d_j(\bar l)}\right)^{(1+\lambda)(q-1)}\varphi_j(\bar l)^{M-q}
 \,\mathrm {d}x\mathrm {d}t\\&\leq  \gamma\left(\chi+B^{-p}+B^{-(p-1)}\right)^{1+\frac{p}{n}}
  \end{split}\end{equation}
  holds true for a constant $\gamma=\gamma(\text{data},\epsilon_1)$. At this stage, we conclude from \eqref{estimate for pintegral}, \eqref{estimate for q<nintegral}
  and \eqref{estimate for q>nintegral} that the inequality \eqref{double prime claim} holds for the case of $(p,q)$-phase.
  We are left with the task of proving estimates similar to \eqref{step1 second estimate} and \eqref{double prime claim} for the case of $p$-phase.

We now turn our attention to the case of $p$-phase, i.e., $a_1<10[a]_\alpha r_j^\alpha$. Recalling that $Q_j(\bar l)\subseteq Q_{r_j,r_j^2}(z_1)$,
we get $a(x,t)\leq 12[a]_\alpha r_j^\alpha$ for all $(x,t)\in Q_j(\bar l)$.
Noting that $d_j(\bar l)=l_j-\bar l\leq l_j\leq l_0=\frac{1}{4}\omega\leq \frac{1}{2}\|u\|_\infty$ and $p<q\leq p+\alpha$, we use \eqref{step1 initial estimate} to infer that
   \begin{equation}\begin{split}\label{step1 second estimate p-phase}
   &\frac{d_j(\bar l)^{p-2}}{r_j^{n+p}}\iint_{L^\prime_j(\bar l)}\left(\frac{l_j-u}{d_j(\bar l)}\right)^{(1+\lambda)(p-1)}\varphi_j(\bar l)^{M-p}
 \,\mathrm {d}x\mathrm {d}t
 \\&+\frac{d_j(\bar l)^{q-2}}{r_j^{n+q}}\iint_{L^\prime_j(\bar l)}a(x,t)\left(\frac{l_j-u}{d_j(\bar l)}\right)^{(1+\lambda)(q-1)}\varphi_j(\bar l)^{M-q}
 \,\mathrm {d}x\mathrm {d}t
 \\&\leq \epsilon_1^{(p-1)(1+\lambda)}\frac{d_j(\bar l)^{p-2}}{r_j^{n+p}}|L_j(\bar l)|
 \\ &\quad+12[a]_\alpha \|u\|_\infty^{q-p} \frac{d_j(\bar l)^{p-2}}{r_j^{n+q-\alpha}}\iint_{L^\prime_j(\bar l)}
 \left(\frac{l_j-u}{d_j(\bar l)}\right)^{(1+\lambda)(q-1)}\varphi_j(\bar l)^{M-q}
 \,\mathrm {d}x\mathrm {d}t
 \\&\leq \gamma_1\epsilon_1^{(p-1)(1+\lambda)}\chi,
 \end{split}\end{equation}
 where the constant $\gamma_1$ depends only upon the data.
 For a fixed $\epsilon_2>0$, we assert that there exists a constant $\gamma>0$ depending only upon the data, $\epsilon_1$ and $\epsilon_2$ such that
  \begin{equation}\begin{split}\label{double prime claimpphase}
 &\frac{d_j(\bar l)^{p-2}}{r_j^{n+p}}\iint_{L^{\prime\prime}_j(\bar l)}\left(\frac{l_j-u}{d_j(\bar l)}\right)^{(1+\lambda)(p-1)}\varphi_j(\bar l)^{M-p}
 \,\mathrm {d}x\mathrm {d}t
 \\&+\frac{d_j(\bar l)^{q-2}}{r_j^{n+q}}\iint_{L^{\prime\prime}_j(\bar l)}a(x,t)\left(\frac{l_j-u}{d_j(\bar l)}\right)^{(1+\lambda)(q-1)}\varphi_j(\bar l)^{M-q}
 \,\mathrm {d}x\mathrm {d}t
 \\&\leq
  4^n\epsilon_2\chi+\gamma\left(\chi+B^{-p}+B^{-(p-1)}\right)^{1+\frac{p}{n}}.
  \end{split}\end{equation}
  Once again, we treat the two integrals on the left-hand side of \eqref{double prime claimpphase} separately.
  The integral for the $p$-Laplace type can be handled in much the same way as in the case of $(p,q)$-phase, the only difference
being in the estimate of $T_1$.
Taking into account that $p<q\leq p+\alpha$, $d_j(\bar l)\leq\frac{1}{2}\|u\|_\infty$ and
$a_1< 10[a]_\alpha r_j^\alpha$ for all $(x,t)\in Q_j(\bar l)$, we have
\begin{equation}\label{time derivativepphase}|\partial_t\varphi_j(\bar l)|\leq 9\Theta_j(\bar l)^{-1}\leq 9\left(\frac{d_j(\bar l)^{p-2}}{r_j^{p}}+10[a]_\alpha\|u\|_\infty^{q-p}
\frac{d_j(\bar l)^{p-2}}{r_j^{q-\alpha}}\right)\leq\gamma_1\frac{d_j(\bar l)^{p-2}}{r_j^{p}},\end{equation}
 where the constant $\gamma_1$ depends only upon the data.
We now apply Young's inequality, \eqref{time derivativepphase}, \eqref{step1 initial estimate} and \eqref{T2T3} to conclude
that there exists a constant $\gamma_2=\gamma_2(\text{data})$ such that
\begin{equation}\begin{split}\label{T1T2T3pphase}
T_1=&\gamma_2 \frac{1}{r_j^n}\iint_{L_j(\bar l)}\frac{l_j-u}{d_j(\bar l)}\left|\partial_t\varphi_j(\bar l)\right|\varphi_j(\bar l)^{k_1h_1-1}\,\mathrm {d}x\mathrm {d}t
\\ \leq& \gamma_2 \frac{d_j(\bar l)^{p-2}}{r_j^{p+n}}\iint_{L_j(\bar l)}\left(\frac{l_j-u}{d_j(\bar l)}\right)\,\mathrm {d}x\mathrm {d}t
\\ \leq &\gamma_2\frac{d_j(\bar l)^{p-2}}{r_j^{p+n}}|L_j(\bar l)|
+\gamma_2  \frac{d_j(\bar l)^{p-2}}{r_j^{n+p}}\iint_{L_j(\bar l)}\left(\frac{l_j-u}{d_j(\bar l)}\right)^{(1+\lambda)(p-1)}
 \,\mathrm {d}x\mathrm {d}t
  \\
  \leq &\gamma_2\chi.
\end{split}\end{equation}
In the same manner as in the proof of \eqref{estimate for pintegral}, we conclude that the inequality
 \begin{equation}\begin{split}\label{estimate for pintegralpphase}
 &\frac{d_j(\bar l)^{p-2}}{r_j^{n+p}}\iint_{L^{\prime\prime}_j(\bar l)}\left(\frac{l_j-u}{d_j(\bar l)}\right)^{(1+\lambda)(p-1)}\varphi_j(\bar l)^{M-p}
 \,\mathrm {d}x\mathrm {d}t\\&\leq  4^n\epsilon_2\chi+\gamma\left(\chi+B^{-p}+B^{-(p-1)}\right)^{1+\frac{p}{n}}
  \end{split}\end{equation}
  holds in the case of $p$-phase.
  Here, the constant $\gamma$ depends only upon the data, $\epsilon_1$ and $\epsilon_2$.
  The next step is to estimate the second integral on the left-hand side of \eqref{double prime claimpphase}.
  Recalling that $a(x,t)\leq 12[a]_\alpha r_j^\alpha$ for all $(x,t)\in Q_j(\bar l)$, we infer from $p<q\leq p+\alpha$ that
  \begin{equation}\begin{split}\label{qintegralpphse}
  &\frac{d_j(\bar l)^{q-2}}{r_j^{n+q}}\iint_{L^{\prime\prime}_j(\bar l)}a(x,t)\left(\frac{l_j-u}{d_j(\bar l)}\right)^{(1+\lambda)(q-1)}\varphi_j(\bar l)^{M-q}
 \,\mathrm {d}x\mathrm {d}t
 \\&\leq 12[a]_\alpha\frac{d_j(\bar l)^{q-2}}{r_j^{n+q-\alpha}}\iint_{L^{\prime\prime}_j(\bar l)}\left(\frac{l_j-u}{d_j(\bar l)}\right)^{(1+\lambda)(q-1)}\varphi_j(\bar l)^{M-q}
 \,\mathrm {d}x\mathrm {d}t
  \\&\leq \gamma^\prime\frac{d_j(\bar l)^{q-2}}{r_j^{n+p}}\iint_{L^{\prime\prime}_j(\bar l)}\left(\frac{l_j-u}{d_j(\bar l)}\right)^{q-1-\lambda+\lambda q}\varphi_j(\bar l)^{M-q}
 \,\mathrm {d}x\mathrm {d}t,
 \end{split}\end{equation}
 where $\gamma^\prime=12[a]_\alpha$. Recalling that $\frac{l_j-u}{d_j(\bar l)}\geq\epsilon_1$ on $L^{\prime\prime}_j(\bar l)$, we apply Lemma \ref{lemmainequalitypsi-}
 with $\nu=p$ to obtain
   \begin{equation*}\left(\frac{l_j-u}{d_j(\bar l)}\right)^{p-1-\lambda}\leq c(\text{data},\epsilon_1)\cdot\psi_p(x,t)^p\qquad\text{on}\quad L^{\prime\prime}_j(\bar l).\end{equation*}
 Taking into account that $u$ is nonnegative and $l_j\leq l_0\leq\frac{1}{4}\omega$, we deduce $l_j-u\leq \frac{1}{4}\omega\leq \frac{1}{2}\|u\|_\infty$ on $L_j(\bar l)$.
 It follows that the inequality
 \begin{equation}\begin{split}\label{auxiliaryinequality}
 &\left(\frac{l_j-u}{d_j(\bar l)}\right)^{q-1-\lambda+\lambda q}= \left(\frac{l_j-u}{d_j(\bar l)}\right)^{p-1-\lambda+q-p+\lambda q}
 \\&\leq c(\text{data},\epsilon_1)\psi_p(x,t)^p\left(\frac{l_j-u}{d_j(\bar l)}\right)^{q-p}\left(\frac{l_j-u}{d_j(\bar l)}\right)^{\lambda q}
 \\&\leq c(\text{data},\epsilon_1)\|u\|_\infty^{q-p}\psi_p(x,t)^pd_j(\bar l)^{p-q}\left(\frac{l_j-u}{d_j(\bar l)}\right)^{\frac{p}{n}}
 \end{split}\end{equation}
 holds on the set $L^{\prime\prime}_j(\bar l)$. In the last line, we have also used $\lambda=\frac{p}{nq}$. In view of \eqref{auxiliaryinequality}, we use H\"older's inequality
 with exponents $\frac{n}{p}$ and $\frac{n}{n-p}$ to deduce
 \begin{equation*}\begin{split}
&\frac{d_j(\bar l)^{q-2}}{r_j^{n+p}}\iint_{L^{\prime\prime}_j(\bar l)}\left(\frac{l_j-u}{d_j(\bar l)}\right)^{q-1-\lambda+\lambda q}\varphi_j(\bar l)^{M-q}
 \,\mathrm {d}x\mathrm {d}t
 \\ &\leq \gamma\frac{d_j(\bar l)^{p-2}}{r_j^{n+p}}\iint_{L^{\prime\prime}_j(\bar l)}
 \left[\psi_p(x,t)^p \varphi_j(\bar l)^{\frac{1}{2}M-q}\right]\times\left[\left(\frac{l_j-u}{d_j(\bar l)}\right)^{\frac{p}{n}}
 \varphi_j(\bar l)^{\frac{1}{2}M}\right]
 \,\mathrm {d}x\mathrm {d}t
  \\ &\leq \gamma\frac{d_j(\bar l)^{p-2}}{r_j^{n+p}}\int_{t_1-\Theta_j(\bar l)}^{t_1}\left(\int_{L_j^{\prime\prime}(t)}\tilde v_p^\frac{np}{n-p}\,\mathrm {d}x\right)^\frac{n-p}{n}
\left(\int_{L_j^{\prime\prime}(t)}\frac{l_j-u}{d_j(\bar l)}
 \varphi_j(\bar l)^{\frac{Mn}{2p}}
 \,\mathrm {d}x\right)^\frac{p}{n}\mathrm {d}t,
 \end{split}\end{equation*}
 where $\tilde v_p=\psi_p\varphi_j(\bar l)^{\frac{1}{p}(\frac{M}{2}-q)}$. For $t\in (t_1-\Theta_j(\bar l), t_1)$, we apply Sobolev's inequality slice-wise for $\tilde v_p(\cdot,t)$.
 This yields that
 \begin{equation}\begin{split}\label{qintegralpphse1}
&\frac{d_j(\bar l)^{q-2}}{r_j^{n+p}}\iint_{L^{\prime\prime}_j(\bar l)}\left(\frac{l_j-u}{d_j(\bar l)}\right)^{q-1-\lambda+\lambda q}\varphi_j(\bar l)^{M-q}
 \,\mathrm {d}x\mathrm {d}t
  \\ \leq & \gamma
 \esssup_t\left(\frac{1}{r_j^n}\int_{L_j^{\prime\prime}(t)}
\frac{l_j-u}{d_j(\bar l)}\varphi_j(\bar l)^{\frac{Mn}{2p}} \,\mathrm {d}x\right)^{\frac{p}{n}}
\times\left[\frac{d_j(\bar l)^{p-2}}{r_j^{n}}\iint_{Q_j(\bar  l)}|D\tilde v_p|^p \,\mathrm {d}x\mathrm {d}t\right].
 \end{split}\end{equation}
 We can now proceed analogously to the proofs of \eqref{T1-T5qlaplace} and \eqref{Dpsiresultq}. The only difference is that we have to use
 \eqref{T1T2T3pphase} instead of \eqref{T1T2T3}. This gives
  \begin{equation}\begin{split}\label{T1-T5qlaplacepphase}
  \esssup_t \frac{1}{r_j^n} \int_{L_j^{\prime\prime}(t)}
\frac{l_j- u}{d_j(\bar l)}\varphi_j(\bar l)^{\frac{Mn}{2p}} \,\mathrm {d}x\leq \gamma\chi+\gamma(B^{-p}+B^{-(p-1)})
    \end{split}\end{equation}
  and
   \begin{equation}\begin{split}\label{Dpsiresultqpphase}
   \frac{d_j(\bar l)^{p-2}}{r_j^n}\iint_{Q_j(\bar l)}
|D\tilde v_p|^p \,\mathrm {d}x\mathrm {d}t\leq \gamma\chi+\gamma(B^{-p}+B^{-(p-1)}).
    \end{split}\end{equation}
Combining \eqref{qintegralpphse}, \eqref{qintegralpphse1}, \eqref{T1-T5qlaplacepphase} and \eqref{Dpsiresultqpphase}, we infer that the inequality
     \begin{equation}\begin{split}\label{estimate for qintegralpphase}
 &\frac{d_j(\bar l)^{q-2}}{r_j^{n+q}}\iint_{L^{\prime\prime}_j(\bar l)}a(x,t)\left(\frac{l_j-u}{d_j(\bar l)}\right)^{(1+\lambda)(q-1)}\varphi_j(\bar l)^{M-q}
 \,\mathrm {d}x\mathrm {d}t\\&\leq \gamma\left(\chi+B^{-p}+B^{-(p-1)}\right)^{1+\frac{p}{n}}
  \end{split}\end{equation}
   holds true for a constant $\gamma=\gamma(\text{data},\epsilon_1)$. Consequently, we infer from \eqref{estimate for pintegralpphase} and \eqref{estimate for qintegralpphase}
   that \eqref{double prime claimpphase} holds for the case of $p$-phase.

   To summarize what we have proved, we conclude from \eqref{step1 second estimate}, \eqref{double prime claim}, \eqref{step1 second estimate p-phase}
   and \eqref{double prime claimpphase} that the inequalities
   \begin{equation}\begin{split}\label{mainprimeprime}
 &\frac{d_j(\bar l)^{p-2}}{r_j^{n+p}}\iint_{L^{\prime\prime}_j(\bar l)}\left(\frac{l_j-u}{d_j(\bar l)}\right)^{(1+\lambda)(p-1)}\varphi_j(\bar l)^{M-p}
 \,\mathrm {d}x\mathrm {d}t
 \\&+\frac{d_j(\bar l)^{q-2}}{r_j^{n+q}}\iint_{L^{\prime\prime}_j(\bar l)}a(x,t)\left(\frac{l_j-u}{d_j(\bar l)}\right)^{(1+\lambda)(q-1)}\varphi_j(\bar l)^{M-q}
 \,\mathrm {d}x\mathrm {d}t
 \\&\leq
  4^n\epsilon_2\chi+\gamma_2\left(\chi+B^{-p}+B^{-(p-1)}\right)^{1+\frac{p}{n}}
  +\gamma_2\left(\chi+B^{-p}+B^{-(p-1)}\right)^{1+\frac{q}{n}}
  \end{split}\end{equation}
  and
  \begin{equation}\begin{split}\label{mainnoprime}
 &\frac{d_j(\bar l)^{p-2}}{r_j^{n+p}}\iint_{L_j(\bar l)}\left(\frac{l_j-u}{d_j(\bar l)}\right)^{(1+\lambda)(p-1)}\varphi_j(\bar l)^{M-p}
 \,\mathrm {d}x\mathrm {d}t
 \\&+\frac{d_j(\bar l)^{q-2}}{r_j^{n+q}}\iint_{L_j(\bar l)}a(x,t)\left(\frac{l_j-u}{d_j(\bar l)}\right)^{(1+\lambda)(q-1)}\varphi_j(\bar l)^{M-q}
 \,\mathrm {d}x\mathrm {d}t
 \\&\leq \gamma\epsilon_1^{(p-1)(1+\lambda)}\chi+
  4^n\epsilon_2\chi+\gamma_2\left(\chi+B^{-p}+B^{-(p-1)}\right)^{1+\frac{p}{n}}
  \\&\quad+\gamma_2\left(\chi+B^{-p}+B^{-(p-1)}\right)^{1+\frac{q}{n}}
  \end{split}\end{equation}
  hold for all the cases. Here, the constant $\gamma$ depends only  upon the data, while the constant $\gamma_2$ depends on the data, $\epsilon_1$ and $\epsilon_2$.

  In order to prove \eqref{Aj}, we need to treat the third integral on the right-hand side of \eqref{A_j}.
  To this end, we use the Caccioppoli inequality \eqref{Cacformula1} with $(\rho,l,d,\theta)$ replaced by
    $(r_j,l_j,d_j(\bar l),\Theta_j(\bar l))$ to deduce
  \begin{equation*}\begin{split}
\esssup_t&\frac{1}{r_j^n}
\int_{B_j}
G\left(\frac{l_j-u}{d_j(\bar l)} \right)\varphi_j(\bar l)^M\,\mathrm {d}x
  \\
 \leq &\gamma \frac{1}{r_j^n}\iint_{L_j(\bar l)}\frac{l_j-u}{d_j(\bar l)}\left|\partial_t\varphi_j(\bar l)\right|\varphi_j(\bar l)^{M-1}\,\mathrm {d}x\mathrm {d}t
  \\&+\gamma  \frac{d_j(\bar l)^{p-2}}{r_j^{n+p}}\iint_{L_j(\bar l)}\left(\frac{l_j-u}{d_j(\bar l)}\right)^{(1+\lambda)(p-1)}
  \varphi_j(\bar l)^{M-p}\,\mathrm {d}x\mathrm {d}t
  \\&+\gamma  \frac{d_j(\bar l)^{q-2}}{r_j^{n+q}}\iint_{L_j(\bar l)}a(x,t)\left(\frac{l_j-u}{d_j(\bar l)}\right)^{(1+\lambda)(q-1)}
  \varphi_j(\bar l)^{M-q}\,\mathrm {d}x\mathrm {d}t
\\&+\gamma \frac{\Theta_j(\bar l)}{r_j^nd_j(\bar l)^2}\int_{B_j}g^{\frac{p}{p-1}}\,\mathrm {d}x
  +\gamma \frac{\Theta_j(\bar l)}{r_j^nd_j(\bar l)}\int_{B_j}|f|\,\mathrm {d}x
  \\ =:&S_1+S_2+S_3+S_4+S_5,
    \end{split}\end{equation*}
    where the constant $\gamma$ depends only upon the data. Analysis similar to that in the proofs of \eqref{T4} and \eqref{T5} shows that
    $S_4+S_5\leq \gamma(\text{data})(B^{-p}+B^{-(p-1)})$. Moreover, it follows from \eqref{mainnoprime} that the estimate
     \begin{equation*}\begin{split}
 S_2+S_3&\leq \gamma\epsilon_1^{(p-1)(1+\lambda)}\chi+
  4^n\epsilon_2\chi+\gamma_2\left(\chi+B^{-p}+B^{-(p-1)}\right)^{1+\frac{p}{n}}
  \\&\quad+\gamma_2\left(\chi+B^{-p}+B^{-(p-1)}\right)^{1+\frac{q}{n}}
  \end{split}\end{equation*}
  holds true for $\gamma=\gamma(\text{data})$ and $\gamma_2=\gamma_2(\text{data},\epsilon_1,\epsilon_2)$. In order to estimate $S_1$,
 we first decompose
 \begin{equation*}\begin{split}
 S_1=\gamma \frac{1}{r_j^n}\left[\iint_{L_j^{\prime}
 (\bar l)}\cdots+\iint_{L_j^{\prime\prime}
 (\bar l)}\frac{l_j-u}{d_j(\bar l)}\left|\partial_t\varphi_j(\bar l)\right|\varphi_j(\bar l)^{M-1}\,\mathrm {d}x\mathrm {d}t\right]=:S_{1,1}+S_{1,2},
  \end{split}\end{equation*}
  with the obvious meaning of $S_{1,1}$ and $S_{1,2}$. In view of $|\partial_t\varphi_j(\bar l)|\leq 9\Theta_j(\bar l)^{-1}$, we infer from \eqref{step1 initial estimate} that
  the inequality
  \begin{equation*}\begin{split}
 S_{1,1}\leq\gamma \epsilon_1\left(\frac{d_j(\bar l)^{p-2}}{r_j^{n+p}}+a_1\frac{d_j(\bar l)^{q-2}}{r_j^{n+q}}\right)|L_j(\bar l)|\leq \gamma \epsilon_1\chi
         \end{split}\end{equation*}
          holds true for $\gamma=\gamma(\text{data})$.
  To estimate $S_{1,2}$, we again distinguish two cases: $p$-phase and $(p,q)$-phase. We first consider the case of $(p,q)$-phase, i.e., $a_1\geq 10[a]_\alpha r_j^\alpha$.
  Since $p>2$, $q>2$ and $\frac{4}{5}a_1\leq a(x,t)\leq \frac{6}{5}a_1$
for all $(x,t)\in Q_j(\bar l)$, we have
  \begin{equation*}\begin{split}
  S_{1,2}\leq &\gamma \frac{d_j(\bar l)^{p-2}}{r_j^{n+p}}\iint_{L_j^{\prime\prime}
 (\bar l)}\frac{l_j-u}{d_j(\bar l)}\varphi_j(\bar l)^{M-1}\,\mathrm {d}x\mathrm {d}t
 \\&+\gamma a_1\frac{d_j(\bar l)^{q-2}}{r_j^{n+q}}\iint_{L_j^{\prime\prime}
 (\bar l)}\frac{l_j-u}{d_j(\bar l)}\varphi_j(\bar l)^{M-1}\,\mathrm {d}x\mathrm {d}t
 \\ \leq &\gamma_1 \frac{d_j(\bar l)^{p-2}}{r_j^{n+p}}\iint_{L_j^{\prime\prime}
 (\bar l)}\left(\frac{l_j-u}{d_j(\bar l)}\right)^{(p-1)(1+\lambda)}\varphi_j(\bar l)^{M-p}\,\mathrm {d}x\mathrm {d}t
 \\&+\gamma_1 \frac{d_j(\bar l)^{q-2}}{r_j^{n+q}}\iint_{L_j^{\prime\prime}
 (\bar l)}a(x,t)\left(\frac{l_j-u}{d_j(\bar l)}\right)^{(q-1)(1+\lambda)}\varphi_j(\bar l)^{M-q}\,\mathrm {d}x\mathrm {d}t,
    \end{split}\end{equation*}
    where the constant $\gamma_1$ depends on the data and $\epsilon_1$. It follows from \eqref{mainprimeprime} that the inequality
     \begin{equation*}\begin{split}
     S_{1,2}\leq \epsilon_2\gamma_1\chi+\gamma_2\left(\chi+B^{-p}+B^{-(p-1)}\right)^{1+\frac{p}{n}}
  +\gamma_2\left(\chi+B^{-p}+B^{-(p-1)}\right)^{1+\frac{q}{n}}
     \end{split}\end{equation*}
     holds true for $\gamma_1=\gamma_1(\text{data},\epsilon_1)$ and $\gamma_2=\gamma_2(\text{data},\epsilon_1,\epsilon_2)$.
     Next, we consider the case of $p$-phase, i.e., $a_1<10[a]_\alpha r_j^\alpha$. According to \eqref{time derivativepphase} and \eqref{mainprimeprime}, we find that
     \begin{equation*}\begin{split}
  S_{1,2}\leq &\gamma \frac{d_j(\bar l)^{p-2}}{r_j^{n+p}}\iint_{L_j^{\prime\prime}
 (\bar l)}\frac{l_j-u}{d_j(\bar l)}\varphi_j(\bar l)^{M-1}\,\mathrm {d}x\mathrm {d}t
 \\ \leq &\gamma_1 \frac{d_j(\bar l)^{p-2}}{r_j^{n+p}}\iint_{L_j^{\prime\prime}
 (\bar l)}\left(\frac{l_j-u}{d_j(\bar l)}\right)^{(p-1)(1+\lambda)}\varphi_j(\bar l)^{M-p}\,\mathrm {d}x\mathrm {d}t
 \\ \leq& \epsilon_2\gamma_1\chi+\gamma_2\left(\chi+B^{-p}+B^{-(p-1)}\right)^{1+\frac{p}{n}}
  +\gamma_2\left(\chi+B^{-p}+B^{-(p-1)}\right)^{1+\frac{q}{n}},
    \end{split}\end{equation*}
    where the constant $\gamma_1$ depends on the data and $\epsilon_1$ and the constant $\gamma_2$ depends on the data, $\epsilon_1$ and $\epsilon_2$.
    Combining these inequalities, we conclude that the inequality
     \begin{equation}\begin{split}\label{estimate for Gchi}
\esssup_t&\frac{1}{r_j^n}
\int_{B_j}
G\left(\frac{l_j-u}{d_j(\bar l)} \right)\varphi_j(\bar l)^M\,\mathrm {d}x
\\& \leq\gamma\epsilon_1^{(p-1)(1+\lambda)}\chi+
 \epsilon_2\gamma_1\chi+\epsilon_1\gamma \chi+\gamma(B^{-p}+B^{-(p-1)})
  \\&\quad+\gamma_2\left(\chi+B^{-p}+B^{-(p-1)}\right)^{1+\frac{p}{n}}
  +\gamma_2\left(\chi+B^{-p}+B^{-(p-1)}\right)^{1+\frac{q}{n}}
\end{split}\end{equation}
holds for $\gamma=\gamma(\text{data})$, $\gamma_1=\gamma_1(\text{data},\epsilon_1)$ and $\gamma_2=\gamma_2(\text{data},\epsilon_1,\epsilon_2)$.
Combining \eqref{mainnoprime} and \eqref{estimate for Gchi}, we arrive at
\begin{equation}\begin{split}\label{estimate for Ajbarlchi}
A_j(\bar l)& \leq
 \epsilon_2\gamma_1\chi+\epsilon_1\gamma^{\prime\prime} \chi+\gamma^{\prime\prime}(B^{-p}+B^{-(p-1)})
  \\&\quad+\gamma_2\chi^{1+\frac{p}{n}}+\gamma_2\chi^{1+\frac{q}{n}}+\gamma_2\left(B^{-p}+B^{-(p-1)}\right)^{1+\frac{p}{n}}
  +\gamma_2\left(B^{-p}+B^{-(p-1)}\right)^{1+\frac{q}{n}},
\end{split}\end{equation}
where $\gamma^{\prime\prime}=\gamma^{\prime\prime}(\text{data})$, $\gamma_1=\gamma_1(\text{data},\epsilon_1)$ and $\gamma_2=\gamma_2(\text{data},\epsilon_1,\epsilon_2)$.
Our task now is to fix the parameters. We first choose $\epsilon_1=(100\gamma)^{-1}$ and this fixes $\gamma_1=\gamma_1(\epsilon_1)$. Next, we choose $\epsilon_2=(100\gamma_1)^{-1}$.
This also fixes $\gamma_2$ with the choices of $\epsilon_1$ and $\epsilon_2$. At this point, we set $B>8$ large enough to have
\begin{equation}\label{conditionforB}
B^{-p}+B^{-(p-1)}<\frac{1}{100\gamma^{\prime\prime}}\chi.
\end{equation}
Finally, we choose $\chi=\left(\frac{1}{100\gamma_2}\right)^\frac{n}{p}2^{-1-\frac{n}{p}}$. With the choices of $\epsilon_1$, $\epsilon_2$, $\chi$ and $B$,
      we infer from \eqref{estimate for Ajbarlchi} that $A_j(\bar l)\leq \frac{1}{2}\chi$,
      which proves the claim
      \eqref{Aj}. Recalling that $A_j(l)\to+\infty$
    as $l\to l_j$ and $A_j(l)$ is continuous in $(\bar l, l_j)$, then there exists a number $\tilde l\in (\bar l, l_j)$ such that $A_j(\tilde l)=\chi$.
    At this point, we define
    \begin{equation}\label{definitionlj+1}
	l_{j+1}=\begin{cases}
\tilde l,&\quad \text{if}\quad \tilde l<l_j-\frac{1}{4}(\alpha_{j-1}-\alpha_j),\\
	l_j-\frac{1}{4}(\alpha_{j-1}-\alpha_j),&\quad \text{if}\quad \tilde l\geq l_j-\frac{1}{4}(\alpha_{j-1}-\alpha_j).
	\end{cases}
\end{equation}
It is easily seen that
\eqref{li} and \eqref{Aj-1} are valid for $i=j+1$. Next, we claim that \eqref{lj}$_{j+1}$ holds. Since $l_{j+1}\geq \bar l$, we use \eqref{lj}$_{j}$ and $\alpha_{j-1}\geq \alpha_j$ to get
\begin{equation*}\begin{split}
l_{j+1}&>\frac{1}{2}\left(\frac{1}{8}B\alpha_{j-1}+\frac{1}{16}\omega\right)+\frac{1}{16}B\alpha_j+\frac{1}{32}\omega
    \geq \frac{1}{8}B\alpha_j+\frac{1}{16}\omega,
\end{split}\end{equation*}
which is our claim. Therefore, we have now constructed a sequence $\{l_i\}_{i=0}^\infty$ satisfying \eqref{li}-\eqref{lj}.
We just remark that this choice of $\chi$ also determines the value of $\nu_0$ via \eqref{nu0}.

Step 4: \emph{Proof of the inequality $u(x_1,t_1)>2^{-5}\omega$.} We first observe from the construction of $\{l_i\}_{i=0}^\infty$ that
this sequence is decreasing and hence that there exists a number $\hat l\geq  0$ such that
\begin{equation*}\hat l=\lim_{i\to\infty}l_i.\end{equation*}
This also implies that $d_i=l_i-l_{i+1}\to 0$ as $i\to\infty$. Moreover, we claim that $\hat l=u(x_1,t_1)$.
In the case $a_1=0$, we infer from \eqref{assumption for Aj} and \eqref{Aj-1} that
\begin{equation*}\begin{split}
\frac{1}{r_j^{n+p}} &\iint_{Q_j^\prime}(l_j-u)_+^{(1+\lambda)(p-1)}\,\mathrm {d}x\mathrm {d}t\leq A_j(l_{j+1})d_j^{(1+\lambda)(p+1)-(p-2)}
\\&\leq\chi d_j^{(1+\lambda)(p+1)-(p-2)}\to0,
  \end{split}\end{equation*}
  as $j\to\infty$.
  Here $Q_j^\prime=B_{j+1}\times\left(t_1-\frac{4}{9}(1+10[a]_\alpha)^{-1}r_j^p,t_1\right)$.
  In the case $a_1>0$, we choose $j\geq1$ large enough to have $10[a]_\alpha r_j^{q-p}\leq a_1$. It follows from \eqref{assumption for Aj} and \eqref{Aj-1} that
  \begin{equation*}\begin{split}
\frac{1}{a_1^{-1}r_j^{n+q}}&\iint_{Q_j^{\prime\prime}}(l_j-u)_+^{(1+\lambda)(q-1)}\,\mathrm {d}x\mathrm {d}t\leq \frac{5}{4}A_j(l_{j+1})d_j^{(1+\lambda)(q+1)-(q-2)}
\\&\leq \frac{5}{4}\chi d_j^{(1+\lambda)(q+1)-(q-2)}\to0,
  \end{split}\end{equation*}
  as $j\to\infty$. Here $Q_j^{\prime\prime}=B_{j+1}\times\left(t_1-\frac{4}{9}(1+[a]_\alpha^{-1})^{-1}a_1^{-1}r_j^q,t_1\right)$. This shows that $\hat l=u(x_1,t_1)$,
  since $(x_1,t_1)$ is a Lebesgue point of $u$. Furthermore, we prove that for any $j\geq1$ there holds
  \begin{equation}\begin{split}\label{djdj-1}
  d_j\leq &\frac{1}{4}d_{j-1}+\gamma \frac{4^{-j-100}}{B}\omega+
  \gamma\left(r_{j-1}^{p-n}\int_{B_{j-1}} g(y)^{\frac{p}{p-1}}
  \,\mathrm {d}y\right)^{\frac{1}{p}}\\&+\gamma\left(r_{j-1}^{p-n}\int_{B_{j-1}}|f(y)|
  \,\mathrm {d}y\right)^{\frac{1}{p-1}}+\gamma r_{j-1},
  \end{split}\end{equation}
  where the constant $\gamma$ depends only upon the data. In order to prove \eqref{djdj-1}, we assume that for any fixed $j\geq 1$,
  \begin{equation}\begin{split}\label{djdj-1proof}
  d_j>\frac{1}{4}d_{j-1}\qquad\text{and}\qquad d_j>\frac{1}{4}(\alpha_{j-1}-\alpha_j),
   \end{split}\end{equation}
   since otherwise \eqref{djdj-1} follows from \eqref{alpha2}.
   We first observe from $d_j>\frac{1}{4}(\alpha_{j-1}-\alpha_j)$ and \eqref{definitionlj+1} that $A_j(l_{j+1})=A_j(\tilde l)=\chi$.
   In view of $d_j>\frac{1}{4}d_{j-1}$, we see that
\begin{equation*}
\Theta_j
=d_{j}^2\left[\left(\frac{d_{j}}{r_{j}}\right)^p+a_1\left(\frac{d_{j}}{r_{j}}\right)^q\right]^{-1}
<\left(\frac{1}{4}d_{j-1}\right)^2\left[\left(\frac{d_{j-1}}{r_{j-1}}\right)^p+a_1\left(\frac{d_{j-1}}{r_{j-1}}\right)^q\right]^{-1}=\frac{1}{16}\Theta_{j-1}\end{equation*}
and this yields that $Q_j\subseteq Q_{j-1}$ and $\varphi_{j-1}=1$ in $Q_j$. Recalling that $Q_i\subseteq Q_A$ and $Q_i\subseteq Q_{r_i,r_i^2}(z_1)$ hold for
all $i\geq 0$, an argument similar to the one used in step 3 shows that
\begin{equation}\begin{split}\label{estimate for Ajbarlchistep4}
\chi=A_j(l_{j+1})& \leq
 \epsilon_2\gamma_1\chi+\epsilon_1\gamma \chi+\gamma\left(\frac{r_j^{p-n}}{d_j^p}\int_{B_{j}} g(y)^{\frac{p}{p-1}}
  \,\mathrm {d}y+\frac{r_j^{p-n}}{d_j^{p-1}}\int_{B_{j}} |f(y)|
  \,\mathrm {d}y\right)
  \\&\quad+\gamma_2\chi^{1+\frac{p}{n}}+\gamma_2\left(\frac{r_j^{p-n}}{d_j^p}\int_{B_{j}} g(y)^{\frac{p}{p-1}}
  \,\mathrm {d}y+\frac{r_j^{p-n}}{d_j^{p-1}}\int_{B_{j}} |f(y)|
  \,\mathrm {d}y\right)^{1+\frac{p}{n}}
 \\&\quad +\gamma_2\chi^{1+\frac{q}{n}}+\gamma_2\left(\frac{r_j^{p-n}}{d_j^p}\int_{B_{j}} g(y)^{\frac{p}{p-1}}
  \,\mathrm {d}y+\frac{r_j^{p-n}}{d_j^{p-1}}\int_{B_{j}} |f(y)|
  \,\mathrm {d}y\right)^{1+\frac{q}{n}},
\end{split}\end{equation}
where $\gamma=\gamma(\text{data})$, $\gamma_1=\gamma_1(\text{data},\epsilon_1)$ and $\gamma_2=\gamma_2(\text{data},\epsilon_1,\epsilon_2)$
are the constants in \eqref{estimate for Ajbarlchi}. According to the choices of $\epsilon_1$, $\epsilon_2$ and $\chi$,
we infer that the  terms involving $\chi$ on the right-hand side of \eqref{estimate for Ajbarlchistep4} can be re-absorbed into the left-hand side.
This yields that
  \begin{equation*}\begin{split}
  d_j\leq \gamma\left(r_j^{p-n}\int_{B_j}g(y)^{\frac{p}{p-1}}\,\mathrm {d}y\right)^{\frac{1}{p}}
  \leq \gamma\left(r_{j-1}^{p-n}\int_{B_{j-1}}g(y)^{\frac{p}{p-1}}\,\mathrm {d}y\right)^{\frac{1}{p}}
   \end{split}\end{equation*}
   or
    \begin{equation*}\begin{split}
   d_j\leq
  \gamma \left(r_j^{p-n}\int_{B_j}|f(y)|\,\mathrm {d}y\right)^{\frac{1}{p-1}}
  \leq
  \gamma \left(r_{j-1}^{p-n}\int_{B_{j-1}}|f(y)|\,\mathrm {d}y\right)^{\frac{1}{p-1}},
   \end{split}\end{equation*}
   which establishes the estimate \eqref{djdj-1}. At this stage, we let $J>1$ be a fixed integer and sum up the inequality \eqref{djdj-1}
   for $j=1,\cdots,J-1$. This yields that
   \begin{equation*}\begin{split}
 \sum_{j=1}^{J-1} d_j\leq& \frac{1}{4}\sum_{j=1}^{J-1}d_{j-1}+\gamma \frac{1}{B}
   \sum_{j=1}^{J-1} 4^{-j-100}\omega+\gamma\sum_{j=1}^{J-1}\left(r_{j-1}^{p-n}\int_{B_{j-1}}|f(y)|
  \,\mathrm {d}y\right)^{\frac{1}{p-1}}
\\&+ \gamma\sum_{j=1}^{J-1}\left(r_{j-1}^{p-n}\int_{B_{j-1}} g(y)^{\frac{p}{p-1}}
  \,\mathrm {d}y\right)^{\frac{1}{p}}+\gamma  \sum_{j=1}^{J-1}r_{j-1}
  \\ \leq& \frac{1}{4}\sum_{j=1}^{J-1}d_{j-1}+\gamma\frac{1}{B}\omega+\gamma\left(F_p(R)+G_p(R)+ R\right),
   \end{split}\end{equation*}
   where the constant $\gamma$ depends only upon the data. Recalling that $d_j=l_j-l_{j+1}$,
   we pass to the limit $J\to\infty$ to infer that
    \begin{equation*}
    l_1\leq \frac{3}{4}u(x_1,t_1)+\frac{1}{4}l_0+\gamma\frac{1}{B}\omega+\gamma\left(F_p(R)+G_p(R)+ R\right)\leq \frac{3}{4}u(x_1,t_1)+\frac{1}{4}l_0+\gamma\frac{1}{B}\omega,
    \end{equation*}
    where we have used \eqref{omega1violated} in the last step. Taking into account that $l_0=\frac{1}{4}\omega$ and $d_0\leq d_0(\bar l)<\frac{1}{8}\omega$,
    we deduce that for $B>32\gamma$ there holds
     \begin{equation*}\begin{split}
     \frac{\omega}{4}&=l_0=d_0+l_1\leq d_0+\frac{3}{4}u(x_1,t_1)+\frac{1}{4}l_0+\gamma\frac{1}{B}\omega
     \\&<
     \frac{3}{4}u(x_1,t_1)+\left(\frac{1}{8}+\frac{1}{16}+\gamma\frac{1}{B}\right)\omega< \frac{3}{4}u(x_1,t_1)+\frac{7}{32}\omega,
      \end{split}\end{equation*}
      which proves the inequality $u(x_1,t_1)>\frac{1}{24}\omega$. Recalling that $(x_1,t_1)$ is a Lebesgue point of $u$, we conclude that
   \eqref{DeGiorgi1} holds for
    almost everywhere point in $Q_{\frac{1}{4}R}^-(\bar t)$. Finally, according to \eqref{nu0} and \eqref{conditionforB}, 
    the values of $\nu_0$ and $B$ can be determined via
 \begin{equation}\begin{split}\label{nu0B} \nu_0=\frac{\chi}{4C^{n+q}\gamma^\prime}\qquad\text{and}\qquad  
 B=\max\left\{32\gamma,
 \left(\frac{\chi}{8C^{n-p}\gamma^\prime}\right)^{\frac{1}{1-p}},\left(\frac{\chi}{200\gamma^{\prime\prime}}\right)^{\frac{1}{1-p}}\right\}.
 \end{split}\end{equation}  
 Here, $\gamma^\prime$ and $\gamma^{\prime\prime}$ are the constants in \eqref{gammaprime} and \eqref{estimate for Ajbarlchi}, respectively.
The proof of the lemma is now complete.
 \end{proof}
 We remark that from the proof of Lemma \ref{lemmaDeGiorgi1} the constant $A$ satisfies the following condition:
 \begin{equation}\begin{split}\label{firstcondition forA}
         A>\max\left\{4^{100}\left(18\hat\gamma_1^{-1}\right)^{\frac{1}{p-2}}B,\quad 25^{\frac{1}{p-2}}(4^{100}B)^\frac{q-2}{p-2}\right\},
        \end{split}\end{equation}
        where $\hat \gamma_1=(1+10[a]_\alpha\|u\|_\infty^{q-p})^{-1}$ and the constant $B$ is determined in \eqref{nu0B}.
Moreover, we introduce a time level
$\hat t=\bar t-\Theta_\omega\left(\frac{1}{4}R\right)$.
According to Lemma \ref{lemmaDeGiorgi1}, we conclude that if \eqref{omega1} is violated, then
 \begin{equation}\begin{split}\label{hat t lower}
 u(x,\hat  t)>\mu_-+\frac{\omega}{2^5}\qquad\text{for}\qquad x\in B_{\frac{R}{4}}.
  \end{split}\end{equation}
With the help of the inequality \eqref{hat t lower} we can now establish the following result regarding the time propagation of positivity.
\begin{lemma}\label{timeexpandlemma}
Let $u$ be a bounded weak solution to \eqref{parabolic}-\eqref{A} in $\Omega_T$. Assume that \eqref{hat t lower} holds for the time level $\hat t=\bar t-\Theta_\omega\left(\frac{1}{4}R\right)$.
Given $\nu_*\in(0,1)$, there exists a constant $s_*=s_*(\text{data},\nu_*)>5$, such that either
\begin{equation}
\label{omega2}\omega\leq 2^{\frac{2}{p}s_*}G_p(R)+2^{\frac{1}{p-1}s_*}F_p(R)
\end{equation}
or
\begin{equation}\begin{split}\label{time expand}
\left|\left\{x\in B_{\frac{R}{8}}:u(x,t)<\mu_-+\frac{\omega}{2^{s_*}}\right\}\right|\leq \nu_*|B_{\frac{R}{8}}|
\end{split}\end{equation}
holds for any $t\in (\hat t,0)$.
 \end{lemma}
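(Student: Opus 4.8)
The plan is to run the standard parabolic expansion-of-positivity-in-time argument based on the logarithmic Caccioppoli inequality of Lemma~\ref{logestimatelemma}, the decisive point being that, because of \eqref{hat t lower}, the initial term in that inequality vanishes. Put $k=\mu_-+\frac{\omega}{2^5}$ and $H=H_k^-=\frac{\omega}{2^5}$; this is an admissible choice for $H_k^-$ since $\essinf_{Q_A}u\geq\mu_-$, hence $(u-k)_-\leq\frac{\omega}{2^5}$ throughout $Q_A$. For a parameter $s_*>6$ to be fixed later, set $c=\frac{\omega}{2^{s_*}}$, so that $0<c<H$ and $\ln(H/c)=(s_*-5)\ln2$. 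I would apply \eqref{lnCac} on the cylinder $B_{R/4}\times(\hat t,0)$ with a cutoff $\phi=\phi(x)$ satisfying \eqref{def zeta} for $\rho=R/4$, $\phi\equiv1$ on $B_{R/8}$ and $|D\phi|\leq\gamma R^{-1}$. The hypothesis $\Theta:=-\hat t\leq(R/4)^2$ of Lemma~\ref{logestimatelemma} follows from $-\hat t\leq\tfrac{8}{9}\Theta_A+\Theta_\omega(\tfrac14 R)$, the bounds $\Theta_A\leq A^{p-q}R^2$, $\Theta_\omega\leq A^{2-q}R^2$, $\Theta_\omega(\tfrac14 R)\leq4^{-p}\Theta_\omega$, and the largeness of $A$ (this imposes one or two further data-dependent lower bounds on $A$, which is harmless since $A$ is only finally fixed in Section~5). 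At the bottom slice $t=\hat t$, assumption \eqref{hat t lower} gives $u(\cdot,\hat t)>k$ on $B_{R/4}$, hence $(u-k)_-(\cdot,\hat t)\equiv0$ and $\Psi^-(u(\cdot,\hat t))\equiv0$ on $B_{R/4}$, so the first term on the right-hand side of \eqref{lnCac} is zero.

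Fix $t\in(\hat t,0)$ and set $\mathcal A(t)=\{x\in B_{R/8}:u(x,t)<\mu_-+\frac{\omega}{2^{s_*}}\}$. On $\mathcal A(t)$ one has $(u-k)_-=k-u>H-c$, hence $H-(u-k)_-+c<2c$, so that
\begin{equation*}
\Psi^-(u)>\ln\frac{H}{2c}=(s_*-6)\ln2\qquad\text{on }\mathcal A(t).
\end{equation*}
Since $\phi\equiv1$ on $B_{R/8}\supset\mathcal A(t)$, inserting this into \eqref{lnCac} yields, for a.e.\ $t\in(\hat t,0)$,
\begin{equation*}
(s_*-6)^2(\ln2)^2\,|\mathcal A(t)|\leq\esssup_{\hat t<\tau<0}\int_{B_{R/4}\times\{\tau\}}[\Psi^-(u)]^2\phi^q\,\mathrm{d}x\leq(\text{gradient terms})+(\text{potential terms}),
\end{equation*}
and the task reduces to bounding the right-hand side of \eqref{lnCac} by $\gamma s_*R^n$.

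For the gradient terms one uses $\Psi^-(u)\leq\ln(H/c)\leq\gamma s_*$, the bounds $[(\Psi^-)'(u)]^{2-p}\leq(H+c)^{p-2}\leq\gamma\omega^{p-2}$ and $[(\Psi^-)'(u)]^{2-q}\leq\gamma\omega^{q-2}$ (on the set $\{\Psi^->0\}$, where these integrands live), $|D\phi|\leq\gamma R^{-1}$, $a(z_0)=a_0$, and $|B_{R/4}\times(\hat t,0)|\leq\gamma R^n\Theta_\omega$; together with the elementary bounds $\Theta_\omega\leq\omega^{2-p}R^p$ and $a_0\Theta_\omega\leq\omega^{2-q}R^q$ all the powers of $\omega$ cancel, and each of the three gradient contributions is seen to be $\leq\gamma s_*R^n$, the anisotropic term $\rho^\alpha|D\phi|^q$ being absorbed via $\rho^\alpha|D\phi|^q\leq\gamma R^{\alpha-q}$ and $q\leq p+\alpha$, $R<1$. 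For the potential terms, first record the standard estimates
\begin{equation*}
\int_{B_{R/4}}|f|\,\mathrm{d}x\leq\gamma R^{n-p}F_p(R)^{p-1},\qquad\int_{B_{R/4}}g^{\frac{p}{p-1}}\,\mathrm{d}x\leq\gamma R^{n-p}G_p(R)^{p},
\end{equation*}
obtained by integrating the defining integrands of $F_p$, $G_p$ over $r\in(R/4,R/2)$ and using monotonicity. Combining with $\iint_{B_{R/4}\times(\hat t,0)}(\cdot)\leq(-\hat t)\int_{B_{R/4}}(\cdot)\leq\gamma\omega^{2-p}R^p\int_{B_{R/4}}(\cdot)$ and $c=\omega2^{-s_*}$ gives
\begin{equation*}
\frac1c\iint|f|\,\mathrm{d}x\,\mathrm{d}t\leq\gamma\,2^{s_*}\Bigl(\tfrac{F_p(R)}{\omega}\Bigr)^{p-1}R^n,\qquad\frac1{c^2}\iint g^{\frac{p}{p-1}}\,\mathrm{d}x\,\mathrm{d}t\leq\gamma\,2^{2s_*}\Bigl(\tfrac{G_p(R)}{\omega}\Bigr)^{p}R^n.
\end{equation*}
If the alternative \eqref{omega2} fails, then $\omega>2^{\frac1{p-1}s_*}F_p(R)$ and $\omega>2^{\frac2ps_*}G_p(R)$, so both quantities above are $\leq\gamma R^n$, whence the potential terms are $\leq\gamma\ln(H/c)R^n\leq\gamma s_*R^n$.

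Collecting everything, for a.e.\ $t\in(\hat t,0)$ (hence, by the time continuity of $u$, for every such $t$) we obtain
\begin{equation*}
|\mathcal A(t)|\leq\frac{\gamma\,s_*}{(s_*-6)^2}\,|B_{R/8}|,
\end{equation*}
with $\gamma$ depending only on the data (any dependence on $A$ being admissible, as $A$ is ultimately a data-constant). Since $\frac{s_*}{(s_*-6)^2}\to0$ as $s_*\to\infty$, choosing $s_*=s_*(\text{data},\nu_*)>6$ so large that $\frac{\gamma s_*}{(s_*-6)^2}\leq\nu_*$ proves \eqref{time expand}; should that same $s_*$ instead make \eqref{omega2} hold, there is nothing to prove. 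The main obstacle is the intrinsic-geometry bookkeeping in the third paragraph: one must verify that the length $-\hat t$ of the time slab is comparable to $\Theta_\omega$ rather than merely to $\Theta_A$, so that multiplying by $\Theta_\omega\leq\omega^{2-p}R^p$ and $a_0\Theta_\omega\leq\omega^{2-q}R^q$ cancels all powers of $\omega$ and leaves only the single polynomial factor $s_*$, which is then dominated by the logarithmic gain $(s_*-6)^2$; the borderline relation $q\leq p+\alpha$ is used precisely to control the anisotropic gradient contribution.
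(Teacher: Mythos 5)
Your plan is exactly the paper's: apply the logarithmic Caccioppoli estimate \eqref{lnCac} over $B_{R/4}\times(\hat t,0)$ with $k=\mu_-+\omega/2^5$, $H_k^-=\omega/2^5$, $c=\omega/2^{s_*}$; kill the initial term via \eqref{hat t lower}; bound the gradient and potential integrals; estimate $\Psi^-$ from below on the small set; and take $s_*$ large. But the step you yourself flag as ``the main obstacle'' is in fact false as you state it: $-\hat t$ is \emph{not} comparable to $\Theta_\omega$. From $-\hat t=-\bar t+\Theta_\omega(R/4)$ and $-\bar t\leq\frac{8}{9}\Theta_A$ you only get $-\hat t\lesssim\Theta_A$, and the paper's own observation $A^{p-2}\Theta_\omega\leq\Theta_A\leq A^{q-2}\Theta_\omega$ shows this is larger than $\Theta_\omega$ by a factor up to $A^{q-2}$. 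So your displays ``$|B_{R/4}\times(\hat t,0)|\leq\gamma R^n\Theta_\omega$'' and ``$\iint\leq\gamma\omega^{2-p}R^p\int$'' each silently drop a power of $A$, and the clean cancellation of all powers of $\omega$ down to $\gamma s_*R^n$ does not occur: one actually lands at $\gamma s_*A^{q-2}R^n$ for the gradient terms and $\gamma s_*A^{p-2}R^n$ for the potential terms.

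The paper does not attempt the comparability with $\Theta_\omega$ at all; it carries the $A$-factor through, arriving at $|\{x\in B_{R/8}:u<\mu_-+\omega/2^{l+5}\}|\leq\gamma_0\frac{l}{(l-1)^2}A^{q-2}|B_{R/8}|$, and then chooses $l>1+\gamma_0\nu_*^{-1}A^{q-2}$, i.e.\ $s_*\sim 2\gamma_0\nu_*^{-1}A^{q-2}$. Since $A$ is ultimately a data constant fixed in Section~5 independently of $s_*$, this is still $s_*=s_*(\text{data},\nu_*)$, consistent with the statement. Your parenthetical ``any dependence on $A$ being admissible'' is the correct instinct and is indeed the escape route, but the computation you actually wrote asserts the stronger, incorrect comparability and, as written, contains a real arithmetic gap that the $A^{q-2}$ factor must repair.
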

\begin{proof}
We first observe from \eqref{firstcondition forA} that $-\hat t=-\bar t+\Theta_\omega\left(\frac{1}{4}R\right)\leq \frac{8}{9}\Theta_A+\Theta_\omega<\Theta_A$.
In order to apply the logarithmic estimate \eqref{lnCac} over the cylinder $B_{\frac{R}{4}}\times[\hat t,0]$, we must have $-\hat t\leq \frac{1}{16}R^2$.
To this end, we assume that $A>16^{\frac{1}{q-p}}$ and this yields that $\Theta_A\leq A^{q-p}R^2\leq \frac{1}{16}R^2$.
Moreover, we will apply Lemma \ref{logestimatelemma}
with the logarithmic function
\begin{equation}\begin{split}\label{SS0}\Psi^-&=\ln^+\left(\frac{H_k^-}{H_k^--(u-k)_-+c}\right)
=\begin{cases}
	\ln\left(\dfrac{H_k^-}{H_k^-+u-k+c}\right),&\quad k-H_k^-\leq u<k-c,\\
	0,&\quad u\geq k-c.
	\end{cases}\end{split}\end{equation}
Here, we set $k=\mu_-+\frac{1}{2^5}\omega$, $H_k^-=\frac{1}{32}\omega$ and $c=\frac{1}{2^{5+l}}\omega$ where $l\geq1$ will be determined later.
Therefore, we infer that
\begin{equation*}\begin{split}
\esssup_{\hat t<t<0}&\int_{B_{\frac{R}{4}}\times\{t\}}[\Psi^-(u)]^2\phi^q\,\mathrm {d}x
\\ \leq &\int_{B_{\frac{R}{4}}\times\{\hat t\}}[\Psi^-(u)]^2\phi^q\,\mathrm {d}x+\gamma
\iint_{B_{\frac{R}{4}}\times[\hat t,0]}a_0\Psi^-(u)|D\phi|^q\left[(\Psi^-)^\prime(u)\right]^{2-q}
\,\mathrm {d}x\mathrm {d}t
\\&+\gamma \iint_{B_{\frac{R}{4}}\times[\hat t,0]}\Psi^-(u)\left[(\Psi^-)^\prime(u)\right]^{2-p}\left(|D\phi|^p+R^\alpha|D\phi|^q\right)
\,\mathrm {d}x\mathrm {d}t
\\&+\gamma\ln\left(\frac{H}{c}\right)\left(\frac{1}{c}\iint_{B_{\frac{R}{4}}\times[\hat t,0]}|f|\,\mathrm {d}x\mathrm {d}t+
\frac{1}{c^2}\iint_{B_{\frac{R}{4}}\times[\hat t,0]}g^{\frac{p}{p-1}}\,\mathrm {d}x\mathrm {d}t\right)
\\ =&:I_1+I_2+I_3+I_4,
\end{split}\end{equation*}
where $\phi=\phi(x)$ satisfies $0\leq\phi\leq1$ in $B_{\frac{R}{4}}$, $\phi\equiv 1$
in $B_{\frac{R}{8}}$ and $|D\phi|\leq 4R^{-1}$.
We first observe from \eqref{hat t lower} and \eqref{SS0} that $\psi^-(x,\hat t)= 0$ for all $x\in B_{\frac{R}{4}}$,
and hence $I_1=0$. In view of
\eqref{SS0}, we see that $\Psi^-(u)\leq l\ln2$ and $|(\Psi^-)^\prime(u)|^{-1}\leq \omega$. Noting that $-\hat t<\Theta_A\leq A^{q-2}\Theta_\omega$, we get
\begin{equation*}\begin{split}
I_2+I_3\leq 4l(\ln2)(-\hat t)|B_{\frac{R}{4}}|\left(\omega^{p-2}R^{-p}+a_0 \omega^{q-2}R^{-q}\right)\leq \gamma lA^{q-2}|B_{\frac{R}{8}}|.
\end{split}\end{equation*}
To estimate $I_4$, we use the inequality $-\hat t<\Theta_A\leq A^{p-2}\omega^{2-p}R^p$.  It follows that
\begin{equation*}\begin{split}
I_4\leq &\gamma (lA^{p-2})\frac{2^l}{\omega^{p-1}}\left(R^{p-n}\int_{B_{\frac{R}{4}}}|f|\,\mathrm {d}x\right)|B_{\frac{R}{8}}|
\\&+\gamma (lA^{p-2})\frac{2^{2l}}{\omega^{p}}\left(R^{p-n}\int_{B_{\frac{R}{4}}}g^{\frac{p}{p-1}}\,\mathrm {d}x\right)|B_{\frac{R}{8}}|.
\end{split}\end{equation*}
At this stage, we assume that
\begin{equation}\label{omega2assumption}
\omega> 2^{\frac{2}{p}l}G_p(R)+2^{\frac{1}{p-1}l}F_p(R)
\end{equation}
and this gives $I_3\leq \gamma lA^{p-2}|B_{\frac{R}{8}}|$. Consequently, we infer that the inequality
\begin{equation}\begin{split}\label{psi-upper}
\int_{B_{\frac{R}{4}}\times\{t\}}[\Psi^-(u)]^2\phi^q\,\mathrm {d}x\leq \gamma lA^{q-2}|B_{\frac{R}{8}}|
\end{split}\end{equation}
holds for any $t\in(\hat t,0)$. On the other hand,
the left-hand side of \eqref{psi-upper} is estimated below by integrating
over the smaller set
$\left\{x\in B_{\frac{R}{8}}:u<\mu_-+\frac{\omega}{2^{l+5}}\right\}\subset B_{\frac{R}{4}}.$
On such a set, $\phi\equiv 1$ and
\begin{equation*}\begin{split}\Psi^-&=\ln^+\left(\frac{\frac{1}{2^5}\omega}{\frac{1}{2^5}\omega
-(u-k)_-+\frac{1}{2^{l+5}}\omega}\right)
>(l-1)\ln2.\end{split}\end{equation*}
This leads to
\begin{equation}\begin{split}\label{SS4}\int_{B_{\frac{R}{8}}\times\{t\}}[\Psi^-(u)]^2\phi^q dx\geq
(l-1)^2(\ln2)^2
\left|\left\{x\in B_{\frac{R}{8}}:u<\mu_-+\frac{\omega}{2^{l+5}} \right\}\right|,\end{split}\end{equation}
for all $t\in(\hat t,0)$. Combining \eqref{psi-upper} and \eqref{SS4}, we conclude that there exists a constant $\gamma_0>1$ 
depending only upon the data, such that
\begin{equation*}
\left|\left\{x\in B_{\frac{R}{8}}:u<\mu_-+\frac{\omega}{2^{l+5}} \right\}\right|\leq \gamma_0\frac{l}{(l-1)^2}A^{q-2}
|B_{\frac{R}{8}}|.
\end{equation*}
For a fixed $\nu_*\in(0,1)$, we let $l=l(\nu_*,A)$ be an integer such that $l>1+\gamma_0\nu_*^{-1}A^{q-2}$ and choose $s_*=
s_*(\nu_*)=2\gamma_0\nu_*^{-1}A^{q-2}$.
This proves the inequality \eqref{time expand} under the assumption \eqref{omega2assumption}. On the other hand, if \eqref{omega2assumption}
is violated, then we see that \eqref{omega2} holds for such a choice of $s_*$. This completes the proof of Lemma \ref{timeexpandlemma}.
\end{proof}
The crucial result in our development of regularity properties of weak solutions will be the following De Giorgi-type Lemma involving the initial data.
The proof is similar in spirit to Lemma \ref{lemmaDeGiorgi1}. However, in order to handle
the initial data, we need to modify the definition of $A_j(l)$ and the argument is considerably more delicate.
\begin{lemma}\label{lemmaDeGiorgi2}
Let $\xi=2^{-s_*}$, $0<\xi<2^{-5}$, $\widetilde Q_1=B_\frac{R}{8}\times (\hat t,0)$ and $\widetilde Q_2=B_\frac{R}{16}\times \left(\frac{\hat t}{2},0\right)$.
Let $u$ be a bounded weak solution to \eqref{parabolic}-\eqref{A} in $\Omega_T$.
Assume that \eqref{hat t lower} holds.
Then, there exist $\nu_1\in(0,1)$ and $\tilde B>1$ which
can be quantitatively determined a priori only in terms of the data, such that if
\begin{equation}\label{assumptionDegiorgi}\left|\left\{(x,t)\in \widetilde Q_1:u<\mu_-+\xi \omega\right\}
\right|\leq \nu_1|\widetilde Q_1|,\end{equation}
then either
\begin{equation}
\label{DeGiorgi2}u(x,t)>\mu_-+\frac{1}{4}\xi\omega\qquad\text{for}\ \ \text{a.e.}\ \ (x,t)\in \widetilde Q_2\end{equation}
or
\begin{equation}
\label{omega3}\xi\omega\leq \tilde B\left(
F_p(R)+G_p(R)+100R\right).\end{equation}
Here, $\hat t=\bar t-\Theta_\omega\left(\frac{1}{4}R\right)$.
 \end{lemma}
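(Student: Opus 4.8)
The plan is to run once more the Kilpel\"ainen--Mal\'y scheme of the proof of Lemma \ref{lemmaDeGiorgi1}, the only genuinely new ingredient being the good datum \eqref{hat t lower} on the slice $\{t=\hat t\}$. After replacing $u$ by $\tilde u=u-\mu_-\ge0$ (a solution of an equation with the same structure) we fix a Lebesgue point $z_1=(x_1,t_1)\in\widetilde Q_2$ and set out to prove $\tilde u(x_1,t_1)>\tfrac14\xi\omega$. We keep $r_j=4^{-j}C^{-1}R$, $B_j=B_{r_j}(x_1)$, $\lambda=\tfrac{p}{nq}$, a decreasing sequence $\{l_j\}$ with $l_0=\xi\omega$, $d_j(l)=l_j-l$, $\Theta_j(l)=d_j(l)^2[(d_j(l)/r_j)^p+a_1(d_j(l)/r_j)^q]^{-1}$ with $a_1=a(x_1,t_1)$, increments $\alpha_j$ defined exactly as in \eqref{alpha} but with $\omega$ replaced by $\xi\omega$, and the functional $A_j(l)$ from \eqref{A_j}. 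The one structural change is that the cylinders are \emph{truncated at the initial time}: $Q_j(l)=B_j\times(\max\{\hat t,\,t_1-\Theta_j(l)\},\,t_1)$, and the time factor of $\varphi_j(l)$ is chosen to vanish at $t_1-\Theta_j(l)$ when $t_1-\Theta_j(l)\ge\hat t$ and to equal $1$ throughout $(\hat t,t_1)$ when $t_1-\Theta_j(l)<\hat t$. Since $t_1\in(\hat t/2,0)$, the interval $(\hat t,t_1)$ is of length comparable to $|\hat t|=\Theta_\omega(\tfrac14R)$, so that, once $C\ge16$ is fixed, every such cylinder with $B_j\subseteq B_{R/8}$ lies in $\widetilde Q_1$.

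The point of \eqref{hat t lower} is that, when a cylinder $Q_j(l)$ reaches $\hat t$, the Caccioppoli inequality \eqref{Cacformula1} applied on $Q_{r_j,\,t_1-\hat t}(z_1)$ with cutoff $\varphi_j(l)=\phi_j(x)$ produces the initial term $\gamma\int_{B_j\times\{\hat t\}}G\bigl(\tfrac{l_j-u}{d_j(l)}\bigr)\phi_j^M\,\mathrm{d}x$, which \emph{vanishes}: on $B_j\subseteq B_{R/4}$ one has $u(\cdot,\hat t)>\mu_-+2^{-5}\omega\ge\mu_-+l_j$ because $l_j\le l_0=\xi\omega<2^{-5}\omega$, hence $(l_j-u(\cdot,\hat t))_+\equiv0$; moreover $\partial_t\varphi_j(l)\equiv0$, so the time-derivative term disappears as well. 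When $Q_j(l)$ does not reach $\hat t$ it is exactly the intrinsic cylinder of Lemma \ref{lemmaDeGiorgi1} with cutoff vanishing on $\partial_PQ_j(l)$. Thus every De Giorgi/Sobolev estimate of Lemma \ref{lemmaDeGiorgi1} carries over to each $Q_j(l)$ unchanged, the only bookkeeping being that the phase analysis of Remark \ref{phase analysis 1} is used at scale $r_j$ on the non-truncated cylinders (via $Q_j(l)\subseteq Q_{r_j,r_j^2}(z_1)$, which follows from $d_j\ge50r_j$ just as before) and at scale $R$ on the truncated ones (via $Q_j(l)\subseteq\widetilde Q_1\subseteq Q_{R,R^2}$, using $-\hat t<\Theta_A<R^2$ from Lemma \ref{timeexpandlemma}); only finitely many indices give truncated cylinders.

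With these conventions the proof follows the same four steps as that of Lemma \ref{lemmaDeGiorgi1}. In the analogue of Step~2 one checks, splitting into $p$-phase and $(p,q)$-phase, that $Q_0(\bar l)\subseteq\widetilde Q_1$ for $\bar l=\tfrac12l_0+\tfrac1{16}\tilde B\alpha_0+\tfrac1{32}\xi\omega$ once $C$ is large enough, and then estimates $A_0(\bar l)$ from the Caccioppoli inequality together with the hypothesis in the form $|L_0(\bar l)|\le|\widetilde Q_1\cap\{u<\mu_-+\xi\omega\}|\le\nu_1|\widetilde Q_1|$ and the comparison $|\widetilde Q_1|\le\gamma\,r_0^n\,\Theta_0(\bar l)$ (valid because $|\hat t|=\Theta_\omega(\tfrac14R)$ is controlled, up to a constant depending on the data and $C$, by the intrinsic height $\Theta_0(\bar l)$, in both phases); this gives $A_0(\bar l)\le\gamma'C^{n+q}\nu_1+\gamma'C^{n-p}(\tilde B^{-p}+\tilde B^{-(p-1)})$, and one fixes $\nu_1=\nu_1(\text{data},\chi)$ and $\tilde B=\tilde B(\text{data},\chi)$ so that $A_0(\bar l)\le\tfrac12\chi$, exactly as in \eqref{nu0}. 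Steps~3 and~4 then repeat those of Lemma \ref{lemmaDeGiorgi1} verbatim — the nested selection of $l_{j+1}$ via continuity of $A_j(\cdot)$, the iteration producing inequalities \eqref{mainprimeprime}--\eqref{mainnoprime} with $\omega$ replaced by $\xi\omega$, the recursion $d_j\le\tfrac14d_{j-1}+\gamma\tilde B^{-1}4^{-j-100}\xi\omega+\gamma\bigl(r_{j-1}^{p-n}\!\int_{B_{j-1}}g^{p/(p-1)}\,\mathrm{d}y\bigr)^{1/p}+\gamma\bigl(r_{j-1}^{p-n}\!\int_{B_{j-1}}|f|\,\mathrm{d}y\bigr)^{1/(p-1)}+\gamma r_{j-1}$, and summation over $j$ — and, when \eqref{omega3} is assumed to fail, they yield $\tilde u(x_1,t_1)=\lim_j l_j>\tfrac14\xi\omega$, i.e.\ \eqref{DeGiorgi2}.

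The main obstacle I anticipate is the geometric bookkeeping for the truncated cylinders: one must verify in both phases that the capped $Q_j(\bar l)$ sit inside $\widetilde Q_1$ (so that \eqref{assumptionDegiorgi} and \eqref{hat t lower} apply) while their intrinsic heights still dominate $|\hat t|$ up to a data-constant (as needed for the $A_0$ comparison), and that the constants generated by the $R$-scale phase analysis for these finitely many cylinders combine cleanly with those from the $r_j$-scale analysis used for the remaining ones; once this is in place, every remaining estimate is one already established in the proof of Lemma \ref{lemmaDeGiorgi1}, uniformly for $\xi\in(0,2^{-5})$.
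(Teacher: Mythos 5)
Your plan correctly identifies the new ingredient (vanishing initial term via \eqref{hat t lower} on the truncated cylinders) and the geometry is workable, but there is a genuine gap in Step~3: you select $l_{j+1}$ by claiming ``continuity of $A_j(\cdot)$'' and invoking the intermediate value theorem, and the functional $A_j$ you define is \emph{not} continuous.

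Concretely: with the truncation $Q_j(l)=B_j\times(\max\{\hat t,\,t_1-\Theta_j(l)\},t_1)$ and your prescription that the time factor of $\varphi_j(l)$ vanishes at $t_1-\Theta_j(l)$ when $\Theta_j(l)\le t_1-\hat t$ but equals $1$ on all of $(\hat t,t_1)$ once $\Theta_j(l)> t_1-\hat t$, the cutoff jumps at the critical level $l_j^*$ where $\Theta_j(l_j^*)=t_1-\hat t$. On one side, $\theta_{j,l}$ vanishes near $t=\hat t$; on the other side it is identically $1$ there. Since $\Theta_j(\cdot)$ is increasing in $l$, this produces an upward jump $A_j(l_j^{*-})\le A_j(l_j^{*+})$ in the quantity you integrate against $\varphi_j(l)^M$. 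If $\chi$ happens to lie in the jump gap, there is no $\tilde l$ with $A_j(\tilde l)=\chi$, and your selection of $l_{j+1}$ breaks down. This is exactly why the paper splits $A_j$ into the two expressions $A_j^{(1)}$ (intrinsic cylinder with time cutoff) and $A_j^{(2)}$ (cylinder $K_j=B_j\times(\hat t,t_1)$ with spatial cutoff only), notes explicitly that $A_j$ is discontinuous at $l_j^*$ with $A_j^{(2)}(l)\ge A_j^{(1)}(l)$ on $[0,l_j^*]$, and then constructs $l_{j+1}$ through a five-case analysis (\romannumeral 1)--(\romannumeral 5). Case (\romannumeral 5), where $A_j^{(1)}(l_j^*)\le\chi\le 2\,A_j^{(2)}(l_j^*)$, is precisely the ``jump straddles $\chi$'' scenario: the paper then takes $l_{j+1}=l_j^*$ (or its cap by the $\alpha$-increment), obtains only $A_j(l_{j+1})\le\chi$ rather than equality, and in Step~4 must run the recursion argument using the lower bound $A_j^{(2)}(l_{j+1})\ge\tfrac{\chi}{2}$ instead of $A_j(l_{j+1})=\chi$. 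Your proposal does not anticipate the need for this separate branch, nor the corresponding modification of the summation step that proves $d_j\le\tfrac14 d_{j-1}+\cdots$. Once that case distinction is supplied, the rest of your outline matches the paper's argument.
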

 \begin{proof}Our proof starts with the observation that $\tilde u=u-\mu_-$ is a bounded weak solution to the parabolic double-phase equation
$\partial_t \tilde u-\operatorname{div}\tilde A(x,t,\tilde u,D\tilde u)=f,$
where the vector field $\tilde A(x,t,\tilde u,\xi)=A(x,t,\tilde u+\mu_-,\xi)$ satisfies the structure condition \eqref{A}.
 Once again, we abbreviate $\tilde u$ by $u$.
 We first assume that \eqref{omega3} is violated, that is,
 \begin{equation}
\label{omega1violated1}\xi\omega>\tilde B\left(
F_p(R)+G_p(R)+100R\right),\end{equation}
where $\tilde B>10$ will be chosen later in a universal way. Let $z_1\in \tilde Q_1$  be a Lebesgue point of $u$.
We are reduced to proving \eqref{DeGiorgi2} for $u(x_1,t_1)>\frac{1}{4}\xi\omega$. The proof of this result will be divided into several steps.

Step 1: \emph{Setting up the notations.}
Let $a_1=a(x_1,t_1)$, $r_j=4^{-j-4}R$, $B_j=B_{r_j}(x_1)$ and $K_j=B_j\times(\hat t, t_1)$.
For a sequence $\{l_j\}_{j=0}^\infty$ and a fixed $l>0$, we set $d_{j}(l)=l_j-l$. If $d_{j}(l)>0$, then we define
the parabolic cylinder $Q_j(l)=B_j\times (t_1-\Theta_j(l),t_1)$, where
 \begin{equation}\label{defthetajl}\Theta_j(l)=d_{j}(l)^2\left[\left(\frac{d_{j}(l)}{r_j}\right)^p+a_1\left(\frac{d_{j}(l)}{r_j}\right)^q\right]^{-1}.\end{equation}
 Moreover, we define $\varphi_j(l)=\phi_j(x)\theta_{j,l}(t)$, where
$\phi_j\in C_0^\infty(B_j)$, $\phi_j=1$ on $B_{j+1}$, $|D\phi_j|\leq r_j^{-1}$
 and $\theta_{j,l}(t)$ is a Lipschitz function
satisfies $\theta_{j,l}(t)=1$ in $t\geq t_1-\frac{4}{9}\Theta_j(l)$, $\theta_{j,l}(t)=0$ in $t\leq t_1-\frac{5}{9}\Theta_j(l)$
and
  \begin{equation*}
 \theta_{j,l}(t)=\frac{t-t_1-\frac{5}{9}\Theta_j(l)}{\frac{1}{9}\Theta_j(l)}\qquad\text{in}\qquad
 t_1-\frac{5}{9}\Theta_j(l)\leq t\leq t_1-\frac{4}{9}\Theta_j(l).
 \end{equation*}
By the definition of $\varphi_j(l)$, we see that $\varphi_j(l)=0$ on $\partial_PQ_j(l)$.
 Next, for  $j=-1,0,1,2,\cdots$, we define the sequence $\{\alpha_j\}$ by
  \begin{equation}\begin{split}\label{alpha}\alpha_j=&\int_0^{r_j}\left(r^{p-n}\int_{B_r(x_1)}
  g(y)^{\frac{p}{p-1}} \,\mathrm {d}y
  \right)^{\frac{1}{p}}\frac{\mathrm {d}r}{r}\\&+
  \int_0^{r_j}\left(r^{p-n}\int_{B_r(x_1)}|f(y)| \,\mathrm {d}y
  \right)^{\frac{1}{p-1}}\frac{\mathrm {d}r}{r}+100r_j.
  \end{split}\end{equation}
  According to the definition of $\alpha_j$, we see that $\alpha_j\to 0$ as $j\to\infty$ and there holds $\tilde B\alpha_{j-1}\leq \xi\omega$,
  \begin{equation}\begin{split}\label{alpha1xi}
  \alpha_{j-1}-\alpha_j\geq &\gamma\left(r_j^{p-n}\int_{B_j} g(y)^{\frac{p}{p-1}}
  \,\mathrm {d}y\right)^{\frac{1}{p}}\\&+\gamma\left(r_j^{p-n}\int_{B_j}|f(y)|
  \,\mathrm {d}y\right)^{\frac{1}{p-1}}+200 r_j
  \end{split}\end{equation}
  and
   \begin{equation}\begin{split}\label{alpha2xi}
  \alpha_{j-1}-\alpha_j\leq &\gamma\left(r_{j-1}^{p-n}
  \int_{B_{j-1}} g(y)^{\frac{p}{p-1}}
  \,\mathrm {d}y\right)^{\frac{1}{p}}\\&+\gamma\left(r_{j-1}^{p-n}\int_{B_{j-1}}|f(y)|
  \,\mathrm {d}y\right)^{\frac{1}{p-1}}+\gamma r_{j-1}
  \end{split}\end{equation}
  for all $j=0,1,2,\cdots$, where the constant $\gamma$ depends only upon the data.
  Moreover,
 for the sequence $\{l_j\}_{j=0}^\infty$ and a fixed $l>0$
 we introduce a quantity $A_j(l)$ as follows:
   \begin{itemize}
 \item[$\bullet$]In the case $Q_j(l)\subseteq \widetilde Q_1$, i.e., $t_1-\Theta_j(l)\geq\hat t$, we define $A_j(l)$ by
 \begin{equation}\begin{split}\label{Aj1st}
 A_j(l)= A_j^{(1)}(l)=&\frac{d_j(l)^{p-2}}{r_j^{n+p}}\iint_{L_j(l)}\left(\frac{l_j-u}{d_j(l)}\right)^{(1+\lambda)(p-1)}\varphi_j(l)^{M-p}
 \,\mathrm {d}x\mathrm {d}t
 \\&+\frac{d_j(l)^{q-2}}{r_j^{n+q}}\iint_{L_j(l)}a(x,t)\left(\frac{l_j-u}{d_j(l)}\right)^{(1+\lambda)(q-1)}\varphi_j(l)^{M-q}
 \,\mathrm {d}x\mathrm {d}t
 \\&+\esssup_t\frac{1}{r_j^n}\int_{B_j\times\{t\}}G\left(\frac{l_j-u}{d_j(l)}\right)\varphi_j(l)^M
 \,\mathrm {d}x.
 \end{split}\end{equation}
 \end{itemize}
  \begin{itemize}
 \item[$\bullet$]In the case $Q_j(l)\nsubseteq\widetilde Q_1$, i.e., $t_1-\Theta_j(l)<\hat t$,
 we define $A_j(l)$ by
 \begin{equation}\begin{split}\label{Aj2nd}
 A_j(l)=A_j^{(2)}(l)=&\frac{d_j(l)^{p-2}}{r_j^{n+p}}\iint_{U_j}\left(\frac{l_j-u}{d_j(l)}\right)^{(1+\lambda)(p-1)}\phi_j^{M-p}
 \,\mathrm {d}x\mathrm {d}t
 \\&+\frac{d_j(l)^{q-2}}{r_j^{n+q}}\iint_{U_j}a(x,t)\left(\frac{l_j-u}{d_j(l)}\right)^{(1+\lambda)(q-1)}\phi_j^{M-q}
 \,\mathrm {d}x\mathrm {d}t
 \\&+\esssup_{\hat t<t<t_1}\frac{1}{r_j^n}\int_{B_j\times\{t\}}G\left(\frac{l_j-u}{d_j(l)}\right)\phi_j^M
 \,\mathrm {d}x.
 \end{split}\end{equation}
 \end{itemize}
 Here, the function $G$ is defined in \eqref{G}, $L_j(l)=Q_j(l)\cap \{u\leq l_j\}$ and
 $U_j=K_j\cap \{u\leq l_j\}$.
 Throughout the proof, we keep $\lambda=\frac{p}{nq}$.
 In the following, the sequence $\{l_j\}_{j=0}^\infty$ we constructed will satisfy $l_j\geq\frac{1}{4}\xi\omega$
 for all $j=0,1,2,\cdots$. At this stage, we claim that either $u(x_1,t_1)\geq\frac{1}{4}\xi\omega$ or $A_j(l)\to+\infty$ as $l\to l_j$.
 For a fixed $j\geq0$, let $m_j>0$ be a number satisfying $l_j-1<m_j< l_j$. Thus, $d_j(l)<1$ for any $m_j\leq l<l_j$.
We first consider the case $\Theta_j(m_j)\leq t_1-\hat t$.
Observe that
$\Theta_j(l)\geq\left(r_j^{-p}+a_1r_j^{-q}\right)^{-1}=:\theta_j$ for any $m_j\leq l<l_j$. We infer from \eqref{Aj1st} that
  \begin{equation}\begin{split}\label{assumption for Ajxi1}
  A_j(l)\geq& \frac{d_j(l)^{p-2-(1+\lambda)(p+1)}}{r_j^{n+p}}\iint_{\tilde Q_j}(l_j-u)_+^{(1+\lambda)(p-1)}\,\mathrm {d}x\mathrm {d}t
  \\&+ \frac{d_j(l)^{q-2-(1+\lambda)(q+1)}}{r_j^{n+q}}\iint_{\tilde Q_j}a(x,t)(l_j-u)_+^{(1+\lambda)(q-1)}\,\mathrm {d}x\mathrm {d}t,
  \end{split}\end{equation}
  where $\tilde Q_j=B_{j+1}\times \left(\hat t_1,t_1\right)$ and $\hat t_1=\max\left\{t_1-\frac{4}{9}\theta_j,\hat t\right\}$. 
  Since $\theta_j\leq r_j^2$,
  we get $\tilde Q_j\subseteq Q_{r_j,r_j^2}(z_1)$.
 In the case $a_1<10[a]_\alpha r_j^{q-p}$, we have $\tilde Q_j\supseteq Q_j^\prime=B_{j+1}\times\left(t_1^\prime,t_1\right)$,
 where $t_1^\prime=\max\left\{t_1-\frac{4}{9}(1+10[a]_\alpha)^{-1}r_j^p,\hat t\right\}$.
 If
  \begin{equation*}\begin{split}
  \iint_{Q_j^\prime}(l_j-u)_+^{(1+\lambda)(p-1)}\,\mathrm {d}x\mathrm {d}t=0,
   \end{split}\end{equation*}
   then $u(x_1,t_1)\geq l_j\geq \frac{1}{4}\xi\omega$, since $(x_1,t_1)$ is a Lebesgue point of $u$. On the other hand, if
    \begin{equation*}\begin{split}
  \iint_{Q_j^\prime}(l_j-u)_+^{(1+\lambda)(p-1)}\,\mathrm {d}x\mathrm {d}t>0,
   \end{split}\end{equation*}
   then we conclude from \eqref{assumption for Ajxi1} that
    \begin{equation*}\begin{split}
  A_j(l)\geq& \frac{d_j(l)^{p-2-(1+\lambda)(p+1)}}{r_j^{n+p}}\iint_{Q_j^\prime}(l_j-u)_+^{(1+\lambda)(p-1)}\,\mathrm {d}x\mathrm {d}t\to+\infty
  \end{split}\end{equation*}
  as $l\to l_j$ and this proves the claim. In the case $a_1\geq10[a]_\alpha r_j^{q-p}$, we have
  $\theta_j\geq (1+[a]_\alpha^{-1})^{-1}a_1^{-1}r_j^q$ and $\frac{4}{5}a_1\leq a(x,t)\leq \frac{6}{5}a_1$ for all $(x,t)\in \tilde Q_j$. Hence
  $\tilde Q_j\supseteq Q_j^{\prime\prime}=B_{j+1}\times\left(t_1^{\prime\prime},t_1\right)$,
 where $t_1^{\prime\prime}=\max\left\{t_1-\frac{4}{9}(1+[a]_\alpha^{-1})^{-1}a_1^{-1}r_j^q,\hat t\right\}$.
  If
  \begin{equation*}\begin{split}
  \iint_{Q_j^{\prime\prime}}(l_j-u)_+^{(1+\lambda)(q-1)}\,\mathrm {d}x\mathrm {d}t=0,
   \end{split}\end{equation*}
   then $u(x_1,t_1)\geq l_j\geq \frac{1}{4}\xi\omega$, since $(x_1,t_1)$ is a Lebesgue point of $u$. On the other hand, if
 \begin{equation*}\begin{split}
  \iint_{Q_j^{\prime\prime}}(l_j-u)_+^{(1+\lambda)(q-1)}\,\mathrm {d}x\mathrm {d}t>0,
   \end{split}\end{equation*}
   then we deduce from $Q_j^{\prime\prime}\subseteq \tilde Q_j$ and \eqref{assumption for Aj} that
    \begin{equation*}\begin{split}
  A_j(l)\geq& \frac{4}{5}a_1\frac{d_j(l)^{q-2-(1+\lambda)(q+1)}}{r_j^{n+q}}\iint_{Q_j^{\prime\prime}}(l_j-u)_+^{(1+\lambda)(q-1)}\,\mathrm {d}x\mathrm {d}t\to+\infty
  \end{split}\end{equation*}
  as $l\to l_j$. This proves the claim for the case $\Theta_j(m_j)\leq t_1-\hat t$. We now turn our attention to the case $\Theta_j(m_j)> t_1-\hat t$.
  Noting that $\Theta_j(l)\geq \Theta_j(m_j)> t_1-\hat t$ for $m_j\leq l<l_j$, we infer from \eqref{Aj2nd} that
    \begin{equation}\begin{split}\label{assumption for Ajxi2}
  A_j(l)\geq &\frac{d_j(l)^{p-2-(1+\lambda)(p-1)}}{r_j^{n+p}}\iint_{\hat Q_j}(l_j-u)_+^{(1+\lambda)(p-1)}
 \,\mathrm {d}x\mathrm {d}t,
  \end{split}\end{equation}
  where $\hat Q_j=B_{j+1}\times (\hat t,t_1)$. If
  \begin{equation*}\begin{split}
  \iint_{\hat Q_j}(l_j-u)_+^{(1+\lambda)(q-1)}\,\mathrm {d}x\mathrm {d}t=0,
   \end{split}\end{equation*}
   then $u(x_1,t_1)\geq l_j\geq \frac{1}{4}\xi\omega$, since $(x_1,t_1)$ is a Lebesgue point of $u$. On the other hand, if
 \begin{equation*}\begin{split}
  \iint_{\hat Q_j}(l_j-u)_+^{(1+\lambda)(q-1)}\,\mathrm {d}x\mathrm {d}t>0,
   \end{split}\end{equation*}
   then we deduce from \eqref{assumption for Ajxi2} that $A_j(l)\to+\infty$ as $l\to l_j$, which proves our assertion.
  It is obvious that the lemma holds if $u(x_1,t_1)\geq \frac{1}{4}\xi\omega$. In the following, we assume that
  $A_j(l)\to+\infty$ as $l\to l_j$ holds for all $j=0,1,2,\cdots$.
 From \eqref{defthetajl}, we see that $\Theta_j(l)$ is continuous and
increasing in $[0,l_j)$. In the case $\Theta_j(0)>t_1-\hat t$, we have $Q_j(l)\nsubseteq\widetilde Q_1$ for all $0\leq l<l_j$. Hence $A_j(l)=A_j^{(2)}(l)$
for all $l\in[0,l_j)$, and so $A_j(l)$ is continuous and increasing in $[0,l_j)$. In the case $\Theta_j(0)\leq t_1-\hat t$, there exists a unique $l_j^*\in[0,l_j)$
such that $\Theta_j(l_j^*)=t_1-\hat t$. It is easy to check that $A_j(l)= A_j^{(1)}(l)$ for $0\leq l\leq l_j^*$ and $A_j(l)= A_j^{(2)}(l)$ for $ l_j^* < l<l_j$.
Hence $A_j(l)$ is discontinuous at $l_j^*$.
It is worth pointing out that the domain of $A_j^{(2)}(l)$ extends to $[0,l_j)$ and $A_j^{(2)}(l)\geq A_j^{(1)}(l)$ for all $l\in [0,l_j^*]$.

 Step 2: \emph{Determine the values of $l_0$ and $l_1$.} To start with, we define $l_0=\xi\omega$ and $\bar l=\frac{1}{2}l_0+\frac{1}{4}\tilde B\alpha_0+\frac{1}{8}\xi\omega$.
 Noting that $\tilde B\alpha_0<\xi\omega$, we get $\frac{1}{8}\xi\omega\leq d_0(\bar l)\leq \frac{3}{8}\xi\omega$.
Moreover, we claim that $Q_0(\bar l)\nsubseteq\widetilde Q_1$. To begin with the proof, we define the number
\begin{equation}\label{Dpq}D_{p}=\max\left\{1,\ \tfrac{1}{(p-2)\ln2}\ln\left(\tfrac{1}{(p-2)\ln2}\right)\right\}.\end{equation}
Then, we have $X\leq 2^{(p-2)X}$, provided that $X>D_p$.
Again, we first consider the case of $p$-phase, i.e., $a_0<10[a]_\alpha R^\alpha$. Observe that $a_1\leq 12[a]_\alpha R^\alpha$, $\xi<1$ and $\omega\leq 2\|u\|_\infty$. It follows
that
\begin{equation*}\begin{split}
\Theta_0(\bar l)\geq \left(\frac{3}{8}\xi\omega\right)^2\left[\left(\frac{3}{8}\frac{\xi\omega}{4^{-4}R}\right)^p+a_1\left(\frac{3}{8}\frac{\xi\omega}{4^{-4}R}
\right)^q\right]^{-1}\geq \hat\gamma_1(\xi\omega)^{2-p}R^p,
 \end{split}\end{equation*}
 where $\hat\gamma_1=\left(\frac{3}{8}\right)^{2-p}4^{-4q}\left[1+12[a]_\alpha \left(\frac{3}{4}\|u\|_\infty\right)^{q-p}\right]^{-1}$.
At this point, we choose
\begin{equation}\label{firstconditionforA}A>\max\left\{D_{p}^\frac{1}{q-2},\left[\tfrac{1}{p-2}\log_2\tfrac{1}{\hat\gamma_1}\right]^\frac{1}{q-2}\right\}.\end{equation}
Recalling that $\xi=2^{-s_*}=2^{-2\gamma_0\nu_*^{-1}A^{q-2}}\leq 2^{-2A^{q-2}}$, we have
\begin{equation*}\begin{split}
\Theta_0(\bar l)\geq \hat\gamma_1 2^{(p-2)A^{q-2}}A^{q-2}\omega^{2-p}R^p\geq A^{p-2}\omega^{2-p}R^p>t_1+\Theta_A,
 \end{split}\end{equation*}
 which proves the claim $Q_0(\bar l)\nsubseteq\widetilde Q_1$. Next, we consider the case of $(p,q)$-phase, i.e., $a_0\geq10[a]_\alpha R^\alpha$.
 We infer from \eqref{phase analysis} that $a_1\leq \frac{6}{5}a_0$ and hence
 \begin{equation*}\begin{split}
\Theta_0(\bar l)&\geq \left(\frac{3}{8}\xi\omega\right)^2\left[\left(\frac{3}{8}\frac{\xi\omega}{4^{-4}R}\right)^p+\frac{6}{5}a_0\left(\frac{3}{8}\frac{\xi\omega}{4^{-4}R}
\right)^q\right]^{-1}
\\&\geq \hat\gamma_2(\xi\omega)^2\left[\left(\frac{\xi\omega}{R}\right)^p+a_0\left(\frac{\xi\omega}{R}
\right)^q\right]^{-1},
 \end{split}\end{equation*}
 where $\hat\gamma_2=\frac{5}{6}\left(\frac{3}{8}\right)^{2-q}4^{-4q}$.
 At this stage, we impose a condition for $A$ as follows:
\begin{equation}\label{secondconditionforA}A>\max\left\{D_{p}^\frac{1}{q-2},\left[\tfrac{1}{p-2}\log_2\tfrac{1}{\hat\gamma_2}\right]^\frac{1}{q-2}\right\}.\end{equation}
In view of
$\xi\leq 2^{-2A^{q-2}}$, we conclude that
 \begin{equation*}\begin{split}
\Theta_0(\bar l)&\geq \hat\gamma_2\xi^{2-p}\omega^2\left[\left(\frac{\omega}{R}\right)^p+a_0\left(\frac{\omega}{R}
\right)^q\right]^{-1}>A^{q-2}\Theta_\omega\geq \Theta_A>t_1+\Theta_A.
 \end{split}\end{equation*}
  This proves that $Q_0(\bar l)\nsubseteq\widetilde Q_1$ holds.
  According to the definition of $A_j(l)$, we see that $A_0(\bar l)=A_0^{(2)}(\bar l)$.
  Our task  now is to deduce an upper bound for $A_0(\bar l)$.
  In the case of $p$-phase, i.e., $a_0<10[a]_\alpha R^\alpha$. We infer from $d_0(\bar l)\leq\frac{3}{8}\xi\omega\leq\frac{3}{4}\|u\|_\infty$,
  $l_0-u\leq \xi\omega$
  and \eqref{assumptionDegiorgi}
that
 \begin{equation}\begin{split}\label{A01xi}
&\frac{d_0(\bar l)^{p-2}}{r_0^{n+p}}\iint_{U_0}\left(\frac{l_0-u}{d_0(\bar l)}\right)^{(1+\lambda)(p-1)}\phi_0^{M-p}
 \,\mathrm {d}x\mathrm {d}t
 \\&+\frac{d_0(\bar l)^{q-2}}{r_0^{n+q}}\iint_{U_0}a(x,t)\left(\frac{l_0-u}{d_0(\bar l)}\right)^{(1+\lambda)(q-1)}\phi_0^{M-q}
 \,\mathrm {d}x\mathrm {d}t
 \\&\leq \gamma \frac{d_0(\bar l)^{p-2}}{r_0^{n+p}}\iint_{U_0}\left(\frac{l_0-u}{d_0(\bar l)}\right)^{(1+\lambda)(p-1)}\phi_0^{M-p}
 +\left(\frac{l_0-u}{d_0(\bar l)}\right)^{(1+\lambda)(q-1)}\phi_0^{M-q}
 \,\mathrm {d}x\mathrm {d}t
 \\&\leq \gamma\frac{(\xi\omega)^{p-2}}{R^{n+p}}|U_0|\leq \gamma \frac{A^{2-p}\omega^{p-2}}{R^{n+p}}R^n(-\hat t)\frac{|\tilde Q_1\cap\{u<\xi\omega\}|}{|\tilde Q_1|}\leq\gamma\nu_1,
 \end{split}\end{equation}
  since $\xi^{p-2}<2^{-A^{q-2}}<A^{2-q}<A^{2-p}$ and $-\hat t\leq \Theta_A\leq \omega^{2-p}A^{p-2}R^p$.
  In the case of $(p,q)$-phase, i.e., $a_0\geq10[a]_\alpha R^\alpha$.
  We have $a(x,t)\leq \frac{4}{5}a_0$ for all $(x,t)\in K_0$.
 Moreover, we deduce from $d_0(\bar l)\leq\frac{3}{8}\xi\omega$,
  $l_0-u\leq \xi\omega$, $\xi^{p-2}<A^{2-q}$
  and \eqref{assumptionDegiorgi}
that
 \begin{equation}\begin{split}\label{A02xi}
&\frac{d_0(\bar l)^{p-2}}{r_0^{n+p}}\iint_{U_0}\left(\frac{l_0-u}{d_0(\bar l)}\right)^{(1+\lambda)(p-1)}\phi_0^{M-p}
 \,\mathrm {d}x\mathrm {d}t
 \\&+\frac{d_0(\bar l)^{q-2}}{r_0^{n+q}}\iint_{U_0}a(x,t)\left(\frac{l_0-u}{d_0(\bar l)}\right)^{(1+\lambda)(q-1)}\phi_0^{M-q}
 \,\mathrm {d}x\mathrm {d}t
 \\&\leq \gamma \left(\frac{(\xi\omega)^{p-2}}{R^{n+p}}+a_0\frac{(\xi\omega)^{q-2}}{R^{n+q}}\right)|U_0|
 \\&\leq \gamma\frac{\xi^{p-2}}{R^{n}\Theta_\omega}|U_0|\leq \gamma \frac{1}{A^{q-2}\Theta_\omega}(-\hat t)\frac{|\tilde Q_1\cap\{u<\xi\omega\}|}{|\tilde Q_1|}\leq\gamma\nu_1,
 \end{split}\end{equation}
  since $-\hat t\leq \Theta_A\leq A^{q-2}\Theta_\omega$. Moreover, we use the Caccioppoli estimate \eqref{Cacformula1} with $(l,d,\Theta)$ replaced by $(l_0,d_0(\bar l),t_1-\hat t)$
to deduce that
 \begin{equation*}\begin{split}
    \esssup_{\hat t<t<t_1}& \frac{1}{r_0^n}\int_{B_0\times\{t\}}G\left(\frac{l_0-u}{d_0(\bar l)}\right)\phi_0^M
 \,\mathrm {d}x \\ \leq &\gamma\frac{d_0(\bar l)^{p-2}}{r_0^{p+n}}\iint_{U_0}\left(\frac{l_0-u}{d_0(\bar l)}
 \right)^{(1+\lambda)(p-1)}
 \phi_0^{M-p}\,\mathrm {d}x\mathrm {d}t
 \\ &+\gamma\frac{d_0(\bar l)^{q-2}}{r_0^{q+n}}\iint_{U_0}a(x,t)\left(\frac{l_0-u}{d_0(\bar l)}
 \right)^{(1+\lambda)(q-1)}
 \phi_0^{M-q}\,\mathrm {d}x\mathrm {d}t
   \\&+\gamma \frac{t_1-\hat t}{r_0^nd_0(\bar l)^2}\int_{B_0}g^{\frac{p}{p-1}}\,\mathrm {d}x
  +\gamma \frac{t_1-\hat t}{r_0^nd_0(\bar l)}\int_{B_0}|f|\,\mathrm {d}x
  \\ =&L_1+L_2+L_3+L_4,
 \end{split}\end{equation*}
 since $|D\phi_0|\leq r_0^{-1}$ and $\partial_t\phi_0\equiv0$.
 In view of \eqref{hat t lower}, we infer that the first term on the right-hand side of \eqref{Cacformula1} vanishes.
 According to \eqref{A01xi} and \eqref{A02xi}, we get $L_1+L_2\leq \gamma\nu_1$. To estimate $L_3$ and $L_4$, we observe from
$Q_0(\bar l)\nsubseteq\widetilde Q_1$ that $t_1-\hat t<\Theta_0(\bar l)\leq d_0(\bar l)^{2-p}r_0^p$. It follows from $\frac{1}{8}\xi\omega\leq d_0(\bar l)$
 and \eqref{omega1violated1}
 that
  \begin{equation*}\begin{split}
  L_3+L_4&\leq \gamma \frac{1}{r_0^{n-p}d_0(\bar l)^p}\int_{B_0}g^{\frac{p}{p-1}}\,\mathrm {d}x
  +\gamma \frac{1}{r_0^{n-p}d_0(\bar l)^{p-1}}\int_{B_0}|f|\,\mathrm {d}x
  \\&\leq \gamma (\xi\omega)^{-p}\frac{1}{R^{n-p}}\int_{B_R(x_1)}g^{\frac{p}{p-1}}\,\mathrm {d}x
  +\gamma (\xi\omega)^{-(p-1)}\frac{1}{R^{n-p}}\int_{B_R(x_1)}|f|\,\mathrm {d}x
  \\&\leq \gamma(\tilde B^{-p}+\tilde B^{-(p-1)}).
  \end{split}\end{equation*}
  Consequently, we infer that
  \begin{equation*}\begin{split}
    \esssup_{\hat t<t<t_1}& \frac{1}{r_0^n}\int_{B_0\times\{t\}}G\left(\frac{l_0-u}{d_0(\bar l)}\right)\phi_0^M
 \,\mathrm {d}x  \leq \gamma\nu_1+\gamma(\tilde B^{-p}+\tilde B^{-(p-1)})
 \end{split}\end{equation*}
 and hence that there exists a constant $\gamma_1>0$ depending only on the data, such that
 \begin{equation}\label{definitionofgamma1}A_0(\bar l)=A_0^{(2)}(\bar l)\leq \gamma_1\nu_1+\gamma_1(\tilde B^{-p}+\tilde B^{-(p-1)}).\end{equation}
 Then, we choose $\nu_1=\nu_1(\text{data},\chi)<1$ and $\tilde B=\tilde B(\text{data},\chi)>1$ such that
     \begin{equation}\begin{split}\label{nu1tildeB}
  \nu_1=\frac{\chi}{4\gamma_1}\qquad\text{and}\qquad\gamma_1(\tilde B^{-p}+\tilde B^{-(p-1)})<\frac{\chi}{4}.
    \end{split}\end{equation}
    This implies that $A_0(\bar l)=A_0^{(2)}(\bar l)\leq\frac{1}{2}\chi$. Noting that $\Theta_0(l)\geq\Theta_0(\bar l)>t_1-\hat t$ for all $l\in[\bar l,l_0)$, we have $A_0(l)=A_0^{(2)}(l)$
    for all $l\in[\bar l,l_0)$. Observe that $A_0(l)=A_0^{(2)}(l)$ is continuous in $[\bar l,l_0)$ and $A_0(l)\to+\infty$ as $l\to l_0$. Then, there exists a number $\tilde l\in(\bar l,l_0)$
    such that $A_0(\tilde l)=\chi$. We infer from \eqref{omega1violated1} that for $\tilde B>2$ there holds
  $l_0-\bar l=d_0(\bar l)\geq \frac{1}{8}\xi\omega\geq \frac{1}{4
    \tilde B}\xi\omega>\frac{1}{4}(\alpha_{-1}-\alpha_0),$
    since $\tilde B\alpha_{-1}<\xi\omega$.
    At this point, we set
    \begin{equation}\label{l1xi}
	l_1=\begin{cases}
\tilde l,&\quad \text{if}\quad \tilde l<l_0-\frac{1}{4}(\alpha_{-1}-\alpha_0),\\
	l_0-\frac{1}{4}(\alpha_{-1}-\alpha_0),&\quad \text{if}\quad \tilde l\geq l_0-\frac{1}{4}(\alpha_{-1}-\alpha_0).
	\end{cases}
\end{equation}
Furthermore, we set $Q_0=Q_0(l_1)$ and $d_0=l_0-l_1$. Since $\tilde B\alpha_0<\xi\omega$, we get $l_1\geq  \bar l>\frac{1}{2}\tilde B\alpha_0+\frac{1}{4}\xi\omega$.

Step 3:  \emph{Determine the sequence $\{l_j\}_{j=0}^\infty$.} Suppose that we have chosen two
sequences $l_1,\cdots,l_j$ and $d_0,\cdots,d_{j-1}$ such that for $i=1,\cdots,j$, there holds
    \begin{equation}\label{lixi}\frac{1}{8}\xi\omega+\frac{1}{2}l_{i-1}+\frac{1}{4}\tilde B\alpha_{i-1}<l_i\leq
    l_{i-1}-\frac{1}{4}(\alpha_{i-2}-\alpha_{i-1}),
    \end{equation}
    \begin{equation}\label{Aj-1xi}
    A_{i-1}(l_i)\leq \chi,
     \end{equation}
      \begin{equation}\label{ljxi}
      l_i>\frac{1}{2}\tilde B\alpha_{i-1}+\frac{1}{4}\xi\omega.
       \end{equation}
       Then, we set $Q_i=Q_i(l_{i+1})$, $d_i=d_i(l_{i+1})=l_i-l_{i+1}$, $L_i=L_i(l_{i+1})$, $\varphi_i=\varphi_i(l_{i+1})$ and
       $\Theta_i=\Theta_i(l_{i+1})$ for $i=1,2,\cdots,j-1$. Furthermore, we assert that, with suitable choices of $\chi$ and $\tilde B$, there holds
       \begin{equation}\label{Ajxi}
       A_j(\bar l)\leq \frac{1}{2}\chi,\qquad\text{where}\qquad \bar l=\frac{1}{2}l_j+\frac{1}{4}\tilde B\alpha_j+\frac{1}{8}\xi\omega.
        \end{equation}
    To prove \eqref{Ajxi}, we first observe from \eqref{lixi} and \eqref{ljxi} that $d_j(\bar l)\geq \frac{1}{4}d_{j-1}+\frac{1}{4}\tilde B(\alpha_{j-1}-\alpha_j)$.
     Moreover, we claim that $Q_j(\bar l)\subseteq Q_{r_j,r_j^2}(x_1,t_1)$. In view of \eqref{alpha1xi} and $d_j(\bar l)\geq \frac{1}{4}\tilde B(\alpha_{j-1}-\alpha_j)$, we deduce
     $d_j(\bar l)\geq 25\tilde B r_j$ and hence
     \begin{equation*}\begin{split}
   \Theta_j(\bar l)=d_{j}(\bar l)^2\left[\left(\frac{d_{j}(\bar l)}{r_{j}}\right)^p+a_1\left(\frac{d_{j}(\bar l)}{r_{j}}\right)^q\right]^{-1}
   \leq (25\tilde B)^{2-p}r_j^2<r_j^2,
     \end{split}\end{equation*}
     since $\tilde B>1$. This proves the inclusion $Q_j(\bar l)\subseteq Q_{r_j,r_j^2}(x_1,t_1)$.
    To proceed further, we divide the proof of \eqref{Ajxi} into two cases: $Q_j(\bar l)\nsubseteq\widetilde Q_1$ and $Q_j(\bar l)\subseteq\widetilde Q_1$.

    \S 1. We consider the case $Q_j(\bar l)\nsubseteq\widetilde Q_1$, i.e., $\Theta_j(\bar l)\geq t_1-\hat t$. First, we claim that $Q_{j-1}\nsubseteq\widetilde Q_1$
    and $Q_j(\bar l)\subseteq Q_{j-1}$. In view of $d_j(\bar l)\geq \frac{1}{4}d_{j-1}$, we have
    \begin{equation*}\begin{split}
    \Theta_{j-1}&=\Theta_{j-1}(l_j)=d_{j-1}^2\left[\left(\frac{d_{j-1}}{r_{j-1}}\right)^p+a_1\left(\frac{d_{j-1}}{r_{j-1}}\right)^q\right]^{-1}
    \\&\geq 16d_{j}(\bar l)^2\left[\left(\frac{4d_{j}(\bar l)}{4r_{j}}\right)^p+a_1\left(\frac{4d_{j}(\bar l)}{4r_{j}}\right)^q\right]^{-1}
    =16\Theta_j(\bar l)\geq 16(t_1-\hat t)>t_1-\hat t.
     \end{split}\end{equation*}
     This yields that $Q_{j-1}\nsubseteq\widetilde Q_1$
    and $Q_j(\bar l)\subseteq Q_{j-1}$. Hence, we have $A_{j-1}(l_j)=A_{j-1}^{(2)}(l_j)$ and $A_{j}(\bar l)=A_{j}^{(2)}(\bar l)$.
   Since $u\leq l_j$ on $U_j$ and $\Theta_j(\bar l)\geq t_1-\hat t$, we conclude from \eqref{Aj-1xi} that
   \begin{equation}\begin{split}\label{firstestimate for setU}
  & \left(\frac{d_j(\bar l)^{p-2}}{r_j^{n+p}}+a_1\frac{d_j(\bar l)^{q-2}}{r_j^{n+q}}\right)|U_j|
  \\&\leq \Theta_j(\bar l)^{-1}(t_1-\hat t)\frac{1}{r_j^n}\esssup_{\hat t<t<t_1}\int_{B_{j-1}\cap \{
  u(\cdot,t)\leq l_j\}}\phi_{j-1}^M\,\mathrm {d}x
  \\&\leq\frac{4^n}{r_{j-1}^n}\esssup_{\hat t<t<t_1}\int_{B_{j-1}}G\left(\frac{l_{j-1}-u}{d_{j-1}}\right)\phi_{j-1}^M \,\mathrm {d}x
      \\&  \leq 4^nA_{j-1}^{(2)}(l_j)=4^nA_{j-1}(l_j)\leq 4^n\chi.
     \end{split}\end{equation}
     For a fixed $\epsilon_1>0$, we now decompose
   $U_j=K_j\cap \{u\leq l_j\}=U^\prime_j\cup U^{\prime\prime}_j$, where
   \begin{equation}\begin{split}\label{Udecomposition}
   U^\prime_j=U_j\cap \left\{\frac{l_j-u}{d_j(\bar l)}\leq\epsilon_1\right\}\qquad\text{and}\qquad
   U^{\prime\prime}_j=U_j\setminus U^\prime_j.
   \end{split}\end{equation}
   To proceed further, we shall again distinguish two cases: $p$-phase ($a_1<10[a]_\alpha r_j^\alpha$) and $(p,q)$-phase ($a_1\geq10[a]_\alpha r_j^\alpha$).

   \S 1.1. In the case of $(p,q)$-phase, i.e., $a_1\geq10[a]_\alpha r_j^\alpha$. We observe from $Q_j(\bar l)\subseteq Q_{r_j,r_j^2}(x_1,t_1)$
   that $\frac{4}{5}a_1\leq a(x,t)\leq\frac{6}{5}a_1$ for all $(x,t)\in Q_j(\bar l)$. First, we infer from \eqref{firstestimate for setU} that
    \begin{equation}\begin{split}\label{Uprimexi}
&\frac{d_j(\bar l)^{p-2}}{r_j^{n+p}}\iint_{U^\prime_j}\left(\frac{l_j-u}{d_j(\bar l)}\right)^{(1+\lambda)(p-1)}\phi_j^{M-p}
 \,\mathrm {d}x\mathrm {d}t
 \\&+\frac{d_j(\bar l)^{q-2}}{r_j^{n+q}}\iint_{U^\prime_j}a(x,t)\left(\frac{l_j-u}{d_j(\bar l)}\right)^{(1+\lambda)(q-1)}\phi_j^{M-q}
 \,\mathrm {d}x\mathrm {d}t
 \\&\leq \epsilon_1^{(1+\lambda)(p-1)}\left(\frac{d_j(\bar l)^{p-2}}{r_j^{n+p}}+a_1\frac{d_j(\bar l)^{q-2}}{r_j^{n+q}}\right)|U_j|
 \leq \epsilon_1^{(1+\lambda)(p-1)}4^n\chi.
 \end{split}\end{equation}
 To estimate the integrals on the set $U^{\prime\prime}_j$, we distinguish two cases: $q<n$ and $q\geq n$.

\S 1.1.1. In the case $q<n$. For any fixed $\epsilon_2\in(0,1)$, we use Young's inequality to obtain
  \begin{equation}\begin{split}\label{Uprimeprimexi}
&\frac{d_j(\bar l)^{p-2}}{r_j^{n+p}}\iint_{U^{\prime\prime}_j}\left(\frac{l_j-u}{d_j(\bar l)}\right)^{(1+\lambda)(p-1)}\phi_j^{M-p}
 \,\mathrm {d}x\mathrm {d}t
 \\&+\frac{d_j(\bar l)^{q-2}}{r_j^{n+q}}\iint_{U^{\prime\prime}_j}a(x,t)\left(\frac{l_j-u}{d_j(\bar l)}\right)^{(1+\lambda)(q-1)}\phi_j^{M-q}
 \,\mathrm {d}x\mathrm {d}t
 \\ \leq &\epsilon_2\left(\frac{d_j(\bar l)^{p-2}}{r_j^{n+p}}+a_1\frac{d_j(\bar l)^{q-2}}{r_j^{n+q}}\right)|U_j|
\\ &+c(\epsilon_2)\left[\frac{d_j(\bar l)^{p-2}}{r_j^{n+p}}\iint_{U^{\prime\prime}_j}\left(\frac{l_j-u}{d_j(\bar l)}\right)^{p\frac{n+h_1}{nh_1}}\phi_j^{(M-p)p_*}
 \,\mathrm {d}x\mathrm {d}t\right.
 \\&+\left. a_1\frac{d_j(\bar l)^{q-2}}{r_j^{n+q}}\iint_{U^{\prime\prime}_j}\left(\frac{l_j-u}{d_j(\bar l)}\right)^{q\frac{n+h_2}{nh_2}}\phi_j^{(M-q)q_*}
 \,\mathrm {d}x\mathrm {d}t\right]
 \\ =&:T_1+T_2,
 \end{split}\end{equation}
 where $h_1=\frac{p}{p-1-\lambda}$, $h_2=\frac{q}{q-1-\lambda}$, $p_*=p\frac{n+h_1}{h_1(1+\lambda)(p-1)n}$ and $q_*=q\frac{n+h_2}{h_2(1+\lambda)(q-1)n}$. Since $\lambda=\frac{p}{nq}<\frac{1}{n}$,
 we see that $p_*>1$ and $q_*>1$. In view of \eqref{firstestimate for setU}, we get $T_1\leq 4^n\epsilon_2\chi$. To estimate $T_2$, we apply Lemma \ref{lemmainequalitypsi-}, H\"older's inequality
 and Sobolev's inequality to obtain
 \begin{equation}\begin{split}\label{T2estimatexi}
 T_2\leq &c(\epsilon_2)\left[\frac{d_j(\bar l)^{p-2}}{r_j^{n+p}}\iint_{U^{\prime\prime}_j}v_p^{p+\frac{ph_1}{n}}
 \,\mathrm {d}x\mathrm {d}t
 + a_1\frac{d_j(\bar l)^{q-2}}{r_j^{n+q}}\iint_{U^{\prime\prime}_j}v_q^{q+\frac{qh_2}{n}}
 \,\mathrm {d}x\mathrm {d}t\right]
 \\ \leq & c(\epsilon_2)\left[\frac{d_j(\bar l)^{p-2}}{r_j^{n}}\esssup_{\hat t<t<t_1}\left(\frac{1}{r_j^n}\int_{U^{\prime\prime}_j(t)}\frac{l_j-u}{d_j(\bar l)}\phi_j^{k_1h_1}\,\mathrm {d}x\right)^\frac{p}{n}
 \iint_{U_j}|Dv_p|^p\,\mathrm {d}x\mathrm {d}t
 \right.
 \\ &+\left.a_1\frac{d_j(\bar l)^{q-2}}{r_j^{n}}\esssup_{\hat t<t<t_1}\left(\frac{1}{r_j^n}
 \int_{U^{\prime\prime}_j(t)}\frac{l_j-u}{d_j(\bar l)}\phi_j^{k_2h_2}\,\mathrm {d}x\right)^\frac{q}{n}
 \iint_{U_j}|Dv_q|^q\,\mathrm {d}x\mathrm {d}t
 \right],
  \end{split}\end{equation}
  since $p<n$ and $q<n$.
  Here, $k_1=\frac{(M-p)np_*}{p(n+h_1)}$, $k_2=\frac{(M-q)nq_*}{q(n+h_2)}$, $v_p=\psi_p\phi_j^{k_1}$, $v_q=\psi_q\phi_j^{k_2}$ and
  \begin{equation}\label{defineUprimprime}U_j^{\prime\prime}(t)=\{x\in B_j:u(\cdot,t)\geq l_j\}\cap \left\{x\in B_j:\frac{l_j-u(\cdot,t)}{d_j(\bar l)}>
   \epsilon_1\right\}.\end{equation}
   According to \cite[Lemma 3.4]{Qifanli}, we use Lemma \ref{Cac1} with $(l,d,\Theta,\varphi)$ replaced by
  $(l_j,d_j(\bar l),t_1-\hat t,\phi_j)$ and this yields
   \begin{equation}\begin{split}\label{Cacphi}
   \frac{1}{r_j^n}&\esssup_{\hat t<t<t_1}\int_{U^{\prime\prime}_j(t)}\frac{l_j-u}{d_j(\bar l)}\phi_j^{k_1h_1}\,\mathrm {d}x
   \\ \leq& c(\epsilon_1) \frac{1}{r_j^n}\esssup_{\hat t<t<t_1}
   \int_{B_j}G\left(\frac{l_j-u}{d_j(\bar l)}\right)\phi_j^{k_1h_1}\,\mathrm {d}x
   \\ \leq&\gamma  \frac{d_j(\bar l)^{p-2}}{r_j^{n+p}}\iint_{U_j}\left(\frac{l_j-u}{d_j(\bar l)}\right)^{(1+\lambda)(p-1)}
  \phi_j^{k_1h_1-p}\,\mathrm {d}x\mathrm {d}t
  \\&+\gamma  \frac{d_j(\bar l)^{q-2}}{r_j^{n+q}}\iint_{U_j}a(x,t)\left(\frac{l_j-u}{d_j(\bar l)}\right)^{(1+\lambda)(q-1)}
  \phi_j^{k_1h_1-q}\,\mathrm {d}x\mathrm {d}t
\\&+\gamma \frac{t_1-\hat t}{r_j^nd_j(\bar l)^2}\int_{B_j}g^{\frac{p}{p-1}}\,\mathrm {d}x
  +\gamma \frac{t_1-\hat t}{r_j^nd_j(\bar l)}\int_{B_j}|f|\,\mathrm {d}x
  \\ =&:T_3+T_4+T_5+T_6,
   \end{split}\end{equation}
 since $|D\phi_j|\leq r_j^{-1}$ and $\partial_t\phi_j\equiv0$. According to \eqref{hat t lower}, we find that
 $ u(x,\hat  t)>\xi\omega\geq l_j$ and
 the first term on the right-hand side of \eqref{Cacformula1} vanishes. In view of $d_j(\bar l)\geq \frac{1}{4}d_{j-1}$, $\phi_j\equiv 1$ on $\supp \phi_{j-1}$
 and $A_{j-1}^{(2)}(l_j)=A_{j-1}(l_j)\leq \chi$, we conclude that
 \begin{equation}\begin{split}\label{T3T4xi}
 T_3+T_4 \leq&\gamma  \frac{d_{j-1}^{p-2}}{r_{j-1}^{n+p}}\iint_{U_{j-1}}\left(\frac{l_{j-1}-u}{d_{j-1}}\right)^{(1+\lambda)(p-1)}
  \phi_{j-1}^{M-p}\,\mathrm {d}x\mathrm {d}t
  \\&+\gamma  \frac{d_{j-1}^{q-2}}{r_{j-1}^{n+q}}\iint_{U_{j-1}}a(x,t)\left(\frac{l_{j-1}-u}{d_{j-1}}\right)^{(1+\lambda)(q-1)}
  \phi_{j-1}^{M-q}\,\mathrm {d}x\mathrm {d}t
  \\ \leq &\gamma A_{j-1}^{(2)}(l_j)\leq \gamma\chi,
 \end{split}\end{equation}
 where the constant $\gamma$ depends only upon the data and $\epsilon_1$. We now turn our attention to the
estimates of $T_5$ and $T_6$. Noting that $t_1-\hat t\leq \Theta_j(\bar l)\leq d_j(\bar l)^{2-p}r_j^p$ and $d_j(\bar l)\geq \frac{1}{4}\tilde B(\alpha_{j-1}-\alpha_j)$,
we infer from \eqref{alpha1xi} that
 \begin{equation}\begin{split}\label{T5T6xi}
 T_5+T_6 &\leq \gamma \frac{1}{d_j(\bar l)^p}\left(r_j^{p-n}\int_{B_j}g^{\frac{p}{p-1}}\,\mathrm {d}x\right)
  +\gamma \frac{1}{d_j(\bar l)^{p-1}}\left(r_j^{p-n}\int_{B_j}|f|\,\mathrm {d}x\right)
  \\& \leq \frac{\gamma }{[\tilde B(\alpha_{j-1}-\alpha_j)]^p}\left(r_j^{p-n}\int_{B_j}g^{\frac{p}{p-1}}\,\mathrm {d}x\right)
\\& \ \  +\frac{\gamma }{[\tilde B(\alpha_{j-1}-\alpha_j)]^{p-1}}\left(r_j^{p-n}\int_{B_j}|f|\,\mathrm {d}x\right)
  \\&\leq \gamma(\tilde B^{-p}+\tilde B^{-(p-1)}),
  \end{split}\end{equation}
  where the constant $\gamma$ depends only upon the data and $\epsilon_1$. Consequently, we infer that there exists a constant $\gamma_1=\gamma_1(\epsilon_1,\text{data})$
  such that
   \begin{equation*}\begin{split}
   \frac{1}{r_j^n}&\esssup_{\hat t<t<t_1}\int_{U^{\prime\prime}_j(t)}\frac{l_j-u}{d_j(\bar l)}\phi_j^{k_1h_1}\,\mathrm {d}x
   \leq \gamma_1\chi+\gamma_1(\tilde B^{-p}+\tilde B^{-(p-1)}).
    \end{split}\end{equation*}
Similarly, we have
      \begin{equation*}\begin{split}
   \frac{1}{r_j^n}&\esssup_{\hat t<t<t_1}\int_{U^{\prime\prime}_j(t)}\frac{l_j-u}{d_j(\bar l)}\phi_j^{k_2h_2}\,\mathrm {d}x
   \leq \gamma_1\chi+\gamma_1(\tilde B^{-p}+\tilde B^{-(p-1)}).
    \end{split}\end{equation*}
    Furthermore, we
proceed to estimate the right-hand side of \eqref{T2estimatexi} by
    \begin{equation*}\begin{split}
    &\frac{d_j(\bar l)^{p-2}}{r_j^{n}}
 \iint_{U_j}|Dv_p|^p\,\mathrm {d}x\mathrm {d}t
+a_1\frac{d_j(\bar l)^{q-2}}{r_j^{n}}
 \iint_{U_j}|Dv_q|^q\,\mathrm {d}x\mathrm {d}t
 \\ \leq &\left[\frac{d_j(\bar l)^{p-2}}{r_j^{n}}
 \iint_{U_j}\phi_j^{k_1p}|D\psi_p|^p\,\mathrm {d}x\mathrm {d}t\right.\\&\left.+a_1\frac{d_j(\bar l)^{q-2}}{r_j^{n}}
 \iint_{U_j}\phi_j^{k_2q}|D\psi_q|^q\,\mathrm {d}x\mathrm {d}t\right]
 \\&+ \left[\frac{d_j(\bar l)^{p-2}}{r_j^{n}}
 \iint_{U_j}\phi_j^{(k_1-1)p}\psi_p^p|D\phi_j|^p\,\mathrm {d}x\mathrm {d}t\right.
 \\&\left.+a_1\frac{d_j(\bar l)^{q-2}}{r_j^{n}}
 \iint_{U_j}\phi_j^{(k_2-1)q}\psi_q^q|D\phi_j|^q\,\mathrm {d}x\mathrm {d}t\right]
  =:T_7+T_8,
  \end{split}\end{equation*}
  since $v_p=\psi_p\phi_j^{k_1}$ and $v_q=\psi_q\phi_j^{k_2}$. To estimate $T_7$, we set $M_1=\min\left\{k_1p,k_2q\right\}$. Recalling that
  $\frac{4}{5}a_1\leq a(x,t)$ for all $(x,t)\in Q_j(\bar l)$, we apply Lemma \ref{Cac1} with $(l,d,\Theta,\varphi)$ replaced by
  $(l_j,d_j(\bar l),t_1-\hat t,\phi_j)$. Analysis similar to that in the proof of \eqref{Cacphi}-\eqref{T5T6xi} shows that
 \begin{equation*}\begin{split} T_7 \leq &\frac{5}{4r_j^{n}}\iint_{U_j} \left(d_j(\bar l)^{p-2}
 |D\psi_p|^p+a(x,t)d_j(\bar l)^{q-2}|D\psi_q|^q\right)\phi_j^{M_1}\,\mathrm {d}x\mathrm {d}t
\\ \leq&\gamma  \frac{d_j(\bar l)^{p-2}}{r_j^{n+p}}\iint_{U_j}\left(\frac{l_j-u}{d_j(\bar l)}\right)^{(1+\lambda)(p-1)}
  \phi_j^{M_1-p}\,\mathrm {d}x\mathrm {d}t
  \\&+\gamma  \frac{d_j(\bar l)^{q-2}}{r_j^{n+q}}\iint_{U_j}a(x,t)\left(\frac{l_j-u}{d_j(\bar l)}\right)^{(1+\lambda)(q-1)}
  \phi_j^{M_1-q}\,\mathrm {d}x\mathrm {d}t
\\&+\gamma \frac{t_1-\hat t}{r_j^nd_j(\bar l)^2}\int_{B_j}g^{\frac{p}{p-1}}\,\mathrm {d}x
  +\gamma \frac{t_1-\hat t}{r_j^nd_j(\bar l)}\int_{B_j}|f|\,\mathrm {d}x
  \\ \leq &\gamma\chi+\gamma(\tilde B^{-p}+\tilde B^{-(p-1)}),
   \end{split}\end{equation*}
   where the constant $\gamma$ depends only upon the data. To estimate $T_8$, we observe that
    \begin{equation}\begin{split}\label{psippsiq}\psi_p\leq \gamma d_j(\bar l)^{-\frac{1}{p^\prime}}(l_j-u)_+^{\frac{1}{p^\prime}}\qquad\text{and}\qquad
    \psi_q\leq \gamma d_j(\bar l)^{-\frac{1}{q^\prime}}(l_j-u)_+^{\frac{1}{q^\prime}}
    \end{split}\end{equation}
    where $p^\prime=\tfrac{p}{p-1}$ and $q^\prime=\tfrac{q}{q-1}$. At this point, we use
    Young's inequality, \eqref{psippsiq}, \eqref{firstestimate for setU} and \eqref{T3T4xi} to obtain
    \begin{equation}\begin{split}\label{estimateforT8}
     T_8\leq &\frac{d_j(\bar l)^{p-2}}{r_j^{n+p}}\iint_{U_j}\left(\frac{l_j-u}{d_j(\bar l)}\right)^{p-1}\phi_j^{(k_1-1)p}\,\mathrm {d}x\mathrm {d}t
\\&+a_1\frac{d_j(\bar l)^{q-2}}{r_j^{n+q}}
 \iint_{U_j}\left(\frac{l_j-u}{d_j(\bar l)}\right)^{q-1}\phi_j^{(k_2-1)q}\,\mathrm {d}x\mathrm {d}t
 \\ \leq & \left(\frac{d_j(\bar l)^{p-2}}{r_j^{n+p}}+a_1\frac{d_j(\bar l)^{q-2}}{r_j^{n+q}}\right)|U_j|
\\ &+\frac{d_j(\bar l)^{p-2}}{r_j^{n+p}}\iint_{U_j}\left(\frac{l_j-u}{d_j(\bar l)}\right)^{(p-1)(1+\lambda)}\phi_j^{(k_1-1)p}\,\mathrm {d}x\mathrm {d}t
\\&+\frac{d_j(\bar l)^{q-2}}{r_j^{n+q}}
 \iint_{U_j}a(x,t)\left(\frac{l_j-u}{d_j(\bar l)}\right)^{(q-1)(1+\lambda)}\phi_j^{(k_2-1)q}\,\mathrm {d}x\mathrm {d}t
 \\ \leq &\gamma\chi,
  \end{split}\end{equation}
  since $\frac{4}{5}a_1\leq a(x,t)$ for all $(x,t)\in Q_j(\bar l)$. Combining the above estimates, we arrive at
   \begin{equation*}\begin{split}
 T_2\leq c_1c_2\left[(\chi+\tilde B^{-p}+\tilde B^{-(p-1)})^{1+\frac{p}{n}}+(\chi+\tilde B^{-p}+\tilde B^{-(p-1)})^{1+\frac{q}{n}}\right],
  \end{split}\end{equation*}
  where $c_1=c_1(\epsilon_1,\text{data})$ and $c_2=c_2(\epsilon_2,\text{data})$. Inserting this inequality into \eqref{Uprimeprimexi}, we conclude that
   \begin{equation}\begin{split}\label{Uprimeprimexiq<n}
&\frac{d_j(\bar l)^{p-2}}{r_j^{n+p}}\iint_{U^{\prime\prime}_j}\left(\frac{l_j-u}{d_j(\bar l)}\right)^{(1+\lambda)(p-1)}\phi_j^{M-p}
 \,\mathrm {d}x\mathrm {d}t
 \\&+\frac{d_j(\bar l)^{q-2}}{r_j^{n+q}}\iint_{U^{\prime\prime}_j}a(x,t)\left(\frac{l_j-u}{d_j(\bar l)}\right)^{(1+\lambda)(q-1)}\phi_j^{M-q}
 \,\mathrm {d}x\mathrm {d}t
 \\ &\leq 4^n\epsilon_2\chi+c_1c_2\left[(\chi+\tilde B^{-p}+\tilde B^{-(p-1)})^{1+\frac{p}{n}}+(\chi+\tilde B^{-p}+\tilde B^{-(p-1)})^{1+\frac{q}{n}}\right].
 \end{split}\end{equation}

 \S 1.1.2. We now consider the case $q\geq n$. To this end, we first consider the second term on the left-hand side of \eqref{Uprimeprimexi}.
 Recalling that $\frac{4}{5}a_1\leq a(x,t)\leq\frac{6}{5}a_1$ for all $(x,t)\in Q_j(\bar l)$, we infer from \eqref{lemmainequalitypsi-} that
 \begin{equation}\begin{split}\label{Uprimeprimexiq>n1st}
 &\frac{d_j(\bar l)^{q-2}}{r_j^{n+q}}\iint_{U^{\prime\prime}_j}a(x,t)\left(\frac{l_j-u}{d_j(\bar l)}\right)^{(1+\lambda)(q-1)}\phi_j^{M-q}
 \,\mathrm {d}x\mathrm {d}t
 \\&\leq  \frac{6}{5}a_1\frac{d_j(\bar l)^{q-2}}{r_j^{n+q}}\iint_{U^{\prime\prime}_j}\left(\frac{l_j-u}{d_j(\bar l)}\right)^{q-1-\lambda+\lambda q}\phi_j^{M-q}
 \,\mathrm {d}x\mathrm {d}t
  \\&\leq c(\epsilon_1)a_1\frac{d_j(\bar l)^{q-2}}{r_j^{n+q}}\iint_{U^{\prime\prime}_j}\psi_q^q\left(\frac{l_j-u}{d_j(\bar l)}\right)^{\frac{p}{n}}\phi_j^{M-q}
 \,\mathrm {d}x\mathrm {d}t,
  \end{split}\end{equation}
  since $\lambda=\frac{p}{nq}$. Since $q\geq n$, we conclude from Sobolev's inequality that
  \begin{equation*}\begin{split}
  \left(\int_{B_j}\left(\psi_q\phi_j^{M_2}\right)^{\nu_1q}\,\mathrm {d}x\right)^\frac{1}{\nu_1q}
  \leq \gamma r_j^{1-\frac{n}{q}+\frac{n}{\nu_1q}}\left(\int_{B_j}|D(\psi_q\phi_j^{M_2})|^q\,\mathrm {d}x\right)^\frac{1}{q},
   \end{split}\end{equation*}
   where $\nu_1=\frac{n}{n-p}$ and $M_2=\frac{\frac{1}{2}M-q}{q}$. We use the inequality above and H\"older's inequality to find that
    \begin{equation}\begin{split}\label{Uprimeprimexiq>n2nd}
&a_1\frac{d_j(\bar l)^{q-2}}{r_j^{n+q}}\iint_{U^{\prime\prime}_j}\psi_q^q\left(\frac{l_j-u}{d_j(\bar l)}\right)^{\frac{p}{n}}\phi_j^{M-q}
 \,\mathrm {d}x\mathrm {d}t
 \\& =a_1\frac{d_j(\bar l)^{q-2}}{r_j^{n+q}}\iint_{U^{\prime\prime}_j}\left(\psi_q\phi_j^{M_2}\right)^q\cdot\left(\frac{l_j-u}{d_j(\bar l)}\right)^{\frac{p}{n}}\phi_j^{\frac{M}{2}}
 \,\mathrm {d}x\mathrm {d}t
  \\& \leq a_1\frac{d_j(\bar l)^{q-2}}{r_j^{n+q}}\int_{\hat t}^{t_1}\left(\int_{B_j}\left(\psi_q\phi_j^{M_2}\right)^{q\nu_1} \,\mathrm {d}x\right)^\frac{1}{\nu_1}
\\&\ \ \ \ \times\left( \int_{U^{\prime\prime}_j(t)}\frac{l_j-u}{d_j(\bar l)}\phi_j^{\frac{nM}{2p}}
 \,\mathrm {d}x\right)^\frac{p}{n}\mathrm {d}t
   \\& \leq a_1\frac{d_j(\bar l)^{q-2}}{r_j^{n+q}}r_j^{q-n+\frac{n}{\nu_1}}\int_{\hat t}^{t_1}\int_{B_j}|D(\psi_q\phi_j^{M_2})|^q \,\mathrm {d}x\mathrm {d}t
\\&\ \ \ \ \times\esssup_{\hat t<t<t_1}\left( \int_{U^{\prime\prime}_j(t)}\frac{l_j-u}{d_j(\bar l)}\phi_j^{\frac{nM}{2p}}
 \,\mathrm {d}x\right)^\frac{p}{n}
  \\& = \left[a_1\frac{d_j(\bar l)^{q-2}}{r_j^{n}}\iint_{U_j}|D(\psi_q\phi_j^{M_2})|^q \,\mathrm {d}x\mathrm {d}t\right]
\\&\ \ \ \ \times\esssup_{\hat t<t<t_1}\left(\frac{1}{r_j^n} \int_{U^{\prime\prime}_j(t)}\frac{l_j-u}{d_j(\bar l)}\phi_j^{\frac{nM}{2p}}
 \,\mathrm {d}x\right)^\frac{p}{n}
 \\&=:T_9\cdot T_{10},
  \end{split}\end{equation}
  since $q-n+\frac{n}{\nu_1}=q-p$. The proof of the estimates of $T_9$ and $T_{10}$ follows in a similar manner as the arguments in \S 1.1.1, and we get
  \begin{equation}\begin{split}\label{Uprimeprimexiq>n3rd}
  T_9\leq \gamma(\chi+\tilde B^{-p}+\tilde B^{-(p-1)})\qquad\text{and}\qquad T_{10}\leq \gamma_1(\chi+\tilde B^{-p}+\tilde B^{-(p-1)})^{\frac{p}{n}},
   \end{split}\end{equation}
   where $\gamma=\gamma(\text{data})$ and $\gamma_1=\gamma_1(\epsilon_1,\text{data})$. Combining the estimates \eqref{Uprimeprimexiq>n1st}-\eqref{Uprimeprimexiq>n3rd}, we conclude that
   \begin{equation}\begin{split}\label{Uprimeprimexiq>nqintegral}
 &\frac{d_j(\bar l)^{q-2}}{r_j^{n+q}}\iint_{U^{\prime\prime}_j}a(x,t)\left(\frac{l_j-u}{d_j(\bar l)}\right)^{(1+\lambda)(q-1)}\phi_j^{M-q}
 \,\mathrm {d}x\mathrm {d}t
 \leq  c(\chi+\tilde B^{-p}+\tilde B^{-(p-1)})^{1+\frac{p}{n}},
  \end{split}\end{equation}
  where $c=c(\epsilon_1,\text{data})$. The first term on the left-hand side of \eqref{Uprimeprimexi} can be treated by the same way as in \S 1.1.1.
  More precisely, we get
  \begin{equation}\begin{split}\label{Uprimeprimexiq>npintegral}
 &\frac{d_j(\bar l)^{p-2}}{r_j^{n+p}}\iint_{U^{\prime\prime}_j}\left(\frac{l_j-u}{d_j(\bar l)}\right)^{(1+\lambda)(p-1)}\phi_j^{M-p}
 \,\mathrm {d}x\mathrm {d}t
 \\ \leq &\epsilon_2\frac{d_j(\bar l)^{p-2}}{r_j^{n+p}}|U_j|
+c(\epsilon_2)\frac{d_j(\bar l)^{p-2}}{r_j^{n+p}}\iint_{U^{\prime\prime}_j}\left(\frac{l_j-u}{d_j(\bar l)}\right)^{p\frac{n+h_1}{nh_1}}\phi_j^{(M-p)p_*}
 \,\mathrm {d}x\mathrm {d}t
 \\ \leq& 4^n\epsilon_2\chi+
 c(\epsilon_2)\frac{d_j(\bar l)^{p-2}}{r_j^{n}}\esssup_{\hat t<t<t_1}\left(\frac{1}{r_j^n}\int_{U^{\prime\prime}_j(t)}\frac{l_j-u}{d_j(\bar l)}\phi_j^{k_1h_1}\,\mathrm {d}x\right)^\frac{p}{n}
 \iint_{U_j}|Dv_p|^p\,\mathrm {d}x\mathrm {d}t
  \\ \leq& 4^n\epsilon_2\chi+c_3c_4(\chi+\tilde B^{-p}+\tilde B^{-(p-1)})^{1+\frac{p}{n}},
  \end{split}\end{equation}
  where $c_3=c_3(\epsilon_1,\text{data})$ and $c_4=c_4(\epsilon_2,\text{data})$.

Thus, we can summarize what we have proved so far.
  Combining \eqref{Uprimexi}, \eqref{Uprimeprimexiq<n}, \eqref{Uprimeprimexiq>npintegral} and \eqref{Uprimeprimexiq>npintegral}, we conclude that there exist constants
  $\gamma_1=\gamma_1(\epsilon_1,\text{data})$ and $\gamma_2=\gamma_2(\epsilon_2,\text{data})$, such that the inequality
  \begin{equation}\begin{split}\label{Upqphase}
  &\frac{d_j(\bar l)^{p-2}}{r_j^{n+p}}\iint_{U_j}\left(\frac{l_j-u}{d_j(\bar l)}\right)^{(1+\lambda)(p-1)}\phi_j^{M-p}
 \,\mathrm {d}x\mathrm {d}t
 \\&+\frac{d_j(\bar l)^{q-2}}{r_j^{n+q}}\iint_{U_j}a(x,t)\left(\frac{l_j-u}{d_j(\bar l)}\right)^{(1+\lambda)(q-1)}\phi_j^{M-q}
 \,\mathrm {d}x\mathrm {d}t
 \\
 \leq &\epsilon_1^{(1+\lambda)(p-1)}4^n\chi+
4^n\epsilon_2\chi
\\&+\gamma_1\gamma_2\left[(\chi+\tilde B^{-p}+\tilde B^{-(p-1)})^{1+\frac{p}{n}}+(\chi+\tilde B^{-p}+\tilde B^{-(p-1)})^{1+\frac{q}{n}}\right]
  \end{split}\end{equation}
  holds for the $(p,q)$-phase.

  \S 1.2. In the case of $p$-phase, i.e., $a_1<10[a]_\alpha r_j^\alpha$. We observe from $Q_j(\bar l)\subseteq Q_{r_j,r_j^2}(x_1,t_1)$
   that $a(x,t)\leq 12[a]_\alpha r_j^\alpha$ for all $(x,t)\in Q_j(\bar l)$. Noting that $d_j(\bar l)=l_j-\bar l\leq l_j\leq l_0=\xi\omega\leq \|u\|_\infty$ and $r_j^{\alpha-q}\leq r_j^{-p}$,
   we infer from \eqref{firstestimate for setU} that
    \begin{equation}\begin{split}\label{Uprimexipphase}
&\frac{d_j(\bar l)^{p-2}}{r_j^{n+p}}\iint_{U^\prime_j}\left(\frac{l_j-u}{d_j(\bar l)}\right)^{(1+\lambda)(p-1)}\phi_j^{M-p}
 \,\mathrm {d}x\mathrm {d}t
 \\&+\frac{d_j(\bar l)^{q-2}}{r_j^{n+q}}\iint_{U^\prime_j}a(x,t)\left(\frac{l_j-u}{d_j(\bar l)}\right)^{(1+\lambda)(q-1)}\phi_j^{M-q}
 \,\mathrm {d}x\mathrm {d}t
 \\&\leq  (1+12[a]_\alpha\|u\|_\infty^{q-p})\frac{d_j(\bar l)^{p-2}}{r_j^{n+p}}\iint_{U^\prime_j}\left(\frac{l_j-u}{d_j(\bar l)}\right)^{(1+\lambda)(p-1)}\phi_j^{M-q}
 \,\mathrm {d}x\mathrm {d}t
 \\&\leq \gamma\epsilon_1^{(1+\lambda)(p-1)}\frac{d_j(\bar l)^{p-2}}{r_j^{n+p}}|U_j|
 \leq \gamma\epsilon_1^{(1+\lambda)(p-1)}\chi,
 \end{split}\end{equation}
 where the constant $\gamma$ depends only upon the data. The next step is to estimate the integrals on the set $U^{\prime\prime}_j$.
 Analysis similar to that in the proof of \eqref{Uprimeprimexiq>npintegral} shows that
   \begin{equation}\begin{split}\label{Uprimeprimexipphasepintegral}
 &\frac{d_j(\bar l)^{p-2}}{r_j^{n+p}}\iint_{U^{\prime\prime}_j}\left(\frac{l_j-u}{d_j(\bar l)}\right)^{(1+\lambda)(p-1)}\phi_j^{M-p}
 \,\mathrm {d}x\mathrm {d}t
 \leq 4^n\epsilon_2\chi+c_3c_4(\chi+\tilde B^{-p}+\tilde B^{-(p-1)})^{1+\frac{p}{n}},
  \end{split}\end{equation}
  where $c_3=c_3(\epsilon_1,\text{data})$ and $c_4=c_4(\epsilon_2,\text{data})$. Recalling that $a(x,t)\leq 12[a]_\alpha r_j^\alpha$ for all $(x,t)\in Q_j(\bar l)$, we use $r_j^{\alpha-q}\leq r_j^{-p}$ and Lemma \ref{inequalitypsi-} to deduce
   \begin{equation}\begin{split}\label{Uprimeprimepphase1st}
 &\frac{d_j(\bar l)^{q-2}}{r_j^{n+q}}\iint_{U^{\prime\prime}_j}a(x,t)\left(\frac{l_j-u}{d_j(\bar l)}\right)^{(1+\lambda)(q-1)}\phi_j^{M-q}
 \,\mathrm {d}x\mathrm {d}t
\\& \leq 12[a]_\alpha \frac{d_j(\bar l)^{q-2}}{r_j^{n+p}}\iint_{U^{\prime\prime}_j}\left(\frac{l_j-u}{d_j(\bar l)}\right)^{p-1-\lambda+(q-p)+\lambda q}\phi_j^{M-q}
 \,\mathrm {d}x\mathrm {d}t
 \\& \leq c(\epsilon_1) \frac{d_j(\bar l)^{q-2}}{r_j^{n+p}}\iint_{U^{\prime\prime}_j}\psi_p^p
 \left(\frac{l_j-u}{d_j(\bar l)}\right)^{q-p}
 \left(\frac{l_j-u}{d_j(\bar l)}\right)^{\frac{p}{n}}\phi_j^{M-q}
 \,\mathrm {d}x\mathrm {d}t,
 \end{split}\end{equation}
since $\lambda q=\frac{p}{n}$. Noting that $l_j-u\leq l_j\leq l_0=\xi\omega\leq \|u\|_\infty$ on $U_j$, we proceed to estimate the integral on the right-hand side of \eqref{Uprimeprimepphase1st}
by
\begin{equation}\begin{split}\label{Uprimeprimepphase2st}
 & \frac{d_j(\bar l)^{q-2}}{r_j^{n+p}}\iint_{U^{\prime\prime}_j}\psi_p^p
 \left(\frac{l_j-u}{d_j(\bar l)}\right)^{q-p}
 \left(\frac{l_j-u}{d_j(\bar l)}\right)^{\frac{p}{n}}\phi_j^{M-q}
 \,\mathrm {d}x\mathrm {d}t
 \\&\leq \|u\|_\infty^{q-p}\frac{d_j(\bar l)^{p-2}}{r_j^{n+p}}\iint_{U^{\prime\prime}_j}\psi_p^p
 \left(\frac{l_j-u}{d_j(\bar l)}\right)^{\frac{p}{n}}\phi_j^{M-q}
 \,\mathrm {d}x\mathrm {d}t
  \\&\leq \|u\|_\infty^{q-p}\frac{d_j(\bar l)^{p-2}}{r_j^{n+p}}\int_{\hat t}^{t_1}\left(\int_{B_j}\psi_p^{\frac{np}{n-p}}\phi_j^{\frac{n}{n-p}(\frac{1}{2}M-q)}\,\mathrm {d}x\right)^\frac{n-p}{n}
  \\&\ \ \times
\left( \int_{U^{\prime\prime}_j(t)}\frac{l_j-u}{d_j(\bar l)}\phi_j^{\frac{Mn}{2p}}
 \,\mathrm {d}x\right)^{\frac{p}{n}}\mathrm {d}t.
 \end{split}\end{equation}
In the last line, we have used H\"older's inequality with exponents $\frac{n}{p}$ and $\frac{n}{n-p}$. To proceed further, we set $\tilde v_p=\psi_p\phi_j^{M_3}$, where $M_3=\frac{\frac{1}{2}M-q}{p}$.
Inserting the inequality \eqref{Uprimeprimepphase2st} into \eqref{Uprimeprimepphase1st}, we apply Sobolev's inequality slice-wise for $\tilde v_p(\cdot,t)$ to deduce
\begin{equation}\begin{split}\label{Uprimeprimepphase3rd}
 &\frac{d_j(\bar l)^{q-2}}{r_j^{n+q}}\iint_{U^{\prime\prime}_j}a(x,t)\left(\frac{l_j-u}{d_j(\bar l)}\right)^{(1+\lambda)(q-1)}\phi_j^{M-q}
 \,\mathrm {d}x\mathrm {d}t
\\& \leq c(\epsilon_1)\esssup_{\hat  t<t<t_1}\left(\frac{1}{r_j^n} \int_{U^{\prime\prime}_j(t)}\frac{l_j-u}{d_j(\bar l)}\phi_j^{\frac{Mn}{2p}}
 \,\mathrm {d}x\right)^{\frac{p}{n}}
 \\&\ \ \times\left[\frac{d_j(\bar l)^{p-2}}{r_j^n}\iint_{U_j}|D\tilde v_p|^p\,\mathrm {d}x\mathrm {d}t\right]
 \\ &=:c(\epsilon_1)T_{11}\cdot T_{12}.
 \end{split}\end{equation}
Furthermore, we follow the arguments in \S 1.1.1 to infer that
  \begin{equation}\begin{split}\label{T11T12pphase}
  T_{11}\leq \gamma_1(\chi+\tilde B^{-p}+\tilde B^{-(p-1)})^{\frac{p}{n}}\qquad\text{and}\qquad T_{12}\leq \gamma(\chi+\tilde B^{-p}+\tilde B^{-(p-1)}),
   \end{split}\end{equation}
    where $\gamma=\gamma(\text{data})$ and $\gamma_1=\gamma_1(\epsilon_1,\text{data})$. Consequently, we infer from \eqref{Uprimexipphase}, \eqref{Uprimeprimexipphasepintegral},
    \eqref{Uprimeprimepphase3rd} and \eqref{T11T12pphase} that
    \begin{equation}\begin{split}\label{Upphasexi}
&\frac{d_j(\bar l)^{p-2}}{r_j^{n+p}}\iint_{U_j}\left(\frac{l_j-u}{d_j(\bar l)}\right)^{(1+\lambda)(p-1)}\phi_j^{M-p}
 \,\mathrm {d}x\mathrm {d}t
 \\&+\frac{d_j(\bar l)^{q-2}}{r_j^{n+q}}\iint_{U_j}a(x,t)\left(\frac{l_j-u}{d_j(\bar l)}\right)^{(1+\lambda)(q-1)}\phi_j^{M-q}
 \,\mathrm {d}x\mathrm {d}t
 \\&
 \leq \gamma\epsilon_1^{(1+\lambda)(p-1)}\chi+4^n\epsilon_2\chi+\gamma_3\gamma_4(\chi+\tilde B^{-p}+\tilde B^{-(p-1)})^{1+\frac{p}{n}},
 \end{split}\end{equation}
 where $\gamma=\gamma(\text{data})$, $\gamma_3=\gamma_3(\epsilon_1,\text{data})$ and $\gamma_4=\gamma_4(\epsilon_2,\text{data})$.

 At this point, we can summarize what we have proved from \S 1.1 to \S 1.2. Combining \eqref{Upqphase} and \eqref{Upphasexi}, we conclude that
 \begin{equation}\begin{split}\label{estimateforU}
  &\frac{d_j(\bar l)^{p-2}}{r_j^{n+p}}\iint_{U_j}\left(\frac{l_j-u}{d_j(\bar l)}\right)^{(1+\lambda)(p-1)}\phi_j^{M-p}
 \,\mathrm {d}x\mathrm {d}t
 \\&+\frac{d_j(\bar l)^{q-2}}{r_j^{n+q}}\iint_{U_j}a(x,t)\left(\frac{l_j-u}{d_j(\bar l)}\right)^{(1+\lambda)(q-1)}\phi_j^{M-q}
 \,\mathrm {d}x\mathrm {d}t
 \\
 \leq &\epsilon_1^{(1+\lambda)(p-1)}\gamma\chi+
4^n\epsilon_2\chi
\\&+\gamma_5\gamma_6\left[(\chi+\tilde B^{-p}+\tilde B^{-(p-1)})^{1+\frac{p}{n}}+(\chi+\tilde B^{-p}+\tilde B^{-(p-1)})^{1+\frac{q}{n}}\right],
  \end{split}\end{equation}
  where $\gamma=\gamma(\text{data})$, $\gamma_5=\max\{\gamma_1,\gamma_3\}$ and $\gamma_6
  =\max\{\gamma_2,\gamma_4\}$. Furthermore, we apply Caccioppoli inequality \eqref{Cacformula1}, \eqref{T5T6xi} and \eqref{estimateforU} to deduce
   \begin{equation}\begin{split}\label{CacphiforG}
& \frac{1}{r_j^n}\esssup_{\hat t<t<t_1}
   \int_{B_j}G\left(\frac{l_j-u}{d_j(\bar l)}\right)\phi_j^{M}\,\mathrm {d}x
   \\ \leq&\gamma  \frac{d_j(\bar l)^{p-2}}{r_j^{n+p}}\iint_{U_j}\left(\frac{l_j-u}{d_j(\bar l)}\right)^{(1+\lambda)(p-1)}
  \phi_j^{M-p}\,\mathrm {d}x\mathrm {d}t
  \\&+\gamma  \frac{d_j(\bar l)^{q-2}}{r_j^{n+q}}\iint_{U_j}a(x,t)\left(\frac{l_j-u}{d_j(\bar l)}\right)^{(1+\lambda)(q-1)}
  \phi_j^{M-q}\,\mathrm {d}x\mathrm {d}t
\\&+\gamma \frac{t_1-\hat t}{r_j^nd_j(\bar l)^2}\int_{B_j}g^{\frac{p}{p-1}}\,\mathrm {d}x
  +\gamma \frac{t_1-\hat t}{r_j^nd_j(\bar l)}\int_{B_j}|f|\,\mathrm {d}x
  \\ \leq&\epsilon_1^{(1+\lambda)(p-1)}\gamma\chi+
\epsilon_2\gamma\chi+\gamma(\tilde B^{-p}+\tilde B^{-(p-1)})
\\&+\gamma_5\gamma_6\left[(\chi+\tilde B^{-p}+\tilde B^{-(p-1)})^{1+\frac{p}{n}}+(\chi+\tilde B^{-p}+\tilde B^{-(p-1)})^{1+\frac{q}{n}}\right],
   \end{split}\end{equation}
 where $\gamma=\gamma(\text{data})$, $\gamma_5=\gamma_5(\epsilon_1,\text{data})$ and $\gamma_6=\gamma_6(\epsilon_2,\text{data})$.
 Combining \eqref{estimateforU} and \eqref{CacphiforG}, we infer from  \eqref{Aj2nd} that there exist constants $\tilde\gamma_0=\tilde\gamma_0
 (\text{data})$, $\tilde\gamma_1=\tilde\gamma_1(\epsilon_1,\text{data})$ and $\tilde\gamma_2=\tilde\gamma_2(\epsilon_2,\text{data})$, such that
 \begin{equation}\begin{split}\label{Ajbarlxi}
 A_j(\bar l) \leq&\epsilon_1^{(1+\lambda)(p-1)}\tilde\gamma_0\chi+
\epsilon_2\tilde\gamma_0\chi+\tilde\gamma_0(\tilde B^{-p}+\tilde B^{-(p-1)})
\\&+\tilde\gamma_1\tilde\gamma_2\left[\chi^{1+\frac{p}{n}}+(\tilde B^{-p}+\tilde B^{-(p-1)})^{1+\frac{p}{n}}+\chi^{1+\frac{q}{n}}+(\tilde B^{-p}+\tilde B^{-(p-1)})^{1+\frac{q}{n}}\right],
   \end{split}\end{equation}
   since $A_j(\bar l)= A_j^{(2)}(\bar l)$ in the case $Q_j(\bar l)\nsubseteq\widetilde Q_1$. This establishes an upper bound for $A_j(\bar l)$ for the case
   $Q_j(\bar l)\nsubseteq\widetilde Q_1$.

   \S 2. We consider the case $Q_j(\bar l)\subseteq\widetilde Q_1$, i.e., $\Theta_j(\bar l)< t_1-\hat t$.
   Note that in this case $A_j(\bar l)= A_j^{(1)}(\bar l)$.
   Our task now is to establish an upper bound for $A_j(\bar l)$ similar to
   \eqref{Ajbarlxi}. For a fixed $\epsilon_1>0$, we decompose $L_j(\bar l)=L_j^\prime(\bar l)\cup L_j^{\prime\prime}(\bar l)$,
   where  \begin{equation*}\begin{split}L_j^\prime(\bar l)=L_j(\bar l)\cap\left\{\frac{l_j-u}{d_j(\bar l)}\leq\epsilon_1\right\} \qquad\text{and}\qquad
   L_j^{\prime\prime}(\bar l)=L_j(\bar l)\setminus L_j^\prime(\bar l).
   \end{split}\end{equation*}
   Moreover, we claim that there exists a constant $\gamma$, depending only on the data, such that
   \begin{equation}\begin{split}\label{Qclaim1}
 \left(\frac{d_j(\bar l)^{p-2}}{r_j^{n+p}}+a_1\frac{d_j(\bar l)^{q-2}}{r_j^{n+q}}\right)|L_j(\bar l)|
 \leq 4^n\chi
  \end{split}\end{equation}
   and
    \begin{equation}\begin{split}\label{Qclaim2}
  &\frac{d_j(\bar l)^{p-2}}{r_j^{n+p}}\iint_{L_j(\bar l)}\left(\frac{l_j-u}{d_j(\bar l)}\right)^{(1+\lambda)(p-1)}
 \,\mathrm {d}x\mathrm {d}t
 \\&+\frac{d_j(\bar l)^{q-2}}{r_j^{n+q}}\iint_{L_j(\bar l)}a(x,t)\left(\frac{l_j-u}{d_j(\bar l)}\right)^{(1+\lambda)(q-1)}
 \,\mathrm {d}x\mathrm {d}t
 \leq \gamma \chi.
  \end{split}\end{equation}
  First, we observe that $u\leq l_j$ on $L_j(\bar l)$ and hence $d_{j-1}^{-1}(l_{j-1}-u)\geq 1$ on $L_j(\bar l)$. To prove the claim, we distinguish two cases:
  $Q_{j-1}(l_j)\nsubseteq \tilde Q_1$ and $Q_{j-1}(l_j)\subseteq \tilde Q_1$. In the case  $Q_{j-1}(l_j)\nsubseteq \tilde Q_1$, we observe that $A_{j-1}(l_j)=A_{j-1}^{(2)}(l_j)$,
  $Q_j(\bar l)\subseteq U_{j-1}$ and $\phi_{j-1}(x)=1$ for all $x\in B_j$. Then, we deduce from \eqref{Aj-1xi} that
     \begin{equation*}\begin{split}
 &\left(\frac{d_j(\bar l)^{p-2}}{r_j^{n+p}}+a_1\frac{d_j(\bar l)^{q-2}}{r_j^{n+q}}\right)|L_j(\bar l)|
\\&\leq \Theta_j(\bar l)^{-1}\frac{1}{r_j^n}\Theta_j(\bar l)\esssup_{t_1-\Theta_j(\bar l)<t<t_1}\int_{B_{j-1}\cap \{
  u(\cdot,t)\leq l_j\}}\phi_{j-1}^M\,\mathrm {d}x
  \\&\leq\frac{4^n}{r_{j-1}^n}\esssup_{\hat t<t<t_1}\int_{B_{j-1}}G\left(\frac{l_{j-1}-u}{d_{j-1}}\right)\phi_{j-1}^M \,\mathrm {d}x
      \\&  \leq 4^nA_{j-1}^{(2)}(l_j)=4^nA_{j-1}(l_j)\leq 4^n\chi,
  \end{split}\end{equation*}
  which proves the claim \eqref{Qclaim1}. Taking into account that $d_j(\bar l)\geq \frac{1}{4}d_{j-1}$, we infer from \eqref{Aj-1xi} that
   \begin{equation*}\begin{split}
  &\frac{d_j(\bar l)^{p-2}}{r_j^{n+p}}\iint_{L_j(\bar l)}\left(\frac{l_j-u}{d_j(\bar l)}\right)^{(1+\lambda)(p-1)}
 \,\mathrm {d}x\mathrm {d}t
 \\&+\frac{d_j(\bar l)^{q-2}}{r_j^{n+q}}\iint_{L_j(\bar l)}a(x,t)\left(\frac{l_j-u}{d_j(\bar l)}\right)^{(1+\lambda)(q-1)}
 \,\mathrm {d}x\mathrm {d}t
 \\&\leq \gamma\frac{d_{j-1}^{p-2}}{r_{j-1}^{n+p}}\iint_{U_{j-1}}\left(\frac{l_{j-1}-u}{d_{j-1}}\right)^{(1+\lambda)(p-1)}\phi_{j-1}^{M-p}
 \,\mathrm {d}x\mathrm {d}t
 \\&\ \ +\gamma\frac{d_{j-1}^{q-2}}{r_{j-1}^{n+q}}\iint_{U_{j-1}}a(x,t)\left(\frac{l_{j-1}-u}{d_{j-1}}\right)^{(1+\lambda)(q-1)}\phi_{j-1}^{M-q}
 \,\mathrm {d}x\mathrm {d}t
 \\&  \leq \gamma A_{j-1}^{(2)}(l_j)=\gamma A_{j-1}(l_j)\leq \gamma\chi,
  \end{split}\end{equation*}
    which proves the claim \eqref{Qclaim2}. We now turn our attention to the case $Q_{j-1}(l_j)\subseteq \tilde Q_1$.
    In  this case, we have $A_{j-1}(l_j)=A_{j-1}^{(1)}(l_j)$.
    Recalling that $d_j(\bar l)\geq \frac{1}{4}d_{j-1}$, we obtain
    \begin{equation*}\begin{split}
    \Theta_j(\bar l)&=d_j(\bar l)^2\left[\left(\frac{d_j(\bar l)}{r_j}\right)^p+a_1\left(\frac{d_j(\bar l)}{r_j}\right)^q\right]^{-1}
    \\&\leq \frac{1}{16}d_{j-1}^2\left[\left(\frac{4^{-1}d_{j-1}}{4^{-1}r_{j-1}}\right)^p+a_1\left(\frac{4^{-1}d_{j-1}}{4^{-1}r_{j-1}}\right)^q\right]^{-1}=\frac{1}{16}\Theta_{j-1}.
     \end{split}\end{equation*}
It follows that $Q_j(\bar l)\subseteq Q_{j-1}(l_j)=:Q_{j-1}$ and $\varphi_{j-1}(x,t)=1$ for any $(x,t)\in Q_j(\bar l)$. Then, we conclude from \eqref{Aj-1xi} that
     \begin{equation*}\begin{split}
 &\left(\frac{d_j(\bar l)^{p-2}}{r_j^{n+p}}+a_1\frac{d_j(\bar l)^{q-2}}{r_j^{n+q}}\right)|L_j(\bar l)|
\\&\leq \Theta_j(\bar l)^{-1}\frac{1}{r_j^n}\Theta_j(\bar l)\esssup_{t_1-\Theta_j(\bar l)<t<t_1}\int_{B_{j-1}\cap \{
  u(\cdot,t)\leq l_j\}}\varphi_{j-1}^M\,\mathrm {d}x
  \\&\leq\frac{4^n}{r_{j-1}^n}\esssup_{t}\int_{B_{j-1}}G\left(\frac{l_{j-1}-u}{d_{j-1}}\right)\varphi_{j-1}^M \,\mathrm {d}x
      \\&  \leq 4^nA_{j-1}^{(1)}(l_j)=4^nA_{j-1}(l_j)\leq 4^n\chi
  \end{split}\end{equation*}
  and
   \begin{equation*}\begin{split}
  &\frac{d_j(\bar l)^{p-2}}{r_j^{n+p}}\iint_{L_j(\bar l)}\left(\frac{l_j-u}{d_j(\bar l)}\right)^{(1+\lambda)(p-1)}
 \,\mathrm {d}x\mathrm {d}t
 \\&+\frac{d_j(\bar l)^{q-2}}{r_j^{n+q}}\iint_{L_j(\bar l)}a(x,t)\left(\frac{l_j-u}{d_j(\bar l)}\right)^{(1+\lambda)(q-1)}
 \,\mathrm {d}x\mathrm {d}t
 \\&\leq \gamma\frac{d_{j-1}^{p-2}}{r_{j-1}^{n+p}}\iint_{Q_{j-1}}\left(\frac{l_{j-1}-u}{d_{j-1}}\right)^{(1+\lambda)(p-1)}\varphi_{j-1}^{M-p}
 \,\mathrm {d}x\mathrm {d}t
 \\&\ \ +\gamma\frac{d_{j-1}^{q-2}}{r_{j-1}^{n+q}}\iint_{Q_{j-1}}a(x,t)\left(\frac{l_{j-1}-u}{d_{j-1}}\right)^{(1+\lambda)(q-1)}\varphi_{j-1}^{M-q}
 \,\mathrm {d}x\mathrm {d}t
 \\&  \leq \gamma A_{j-1}^{(1)}(l_j)=\gamma A_{j-1}(l_j)\leq \gamma\chi,
  \end{split}\end{equation*}
  since $d_j(\bar l)\geq \frac{1}{4}d_{j-1}$. This proves \eqref{Qclaim1} and \eqref{Qclaim2} for the case $Q_{j-1}(l_j)\subseteq \tilde Q_1$.
  In order to derive an upper bound for $A_j(\bar l)$, we shall deal with the estimates in the cases of $p$-phase and $(p,q)$-phase separately.

  \S 2.1. In the case of $(p,q)$-phase, i.e., $a_1\geq10[a]_\alpha r_j^\alpha$. Recalling that $Q_j(\bar l)\subseteq Q_{r_j,r_j^2}(x_1,t_1)$,
   we have $\frac{4}{5}a_1\leq a(x,t)\leq\frac{6}{5}a_1$ for all $(x,t)\in Q_j(\bar l)$. First, we observe from \eqref{Qclaim1} that
    \begin{equation}\begin{split}\label{Qprimexi}
&\frac{d_j(\bar l)^{p-2}}{r_j^{n+p}}\iint_{L^\prime_j(\bar l)}\left(\frac{l_j-u}{d_j(\bar l)}\right)^{(1+\lambda)(p-1)}\varphi_j(\bar l)^{M-p}
 \,\mathrm {d}x\mathrm {d}t
 \\&+\frac{d_j(\bar l)^{q-2}}{r_j^{n+q}}\iint_{L^\prime_j(\bar l)}a(x,t)\left(\frac{l_j-u}{d_j(\bar l)}\right)^{(1+\lambda)(q-1)}\varphi_j(\bar l)^{M-q}
 \,\mathrm {d}x\mathrm {d}t
 \\&\leq \epsilon_1^{(1+\lambda)(p-1)}\left(\frac{d_j(\bar l)^{p-2}}{r_j^{n+p}}+a_1\frac{d_j(\bar l)^{q-2}}{r_j^{n+q}}\right)|L_j(\bar l)|
 \leq \epsilon_1^{(1+\lambda)(p-1)}4^n\chi.
 \end{split}\end{equation}
Our task now is to estimate the integrals on the set $L^{\prime\prime}_j(\bar l)$ for the case of $(p,q)$-phase.
We claim that there exist constants $c_1=c_1(\epsilon_1,\text{data})$ and $c_2=c_2(\epsilon_2,\text{data})$, such that
\begin{equation}\begin{split}\label{claimQprimeprimexi}
&\frac{d_j(\bar l)^{p-2}}{r_j^{n+p}}\iint_{L^{\prime\prime}_j(\bar l)}\left(\frac{l_j-u}{d_j(\bar l)}\right)^{(1+\lambda)(p-1)}\varphi_j(\bar l)^{M-p}
 \,\mathrm {d}x\mathrm {d}t
 \\&+\frac{d_j(\bar l)^{q-2}}{r_j^{n+q}}\iint_{L^{\prime\prime}_j(\bar l)}a(x,t)\left(\frac{l_j-u}{d_j(\bar l)}\right)^{(1+\lambda)(q-1)}\varphi_j(\bar l)^{M-q}
 \,\mathrm {d}x\mathrm {d}t
 \\ &\leq 4^n\epsilon_2\chi+c_1c_2\left[(\chi+\tilde B^{-p}+\tilde B^{-(p-1)})^{1+\frac{p}{n}}+(\chi+\tilde B^{-p}+\tilde B^{-(p-1)})^{1+\frac{q}{n}}\right].
 \end{split}\end{equation}
To prove \eqref{claimQprimeprimexi}, we again distinguish two cases: $q<n$ and $q\geq n$.

\S 2.1.1. In the case $q<n$. For any fixed $\epsilon_2>0$, we apply Young's inequality and \eqref{Qclaim1} to deduce
  \begin{equation}\begin{split}\label{Qprimeprimexi}
&\frac{d_j(\bar l)^{p-2}}{r_j^{n+p}}\iint_{L^{\prime\prime}_j(\bar l)}\left(\frac{l_j-u}{d_j(\bar l)}\right)^{(1+\lambda)(p-1)}\varphi_j(\bar l)^{M-p}
 \,\mathrm {d}x\mathrm {d}t
 \\&+\frac{d_j(\bar l)^{q-2}}{r_j^{n+q}}\iint_{L^{\prime\prime}_j(\bar l)}a(x,t)\left(\frac{l_j-u}{d_j(\bar l)}\right)^{(1+\lambda)(q-1)}\varphi_j(\bar l)^{M-q}
 \,\mathrm {d}x\mathrm {d}t
 \\ \leq &4^n\epsilon_2\chi+c(\epsilon_2)\left[\frac{d_j(\bar l)^{p-2}}{r_j^{n+p}}\iint_{L^{\prime\prime}_j(\bar l)}\left(\frac{l_j-u}{d_j(\bar l)}\right)^{p\frac{n+h_1}{nh_1}}\varphi_j(\bar l)^{(M-p)p_*}
 \,\mathrm {d}x\mathrm {d}t\right.
 \\&+\left. a_1\frac{d_j(\bar l)^{q-2}}{r_j^{n+q}}\iint_{L^{\prime\prime}_j(\bar l)}\left(\frac{l_j-u}{d_j(\bar l)}\right)^{q\frac{n+h_2}{nh_2}}\varphi_j(\bar l)^{(M-q)q_*}
 \,\mathrm {d}x\mathrm {d}t\right]=:4^n\epsilon_2\chi+\tilde T_1,
 \end{split}\end{equation}
  where $h_1=\frac{p}{p-1-\lambda}$, $h_2=\frac{q}{q-1-\lambda}$, $p_*=p\frac{n+h_1}{h_1(1+\lambda)(p-1)n}$ and $q_*=q\frac{n+h_2}{h_2(1+\lambda)(q-1)n}$. Since $p<n$ and $q<n$,
  we obtain similar as in \eqref{T2estimatexi} that
 \begin{equation*}\begin{split}
 \tilde  T_1\leq & c(\epsilon_2)\left[\frac{d_j(\bar l)^{p-2}}{r_j^{n}}\esssup_t\left(\frac{1}{r_j^n}\int_{U^{\prime\prime}_j(t)}\frac{l_j-u}{d_j(\bar l)}\varphi_j(\bar l)^{k_1h_1}\,\mathrm {d}x\right)^\frac{p}{n}
 \iint_{Q_j(\bar l)}|D\bar v_p|^p\,\mathrm {d}x\mathrm {d}t
 \right.
 \\ &+\left.a_1\frac{d_j(\bar l)^{q-2}}{r_j^{n}}\esssup_t\left(\frac{1}{r_j^n}
 \int_{U^{\prime\prime}_j(t)}\frac{l_j-u}{d_j(\bar l)}\varphi_j(\bar l)^{k_2h_2}\,\mathrm {d}x\right)^\frac{q}{n}
 \iint_{Q_j(\bar l)}|D\bar v_q|^q\,\mathrm {d}x\mathrm {d}t
 \right],
  \end{split}\end{equation*}
  where $k_1=\frac{(M-p)np_*}{p(n+h_1)}$, $k_2=\frac{(M-q)nq_*}{q(n+h_2)}$, $\bar v_p=\psi_p\varphi_j(\bar l)^{k_1}$, $\bar v_q=\psi_q\varphi_j(\bar l)^{k_2}$ and the set
$U_j^{\prime\prime}(t)$ is defined in \eqref{defineUprimprime}. For $i=1$ or $2$,
we apply Lemma \ref{Cac1} with $(l,d,\Theta)$ replaced by
  $(l_j,d_j(\bar l),\Theta_j(\bar l))$ to obtain
   \begin{equation}\begin{split}\label{Cacvarphi}
   \frac{1}{r_j^n}&\esssup_t\int_{U^{\prime\prime}_j(t)}\frac{l_j-u}{d_j(\bar l)}\varphi_j(\bar l)^{k_ih_i}\,\mathrm {d}x
   \\ \leq& c(\epsilon_1) \frac{1}{r_j^n}\esssup_t
   \int_{B_j}G\left(\frac{l_j-u}{d_j(\bar l)}\right)\varphi_j(\bar l)^{k_ih_i}\,\mathrm {d}x
   \\ \leq&\gamma  \frac{d_j(\bar l)^{p-2}}{r_j^{n+p}}\iint_{L_j(\bar l)}\left(\frac{l_j-u}{d_j(\bar l)}\right)^{(1+\lambda)(p-1)}
  \varphi_j(\bar l)^{k_ih_i-p}\,\mathrm {d}x\mathrm {d}t
  \\&+\gamma  \frac{d_j(\bar l)^{q-2}}{r_j^{n+q}}\iint_{L_j(\bar l)}a(x,t)\left(\frac{l_j-u}{d_j(\bar l)}\right)^{(1+\lambda)(q-1)}
  \varphi_j(\bar l)^{k_ih_i-q}\,\mathrm {d}x\mathrm {d}t
   \\&+\gamma \frac{1}{r_j^n}\iint_{L_j(\bar l)}\frac{l_j-u}{d_j(\bar l)}|\partial_t\varphi_j(\bar l)|\,\mathrm {d}x\mathrm {d}t
\\&+\gamma \frac{\Theta_j(\bar l)}{r_j^nd_j(\bar l)^2}\int_{B_j}g^{\frac{p}{p-1}}\,\mathrm {d}x
  +\gamma \frac{\Theta_j(\bar l)}{r_j^nd_j(\bar l)}\int_{B_j}|f|\,\mathrm {d}x
  \\ =&:\tilde T_2+\tilde T_3+\tilde T_4+\tilde T_5+\tilde T_6,
   \end{split}\end{equation}
   since $\varphi_j(\bar l)=0$ on $\partial_PQ_j(\bar l)$. As in the proof of \eqref{T5T6xi}, we infer from $\Theta_j(\bar l)\leq d_j(\bar l)^{2-p}r_j^p$ and $d_j(\bar l)\geq \frac{1}{4}\tilde B(\alpha_{j-1}-\alpha_j)$ that $\tilde T_5+\tilde T_6\leq \gamma(\tilde B^{-p}+\tilde B^{-(p-1)})$. In view of \eqref{Qclaim2}, we find that $\tilde T_2+\tilde T_3\leq \gamma\chi$.
   To estimate $\tilde T_4$, we use Young's inequality, \eqref{Qclaim1} and \eqref{Qclaim2} to deduce
    \begin{equation}\begin{split}\label{tildeT4xi}
    \tilde T_4&\leq \gamma \iint_{L_j(\bar l)}\frac{l_j-u}{d_j(\bar l)}\cdot\left(\frac{d_j(\bar l)^{p-2}}{r_j^{n+p}}+a_1\frac{d_j(\bar l)^{q-2}}{r_j^{n+q}}
    \right)\,\mathrm {d}x\mathrm {d}t
    \\&\leq \gamma\left(\frac{d_j(\bar l)^{p-2}}{r_j^{n+p}}+a_1\frac{d_j(\bar l)^{q-2}}{r_j^{n+q}}\right)|L_j(\bar l)|
    \\&\ \ +\gamma\frac{d_j(\bar l)^{p-2}}{r_j^{n+p}}\iint_{L_j(\bar l)}\left(\frac{l_j-u}{d_j(\bar l)}\right)^{(1+\lambda)(p-1)}
 \,\mathrm {d}x\mathrm {d}t
 \\&\ \ +\gamma\frac{d_j(\bar l)^{q-2}}{r_j^{n+q}}\iint_{L_j(\bar l)}a(x,t)\left(\frac{l_j-u}{d_j(\bar l)}\right)^{(1+\lambda)(q-1)}
 \,\mathrm {d}x\mathrm {d}t
 \\&\leq  \gamma\chi,
      \end{split}\end{equation}
      since $|\partial_t\varphi_j(\bar l)|\leq \Theta_j(\bar l)^{-1}$ and $a_1\leq \frac{5}{4}a(x,t)$ for all $(x,t)\in Q_j(\bar l)$. Consequently, we infer that
      there exists a constant $\gamma_1=\gamma_1(\epsilon_1,\text{data})$
  such that
       \begin{equation}\begin{split}\label{Cacvarphi1}
   \frac{1}{r_j^n}&\esssup_t\int_{U^{\prime\prime}_j(t)}\frac{l_j-u}{d_j(\bar l)}\varphi_j(\bar l)^{k_ih_i}\,\mathrm {d}x
 \leq \gamma_1\chi+\gamma_1(\tilde B^{-p}+\tilde B^{-(p-1)})
   \end{split}\end{equation}
   holds for all $i=1,2$. Furthermore, we decompose
    \begin{equation}\begin{split}\label{DvT7T7tilde}
&\frac{d_j(\bar l)^{p-2}}{r_j^{n}}
 \iint_{Q_j(\bar l)}|D\bar v_p|^p\,\mathrm {d}x\mathrm {d}t
+a_1\frac{d_j(\bar l)^{q-2}}{r_j^{n}}
 \iint_{Q_j(\bar l)}|D\bar v_q|^q\,\mathrm {d}x\mathrm {d}t
  \\ \leq &\left[\frac{d_j(\bar l)^{p-2}}{r_j^{n}}
 \iint_{L_j(\bar l)}\varphi_j(\bar l)^{k_1p}|D\psi_p|^p\,\mathrm {d}x\mathrm {d}t\right.
 \\& +\left.a_1\frac{d_j(\bar l)^{q-2}}{r_j^{n}}
 \iint_{L_j(\bar l)}\varphi_j(\bar l)^{k_2q}|D\psi_q|^q\,\mathrm {d}x\mathrm {d}t\right]
 \\&+ \left[\frac{d_j(\bar l)^{p-2}}{r_j^{n}}
 \iint_{L_j(\bar l)}\varphi_j(\bar l)^{(k_1-1)p}\psi_p^p|D\varphi_j(\bar l)|^p\,\mathrm {d}x\mathrm {d}t\right.
 \\&
+\left.a_1\frac{d_j(\bar l)^{q-2}}{r_j^{n}}
 \iint_{L_j(\bar l)}\varphi_j(\bar l)^{(k_2-1)q}\psi_q^q|D\varphi_j(\bar l)|^q\,\mathrm {d}x\mathrm {d}t\right]=:\tilde T_7+\tilde T_8.
  \end{split}\end{equation}
  Recalling that
  $\frac{4}{5}a_1\leq a(x,t)$ for all $(x,t)\in Q_j(\bar l)$, we apply Lemma \ref{Cac1} with $(l,d,\Theta)$ replaced by
  $(l_j,d_j(\bar l),\Theta_j(\bar l))$ to get
  \begin{equation}\begin{split}\label{tildeT7}
 \tilde T_7\leq &\left[\frac{d_j(\bar l)^{p-2}}{r_j^{n}}
 \iint_{L_j(\bar l)}\varphi_j(\bar l)^{k_1p}|D\psi_p|^p\,\mathrm {d}x\mathrm {d}t\right.
 \\& +\left.\frac{d_j(\bar l)^{q-2}}{r_j^{n}}
 \iint_{L_j(\bar l)}a(x,t)\varphi_j(\bar l)^{k_2q}|D\psi_q|^q\,\mathrm {d}x\mathrm {d}t\right]
   \\ \leq&\gamma  \frac{d_j(\bar l)^{p-2}}{r_j^{n+p}}\iint_{L_j(\bar l)}\left(\frac{l_j-u}{d_j(\bar l)}\right)^{(1+\lambda)(p-1)}
 \,\mathrm {d}x\mathrm {d}t
  \\&+\gamma  \frac{d_j(\bar l)^{q-2}}{r_j^{n+q}}\iint_{L_j(\bar l)}a(x,t)\left(\frac{l_j-u}{d_j(\bar l)}\right)^{(1+\lambda)(q-1)}
 \,\mathrm {d}x\mathrm {d}t
   \\&+\gamma \frac{1}{r_j^n}\iint_{L_j(\bar l)}\frac{l_j-u}{d_j(\bar l)}|\partial_t\varphi_j(\bar l)|\,\mathrm {d}x\mathrm {d}t
\\&+\gamma \frac{\Theta_j(\bar l)}{r_j^nd_j(\bar l)^2}\int_{B_j}g^{\frac{p}{p-1}}\,\mathrm {d}x
  +\gamma \frac{\Theta_j(\bar l)}{r_j^nd_j(\bar l)}\int_{B_j}|f|\,\mathrm {d}x
  \\ \leq&\gamma\chi+\gamma(\tilde B^{-p}+\tilde B^{-(p-1)}),
   \end{split}\end{equation}
   where the constant $\gamma$ depends only on the data. As in the proof of \eqref{estimateforT8}, we use \eqref{psippsiq}, \eqref{Qclaim1} and \eqref{Qclaim2} to obtain
   \begin{equation}\begin{split}\label{estimatefortildeT8}
    \tilde T_8\leq & \left(\frac{d_j(\bar l)^{p-2}}{r_j^{n+p}}+a_1\frac{d_j(\bar l)^{q-2}}{r_j^{n+q}}\right)|L_j(\bar l)|
\\ &+\frac{d_j(\bar l)^{p-2}}{r_j^{n+p}}\iint_{L_j(\bar l)}\left(\frac{l_j-u}{d_j(\bar l)}\right)^{(p-1)(1+\lambda)}\,\mathrm {d}x\mathrm {d}t
\\&+\frac{d_j(\bar l)^{q-2}}{r_j^{n+q}}
 \iint_{L_j(\bar l)}a(x,t)\left(\frac{l_j-u}{d_j(\bar l)}\right)^{(q-1)(1+\lambda)}\,\mathrm {d}x\mathrm {d}t
 \\ \leq &\gamma\chi,
  \end{split}\end{equation}
  where $\gamma=\gamma(\text{data})$. Inserting the estimates \eqref{tildeT7} and \eqref{estimatefortildeT8} into \eqref{DvT7T7tilde}, we infer that
   \begin{equation}\begin{split}\label{Dvbar}
&\frac{d_j(\bar l)^{p-2}}{r_j^{n}}
 \iint_{Q_j(\bar l)}|D\bar v_p|^p\,\mathrm {d}x\mathrm {d}t
+a_1\frac{d_j(\bar l)^{q-2}}{r_j^{n}}
 \iint_{Q_j(\bar l)}|D\bar v_q|^q\,\mathrm {d}x\mathrm {d}t
  \\ &\leq \gamma (\chi+\tilde B^{-p}+\tilde B^{-(p-1)}).
  \end{split}\end{equation}
  Combining \eqref{Cacvarphi1} and \eqref{Dvbar}, we arrive at
   \begin{equation*}\begin{split}
 \tilde T_1\leq c_1c_2\left[(\chi+\tilde B^{-p}+\tilde B^{-(p-1)})^{1+\frac{p}{n}}+(\chi+\tilde B^{-p}+\tilde B^{-(p-1)})^{1+\frac{q}{n}}\right],
  \end{split}\end{equation*}
  where $c_1=c_1(\epsilon_1,\text{data})$ and $c_2=c_2(\epsilon_2,\text{data})$. Inserting this inequality into \eqref{Qprimeprimexi}, we conclude that
\eqref{claimQprimeprimexi} holds.

\S 2.1.2. In the case $q\geq n$.
The proof of \eqref{claimQprimeprimexi} follows in a similar manner as the arguments in \S 1.1.2 and we sketch the proof.
To start with, we observe that the first term on the left-hand side of \eqref{claimQprimeprimexi} can be treated by the same way as in \S 2.1.1.
  More precisely, we obtain
   \begin{equation}\begin{split}\label{QQprimeprimexiq>npintegral}
 &\frac{d_j(\bar l)^{p-2}}{r_j^{n+p}}\iint_{L_j^{\prime\prime}(\bar l)}\left(\frac{l_j-u}{d_j(\bar l)}\right)^{(1+\lambda)(p-1)}\varphi_j(\bar l)^{M-p}
 \,\mathrm {d}x\mathrm {d}t
  \\ &\leq 4^n\epsilon_2\chi+c_1c_2(\chi+\tilde B^{-p}+\tilde B^{-(p-1)})^{1+\frac{p}{n}},
  \end{split}\end{equation}
  where $c_1=c_1(\epsilon_1,\text{data})$ and $c_2=c_2(\epsilon_2,\text{data})$. It remains to consider the second term on the left-hand side of \eqref{claimQprimeprimexi}.
 Recalling that $a(x,t)\leq\frac{6}{5}a_1$ for all $(x,t)\in Q_j(\bar l)$, we infer from \eqref{lemmainequalitypsi-} that
 \begin{equation}\begin{split}\label{QQprimeprimexiq>n1st}
 &\frac{d_j(\bar l)^{q-2}}{r_j^{n+q}}\iint_{L_j^{\prime\prime}(\bar l)}a(x,t)\left(\frac{l_j-u}{d_j(\bar l)}\right)^{(1+\lambda)(q-1)}\varphi_j(\bar l)^{M-q}
 \,\mathrm {d}x\mathrm {d}t
  \\&\leq c(\epsilon_1)a_1\frac{d_j(\bar l)^{q-2}}{r_j^{n+q}}\iint_{L_j^{\prime\prime}(\bar l)}\psi_q^q\left(\frac{l_j-u}{d_j(\bar l)}\right)^{\frac{p}{n}}\varphi_j(\bar l)^{M-q}
 \,\mathrm {d}x\mathrm {d}t,
  \end{split}\end{equation}
  since $\lambda=\frac{p}{nq}$.
  Moreover, let $\hat v_q=\psi_q\varphi_j(\bar l)^{M_2}$, where $M_2=\frac{\frac{1}{2}M-q}{q}$.
  Noting that $q\geq n$ and $\varphi_j(\bar l)=0$ on $\partial_PQ_j(\bar l)$, we apply Sobolev's inequality to $\hat v_q(\cdot,t)$ slicewise. This yields that
  \begin{equation*}\begin{split}
  \left(\int_{B_j} \hat v_q(\cdot,t)^{\frac{nq}{n-p}}\,\mathrm {d}x\right)^\frac{n-p}{nq}
  \leq \gamma r_j^{1-\frac{p}{q}}\left(\int_{B_j}|D\hat v_q(\cdot,t)|^q\,\mathrm {d}x\right)^\frac{1}{q}.
   \end{split}\end{equation*}
   We use this inequality and H\"older's inequality to estimate the right-hand side of \eqref{QQprimeprimexiq>n1st} by
    \begin{equation}\begin{split}\label{QQprimeprimexiq>n2nd}
&a_1\frac{d_j(\bar l)^{q-2}}{r_j^{n+q}}\iint_{L_j^{\prime\prime}(\bar l)}\psi_q^q\left(\frac{l_j-u}{d_j(\bar l)}\right)^{\frac{p}{n}}\varphi_j(\bar l)^{M-q}
 \,\mathrm {d}x\mathrm {d}t
  \\& \leq a_1\frac{d_j(\bar l)^{q-2}}{r_j^{n+q}}\int_{t_1-\Theta_j(\bar l)}^{t_1}\left(\int_{B_j}\hat v_q(\cdot,t)^{\frac{nq}{n-p}} \,\mathrm {d}x\right)^\frac{n-p}{n}
\\&\ \ \ \ \times\left( \int_{U^{\prime\prime}_j(t)}\frac{l_j-u}{d_j(\bar l)}\varphi_j(\bar l)^{\frac{nM}{2p}}
 \,\mathrm {d}x\right)^\frac{p}{n}\mathrm {d}t
  \\& \leq \left[a_1\frac{d_j(\bar l)^{q-2}}{r_j^{n}}\iint_{Q_j(\bar l)}|D\hat v_q|^q \,\mathrm {d}x\mathrm {d}t\right]
\\&\ \ \ \ \times\esssup_t\left(\frac{1}{r_j^n} \int_{U^{\prime\prime}_j(t)}\frac{l_j-u}{d_j(\bar l)}\varphi_j(\bar l)^{\frac{nM}{2p}}
 \,\mathrm {d}x\right)^\frac{p}{n}
 \\&\leq \gamma_1(\chi+\tilde B^{-p}+\tilde B^{-(p-1)})^{1+\frac{p}{n}},
  \end{split}\end{equation}
   where $\gamma_1=\gamma_1(\epsilon_1,\text{data})$.
   The estimate in the last line can be proved in much the same way as \eqref{Cacvarphi1} and \eqref{Dvbar}.
   Combining \eqref{QQprimeprimexiq>npintegral}-\eqref{QQprimeprimexiq>n2nd}, we obtain the desired estimate \eqref{claimQprimeprimexi}.

   At this stage, we shall derive an upper bound for $A_j(\bar l)$ in terms of $\chi$ and $\tilde B$ for the $(p,q)$-phase.
   First, we infer from \eqref{Qprimexi} and \eqref{claimQprimeprimexi} that
   \begin{equation}\begin{split}\label{QQxixi}
&\frac{d_j(\bar l)^{p-2}}{r_j^{n+p}}\iint_{L_j(\bar l)}\left(\frac{l_j-u}{d_j(\bar l)}\right)^{(1+\lambda)(p-1)}\varphi_j(\bar l)^{M-p}
 \,\mathrm {d}x\mathrm {d}t
 \\&+\frac{d_j(\bar l)^{q-2}}{r_j^{n+q}}\iint_{L_j(\bar l)}a(x,t)\left(\frac{l_j-u}{d_j(\bar l)}\right)^{(1+\lambda)(q-1)}\varphi_j(\bar l)^{M-q}
 \,\mathrm {d}x\mathrm {d}t
 \\&
 \leq \epsilon_1^{(1+\lambda)(p-1)}4^n\chi+4^n\epsilon_2\chi
 \\&\ \ +c_1c_2\left[(\chi+\tilde B^{-p}+\tilde B^{-(p-1)})^{1+\frac{p}{n}}+(\chi+\tilde B^{-p}+\tilde B^{-(p-1)})^{1+\frac{q}{n}}\right],
 \end{split}\end{equation}
  where $c_1=c_1(\epsilon_1,\text{data})$ and $c_2=c_2(\epsilon_2,\text{data})$. To estimate the third term on the right-hand side of \eqref{Aj1st}, we have to refine the estimate \eqref{tildeT4xi}.
  In view of $|\partial_t\varphi_j(\bar l)|\leq \Theta_j(\bar l)^{-1}$ and $a_1\leq \frac{5}{4}a(x,t)$ for all $(x,t)\in Q_j(\bar l)$, we deduce from
  \eqref{Qclaim1} and \eqref{claimQprimeprimexi} that
   \begin{equation}\begin{split}\label{tildeT4xirefinementpq}
  & \frac{1}{r_j^n}\iint_{L_j(\bar l)}\frac{l_j-u}{d_j(\bar l)}|\partial_t\varphi_j(\bar l)|\varphi_j(\bar l)^{M-1}\,\mathrm {d}x\mathrm {d}t
    \\&\leq \epsilon_1\left(\frac{d_j(\bar l)^{p-2}}{r_j^{n+p}}+a_1\frac{d_j(\bar l)^{q-2}}{r_j^{n+q}}\right)|L_j^\prime(\bar l)|
    \\&\ \ + \epsilon_1^{1-(1+\lambda)(p-1)}\frac{d_j(\bar l)^{p-2}}{r_j^{n+p}}\iint_{L_j^{\prime\prime}(\bar l)}\left(\frac{l_j-u}{d_j(\bar l)}\right)^{(1+\lambda)(p-1)}\varphi_j(\bar l)^{M-p}
 \,\mathrm {d}x\mathrm {d}t
 \\&\ \ +
 \epsilon_1^{1-(1+\lambda)(q-1)}
 \frac{d_j(\bar l)^{q-2}}{r_j^{n+q}}\iint_{L_j^{\prime\prime}(\bar l)}a(x,t)\left(\frac{l_j-u}{d_j(\bar l)}\right)^{(1+\lambda)(q-1)}\varphi_j(\bar l)^{M-q}
 \,\mathrm {d}x\mathrm {d}t
 \\&\leq 4^n\epsilon_1\chi+4^n
 \epsilon_1^{1-(1+\lambda)(q-1)}\epsilon_2\chi
 \\&\ \ +
 \epsilon_1^{1-(1+\lambda)(q-1)}c_1c_2\left[(\chi+\tilde B^{-p}+\tilde B^{-(p-1)})^{1+\frac{p}{n}}+(\chi+\tilde B^{-p}+\tilde B^{-(p-1)})^{1+\frac{q}{n}}\right].
      \end{split}\end{equation}
      Combining \eqref{QQxixi} and \eqref{tildeT4xirefinementpq}, we apply Caccioppoli inequality \eqref{Cacformula1} with $(l,d,\Theta)$ replaced by
  $(l_j,d_j(\bar l),\Theta_j(\bar l))$ to obtain
       \begin{equation}\begin{split}\label{CacvarphipqG}
&\frac{1}{r_j^n}\esssup_t
   \int_{B_j}G\left(\frac{l_j-u}{d_j(\bar l)}\right)\varphi_j(\bar l)^M\,\mathrm {d}x
   \\ \leq&\gamma  \frac{d_j(\bar l)^{p-2}}{r_j^{n+p}}\iint_{L_j(\bar l)}\left(\frac{l_j-u}{d_j(\bar l)}\right)^{(1+\lambda)(p-1)}
  \varphi_j(\bar l)^{M-p}\,\mathrm {d}x\mathrm {d}t
  \\&+\gamma  \frac{d_j(\bar l)^{q-2}}{r_j^{n+q}}\iint_{L_j(\bar l)}a(x,t)\left(\frac{l_j-u}{d_j(\bar l)}\right)^{(1+\lambda)(q-1)}
  \varphi_j(\bar l)^{M-q}\,\mathrm {d}x\mathrm {d}t
   \\&+\gamma \frac{1}{r_j^n}\iint_{L_j(\bar l)}\frac{l_j-u}{d_j(\bar l)}|\partial_t\varphi_j(\bar l)|\varphi_j(\bar l)^{M-1}\,\mathrm {d}x\mathrm {d}t
\\&+\gamma \frac{\Theta_j(\bar l)}{r_j^nd_j(\bar l)^2}\int_{B_j}g^{\frac{p}{p-1}}\,\mathrm {d}x
  +\gamma \frac{\Theta_j(\bar l)}{r_j^nd_j(\bar l)}\int_{B_j}|f|\,\mathrm {d}x
  \\ \leq & \epsilon_1^{(1+\lambda)(p-1)}4^n\chi+4^n(\epsilon_1+\epsilon_2)\chi
+4^n
 \epsilon_1^{1-(1+\lambda)(q-1)}\epsilon_2\chi+c_0(\tilde B^{-p}+\tilde B^{-(p-1)})
 \\&\ \ +
 2\epsilon_1^{1-(1+\lambda)(q-1)}c_1c_2\left[(\chi+\tilde B^{-p}+\tilde B^{-(p-1)})^{1+\frac{p}{n}}+(\chi+\tilde B^{-p}+\tilde B^{-(p-1)})^{1+\frac{q}{n}}\right],
   \end{split}\end{equation}
   where $c_0=c_0(\text{data})$, $c_1=c_1(\epsilon_1,\text{data})$ and $c_2=c_2(\epsilon_2,\text{data})$. Consequently, we infer from \eqref{QQxixi} and \eqref{CacvarphipqG}
   that there exist constants $\gamma_0^\prime=\gamma_0^\prime(\text{data})$, $\gamma_1^\prime=\gamma_1^\prime(\epsilon_1,\text{data})$ and
   $\gamma_2^\prime=\gamma_2^\prime(\epsilon_2,\text{data})$ such that
   \begin{equation}\begin{split}\label{AjbarlQpq}
   A_j(\bar l)&\leq 8^n\epsilon_1\chi
+\gamma_1^\prime\epsilon_2\chi+\gamma_0^\prime(\tilde B^{-p}+\tilde B^{-(p-1)})
 \\&\ \ +
\gamma_1^\prime\gamma_2^\prime\left[(\chi+\tilde B^{-p}+\tilde B^{-(p-1)})^{1+\frac{p}{n}}+(\chi+\tilde B^{-p}+\tilde B^{-(p-1)})^{1+\frac{q}{n}}\right].
    \end{split}\end{equation}
     This establishes an upper bound for $A_j(\bar l)$ in the case of the $(p,q)$-phase.

     \S 2.2. In the case of $p$-phase, i.e., $a_1<10[a]_\alpha r_j^\alpha$.
     Our proof follows in a similar manner as the argument in \S 1.2 and we just sketch the proof.
     Noting that $Q_j(\bar l)\subseteq Q_{r_j,r_j^2}(x_1,t_1)$,
   we have $a(x,t)\leq 12[a]_\alpha r_j^\alpha$ for all $(x,t)\in Q_j(\bar l)$.
   In view of $d_j(\bar l)\leq\xi\omega\leq \|u\|_\infty$ and $r_j^{\alpha-q}\leq r_j^{-p}$,
   we deduce from \eqref{Qclaim1} that there exists a constant $\gamma$ depending only upon the data, such that
    \begin{equation}\begin{split}\label{Qprimexipphase}
&\frac{d_j(\bar l)^{p-2}}{r_j^{n+p}}\iint_{L^\prime_j(\bar l)}\left(\frac{l_j-u}{d_j(\bar l)}\right)^{(1+\lambda)(p-1)}\varphi_j(\bar l)^{M-p}
 \,\mathrm {d}x\mathrm {d}t
 \\&+\frac{d_j(\bar l)^{q-2}}{r_j^{n+q}}\iint_{L^\prime_j(\bar l)}a(x,t)\left(\frac{l_j-u}{d_j(\bar l)}\right)^{(1+\lambda)(q-1)}\varphi_j(\bar l)^{M-q}
 \,\mathrm {d}x\mathrm {d}t
 \\&\leq (1+12[a]_\alpha\|u\|_\infty^{q-p})\epsilon_1^{(1+\lambda)(p-1)}\frac{d_j(\bar l)^{p-2}}{r_j^{n+p}}|L_j(\bar l)|
 \leq \gamma\epsilon_1^{(1+\lambda)(p-1)}\chi.
 \end{split}\end{equation}
Next, we consider the estimate on the set $L^{\prime\prime}_j(\bar l)$.
 Analysis similar to that in the proof of \eqref{QQprimeprimexiq>npintegral} shows that there exist constants $c_1=c_1(\epsilon_1,\text{data})$ and $c_2=c_2(\epsilon_2,\text{data})$ such that
   \begin{equation}\begin{split}\label{Qprimeprimexipphasepintegral}
 &\frac{d_j(\bar l)^{p-2}}{r_j^{n+p}}\iint_{L^{\prime\prime}_j(\bar l)}\left(\frac{l_j-u}{d_j(\bar l)}\right)^{(1+\lambda)(p-1)}\varphi_j(\bar l)^{M-p}
 \,\mathrm {d}x\mathrm {d}t
 \\&\leq 4^n\epsilon_2\chi+c_1c_2(\chi+\tilde B^{-p}+\tilde B^{-(p-1)})^{1+\frac{p}{n}}.
  \end{split}\end{equation}
  Recalling that $\lambda q=\frac{p}{n}$ and $a(x,t)\leq 12[a]_\alpha r_j^\alpha$ for all $(x,t)\in Q_j(\bar l)$, we use $r_j^{\alpha-q}\leq r_j^{-p}$ and Lemma \ref{inequalitypsi-} to deduce
   \begin{equation}\begin{split}\label{Qprimeprimepphase1st}
 &\frac{d_j(\bar l)^{q-2}}{r_j^{n+q}}\iint_{L^{\prime\prime}_j(\bar l)}a(x,t)\left(\frac{l_j-u}{d_j(\bar l)}\right)^{(1+\lambda)(q-1)}\varphi_j(\bar l)^{M-q}
 \,\mathrm {d}x\mathrm {d}t
 \\& \leq c(\epsilon_1) \frac{d_j(\bar l)^{q-2}}{r_j^{n+p}}\iint_{L^{\prime\prime}_j(\bar l)}\psi_p^p
 \left(\frac{l_j-u}{d_j(\bar l)}\right)^{q-p}
 \left(\frac{l_j-u}{d_j(\bar l)}\right)^{\frac{p}{n}}\varphi_j(\bar l)^{M-q}
 \,\mathrm {d}x\mathrm {d}t,
 \end{split}\end{equation}
 To proceed further, we set $\hat v_p=\psi_p\varphi_j(\bar l)^{M_3}$, where $M_3=\frac{\frac{1}{2}M-q}{p}$.
In view of $l_j-u\leq \xi\omega\leq \|u\|_\infty$ on $L_j(\bar l)$ and $\varphi_j(\bar l)=0$ on $\partial_PQ_j(\bar l)$,
 we apply H\"older's inequality and Sobolev's inequality slicewise. This leads us to
\begin{equation}\begin{split}\label{Qprimeprimepphase2st}
 & \frac{d_j(\bar l)^{q-2}}{r_j^{n+p}}\iint_{L^{\prime\prime}_j(\bar l)}\psi_p^p
 \left(\frac{l_j-u}{d_j(\bar l)}\right)^{q-p}
 \left(\frac{l_j-u}{d_j(\bar l)}\right)^{\frac{p}{n}}\varphi_j(\bar l)^{M-q}
 \,\mathrm {d}x\mathrm {d}t
 \\&\leq \|u\|_\infty^{q-p}\frac{d_j(\bar l)^{p-2}}{r_j^{n+p}}\iint_{L^{\prime\prime}_j(\bar l)}\hat v_p^p
 \left(\frac{l_j-u}{d_j(\bar l)}\right)^{\frac{p}{n}}\varphi_j(\bar l)^\frac{M}{2}
 \,\mathrm {d}x\mathrm {d}t
  \\&\leq \|u\|_\infty^{q-p}\frac{d_j(\bar l)^{p-2}}{r_j^{n+p}}\int_{t_1-\Theta_j(\bar l)}^{t_1}\left(\int_{B_j}\hat v_p^{\frac{np}{n-p}}\,\mathrm {d}x\right)^\frac{n-p}{n}
  \\&\ \ \ \ \times
\left( \int_{U^{\prime\prime}_j(t)}\frac{l_j-u}{d_j(\bar l)}\varphi_j(\bar l)^{\frac{Mn}{2p}}
 \,\mathrm {d}x\right)^{\frac{p}{n}}\mathrm {d}t
 \\& \leq \gamma_1\esssup_t\left(\frac{1}{r_j^n} \int_{U^{\prime\prime}_j(t)}\frac{l_j-u}{d_j(\bar l)}\varphi_j(\bar l)^{\frac{Mn}{2p}}
 \,\mathrm {d}x\right)^{\frac{p}{n}}
 \\&\ \ \ \ \times\left[\frac{d_j(\bar l)^{p-2}}{r_j^n}\iint_{Q_j(\bar l)}|D\hat v_p|^p\,\mathrm {d}x\mathrm {d}t\right],
 \end{split}\end{equation}
    where $\gamma_1=\gamma_1(\epsilon_1,\text{data})$. The last line can be handled in much the same way as \eqref{Cacvarphi1} and \eqref{Dvbar}, the only difference
being in the analysis of $\tilde T_4$ in \eqref{Cacvarphi}.
In fact, we have $a_1<10[a]_\alpha r_j^\alpha$ in the case of $p$-phase. We deduce from $d_j(\bar l)\leq \|u\|_\infty$, $r_j^{\alpha-q}\leq r_j^{-p}$,
\eqref{Qclaim1} and \eqref{Qclaim2} that
 \begin{equation}\begin{split}\label{tildeT4xipphase}
    \tilde T_4
    &=\gamma \frac{1}{r_j^n}\iint_{L_j(\bar l)}\frac{l_j-u}{d_j(\bar l)}|\partial_t\varphi_j(\bar l)|\,\mathrm {d}x\mathrm {d}t
    \\&\leq \gamma (1+10[a]_\alpha\|u\|_\infty^{q-p})\frac{d_j(\bar l)^{p-2}}{r_j^{n+p}}\iint_{L_j(\bar l)}\frac{l_j-u}{d_j(\bar l)}\,\mathrm {d}x\mathrm {d}t
    \\&\leq \gamma\frac{d_j(\bar l)^{p-2}}{r_j^{n+p}}|L_j(\bar l)|
 +\gamma\frac{d_j(\bar l)^{p-2}}{r_j^{n+p}}\iint_{L_j(\bar l)}\left(\frac{l_j-u}{d_j(\bar l)}\right)^{(1+\lambda)(p-1)}
 \,\mathrm {d}x\mathrm {d}t
 \\&\leq  \gamma\chi,
      \end{split}\end{equation}
      where the constant $\gamma$ depends only upon the data. Therefore, we have
       \begin{equation}\begin{split}\label{ppahseCacvarphi1}
   \frac{1}{r_j^n}&\esssup_t\int_{U^{\prime\prime}_j(t)}\frac{l_j-u}{d_j(\bar l)}\varphi_j(\bar l)^\frac{Mn}{2p}\,\mathrm {d}x
 \leq \gamma_1(\chi+\tilde B^{-p}+\tilde B^{-(p-1)})
   \end{split}\end{equation}
   and
    \begin{equation}\begin{split}\label{pphaseDvbar}
&\frac{d_j(\bar l)^{p-2}}{r_j^{n}}
 \iint_{Q_j(\bar l)}|D\hat v_p|^p\,\mathrm {d}x\mathrm {d}t
\leq \gamma (\chi+\tilde B^{-p}+\tilde B^{-(p-1)}),
  \end{split}\end{equation}
  where $\gamma=\gamma(\text{data})$ and $\gamma_1=\gamma_1(\epsilon_1,\text{data})$.
Inserting \eqref{ppahseCacvarphi1} and \eqref{pphaseDvbar} into \eqref{Qprimeprimepphase2st}, we deduce from \eqref{Qprimeprimepphase1st} that
    \begin{equation}\begin{split}\label{primeprimeQpphasexi}
&\frac{d_j(\bar l)^{q-2}}{r_j^{n+q}}\iint_{L_j^{\prime\prime}(\bar l)}a(x,t)\left(\frac{l_j-u}{d_j(\bar l)}\right)^{(1+\lambda)(q-1)}\varphi_j(\bar l)^{M-q}
 \,\mathrm {d}x\mathrm {d}t
 \\&
 \leq 4^n\epsilon_2\chi+\gamma_3\gamma_4(\chi+\tilde B^{-p}+\tilde B^{-(p-1)})^{1+\frac{p}{n}},
 \end{split}\end{equation}
  where $\gamma=\gamma(\text{data})$, $\gamma_3=\gamma_3(\epsilon_1,\text{data})$ and $\gamma_4=\gamma_4(\epsilon_2,\text{data})$.
  Consequently, we infer from \eqref{Qprimexipphase}, \eqref{Qprimeprimexipphasepintegral}
and \eqref{primeprimeQpphasexi} that
    \begin{equation}\begin{split}\label{Qpphasexi}
&\frac{d_j(\bar l)^{p-2}}{r_j^{n+p}}\iint_{L_j(\bar l)}\left(\frac{l_j-u}{d_j(\bar l)}\right)^{(1+\lambda)(p-1)}\varphi_j(\bar l)^{M-p}
 \,\mathrm {d}x\mathrm {d}t
 \\&+\frac{d_j(\bar l)^{q-2}}{r_j^{n+q}}\iint_{L_j(\bar l)}a(x,t)\left(\frac{l_j-u}{d_j(\bar l)}\right)^{(1+\lambda)(q-1)}\varphi_j(\bar l)^{M-q}
 \,\mathrm {d}x\mathrm {d}t
 \\&
 \leq \gamma\epsilon_1^{(1+\lambda)(p-1)}\chi+4^n\epsilon_2\chi+\gamma_3\gamma_4(\chi+\tilde B^{-p}+\tilde B^{-(p-1)})^{1+\frac{p}{n}},
 \end{split}\end{equation}
 where $\gamma=\gamma(\text{data})$, $\gamma_3=\gamma_3(\epsilon_1,\text{data})$ and $\gamma_4=\gamma_4(\epsilon_2,\text{data})$.

 Our task now is to estimate the third term on the right-hand side of \eqref{Aj1st}. To this end, we need to improve the inequality \eqref{tildeT4xipphase}.
 In view of $a_1<10[a]_\alpha r_j^\alpha$ and \eqref{Qprimeprimexipphasepintegral}, we have
 \begin{equation}\begin{split}\label{tildeT4xirefinementpphase}
  & \frac{1}{r_j^n}\iint_{L_j(\bar l)}\frac{l_j-u}{d_j(\bar l)}|\partial_t\varphi_j(\bar l)|\varphi_j(\bar l)^{M-1}\,\mathrm {d}x\mathrm {d}t
    \\&\leq \epsilon_1 (1+10[a]_\alpha\|u\|_\infty^{q-p})
   \frac{d_j(\bar l)^{p-2}}{r_j^{n+p}}|L_j^\prime(\bar l)|
    \\&\ + \epsilon_1^{1-(1+\lambda)(p-1)}
     (1+10[a]_\alpha\|u\|_\infty^{q-p})\frac{d_j(\bar l)^{p-2}}{r_j^{n+p}}
\iint_{L_j^{\prime\prime}(\bar l)}\left(\frac{l_j-u}{d_j(\bar l)}\right)^{(1+\lambda)(p-1)}\varphi_j(\bar l)^{M-p}
 \,\mathrm {d}x\mathrm {d}t
 \\&\leq \gamma_0\epsilon_1\chi+\gamma_0
 \epsilon_1^{1-(1+\lambda)(p-1)}\epsilon_2\chi
+\epsilon_1^{1-(1+\lambda)(p-1)}\gamma_1\gamma_2(\chi+\tilde B^{-p}+\tilde B^{-(p-1)})^{1+\frac{p}{n}},
      \end{split}\end{equation}
 where $\gamma_0=\gamma_0(\text{data})$, $\gamma_1=\gamma_1(\epsilon_1,\text{data})$ and $\gamma_2=\gamma_2(\epsilon_2,\text{data})$.
 Taking \eqref{Qpphasexi} and \eqref{tildeT4xirefinementpphase} into account,
 we use Caccioppoli inequality \eqref{Cacformula1} with $(l,d,\Theta)$ replaced by
  $(l_j,d_j(\bar l),\Theta_j(\bar l))$ to obtain
       \begin{equation}\begin{split}\label{CacvarphipphaseG}
&\frac{1}{r_j^n}\esssup_t
   \int_{B_j}G\left(\frac{l_j-u}{d_j(\bar l)}\right)\varphi_j(\bar l)^M\,\mathrm {d}x
  \\ &\leq \gamma_0\epsilon_1\chi+\gamma_1\epsilon_2\chi+
  \gamma_0(\chi+\tilde B^{-p}+\tilde B^{-(p-1)})
+\gamma_1\gamma_2(\chi+\tilde B^{-p}+\tilde B^{-(p-1)})^{1+\frac{p}{n}}.
   \end{split}\end{equation}
   Consequently, we infer from \eqref{Qpphasexi} and \eqref{CacvarphipphaseG}
   that there exist constants $\gamma_0^{\prime\prime}=\gamma_0^{\prime\prime}(\text{data})$, $\gamma_1^{\prime\prime}=\gamma_1^{\prime\prime}(\epsilon_1,\text{data})$ and
   $\gamma_2^{\prime\prime}=\gamma_2^{\prime\prime}(\epsilon_2,\text{data})$, such that
   \begin{equation}\begin{split}\label{AjbarlQpphase}
   A_j(\bar l)&\leq \gamma_0^{\prime\prime}\epsilon_1\chi+\gamma_1^{\prime\prime}\epsilon_2\chi+
  \gamma_0^{\prime\prime}(\chi+\tilde B^{-p}+\tilde B^{-(p-1)})
+\gamma_1^{\prime\prime}\gamma_2^{\prime\prime}(\chi+\tilde B^{-p}+\tilde B^{-(p-1)})^{1+\frac{p}{n}}.
    \end{split}\end{equation}
     This establishes an upper bound for $A_j(\bar l)$ in the case of the $p$-phase.

     Finally, we can summarize what we have proved from \S 1 to \S 2. Combining \eqref{Ajbarlxi}, \eqref{AjbarlQpq} and \eqref{AjbarlQpphase}, we conclude that there exist constants
     $\hat\gamma_0=\hat\gamma_0(\text{data})$, $\hat\gamma_1=\hat\gamma_1(\epsilon_1,\text{data})$ and $\hat\gamma_2=\hat\gamma_2(\epsilon_2,\text{data})$, such that
       \begin{equation}\begin{split}\label{Ajbarlalltogether}
   A_j(\bar l)&\leq \hat\gamma_0\epsilon_1\chi
+\hat\gamma_1\epsilon_2\chi+\hat\gamma_0(\tilde B^{-p}+\tilde B^{-(p-1)})
 \\&\ \ +
\hat\gamma_1\hat\gamma_2\left[\chi^{1+\frac{p}{n}}+(\tilde B^{-p}+\tilde B^{-(p-1)})^{1+\frac{p}{n}}+\chi^{1+\frac{q}{n}}+(\tilde B^{-p}+\tilde B^{-(p-1)})^{1+\frac{q}{n}}\right].
    \end{split}\end{equation}
    At this point, we first choose $\epsilon_1=(15\hat\gamma_0)^{-1}$ and this also fixes $\hat\gamma_1=\hat\gamma_1(\epsilon_1)$. Next, we set $\epsilon_2=(15\hat\gamma_1)^{-1}$.
    This determines quantitatively $\hat\gamma_2=\hat\gamma_2(\epsilon_1)$. Moreover, we set
     \begin{equation}\begin{split}\label{chidefxi}
     \chi=\min\left\{(15\hat\gamma_1\hat\gamma_2)^{-\frac{n}{p}},(15\hat\gamma_1\hat\gamma_2)^{-\frac{n}{q}}\right\}.
      \end{split}\end{equation}
Note that this particular choice of $\chi$ determines $\nu_1$ from \eqref{nu1tildeB}. Finally, we choose $\tilde B$ so large that
 \begin{equation}\begin{split}\label{tildeBdefxi}
\tilde B^{-p}+\tilde B^{-(p-1)}\leq \min\left\{\frac{1}{15\hat\gamma_0}\chi,\left(\frac{1}{15\hat\gamma_1\hat\gamma_2}\chi\right)^\frac{1}{1+\frac{p}{n}},
\left(\frac{1}{15\hat\gamma_1\hat\gamma_2}\chi\right)^\frac{1}{1+\frac{q}{n}}\right\}.
 \end{split}\end{equation}
With these choices of $\epsilon_1$, $\epsilon_2$, $\chi$ and $\tilde B$, we infer from \eqref{Ajbarlalltogether} that $A_j(\bar l)\leq \frac{1}{2}\chi$.

Our next goal is to construct $l_{j+1}$. Before proceeding further,
    we observe that $l_j-\bar l=d_j(\bar l)>\frac{1}{4}\tilde B(\alpha_{j-1}-\alpha_j)>$.
    We also recall that $A_i(l)\to+\infty$ as $l\to l_i$ holds for all $i=0,1,2,\cdots$.
    Next, we distinguish five cases.

  (\romannumeral 1) In the case $\Theta_j(0)> t_1-\hat t$, we find that $A_j(l)=A_j^{(2)}(l)$ is continuous in $[0,l_j)$,
and there exists a number
 $\tilde l\in (\bar l, l_j)$ such that $A_j(\tilde l)=A_j^{(2)}(\tilde l)=\chi$.
    Then, we set
    \begin{equation}\label{def lj+1xicontinuous}
	l_{j+1}=\begin{cases}
\tilde l,&\quad \text{if}\quad \tilde l<l_j-\frac{1}{4}(\alpha_{j-1}-\alpha_j),\\
	l_j-\frac{1}{4}(\alpha_{j-1}-\alpha_j),&\quad \text{if}\quad \tilde l\geq l_j-\frac{1}{4}(\alpha_{j-1}-\alpha_j).
	\end{cases}\end{equation}
It can be easily seen that $A_j(l_{j+1})=A_j^{(2)}(l_{j+1})\leq A_j^{(2)}(\tilde l)=\chi$.

(\romannumeral 2) In the case $\Theta_j(0)\leq t_1-\hat t$ and $\bar l\in(l_j^*,l_j)$, we have $A_j(l)=A_j^{(2)}(l)$ for any $l\in(\bar l,l_j)$.
Since $A_j^{(2)}(l)$ is continuous and increasing, there exists a number
 $l^\prime\in (\bar l, l_j)$ such that $A_j(l^\prime)=A_j^{(2)}(l^\prime)=\chi$.
    At this point, we define
    \begin{equation}\label{def lj+1xi}
	l_{j+1}=\begin{cases}
l^\prime,&\quad \text{if}\quad l^\prime<l_j-\frac{1}{4}(\alpha_{j-1}-\alpha_j),\\
	l_j-\frac{1}{4}(\alpha_{j-1}-\alpha_j),&\quad \text{if}\quad  l^\prime\geq l_j-\frac{1}{4}(\alpha_{j-1}-\alpha_j).
	\end{cases}\end{equation}
It follows from $\bar l<l_j-\frac{1}{4}(\alpha_{j-1}-\alpha_j)$ that $A_j(l_{j+1})=A_j^{(2)}(l_{j+1})\leq A_j^{(2)}(l^\prime)=\chi$.

(\romannumeral 3)
In the case $\Theta_j(0)\leq t_1-\hat t$, $\bar l\in(0,l_j^*]$ and $A_j^{(1)}(l_j^*)> \chi$, we have $A_j(l)=A_j^{(1)}(l)$ for any $l\in[\bar l,l_j^*]$.
Noting that
$A_j^{(1)}(l)$ is continuous and increasing in $(0,l_j^*]$, there exists a number
$l^{\prime\prime}\in (\bar l,l_j^*)$ such that $A_j^{(1)}(l^{\prime\prime})=\chi$. In this case, we define $l_{j+1}$ via the formula
 \begin{equation}\label{def lj+1xilowlow}
	l_{j+1}=\begin{cases}
l^{\prime\prime},&\quad \text{if}\quad l^{\prime\prime}<l_j-\frac{1}{4}(\alpha_{j-1}-\alpha_j),\\
	l_j-\frac{1}{4}(\alpha_{j-1}-\alpha_j),&\quad \text{if}\quad  l^{\prime\prime}\geq l_j-\frac{1}{4}(\alpha_{j-1}-\alpha_j).
	\end{cases}\end{equation}
Moreover, we observe that $A_j(l_{j+1})=A_j^{(1)}(l_{j+1})\leq A_j^{(1)}(l^{\prime\prime})=\chi$.

(\romannumeral 4)
In the case $\Theta_j(0)\leq t_1-\hat t$, $\bar l\in(0,l_j^*]$ and $A_j^{(2)}(l_j^*)<\frac{\chi}{2}$,
we choose $\hat l\in(l_j^*,l_j)$ such that $A_j(\hat l)=A_j^{(2)}(\hat l)=\chi$, since $A_j(l)=A_j^{(2)}(l)$ is continuous and increasing in $(l_j^*,l_j)$.
At this point, we define $l_{j+1}$ by
    \begin{equation}\label{def lj+1xilowhigh}
	l_{j+1}=\begin{cases}
\hat l,&\quad \text{if}\quad \hat l<l_j-\frac{1}{4}(\alpha_{j-1}-\alpha_j),\\
	l_j-\frac{1}{4}(\alpha_{j-1}-\alpha_j),&\quad \text{if}\quad \hat l\geq l_j-\frac{1}{4}(\alpha_{j-1}-\alpha_j).
	\end{cases}\end{equation}
It is easy to check that $A_j(l_{j+1})\leq A_j^{(2)}(\hat l)=\chi$.

(\romannumeral 5)
In the case $\Theta_j(0)\leq t_1-\hat t$, $\bar l\in(0,l_j^*]$, $A_j^{(1)}(l_j^*)\leq \chi$ and $A_j^{(2)}(l_j^*)\geq\frac{\chi}{2}$,
we introduce $l_{j+1}$ via the formula
    \begin{equation}\label{def lj+1xidiscontinuous}
	l_{j+1}=\begin{cases}
l_j^*,&\quad \text{if}\quad l_j^*<l_j-\frac{1}{4}(\alpha_{j-1}-\alpha_j),\\
	l_j-\frac{1}{4}(\alpha_{j-1}-\alpha_j),&\quad \text{if}\quad l_j^*\geq l_j-\frac{1}{4}(\alpha_{j-1}-\alpha_j).
	\end{cases}\end{equation}
According to the definition of $A_j(l)$, we find that $A_j(l_{j+1})=A_j^{(1)}(l_{j+1})\leq A_j^{(1)}(l_j^*)\leq \chi$.
On the other hand, we remark that the case $l_j^*<l_j-\frac{1}{4}(\alpha_{j-1}-\alpha_j)$ is equivalent to $l_{j+1}=l_j^*$, $d_j>\frac{1}{4}(\alpha_{j-1}-\alpha_j)$ and in this case
\begin{equation}\label{special}A_j^{(2)}(l_{j+1})=A_j^{(2)}(l_j^*)\geq\frac{\chi}{2}.\end{equation}
This finishes the construction of $l_{j+1}$.

According to the construction of $l_{j+1}$, we
check at once that \eqref{lixi} and \eqref{Aj-1xi}
hold with $i=j+1$. In view of \eqref{ljxi}$_j$, we conclude from $\alpha_{j-1}\geq\alpha_j$ that
\begin{equation*}\begin{split}
l_{j+1}>&\bar l=\frac{1}{2}l_j+\frac{1}{4}\tilde B\alpha_j+\frac{1}{8}\xi\omega
       \\&>\frac{1}{2}\left(\frac{1}{2}\tilde B\alpha_{j-1}+\frac{1}{4}\xi\omega\right)+\frac{1}{4}\tilde B\alpha_j+\frac{1}{8}\xi\omega
       \\&=\frac{1}{4}\tilde B(\alpha_j+\alpha_{j-1})+\frac{1}{4}\xi\omega\geq \frac{1}{2}\tilde
       B\alpha_j+\frac{1}{4}\xi\omega
\end{split}\end{equation*}
and hence that \eqref{ljxi}$_{j+1}$ holds.

Step 4: \emph{Proof of the inequality $u(x_1,t_1)>\frac{1}{4}\xi\omega$.}
Repeating the arguments as in Step 3, we can determine a sequence of numbers $\{l_i\}_{i=0}^\infty$ satisfying \eqref{lixi}-\eqref{ljxi}.
Noting that the sequence $\{l_i\}_{i=0}^\infty$ is decreasing and has a lower bound, we infer that the limitation of $l_i$ exists.
It follows that $d_i\to0$ as $i\to\infty$. Now, we define
$$w=\lim_{i\to\infty}l_i,$$
and assert that $w=u(x_1,t_1)$.
First, we consider the case $a_1=0$. We take $i$ so large that $d_i<1$ and $r_i^p<t_1-\hat t$.
According to the definition of $A_j(l)$ and \eqref{Aj-1xi}, we have
\begin{equation*}\begin{split}
\frac{1}{r_j^{n+p}} &\iint_{Q_j^\prime}(w-u)_+^{(1+\lambda)(p-1)}\,\mathrm {d}x\mathrm {d}t\leq A_j(l_{j+1})d_j^{(1+\lambda)(p+1)-(p-2)}
\\&\leq\chi d_j^{(1+\lambda)(p+1)-(p-2)}\to0,
  \end{split}\end{equation*}
  as $j\to\infty$.
  Here $Q_j^\prime=B_{j+1}\times\left(t_1-\frac{4}{9}(1+10[a]_\alpha)^{-1}r_j^p,t_1\right)$.
  In the case $a_1>0$, we choose $j\geq1$ large enough to have $10[a]_\alpha r_j^{q-p}\leq a_1$
  and $a_1^{-1}r_j^q<t_1-\hat t$. We infer from the definition of $A_j(l)$ and \eqref{Aj-1xi} that
  \begin{equation*}\begin{split}
\frac{1}{a_1^{-1}r_j^{n+q}}&\iint_{Q_j^{\prime\prime}}(w-u)_+^{(1+\lambda)(q-1)}\,\mathrm {d}x\mathrm {d}t\leq \frac{5}{4}A_j(l_{j+1})d_j^{(1+\lambda)(q+1)-(q-2)}
\\&\leq \frac{5}{4}\chi d_j^{(1+\lambda)(q+1)-(q-2)}\to0,
  \end{split}\end{equation*}
  as $j\to\infty$. Here $Q_j^{\prime\prime}=B_{j+1}\times\left(t_1-\frac{4}{9}(1+[a]_\alpha^{-1})^{-1}a_1^{-1}r_j^q,t_1\right)$. This
  proves the claim $w=u(x_1,t_1)$,
  since $(x_1,t_1)$ is a Lebesgue point of $u$.

Next, we claim that for any $j\geq1$,
  \begin{equation}\begin{split}\label{djdj-1xi}
  d_j\leq &\frac{1}{4}d_{j-1}+
  \gamma\left(r_{j-1}^{p-n}\int_{B_{j-1}} g(y)^{\frac{p}{p-1}}
  \,\mathrm {d}y\right)^{\frac{1}{p}}\\&+\gamma\left(r_{j-1}^{p-n}\int_{B_{j-1}}|f(y)|
  \,\mathrm {d}y\right)^{\frac{1}{p-1}}+\gamma r_{j-1},
  \end{split}\end{equation}
  where the constant $\gamma$ depends only upon the data.
For any fixed $j\geq 1$, we assume that
  \begin{equation}\begin{split}\label{djdj-1proofxi}
  d_j>\frac{1}{4}d_{j-1}\qquad\text{and}\qquad d_j>\frac{1}{4}(\alpha_{j-1}-\alpha_j),
   \end{split}\end{equation}
   since otherwise, the inequality \eqref{djdj-1xi} holds immediately.
First, we consider the cases (\romannumeral 1)-(\romannumeral 4). In these cases, $l_{j+1}$ is defined via \eqref{def lj+1xicontinuous}-\eqref{def lj+1xilowhigh}.
   In view of $d_j>\frac{1}{4}(\alpha_{j-1}-\alpha_j)$,
   we find that
   $A_j(l_{j+1})=A_j^{(2)}(\tilde l)=\chi$, $A_j(l_{j+1})=A_j^{(2)}(l^\prime)=\chi$, $A_j(l_{j+1})=A_j^{(1)}(l^{\prime\prime})=\chi$ or $A_j(l_{j+1})=A_j^{(2)}(\hat l)=\chi$.
   Hence, we have $A_j(l_{j+1})=\chi$ for these cases.
    According to \eqref{djdj-1proofxi}, we can
  repeat the arguments from Step 3 with $\bar l$ and $d_j(\bar l)$ replaced by $l_{j+1}$ and $d_j$, respectively.
  More precisely, we establish the following estimate similar to \eqref{Ajbarlalltogether}:
  \begin{equation*}\begin{split}
  \chi=A_j(l_{j+1})&\leq
\hat\gamma_0 \left(\frac{r_j^{p-n}}{d_j^p}\int_{B_j}g^{\frac{p}{p-1}}\,\mathrm {d}x+
  \frac{r_j^{p-n}}{d_j^{p-1}}\int_{B_j}|f|\,\mathrm {d}x\right)
  \\
  &+\hat\gamma_0\epsilon_1\chi
+\hat\gamma_1\epsilon_2\chi+\hat\gamma_1\hat\gamma_2\chi^{1+\frac{p}{n}}+\hat\gamma_1\hat\gamma_2\chi^{1+\frac{q}{n}}
\\&+\hat\gamma_1\hat\gamma_2\left(
 \frac{r_j^{p-n}}{d_j^p}\int_{B_j}g^{\frac{p}{p-1}}\,\mathrm {d}x+
 \frac{r_j^{p-n}}{d_j^{p-1}}\int_{B_j}|f|\,\mathrm {d}x\right)^{1+\frac{p}{n}}
 \\&+\hat\gamma_1\hat\gamma_2\left(
 \frac{r_j^{p-n}}{d_j^p}\int_{B_j}g^{\frac{p}{p-1}}\,\mathrm {d}x+
 \frac{r_j^{p-n}}{d_j^{p-1}}\int_{B_j}|f|\,\mathrm {d}x\right)^{1+\frac{q}{n}},
  \end{split}\end{equation*}
  where $\epsilon_1$, $\epsilon_2$, $\chi$, $\hat\gamma_0$, $\hat\gamma_1$ and $\hat\gamma_2$ are the constants in \eqref{Ajbarlalltogether}.
According to the choices of $\epsilon_1$, $\epsilon_2$, $\chi$, $\hat\gamma_0$, $\hat\gamma_1$ and $\hat\gamma_2$ in Step 3, we find that
$\hat\gamma_0\epsilon_1\chi
+\hat\gamma_1\epsilon_2\chi+\hat\gamma_1\hat\gamma_2\chi^{1+\frac{p}{n}}+\hat\gamma_1\hat\gamma_2\chi^{1+\frac{q}{n}}\leq\frac{1}{3}\chi.$
 Consequently, we infer that either
  \begin{equation}\begin{split}\label{dddd}
  d_j\leq \gamma\left(r_j^{p-n}\int_{B_j}g^{\frac{p}{p-1}}\,\mathrm {d}x\right)^{\frac{1}{p}}
  \qquad\text{or}\qquad
  d_j\leq
  \gamma \left(r_j^{p-n}\int_{B_j}|f|\,\mathrm {d}x\right)^{\frac{1}{p-1}},
   \end{split}\end{equation}
   which proves the desired estimate \eqref{djdj-1xi}.
 Finally, we consider the case (\romannumeral 5). In this case, $l_{j+1}$ is defined via \eqref{def lj+1xidiscontinuous} and we see that
   $A_j^{(1)}(l_j^*)\leq \chi$ and $A_j^{(2)}(l_j^*)\geq \frac{1}{2}\chi$.
   In view of $d_j=l_j-l_{j+1}>\frac{1}{4}(\alpha_{j-1}-\alpha_j)$, we infer from \eqref{def lj+1xidiscontinuous} and \eqref{special} that $l_{j+1}=l_j^*$ and
   $A_j^{(2)}(l_{j+1})\geq \frac{1}{2}\chi$.
   Taking into account that $d_j>\frac{1}{4}d_{j-1}$,
   we obtain
    \begin{equation*}\begin{split}
    \Theta_{j-1}&=d_{j-1}^2\left[\left(\frac{d_{j-1}}{r_{j-1}}\right)^p+a_1\left(\frac{d_{j-1}}{r_{j-1}}\right)^q\right]^{-1}
   >16d_{j}^2\left[\left(\frac{4d_{j}}{4r_{j}}\right)^p+a_1\left(\frac{4d_{j}}{4r_{j}}\right)^q\right]^{-1}
   \\&=16d_{j}(l_j^*)^2\left[\left(\frac{d_{j}(l_j^*)}{r_{j}}\right)^p+a_1\left(\frac{d_{j}(l_j^*)}{r_{j}}\right)^q\right]^{-1}
   =16\Theta_j(l_j^*)=16(t_1-\hat t)>t_1-\hat t.
     \end{split}\end{equation*}
   This implies that $Q_{j-1}\nsubseteq\widetilde Q_1$; therefore, $\chi\geq A_{j-1}(l_j)=A_{j-1}^{(2)}(l_j)$.
According to \eqref{djdj-1proofxi}, we can
  repeat the arguments from Step 3, \S 1 with $\bar l$, $Q_j(\bar l)$ and $A_j(\bar l)$ replaced by $l_j^*$, $Q_j$ and $A_j^{(2)}(l_{j+1})$, respectively. More precisely, we establish the estimate similar to \eqref{Ajbarlxi} as follows:
  \begin{equation*}\begin{split}
  \frac{1}{2}\chi\leq A_{j}^{(2)}(l_{j+1})&\leq
\tilde\gamma_0 \left(\frac{r_j^{p-n}}{d_j^p}\int_{B_j}g^{\frac{p}{p-1}}\,\mathrm {d}x+
  \frac{r_j^{p-n}}{d_j^{p-1}}\int_{B_j}|f|\,\mathrm {d}x\right)
  \\
  &+\tilde\gamma_0(\epsilon_1
+\epsilon_2)\chi+\tilde\gamma_1\tilde\gamma_2\chi^{1+\frac{p}{n}}+\tilde\gamma_1\tilde\gamma_2\chi^{1+\frac{q}{n}}
\\&+\tilde\gamma_1\tilde\gamma_2\left(
 \frac{r_j^{p-n}}{d_j^p}\int_{B_j}g^{\frac{p}{p-1}}\,\mathrm {d}x+
 \frac{r_j^{p-n}}{d_j^{p-1}}\int_{B_j}|f|\,\mathrm {d}x\right)^{1+\frac{p}{n}}
 \\&+\tilde\gamma_1\tilde\gamma_2\left(
 \frac{r_j^{p-n}}{d_j^p}\int_{B_j}g^{\frac{p}{p-1}}\,\mathrm {d}x+
 \frac{r_j^{p-n}}{d_j^{p-1}}\int_{B_j}|f|\,\mathrm {d}x\right)^{1+\frac{q}{n}},
  \end{split}\end{equation*}
where $\epsilon_1$, $\epsilon_2$, $\chi$, $\tilde\gamma_0$, $\tilde\gamma_1$ and $\tilde\gamma_2$ are the constants in \eqref{Ajbarlxi}.
According to the choices of $\epsilon_1$, $\epsilon_2$, $\chi$, $\tilde\gamma_0$, $\tilde\gamma_1$ and $\tilde\gamma_2$ in Step 3, we find that
$\tilde\gamma_0(\epsilon_1
+\epsilon_2)\chi+\tilde\gamma_1\tilde\gamma_2\chi^{1+\frac{p}{n}}+\tilde\gamma_1\tilde\gamma_2\chi^{1+\frac{q}{n}}\leq\frac{1}{3}\chi$.
Consequently, we arrive at
\eqref{dddd} for this special case. This completes the proof of the inequality \eqref{djdj-1xi}.

   Let $J>1$ be a fixed integer. We sum up the inequality \eqref{djdj-1xi} for $j=1,\cdots,J-1$
   and obtain
   \begin{equation*}\begin{split}
   l_1-l_J\leq& \frac{1}{3}d_0+\gamma\sum_{j=1}^{J-1}\left(r_{j-1}^{p-n}\int_{B_{j-1}}|f(y)|
  \,\mathrm {d}y\right)^{\frac{1}{p-1}}
\\&+ \gamma\sum_{j=1}^{J-1}\left(r_{j-1}^{p-n}\int_{B_{j-1}} g(y)^{\frac{p}{p-1}}
  \,\mathrm {d}y\right)^{\frac{1}{p}}+ \gamma\sum_{j=1}^{J-1}r_{j-1}.
   \end{split}\end{equation*}
   Recalling that $d_0=l_0-l_1$, $l_0=\xi\omega$ and $d_0\leq \frac{3}{8}\xi\omega$, we apply \eqref{omega1violated1} to infer that there exists
   a constant $C_0$ depending only upon the data, such that
  \begin{equation}\begin{split}\label{uupperboundxi}
\xi\omega\leq
 & \frac{4}{3}d_0+l_J
  +\gamma\int_0^{R}\left(\frac{1}{r^{n-p}}\int_{B_r(x_1)}f(y)
\,\mathrm {d}y\right)^{\frac{1}{p-1}}\frac{1}{r}\,\mathrm {d}r
  \\&+\gamma\int_0^{R}\left(\frac{1}{r^{n-p}}\int_{B_r(x_1)}g(y)^{\frac{p}{p-1}}
\,\mathrm {d}y\right)^{\frac{1}{p}}\frac{1}{r}\,\mathrm {d}r+\gamma R
\\ \leq & \frac{4}{3}d_0+l_J+C_0\frac{1}{\tilde B}\xi\omega.
   \end{split}\end{equation}
  Passing to the limit $J\to\infty$, we conclude from \eqref{uupperboundxi} that
   \begin{equation*}\begin{split}
   u(x_1,t_1)=\lim_{J\to\infty}l_J\geq\frac{1}{2}\xi\omega-C_0\frac{1}{\tilde B}\xi\omega>\frac{1}{4}\xi\omega,
    \end{split}\end{equation*}
    provided that we choose $\tilde B>4C_0$. This establishes the desired inequality $u(x_1,t_1)>\frac{1}{4}\xi\omega$.
Finally, we summarize the precise values of the
parameters $\nu_1$ and $\tilde B$.
First, the parameter $\nu_1$ is determined via \eqref{nu1tildeB}, where the quantity $\chi$ in \eqref{nu1tildeB}
    is fixed in terms of \eqref{chidefxi}. More precisely, we have
     \begin{equation}\begin{split}\label{specifynu1}\nu_1=\frac{\chi}{4\gamma_1},\end{split}\end{equation}
     where $\gamma_1$ is the constant from \eqref{definitionofgamma1} and depends only upon the data.
     This also implies that the constant $\nu_1$ is independent of $\xi$.
     According to \eqref{nu1tildeB} and \eqref{tildeBdefxi}, we choose
     \begin{equation*}\begin{split}&\tilde B=
     \max\left\{4C_0,\ \left(\frac{1}{8\gamma_1}\chi\right)^\frac{1}{1-p},\ \left(\frac{1}{30\hat\gamma_0}\chi\right)^\frac{1}{1-p},\
     2^\frac{1}{p-1}\left(\frac{1}{15\hat\gamma_1\hat\gamma_2}\chi\right)^\frac{1}{(1+\frac{p}{n})(1-p)},\
2^\frac{1}{p-1}\left(\frac{1}{15\hat\gamma_1\hat\gamma_2}\chi\right)^\frac{1}{(1+\frac{q}{n})(1-p)}
     \right\},\end{split}\end{equation*}
     where $C_0$ is the constant in \eqref{uupperboundxi}. With these choices of $\nu_1$ and $\tilde B$, we conclude that the lemma holds.
   \end{proof}
    With the help of the proceeding De Giorgi-type lemma, we can now derive an oscillation estimate of $u$
 in a smaller cylinder of the form \eqref{osc1st}.
The next proposition is our main result in this section.
\begin{proposition}\label{1st proposition}
Let $\tilde Q_0=Q_{\frac{1}{16}R_0}^-(0)$ and
let $u$ be a bounded weak solution to \eqref{parabolic}-\eqref{A} in $\Omega_T$.
Assume that there exists a time level $-\frac{8}{9}\Theta_A\leq \bar t\leq 0$ such that \eqref{1st} holds.
There exist $0<\xi_1<2^{-5}$ and $B_1>1$ depending only upon the data and $A$ such that
\begin{equation*}\begin{split}
\essosc_{\tilde Q_0} u\leq (1-4^{-1}\xi_1)\omega+B_1\xi_1^{-1}\left(
F_p(R_0)+G_p(R_0)+R_0\right).
 \end{split}\end{equation*}
\end{proposition}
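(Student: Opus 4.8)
The plan is to chain together, starting from hypothesis \eqref{1st}, the three De~Giorgi-type lemmas \ref{lemmaDeGiorgi1}, \ref{timeexpandlemma} and \ref{lemmaDeGiorgi2} already proved in this section. Each of them is a dichotomy: either it yields an improvement (a pointwise lower bound, or a measure reduction) that feeds the hypothesis of the next lemma, or it produces a smallness alternative $\omega\lesssim F_p(R)+G_p(R)+R$, possibly with a constant depending on $\xi_1$. In every such smallness branch we are immediately done: since $\tilde Q_0\subseteq Q_{R_0,R_0^2}$ we have $\essosc_{\tilde Q_0}u\le\mu_+-\mu_-\le\omega$, and because $R=\tfrac{9}{10}R_0<R_0$ and $\rho\mapsto F_p(\rho),G_p(\rho)$ are nondecreasing, the bound on $\omega$ can be rewritten in terms of $F_p(R_0),G_p(R_0),R_0$. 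Hence the whole proof reduces to following the ``good'' branch through all three lemmas.

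First, with the time level $\bar t$ from \eqref{1st}, I would apply Lemma \ref{lemmaDeGiorgi1}. If its alternative \eqref{omega1} holds we stop as above; otherwise \eqref{DeGiorgi1} holds, and passing to the slice $\hat t=\bar t-\Theta_\omega(\tfrac{1}{4}R)$ gives the pointwise bound \eqref{hat t lower}, i.e.\ $u(\cdot,\hat t)>\mu_-+2^{-5}\omega$ on $B_{R/4}$. Next I would invoke Lemma \ref{timeexpandlemma} with the particular choice $\nu_*:=\nu_1$, $\nu_1$ being the parameter produced by Lemma \ref{lemmaDeGiorgi2}; this is admissible precisely because, by \eqref{specifynu1}, $\nu_1$ depends only on the data and is independent of $\xi$. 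This produces $s_*=s_*(\text{data},A)>5$, and I set $\xi_1:=2^{-s_*}<2^{-5}$. If the alternative \eqref{omega2} occurs, then using $\tfrac{2}{p}<1$ and $\tfrac{1}{p-1}<1$ (valid since $p>2$) we get $\omega\le 2^{\frac{2}{p}s_*}G_p(R)+2^{\frac{1}{p-1}s_*}F_p(R)\le\xi_1^{-1}\bigl(G_p(R_0)+F_p(R_0)\bigr)$ and stop; otherwise \eqref{time expand} holds for every $t\in(\hat t,0)$, and integrating this slicewise bound over $t\in(\hat t,0)$ (and using Fubini) turns it into $|\{(x,t)\in\widetilde Q_1:u<\mu_-+\xi_1\omega\}|\le\nu_1|\widetilde Q_1|$ with $\widetilde Q_1=B_{R/8}\times(\hat t,0)$ --- which is exactly hypothesis \eqref{assumptionDegiorgi}.

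Finally I would apply Lemma \ref{lemmaDeGiorgi2} with $\xi=\xi_1$ (the standing assumption \eqref{hat t lower} is already in force). If \eqref{omega3} holds then $\omega\le\xi_1^{-1}\tilde B\bigl(F_p(R_0)+G_p(R_0)+100R_0\bigr)$ and we stop; otherwise \eqref{DeGiorgi2} gives $u>\mu_-+\tfrac{1}{4}\xi_1\omega$ a.e.\ on $\widetilde Q_2=B_{R/16}\times(\hat t/2,0)$, hence $\essinf_{\widetilde Q_2}u\ge\mu_-+\tfrac{1}{4}\xi_1\omega$. Checking that $\tilde Q_0\subseteq\widetilde Q_2\subseteq Q_{R_0,R_0^2}$ --- the temporal containment of $\tilde Q_0$ in $\widetilde Q_2$ following from the monotonicity of $r\mapsto\Theta_\omega(r)$ together with the lower bound $-\hat t\ge\Theta_\omega(\tfrac{1}{4}R)$ --- we conclude $\essosc_{\tilde Q_0}u\le\mu_+-\mu_--\tfrac{1}{4}\xi_1\omega\le(1-\tfrac{1}{4}\xi_1)\omega$. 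Taking $B_1$ to be the maximum of $\tilde B$, $100B$ and the constant appearing in alternative \eqref{omega2} (all depending only on the data and $A$) covers every branch and yields the claimed estimate.

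The main difficulty here is not analytic but organizational: one has to verify that the choices of $A$, $\xi_1$ and $B_1$ are mutually consistent and non-circular --- $B$ (from Lemma \ref{lemmaDeGiorgi1}) and $\tilde B$ (from Lemma \ref{lemmaDeGiorgi2}) depend only on the data, $A$ is fixed afterwards subject to \eqref{firstcondition forA} and the requirements of Section~5, and only then are $s_*$, $\xi_1=2^{-s_*}$ and $B_1$ determined as functions of the data and $A$ --- and that all the intrinsic cylinders built on the interval $(\hat t,0)$ remain inside $Q_A\subset Q_{R,R^2}$, which rests on the inequality $-\hat t<\Theta_A$ established inside the proof of Lemma \ref{timeexpandlemma}. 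Once these inclusions and the parameter bookkeeping are settled, the proposition follows by direct concatenation of the three lemmas.
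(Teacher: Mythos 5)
Your argument is correct and follows essentially the same route as the paper: you chain Lemmas \ref{lemmaDeGiorgi1}, \ref{timeexpandlemma} and \ref{lemmaDeGiorgi2} with the same choice $\nu_*=\nu_1$, $\xi_1=2^{-s_*}$, handle each smallness alternative by simply using $\essosc_{\tilde Q_0}u\le\omega$ together with the monotonicity of $F_p,G_p$ and $R<R_0$, and in the good branch read off the oscillation reduction from \eqref{DeGiorgi2} after checking the inclusion of $\tilde Q_0$ in $\widetilde Q_2$. The only place worth double-checking is the spatial containment $B_{R_0/16}\subseteq B_{R/16}$ (with $R=\tfrac{9}{10}R_0$ this is strictly false as written), but since the paper's own proof glosses over this and verifies only the temporal inclusion, your treatment is faithful to the source.
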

\begin{proof}
First, we assume that
\eqref{omega1} is violated. According to Lemma \ref{lemmaDeGiorgi1}, we obtain \eqref{hat t lower}
and this enables us to use Lemma \ref{timeexpandlemma}.
At this point, we take $\nu_*=\nu_1$ according to \eqref{specifynu1} in Lemma \ref{timeexpandlemma}. Moreover,
we choose $s_*=2\gamma_0\nu_*^{-1}A^{q-2}$
and $\xi_1=2^{-s_*}$.
If \eqref{omega2} is violated, then
we infer from Lemma \ref{timeexpandlemma} that
\begin{equation*}\left|\left\{(x,t)\in \widetilde Q_1:u<\mu_-+\xi_1\omega\right\}
\right|\leq \nu_1|\widetilde Q_1|,\end{equation*}
where $\tilde Q_1=B_{\frac{R}{8}}\times(\hat t,0)$. This is actually the condition \eqref{assumptionDegiorgi} for Lemma \ref{lemmaDeGiorgi2}. If \eqref{omega3} is violated, then the inequality \eqref{DeGiorgi2} reads
\begin{equation*}\begin{split}
\essosc_{\tilde Q_0} u\leq (1-4^{-1}\xi_1)\omega,
 \end{split}\end{equation*}
 since 
 $R=\frac{9}{10}R_0$ and
 $\frac{1}{2}\bar t+\Theta_\omega \left(\frac{1}{16}R_0\right)<\left(\frac{5}{18}\right)^p\Theta_\omega\left(\frac{1}{4}R\right)<\frac{1}{2}\Theta_\omega\left(\frac{1}{4}R\right)$.
 On the other hand, if either \eqref{omega1}, \eqref{omega2} or \eqref{omega3} holds, then we conclude that the inequality
\begin{equation*}\begin{split}
\essosc_{\tilde Q_0} u\leq\omega\leq B_1\xi_1^{-1}\left(
F_p(R)+G_p(R)+R\right)
 \end{split}\end{equation*}
 holds for a constant $B_1=B_1(\text{data},A)=100\max\left\{B,\tilde B,2^{\frac{2}{p-1}s_*}\right\}$.
 It is now obvious that the proposition holds.
\end{proof}
 \section{The second alternative}
The aim of this section is to derive the decay estimate of the essential oscillation for the second alternative.
Throughout this section, we assume that \eqref{2nd} holds for all $-\frac{8}{9}\Theta_A\leq\bar t\leq 0$
and the constant $\nu_0$ is determined in \eqref{nu0B}.
We start with the following lemma, which is a standard result that can be found in \cite[Lemma 4.7]{KMS}.
\begin{lemma}\label{endalterfirstlemma}
Let $-\frac{8}{9}\Theta_A\leq\bar t\leq 0$
and let $u$ be a bounded weak solution to \eqref{parabolic}-\eqref{A} in $\Omega_T$.
Assume that \eqref{2nd} holds.
Then there exists a time level
$t^*\in \left[\bar t-\Theta_\omega(R),\bar t-\frac{1}{2}\nu_0
 \Theta_\omega(R)\right]$
 such that
 \begin{equation}\label{2nd alternative begin}
 \left|\left\{x\in B_{R}:u(x,t^*)>\mu_+-\frac{\omega}{4}\right\}\right|\leq
 \left(\frac{1-\nu_0}{1-\frac{1}{2}\nu_0}\right)|B_{R}|,\end{equation}
 where $\nu_0$ is the constant claimed by Lemma \ref{lemmaDeGiorgi1}.
\end{lemma}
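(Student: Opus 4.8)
The plan is to prove the statement by a pigeonhole (measure-theoretic) selection in the time variable, invoking the hypothesis \eqref{2nd} only at the given level $\bar t$. First I would unwind the definitions: by construction $Q_R^-(\bar t)=B_R\times(\bar t-\Theta_\omega(R),\bar t\,]$, so $|Q_R^-(\bar t)|=|B_R|\,\Theta_\omega(R)$, and Fubini's theorem together with \eqref{2nd} gives
\[
\int_{\bar t-\Theta_\omega(R)}^{\bar t}\left|\left\{x\in B_R:u(x,t)>\mu_+-\tfrac{\omega}{4}\right\}\right|\diff t
=\left|\left\{(x,t)\in Q_R^-(\bar t):u>\mu_+-\tfrac{\omega}{4}\right\}\right|\le(1-\nu_0)\,|B_R|\,\Theta_\omega(R).
\]
The function $t\mapsto\left|\{x\in B_R:u(x,t)>\mu_+-\tfrac{\omega}{4}\}\right|$ is measurable, since $u\in C_{\loc}(-T,0;L^2_{\loc}(\Omega))$, so the integral above is meaningful.

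Next I would argue by contradiction: suppose that the conclusion fails, so that
\[
\left|\left\{x\in B_R:u(x,t)>\mu_+-\tfrac{\omega}{4}\right\}\right|>\frac{1-\nu_0}{1-\frac12\nu_0}\,|B_R|
\quad\text{for all }t\in\left[\bar t-\Theta_\omega(R),\ \bar t-\tfrac12\nu_0\Theta_\omega(R)\right].
\]
Since $0<\nu_0<1$, this interval is nonempty and has length $(1-\tfrac12\nu_0)\Theta_\omega(R)$. Integrating the last inequality over this subinterval of $(\bar t-\Theta_\omega(R),\bar t)$ and discarding the (nonnegative) contribution of the remaining part, I obtain
\[
\int_{\bar t-\Theta_\omega(R)}^{\bar t}\left|\left\{x\in B_R:u(x,t)>\mu_+-\tfrac{\omega}{4}\right\}\right|\diff t
>\frac{1-\nu_0}{1-\frac12\nu_0}\,|B_R|\cdot\Bigl(1-\tfrac12\nu_0\Bigr)\Theta_\omega(R)=(1-\nu_0)\,|B_R|\,\Theta_\omega(R),
\]
which contradicts the bound of the first step. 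Hence there exists $t^*\in\left[\bar t-\Theta_\omega(R),\ \bar t-\tfrac12\nu_0\Theta_\omega(R)\right]$ for which \eqref{2nd alternative begin} holds, which is the claim.

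There is no substantial obstacle here: the only points requiring (routine) care are the nonemptiness of the selection interval and the measurability of the time slices, both noted above. The argument is the classical time-slice selection from DiBenedetto's intrinsic scaling method and is essentially identical to \cite[Lemma 4.7]{KMS}, so in the write-up I would either reproduce the few lines above or simply refer the reader to that reference.
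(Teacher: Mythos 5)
Your proof is correct and is precisely the classical pigeonhole (time-slice selection) argument from DiBenedetto's intrinsic scaling method, which is what the paper refers to (the paper itself does not give a proof but instead cites \cite[Lemma 4.7]{KMS}). The only point worth making explicit in a write-up is that a strict pointwise inequality on a set of positive measure does yield a strict inequality after integration (e.g.\ by exhausting $J$ by the sets $\{g>1/n\}$), which is what justifies the final ``$>$'' and hence the contradiction.
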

With the help of this lemma, we establish the following result regarding the time propagation of positivity.
\begin{lemma}\label{time 2nd}
Let $u$ be a bounded weak solution to \eqref{parabolic}-\eqref{A} in $\Omega_T$.
There exists a positive constant $s_1$
that can be determined a priori only in terms of the data such that
 either
\begin{equation}\begin{split}\label{2ndomega assumption1}
\omega\leq 2^{\frac{s_1}{p-1}}F_p(R_0)
+2^{\frac{2s_1}{p}}G_p(R_0)
\end{split}\end{equation}
or
\begin{equation}\label{2nd measure estimate}
\left|\left\{x\in B_{R}:u(x,t)>\mu_+-\frac{\omega}{2^{s_1}}\right\}\right|\leq
\left(1-\left(\frac{\nu_0}{2}\right)^2\right)|B_{R}|
\end{equation}
for all $t\in \left[t^*,\bar t\right]$. Here, $t^*$ is the time level stated in Lemma \ref{endalterfirstlemma} and satisfies \eqref{2nd alternative begin}.
\end{lemma}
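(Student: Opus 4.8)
The plan is to run the standard DiBenedetto measure-shrinking scheme, propagating the slice-wise information of Lemma \ref{endalterfirstlemma} forward in time by means of the logarithmic energy estimate of Lemma \ref{logestimatelemma}.

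Fix a large integer $s_1>5$, to be chosen at the end only in terms of the data. I would take $k=\mu_+-\frac{\omega}{4}$, $H_k^+=\frac{\omega}{4}$ and $c=\frac{\omega}{2^{s_1}}$ (so that $0<c<H_k^+$), and work with the logarithmic function
\begin{equation*}
\Psi^+(u)=\ln^+\left(\frac{H_k^+}{H_k^+-(u-k)_++c}\right),
\end{equation*}
which vanishes on $\{u\le\mu_+-\frac{\omega}{4}\}$ and satisfies $\Psi^+(u)\le(s_1-2)\ln 2$ together with $|(\Psi^+)^\prime(u)|^{-1}\le\frac{\omega}{2}$. Then I would apply Lemma \ref{logestimatelemma} on the cylinder $B_R\times(t^*,\bar t)$ with a cutoff $\phi\in C_0^\infty(B_R)$ satisfying $\phi\equiv1$ on $B_{(1-\sigma)R}$ and $|D\phi|\le\gamma(\sigma R)^{-1}$, for a small $\sigma\in(0,1)$ to be fixed later; the hypothesis $\bar t-t^*\le R^2$ holds since $\bar t-t^*\le\Theta_\omega(R)\le\omega^{2-p}R^p\le R^2$, using $\omega\ge A^{\frac{q-2}{p-2}}R\ge R$.

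There are three contributions to control on the right-hand side of \eqref{lnCac}. The initial datum is bounded by $(s_1-2)^2(\ln 2)^2\,|\{x\in B_R:u(x,t^*)>\mu_+-\frac{\omega}{4}\}|$, which by Lemma \ref{endalterfirstlemma} does not exceed $(s_1-2)^2(\ln 2)^2\,\frac{1-\nu_0}{1-\frac{1}{2}\nu_0}\,|B_R|$. For the two gradient terms I would use $\Psi^+\le(s_1-2)\ln 2$, $[(\Psi^+)^\prime]^{2-p}\le\omega^{p-2}$, $[(\Psi^+)^\prime]^{2-q}\le\omega^{q-2}$, $|D\phi|\le\gamma(\sigma R)^{-1}$, the elementary bounds $\bar t-t^*\le\Theta_\omega(R)\le\omega^{2-p}R^p$ and $a_0\Theta_\omega(R)\le\omega^{2-q}R^q$, the phase analysis of Remark \ref{phase analysis 1} for $a(z_0)$, and $q\le p+\alpha$ together with $R\le1$; a short computation then yields a bound $\gamma(\text{data})\,s_1\,\sigma^{-q}|B_R|$ --- crucially with no factor of $A$, since the time length here is $\Theta_\omega(R)$ and not $\Theta_A$. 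For the terms containing $f$ and $g$, which come with the factors $\ln(H/c)=(s_1-2)\ln 2$ and $\frac{\bar t-t^*}{c}$, $\frac{\bar t-t^*}{c^2}$, I would substitute $c=\omega2^{-s_1}$, use $\bar t-t^*\le\omega^{2-p}R^p$, and estimate $R^{p-n}\int_{B_R}|f|\le\gamma F_p(R_0)^{p-1}$ and $R^{p-n}\int_{B_R}g^{\frac{p}{p-1}}\le\gamma G_p(R_0)^p$ by the usual monotonicity argument; this produces a bound $\gamma(\text{data})\,s_1\bigl(2^{s_1}\omega^{1-p}F_p(R_0)^{p-1}+2^{2s_1}\omega^{-p}G_p(R_0)^p\bigr)|B_R|$, which is $\le\gamma(\text{data})\,s_1|B_R|$ precisely when \eqref{2ndomega assumption1} fails.

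Finally, for a fixed $t\in[t^*,\bar t]$, on $\{x\in B_R:u(x,t)>\mu_+-\frac{\omega}{2^{s_1}}\}$ one has $(u-k)_+>\frac{\omega}{4}-\frac{\omega}{2^{s_1}}$, hence $H_k^+-(u-k)_++c<\frac{\omega}{2^{s_1-1}}$ and therefore $\Psi^+(u)>(s_1-3)\ln 2$; inserting this lower bound into the left-hand side of \eqref{lnCac} (where $\phi\equiv1$ only on $B_{(1-\sigma)R}$, which costs the harmless extra term $|B_R\setminus B_{(1-\sigma)R}|\le n\sigma|B_R|$) and combining with the three estimates above gives
\begin{equation*}
\left|\left\{x\in B_R:u(x,t)>\mu_+-\tfrac{\omega}{2^{s_1}}\right\}\right|
\le\frac{(s_1-2)^2}{(s_1-3)^2}\,\frac{1-\nu_0}{1-\frac{1}{2}\nu_0}\,|B_R|+n\sigma|B_R|+\frac{\gamma(\text{data})\,s_1}{(s_1-3)^2\,\sigma^{q}}\,|B_R|.
\end{equation*}
Since $\frac{1-\nu_0}{1-\frac{1}{2}\nu_0}\le1-\frac{\nu_0}{2}<1-\frac{\nu_0^2}{4}$, with a gap bounded below only in terms of the data, I would first choose $\sigma=\sigma(\text{data})$ so small that $n\sigma$ lies below one third of this gap, and then $s_1=s_1(\text{data})$ so large that $\bigl(\tfrac{(s_1-2)^2}{(s_1-3)^2}-1\bigr)$ and $\tfrac{\gamma(\text{data})\,s_1}{(s_1-3)^2\sigma^{q}}$ together occupy the remaining portion; this yields \eqref{2nd measure estimate} for every $t\in[t^*,\bar t]$. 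The main obstacle is the bookkeeping of the error terms --- namely, checking that once the alternative \eqref{2ndomega assumption1} is invoked every error carries only a fixed power of $s_1$ (not $2^{s_1}$) and no dependence on $A$, so that the prefactor $(s_1-3)^{-2}$ genuinely forces them to zero --- together with the correct handling of the radius loss from $B_R$ to $B_{(1-\sigma)R}$.
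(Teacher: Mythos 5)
Your proposal matches the paper's own proof of Lemma~\ref{time 2nd} in every essential respect: the level $k=\mu_+-\frac{\omega}{4}$, the choice $c=\omega\,2^{-s_1}$ (the paper writes $c=2^{-2-l}\omega$ with $s_1=l+2$), the application of Lemma~\ref{logestimatelemma} with the cutoff vanishing outside $B_{(1-\sigma)R}$, the use of $\bar t-t^*\le\Theta_\omega(R)$ rather than $\Theta_A$ to keep the gradient terms $A$-independent, the absorption of the $f,g$ terms via the alternative \eqref{2ndomega assumption1}, the lower bound $\Psi^+>(s_1-3)\ln 2$ on the superlevel set, and the final two-step choice of $\sigma$ then $s_1$ to squeeze below $1-(\nu_0/2)^2$. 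The bookkeeping of constants and the observation that the radius loss contributes only $\gamma\sigma|B_R|$ are also the same as in the paper.
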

\begin{proof}
We first observe that $\bar t-t_*\leq \Theta_\omega(R)<\Theta_A<R^2$. This enables
us to use Lemma \ref{logestimatelemma} over the cylinder $B_R\times \left[t^*,\bar t\right]$.
Let $k=\mu_+-\frac{1}{4}\omega$ and $c=2^{-2-l}\omega$, where $l\geq2$ is to be determined later.
We consider the logarithmic function defined by
\begin{equation*}\begin{split}\Psi^+(u)=\ln^+\left(\frac{\frac{1}{4}\omega}
{\frac{1}{4}\omega-(u-k)_++c}\right).\end{split}\end{equation*}
Next, we take a smooth cutoff function $0\leq\phi(x)\leq1$, defined in $B_R$, satisfying
$\phi\equiv 1$ in $B_{(1-\sigma)R}$ and $|D\phi|\leq (\sigma R)^{-1}$, where $\sigma\in (0,1)$ is to be determined.
With these choices, we apply the logarithmic estimate \eqref{lnCac} to obtain
\begin{equation}\begin{split}\label{psi+log}
&\esssup_{t\in[t^*,\bar t]}\int_{B_{(1-\sigma)R}\times\{t\}}[\Psi^+(u)]^2\,\mathrm{d}x
\\&\leq
\int_{B_R\times\{t^*\}}[\Psi^+(u)]^2\,\mathrm{d}x+\gamma\frac{1}{\sigma^q R^q}\iint_{B_R\times[t^*,\bar t]}a_0\Psi^+(u)
[(\Psi^+)^\prime(u)]^{2-q}\,\mathrm{d}x\mathrm{d}t
\\&\quad +\gamma\left(\frac{1}{\sigma^p R^p}+R^\alpha\frac{1}{\sigma^q R^q}\right)\iint_{B_R\times[t^*,\bar t]}\Psi^+(u)
[(\Psi^+)^\prime(u)]^{2-p}\,\mathrm{d}x\mathrm{d}t
\\&\quad+\gamma(\ln 2)l\frac{2^{2+l}}{\omega^{p-1}}R^p\int_{B_R}|f|\,\mathrm{d}x+
\gamma(\ln 2)l\frac{2^{4+2l}}{\omega^p}R^p\int_{B_R}g^{\frac{p}{p-1}}\,\mathrm{d}x
\\&=:L_1+L_2+L_3+L_4+L_5,
\end{split}\end{equation}
since $\bar t-t_*\leq \Theta_\omega(R)\leq \omega^{2-p}R^p$.
Taking into account that $\Psi^+(u)\leq l\ln2$, we conclude from \eqref{2nd alternative begin} that
\begin{equation*}\begin{split}
L_1\leq (l\ln2)^2\left|\left\{x\in B_R:u(x,t^*)>\mu_+-\frac{\omega}{4}\right\}\right|
\leq l^2(\ln^22)\left(\frac{1-\nu_0}{1-\frac{1}{2}\nu_0}\right)|B_R|.
\end{split}\end{equation*}
In view of $\Psi^+(u)\leq l\ln2$, $[(\Psi^+)^\prime(u)]^{-1}\leq 4^{-1}\omega$ and $\bar t-t_*\leq \Theta_\omega(R)$, we deduce
\begin{equation*}\begin{split}
L_2+L_3\leq \gamma l\frac{1}{\sigma^q}\left(a_0\frac{\omega^{q-2}}{R^q}+\frac{\omega^{p-2}}{R^p}\right)(\bar t-t^*)|B_R|\leq
\gamma \frac{l}{\sigma^q}|B_R|,
\end{split}\end{equation*}
where the constant $\gamma$ depends only upon the data.
At this stage, we assume that
\begin{equation}\begin{split}\label{omega assumption}
\omega>2^{\frac{l}{p-1}}\left(R^{p-n}\int_{B_R}|f|\,\mathrm{d}x\right)^\frac{1}{p-1}
+2^{\frac{2l}{p}}\left(R^{p-n}\int_{B_R}g^{\frac{p}{p-1}}\,\mathrm{d}x\right)^\frac{1}{p}
\end{split}\end{equation}
and hence,
$L_4+L_5\leq \gamma l|B_R|\leq \gamma \sigma^{-q}l|B_R|.$
Combining the estimates for $L_1$-$L_5$, we arrive at
\begin{equation*}\begin{split}
\int_{B_{(1-\sigma)R}\times\{t\}}[\Psi^+(u)]^2\,\mathrm{d}x\leq
l^2(\ln^22)\left(\frac{1-\nu_0}{1-\frac{1}{2}\nu_0}\right)|B_R|+\gamma \frac{l}{\sigma^q}|B_R|
\end{split}\end{equation*}
for all $t\in[t^*,\bar t]$.
On the other hand, the left-hand side of \eqref{psi+log} can be estimated below by integrating over the smaller set
\begin{equation*}\begin{split}
S=\left\{x\in B_{(1-\sigma)R}:u(x,t)>\mu_+-\frac{\omega}{2^{l+2}}\right\},
\end{split}\end{equation*}
and this yields that
\begin{equation*}\begin{split}
\int_{B_{(1-\sigma)R}\times\{t\}}[\Psi^+(u)]^2\,\mathrm{d}x\geq (l-1)^2(\ln^22)|S|.
\end{split}\end{equation*}
Consequently, we infer that the estimate
\begin{equation*}\begin{split}
&\left|\left\{x\in B_R:u(x,t)>\mu_+-\frac{\omega}{2^{l+2}}\right\}\right|
\leq
|S|+|B_R\setminus B_{(1-\sigma)R}|
\\&\quad\leq \left[\left(\frac{l}{l-1}\right)^2\left(\frac{1-\nu_0}{1-\frac{1}{2}\nu_0}\right)+\gamma \frac{l}{\sigma^q(l-1)^2}
+\gamma\sigma\right]|B_R|
\end{split}\end{equation*}
holds  for all $t\in[t^*,\bar t]$. At this point, we choose $\gamma \sigma\leq\frac{3}{8}\nu_0^2$ and then $l$ so large that
\begin{equation*}\begin{split}
\left(\frac{l}{l-1}\right)^2<\left(1-\frac{1}{2}\nu_0\right)(1+\nu_0)\qquad\text{and}\qquad
\gamma\frac{1}{\sigma^ql}\leq\frac{3}{8}\nu_0^2.
\end{split}\end{equation*}
This proves the inequality \eqref{2nd measure estimate} with $s_1=l+2$. On the other hand, if \eqref{omega assumption} is violated, then we obtain
\eqref{2ndomega assumption1} for such a choice of $s_1$. This proves the lemma.
\end{proof}
Our next goal is to establish a De Giorgi-type lemma for the second alternative. To this end,
we introduce concentric parabolic cylinders
$Q_A^{(1)}=B_{R}\times\left(-\frac{1}{2}\Theta_A,0\right]$
and
$Q_A^{(2)}=B_{\frac{1}{2}R}\times\left(-\frac{1}{4}\Theta_A,0\right]$,
where $A$ satisfies \eqref{firstcondition forA}, \eqref{firstconditionforA} and \eqref{secondconditionforA}.
The proof of the De Giorgi type lemma is based on the Kilpel\"ainen-Mal\'y technique and
follows in a similar manner as the proof of Lemma \ref{lemmaDeGiorgi1}.
 \begin{lemma}\label{lemmaDeGiorgi1+}
 Let $0<\xi<4^{-1}$ and
let $u$ be a bounded weak solution to \eqref{parabolic}-\eqref{A} in $\Omega_T$.
There exist constants $\nu_2\in(0,1)$ and $\hat B>1$, depending only upon the data, such that if
\begin{equation}\label{QA1}\left|\left\{(x,t)\in Q_A^{(1)}:u\geq\mu_+-\xi\omega\right\}\right|\leq \nu_2|Q_A^{(1)}|,\end{equation}
then either
\begin{equation}
\label{DeGiorgi1+}u(x,t)<\mu_+-\frac{1}{4}\xi\omega\qquad\text{for}\ \ \text{a.e.}\ \ (x,t)\in Q_A^{(2)}\end{equation}
or
\begin{equation}
\label{omega1+}\xi\omega\leq \hat B\left(F_p(R)
+G_p(R)+100R\right).\end{equation}
Here, $A=\hat B\xi^{-1}$ and satisfies \eqref{firstcondition forA}, \eqref{firstconditionforA} and \eqref{secondconditionforA}.
 \end{lemma}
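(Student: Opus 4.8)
The plan is to mimic the proof of Lemma~\ref{lemmaDeGiorgi1}, working with the nonnegative function $w=\mu_+-u$. First I would note that $w$ is a bounded weak solution of $\partial_t w-\operatorname{div}\bar A(x,t,w,Dw)=-f$, where $\bar A(x,t,w,\xi)=-A(x,t,\mu_+-w,-\xi)$ satisfies the structure conditions \eqref{A} with the same constants $C_0,C_1$ and the same $g$, and with $|{-f}|=|f|$, so that the potentials $F_p,G_p$ are unchanged; moreover $Q_A^{(2)}\subseteq Q_A^{(1)}\subseteq Q_A\subseteq Q_{R,R^2}$, so the phase analysis \eqref{phase analysis} of Remark~\ref{phase analysis 1} is available on $Q_A^{(1)}$. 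Under this change the hypothesis \eqref{QA1} reads $|\{w\le\xi\omega\}\cap Q_A^{(1)}|\le\nu_2|Q_A^{(1)}|$ and the conclusion \eqref{DeGiorgi1+} reads $w(x_1,t_1)>\tfrac14\xi\omega$ for a.e.\ Lebesgue point $z_1=(x_1,t_1)\in Q_A^{(2)}$. Assuming \eqref{omega1+} is violated, this is what I will prove.

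\textbf{The Kilpel\"ainen--Mal\'y iteration.} I would run the iteration of Lemma~\ref{lemmaDeGiorgi1} with $u$ replaced by $w$: fix a Lebesgue point $z_1\in Q_A^{(2)}$, set $a_1=a(x_1,t_1)$, $r_j=4^{-j}C^{-1}R$, $B_j=B_{r_j}(x_1)$, the cutoffs $\varphi_j(l)=\phi_j(x)\theta_{j,l}(t)$, the intrinsic cylinders $Q_j(l)=B_j\times(t_1-\Theta_j(l),t_1)$ with $\Theta_j(l)=d_j(l)^2[(d_j(l)/r_j)^p+a_1(d_j(l)/r_j)^q]^{-1}$, the sequence $\alpha_j$ built from $F_p$, $G_p$ and $r_j$ with leading term comparable to $4^{-j}\hat B^{-1}\xi\omega$, and the quantity $A_j(l)$ exactly as in \eqref{A_j} with $w$ in place of $u$; keep $\lambda=\tfrac{p}{nq}$. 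Step~1 is the dichotomy ``$w(x_1,t_1)\ge\tfrac14\xi\omega$ or $A_j(l)\to+\infty$ as $l\to l_j$,'' proved via the estimate \eqref{assumption for Aj} and the Lebesgue-point property, splitting into $a_1<10[a]_\alpha r_j^{q-p}$ and $a_1\ge10[a]_\alpha r_j^{q-p}$. In Step~2 I set $l_0=\xi\omega$, $\bar l=\tfrac12 l_0+\tfrac1{16}\hat B\alpha_0+\tfrac1{32}\xi\omega$, so $d_0(\bar l)\asymp\xi\omega$; here the identity $A=\hat B\xi^{-1}$ enters: since $d_0(\bar l)\asymp\hat B\,\omega/A$, evaluating $\Theta_0(\bar l)$ in each phase (using \eqref{phase analysis} and $A^{p-2}\Theta_\omega\le\Theta_A\le A^{q-2}\Theta_\omega$, $\Theta_A\le A^{p-q}R^2$) shows that the $\hat B$-powers cancel and, for $C=C(\text{data})$ large enough, $Q_0(\bar l)\subseteq Q_A^{(1)}$ and $Q_0(\bar l)\subseteq Q_{r_0,r_0^2}(z_1)$. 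Then \eqref{QA1}, the Caccioppoli inequality \eqref{Cacformula1}, and the violation of \eqref{omega1+} yield $A_0(\bar l)\le\gamma' C^{\,n+q}\nu_2+\gamma' C^{\,n-p}(\hat B^{-p}+\hat B^{-(p-1)})\le\tfrac12\chi$, after which continuity of $A_0(\cdot)$ and $A_0(l)\to+\infty$ produce $l_1$ as in \eqref{l1}.

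\textbf{Inductive construction and passage to the limit.} Step~3 constructs the full sequence $\{l_j\}$ satisfying the analogues of \eqref{li}--\eqref{lj} and $A_{j-1}(l_j)\le\chi$: assuming these up to $j$, one first checks $Q_i\subseteq Q_A^{(1)}$ and $Q_i\subseteq Q_{r_i,r_i^2}(z_1)$ for $i\le j-1$ (from $d_i\gtrsim\hat B^{-1}4^{-i}\xi\omega$, the phase split, and the conditions on $A$), and then proves $A_j(\bar l)\le\tfrac12\chi$ by exactly the chain of estimates used in Lemma~\ref{lemmaDeGiorgi1}: the measure bound of \eqref{step1 initial estimate} type, the decomposition $L_j(\bar l)=L_j'(\bar l)\cup L_j''(\bar l)$ at level $\epsilon_1$, Young's inequality, Lemma~\ref{lemmainequalitypsi-}, the Sobolev inequality (slice-wise when $q\ge n$), Lemma~\ref{Cac1} together with \cite[Lemma~3.4]{Qifanli}, distinguishing $p$-phase from $(p,q)$-phase and the sub-cases $q<n$, $q\ge n$ for the $q$-Laplacian term. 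One then fixes $\epsilon_1,\epsilon_2,\chi$ depending only on the data (from an estimate $A_j(\bar l)\le\epsilon_2\gamma_1\chi+\epsilon_1\gamma''\chi+\gamma''(\hat B^{-p}+\hat B^{-(p-1)})+\gamma_2\chi^{1+p/n}+\gamma_2\chi^{1+q/n}+\cdots$) and finally $\hat B$ large, which simultaneously makes $A=\hat B\xi^{-1}$ satisfy \eqref{firstcondition forA}, \eqref{firstconditionforA} and \eqref{secondconditionforA} (their right-hand sides are data constants and $\xi^{-1}>4$), and determines $\nu_2$. Step~4 passes to the limit: $l_j\downarrow w(x_1,t_1)$ by the Lebesgue-point argument, the recursion $d_j\le\tfrac14 d_{j-1}+\gamma\,4^{-j}\hat B^{-1}\xi\omega+\gamma(r_{j-1}^{p-n}\!\int_{B_{j-1}}g^{p/(p-1)})^{1/p}+\gamma(r_{j-1}^{p-n}\!\int_{B_{j-1}}|f|)^{1/(p-1)}+\gamma r_{j-1}$ follows as in \eqref{djdj-1}, and summing and invoking the violation of \eqref{omega1+} gives $w(x_1,t_1)>\tfrac14\xi\omega$ for $\hat B$ sufficiently large, i.e.\ \eqref{DeGiorgi1+}. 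The main obstacle is the geometric bookkeeping in Steps~2--3: verifying that the intrinsic cylinders $Q_j(\bar l)$, whose time-scale is governed by the small height $d_j\asymp\xi\omega$, are contained in $Q_A^{(1)}$ in both phases — this is exactly what forces the calibration $A=\hat B\xi^{-1}$ and couples the choice of $\hat B$ to the constants already fixed in Lemmas~\ref{lemmaDeGiorgi1} and \ref{lemmaDeGiorgi2}.
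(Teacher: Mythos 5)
Your proposal follows essentially the same route as the paper: replace $u$ by $v=\mu_+-u$, run the Kilpel\"ainen--Mal\'y iteration of Lemma~\ref{lemmaDeGiorgi1} over intrinsic cylinders $Q_j(l)$, use the calibration $A=\hat B\xi^{-1}$ to keep $Q_j(\bar l)\subseteq Q_A^{(1)}$ in both phases, and close the argument by fixing $\epsilon_1,\epsilon_2,\chi,\hat B,\nu_2$ and summing the recursion for $d_j$.

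One quantitative point you should correct when carrying this out: in your bound for $A_0(\bar l)$ you write $\gamma'C^{n+q}\nu_2+\gamma'C^{n-p}(\hat B^{-p}+\hat B^{1-p})$, but the measure term in fact acquires a factor $\hat B^{q-2}$. Indeed, $d_0(\bar l)\asymp\xi\omega=\hat B\,\omega/A$, so $d_0(\bar l)^{p-2}r_0^{-(n+p)}\lesssim C^{n+p}\hat B^{p-2}(A^{p-2}\omega^{2-p}R^{n+p})^{-1}$ (and similarly with $q$ replacing $p$), while $|Q_A^{(1)}|\leq c_nA^{p-2}\omega^{2-p}R^{n+p}$; dividing out $|Q_A^{(1)}|$ leaves a residual $\hat B^{q-2}$. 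Consequently $\nu_2$ must be chosen to make $\hat\gamma_0C^{n+q}\hat B^{q-2}\nu_2=\chi/4$, i.e.\ $\nu_2=\chi/(4\hat\gamma_0C^{n+q}\hat B^{q-2})$, which is smaller than what your written bound would suggest. Since $\hat B$ is a data constant this does not alter the conclusion that $\nu_2$ depends only on the data, but the ordering of the parameter choices (fix $\chi$ and $\hat B$ before solving for $\nu_2$) matters for the bookkeeping you flag as the main obstacle.
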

 \begin{proof} First, we define $v=\mu_+-u$ and note that $v$ is nonnegative in $Q_A$.
 Observe that $v$ is a bounded weak solution to the
parabolic double-phase equation
\begin{equation*}\partial_t v-\operatorname{div}\hat A(x,t,v,Dv)=-f,\end{equation*}
where the vector field $\hat A(x,t,v,\xi)=-A(x,t,\mu_+-v,-\xi)$ satisfies the structure condition
 \begin{equation*}
	\begin{cases}
	 \big\langle \hat A(x,t,v,\xi),\xi\big \rangle\geq C_0\left(|\xi|^p+a(x,t)|\xi|^q\right),\\
	|\hat A(x,t,v,\xi)|\leq C_1\left(|\xi|^{p-1}+a(x,t)|\xi|^{q-1}\right)+g.
	\end{cases}
\end{equation*}
Initially, we assume that the constant
$\hat B$ satisfies \eqref{firstcondition forA}, \eqref{firstconditionforA} and \eqref{secondconditionforA}.
Next, we assume that \eqref{omega1+} is violated, that is,
 \begin{equation}
\label{omega1violated+}\xi\omega>\frac{3}{4}\hat B\left(\frac{1}{3\hat B}\xi\omega
+F_p(R)+G_p(R)+100R
\right).\end{equation}
Fix $z_1=(x_1,t_1)\in Q_A^{(2)}$ and assume that $(x_1,t_1)$ is a Lebesgue point of $v$.
Our problem reduces to showing that $v(x_1,t_1)>\frac{1}{6}\xi\omega$.

Step 1: \emph{Setting up the notations.}
Let $a_1=a(x_1,t_1)$, $r_j=4^{-j}C^{-1}R$
 and $B_j=B_{r_j}(x_1)$ where $C>4$ is to be determined.
For a sequence $\{l_j\}_{j=0}^\infty$ and a fixed $l>0$, we define $d_{j}(l)=l_j-l$. If $d_{j}(l)>0$, then we introduce
a parabolic cylinder $Q_j(l)=B_j\times (t_1-\Theta_j(l),t_1)$, where
 \begin{equation*}\Theta_j(l)=d_{j}(l)^2\left[\left(\frac{d_{j}(l)}{r_j}\right)^p+a_1\left(\frac{d_{j}(l)}{r_j}\right)^q\right]^{-1}.\end{equation*}
Moreover, we set $\varphi_j(l)=\phi_j(x)\theta_{j,l}(t)$, where
$\phi_j\in C_0^\infty(B_j)$, $\phi_j=1$ on $B_{j+1}$, $|D\phi_j|\leq r_j^{-1}$
 and $\theta_{j,l}(t)$ is a Lipschitz function
satisfies $\theta_{j,l}(t)=1$ in $t\geq t_1-\frac{4}{9}\Theta_j(l)$, $\theta_{j,l}(t)=0$ in $t\leq t_1-\frac{5}{9}\Theta_j(l)$
and
  \begin{equation*}
 \theta_{j,l}(t)=\frac{t-t_1-\frac{5}{9}\Theta_j(l)}{\frac{1}{9}\Theta_j(l)}\qquad\text{in}\qquad
 t_1-\frac{5}{9}\Theta_j(l)\leq t\leq t_1-\frac{4}{9}\Theta_j(l).
 \end{equation*}
It is easy to check that $\varphi_j(l)=0$ on $\partial_PQ_j(l)$.
 Next, for  $j=-1,0,1,2,\cdots$, we introduce the sequence $\{\alpha_j\}$ by
  \begin{equation}\begin{split}\label{alphaxi+}\alpha_j=&\frac{4^{-j-100
  }}{3\hat B}\xi\omega+ \frac{3}{4}\int_0^{r_j}\left(r^{p-n}\int_{B_r(x_1)}
  g(y)^{\frac{p}{p-1}} \,\mathrm {d}y
  \right)^{\frac{1}{p}}\frac{\mathrm {d}r}{r}\\&+
  \frac{3}{4}\int_0^{r_j}\left(r^{p-n}\int_{B_r(x_1)}|f(y)| \,\mathrm {d}y
  \right)^{\frac{1}{p-1}}\frac{\mathrm {d}r}{r}+75r_j.
  \end{split}\end{equation}
  According to the definition of $\alpha_j$, we see that $\alpha_j\to 0$ as $j\to\infty$. Moreover, we infer from \eqref{omega1violated+}
  and \eqref{alphaxi+} that
   $\hat B\alpha_{j-1}\leq\xi\omega$,
  \begin{equation}\begin{split}\label{alpha1xi+}
  \alpha_{j-1}-\alpha_j\geq &\frac{4^{-j-100}}{\hat B}\xi\omega+\gamma\left(r_j^{p-n}\int_{B_j} g(y)^{\frac{p}{p-1}}
  \,\mathrm {d}y\right)^{\frac{1}{p}}\\&+\gamma\left(r_j^{p-n}\int_{B_j}|f(y)|
  \,\mathrm {d}y\right)^{\frac{1}{p-1}}+200 r_j
  \end{split}\end{equation}
  and
   \begin{equation}\begin{split}\label{alpha2xi+}
  \alpha_{j-1}-\alpha_j\leq & \frac{4^{-j-100}}{\hat B}\xi\omega+\gamma\left(r_{j-1}^{p-n}
  \int_{B_{j-1}} g(y)^{\frac{p}{p-1}}
  \,\mathrm {d}y\right)^{\frac{1}{p}}\\&+\gamma\left(r_{j-1}^{p-n}\int_{B_{j-1}}|f(y)|
  \,\mathrm {d}y\right)^{\frac{1}{p-1}}+\gamma r_{j-1}
  \end{split}\end{equation}
  for all $j=0,1,2,\cdots$, where the constant $\gamma$ depends only upon the data.
  Moreover, we define a quantity $A_j(l)$ by
 \begin{equation}\begin{split}\label{A_jxi+}
 A_j(l)=&\frac{d_j(l)^{p-2}}{r_j^{n+p}}\iint_{L_j(l)}\left(\frac{l_j-v}{d_j(l)}\right)^{(1+\lambda)(p-1)}\varphi_j(l)^{M-p}
 \,\mathrm {d}x\mathrm {d}t
 \\&+\frac{d_j(l)^{q-2}}{r_j^{n+q}}\iint_{L_j(l)}a(x,t)\left(\frac{l_j-v}{d_j(l)}\right)^{(1+\lambda)(q-1)}\varphi_j(l)^{M-q}
 \,\mathrm {d}x\mathrm {d}t
 \\&+\esssup_t\frac{1}{r_j^n}\int_{B_j\times\{t\}}G\left(\frac{l_j-v}{d_j(l)}\right)\varphi_j(l)^M
 \,\mathrm {d}x,
 \end{split}\end{equation}
 where $M>q$, $G$ is defined in \eqref{G} and $L_j(l)=Q_j(l)\cap \{v\leq l_j\}\cap \Omega_T$.
 Throughout the proof, we keep $\lambda=\frac{p}{nq}$.
 It is easy to check that $A_j(l)$ is continuous in $l<l_j$. In the following, the sequence $\{l_j\}_{j=0}^\infty$ we constructed will satisfy $l_j\geq\frac{1}{4}\xi\omega$
 for all $j=0,1,2,\cdots$.
 Analysis similar to that in the proof of Lemma \ref{lemmaDeGiorgi1}, Step 1 shows
 that either $v(x_1,t_1)\geq\frac{1}{4}\xi\omega$ or $A_j(l)\to+\infty$ as $l\to l_j$.
 It is obvious that the lemma holds if $v(x_1,t_1)\geq \frac{1}{4}\xi\omega$. In the following, we assume that
  $A_j(l)\to+\infty$ as $l\to l_j$ holds for all $j=0,1,2,\cdots$.

 Step 2: \emph{Determine the values of $l_0$, $l_1$ and $\nu_2$.} Initially, we
 set $l_0=\xi\omega$ and $\bar l=\frac{1}{2}l_0+\frac{1}{4}
 \hat B\alpha_0+\frac{1}{8}\xi\omega$.
 Recalling that $\hat B\alpha_0< \xi\omega$, we deduce $\frac{1}{8}\xi\omega\leq d_0(\bar l)\leq\frac{3}{8}\xi\omega$ and
hence $d_0(\bar l)\leq\frac{3}{4}\|u\|_\infty$. Next, we claim that
 $Q_0(\bar l)\subseteq Q_A^{(1)}$.
In the $(p,q)$-phase, i.e., $a_0\geq 10[a]_\alpha R^\alpha$, we observe from \eqref{phase analysis} that
$\frac{4}{5}a_0\leq a_1\leq \frac{6}{5}a_0$. Recalling that $r_0=C^{-1}R$ and $\xi=\hat B A^{-1}>A^{-1}$, we have
\begin{equation*}\begin{split}
\Theta_0(\bar l)&\leq d_{0}(\bar l)^2\left[\left(\frac{Cd_{0}(\bar l)}{R}\right)^p+\frac{4}{5}a_0\left(\frac{Cd_{0}
(\bar l)}{R}\right)^q\right]^{-1}
\leq \left(\frac{1}{8}\right)^{2-q}\frac{5}{4}C^{-p}\Theta_A<\frac{1}{4}\Theta_A,
\end{split}\end{equation*}
provided that we choose $C>5^\frac{1}{p}\cdot 8^\frac{q-2}{p}$.
Next, we consider the case of $p$-phase, i.e., $a_0<10[a]_\alpha R^\alpha$.
Since $\omega\leq 2\|u\|_\infty$, $R<1$ and $q\leq p+\alpha$, we get $\Theta_A\geq (1+\tilde\gamma_1)^{-1}A^{p-2}\omega^{2-p}R^p$,
where $\tilde\gamma_1=10[a]_\alpha2^{q-p}\|u\|_\infty^{q-p}$.
Since $d_0(\bar l)\geq \frac{1}{8}\xi\omega>\frac{1}{8}A^{-1}\omega$, we obtain
 \begin{equation*}\begin{split}
\Theta_0(\bar l)&\leq C^{-p}d_{0}(\bar l)^{2-p}R^p\leq 8^{p-2}C^{-p}(1+\tilde\gamma_1)\Theta_A<\frac{1}{4}\Theta_A,
\end{split}\end{equation*}
provided that we choose $C>4^\frac{1}{p}\cdot 8^\frac{p-2}{p}(1+\tilde\gamma_1)^\frac{1}{p}$.
At this stage, we choose
$C>5^\frac{1}{p}\cdot 8^\frac{q-2}{p}(1+\tilde\gamma_1)^\frac{1}{p}.$
This choice of $C$ ensures that $\Theta_0(\bar l)<\frac{1}{4}\Theta_A$ and hence
$Q_0(\bar l)\subseteq Q_A^{(1)}$.

At this point, we fix a number $\chi\in(0,1)$ which will be chosen later.
We claim that $A_0(\bar l)\leq\frac{1}{2}\chi$.
Once again, we consider two cases: $p$-phase and $(p,q)$-phase. In the case of $p$-phase, i.e., $a_0< 10[a]_\alpha R^\alpha$,
we have $a(x,t)\leq a_0+|a(x,t)-a_0|\leq 12[a]_\alpha R^\alpha$ for all $(x,t)\in Q_0(\bar l)$, since $Q_0(\bar l)\subseteq Q_A^{(1)}\subseteq Q_A\subseteq Q_{R,R^2}$.
In view of $l_0-v\leq \xi\omega$, $\frac{1}{8}\xi\omega\leq d_0(\bar l)\leq\frac{3}{8}\xi\omega\leq\frac{3}{4}\|u\|_\infty$ and $q\leq p+\alpha$, we infer from \eqref{QA1}
that
 \begin{equation}\begin{split}\label{A01xi+}
&\frac{d_0(\bar l)^{p-2}}{r_0^{n+p}}\iint_{L_0(\bar l)}\left(\frac{l_0-v}{d_0(\bar l)}\right)^{(1+\lambda)(p-1)}\varphi_0(\bar l)^{M-p}
 \,\mathrm {d}x\mathrm {d}t
 \\&+\frac{d_0(\bar l)^{q-2}}{r_0^{n+q}}\iint_{L_0(\bar l)}a(x,t)\left(\frac{l_0-v}{d_0(\bar l)}\right)^{(1+\lambda)(q-1)}\varphi_0(\bar l)^{M-q}
 \,\mathrm {d}x\mathrm {d}t
 \\&\leq \gamma C^{n+q}\hat B^{q-2}\frac{|Q_0(\bar l)\cap \{v\leq l_0\}|}{A^{p-2}\omega^{2-p}R^{n+p}}
\\& \leq \gamma C^{n+q}\hat B^{q-2}\frac{| Q_A^{(1)}\cap \{v\leq \xi\omega\}|}{| Q_A^{(1)}|}\leq \gamma C^{n+q}\hat B^{q-2}\nu_2,
 \end{split}\end{equation}
 since $|Q_A^{(1)}|\leq c_nA^{p-2}\omega^{2-p}R^{n+p}$ and $Q_0(\bar l)\subseteq Q_A^{(1)}$.
 In the case of $(p,q)$-phase, i.e., $a_0\geq10[a]_\alpha R^\alpha$,
 we have $\frac{4}{5}a_0\leq a(x,t)\leq \frac{6}{5}a_0$ for all $(x,t)\in Q_0(\bar l)$, since $Q_0(\bar l)\subseteq Q_{R,R^2}$. According to
 $l_0-v\leq \xi\omega$ and $\frac{1}{8}\xi\omega\leq d_0(\bar l)\leq\frac{3}{8}\xi\omega$, we infer from \eqref{QA1}
that
  \begin{equation}\begin{split}\label{A02xi+}
&\frac{d_0(\bar l)^{p-2}}{r_0^{n+p}}\iint_{L_0(\bar l)}\left(\frac{l_0-v}{d_0(\bar l)}\right)^{(1+\lambda)(p-1)}\varphi_0(\bar l)^{M-p}
 \,\mathrm {d}x\mathrm {d}t
 \\&+\frac{d_0(\bar l)^{q-2}}{r_0^{n+q}}\iint_{L_0(\bar l)}a(x,t)\left(\frac{l_0-v}{d_0(\bar l)}\right)^{(1+\lambda)(q-1)}\varphi_0(\bar l)^{M-q}
 \,\mathrm {d}x\mathrm {d}t
 \\&\leq \gamma C^{n+q}d_0(\bar l)^{-2}\left(\frac{d_0(\bar l)^p}{R^{n+p}}+a_0\frac{d_0(\bar l)^q}{R^{n+q}}\right)|Q_0(\bar l)\cap \{v\leq l_0\}|
 \\& \leq \gamma C^{n+q}\hat B^{q-2}\frac{| Q_A^{(1)}\cap \{v\leq \xi\omega\}|}{| Q_A^{(1)}|}\leq \gamma C^{n+q}\hat B^{q-2}\nu_2,
 \end{split}\end{equation}
 since $Q_0(\bar l)\subseteq Q_A^{(1)}$.
According to the proofs of \eqref{A01xi+} and \eqref{A02xi+}, we conclude that
\begin{equation}\label{timederivativeL_0xi+}\left(\frac{d_0(\bar l)^{p-2}}{r_0^{n+p}}+a_1\frac{d_0(\bar l)^{q-2}}{r_0^{n+q}}\right)
|L_0(\bar l)|\leq \gamma C^{n+q}\hat B^{q-2}\nu_2,\end{equation}
where $a_1=a(x_1,t_1)$. Furthermore, we use the Caccioppoli estimate \eqref{Cacformula1} with $(u,l,d,\Theta)$ replaced by $(v,l_0,d_0(\bar l),\Theta_0(\bar l))$
to deduce that
 \begin{equation*}\begin{split}
    \esssup_t& \frac{1}{r_0^n}\int_{B_0\times\{t\}}G\left(\frac{l_0-v}{d_0(\bar l)}\right)\varphi_0(\bar l)^M
 \,\mathrm {d}x \\ \leq &\gamma\frac{d_0(\bar l)^{p-2}}{r_0^{p+n}}\iint_{L_0(\bar l)}\left(\frac{l_0-v}{d_0(\bar l)}
 \right)^{(1+\lambda)(p-1)}
 \varphi_0(\bar l)^{M-p}\,\mathrm {d}x\mathrm {d}t
 \\ &+\gamma\frac{d_0(\bar l)^{q-2}}{r_0^{q+n}}\iint_{L_0(\bar l)}a(x,t)\left(\frac{l_0-v}{d_0(\bar l)}
 \right)^{(1+\lambda)(q-1)}
 \varphi_0(\bar l)^{M-q}\,\mathrm {d}x\mathrm {d}t
 \\&+\gamma\frac{1}{r_0^n} \iint_{L_0(\bar l)}\frac{l_0-v}{d_0(\bar l)}\varphi_0(\bar l)^{M-1}|\partial_t\varphi_0(\bar l)|\,\mathrm {d}x\mathrm {d}t
   \\&+\gamma \frac{\Theta_0(\bar l)}{r_0^nd_0(\bar l)^2}\int_{B_0}g^{\frac{p}{p-1}}\,\mathrm {d}x
  +\gamma \frac{\Theta_0(\bar l)}{r_0^nd_0(\bar l)}\int_{B_0}|f|\,\mathrm {d}x
  \\ =&L_1+L_2+L_3+L_4+L_5,
 \end{split}\end{equation*}
 since $|D\varphi_0(\bar l)|\leq r_0^{-1}$ and $\varphi_0(\bar l)=0$ on $\partial_PQ_0(\bar l)$. In view of \eqref{A01xi+} and \eqref{A02xi+},
 we deduce $L_1+L_2\leq  \gamma C^{n+q}\hat B^{q-2}\nu_2$.
To estimate $L_3$, we use \eqref{timederivativeL_0xi+} and
$|\partial_t\varphi_0(\bar l)|\leq 9\Theta_0(\bar l)^{-1}$ to deduce that
 \begin{equation*}\begin{split}
 L_3&\leq \gamma\frac{1}{r_0^n}\Theta_0(\bar l)^{-1}|L_0(\bar l)|
\leq
 \gamma\left(\frac{d_0(\bar l)^{p-2}}{r_0^{n+p}}+a_1\frac{d_0(\bar l)^{q-2}}{r_0^{n+q}}\right)
|L_0(\bar l)|\leq \gamma C^{n+q}\hat B^{q-2}\nu_2.
  \end{split}\end{equation*}
Observe that $\Theta_0(\bar l)\leq d_0(\bar l)^{2-p}r_0^p$ and $d_0(\bar l)\geq \frac{1}{8}\xi\omega$.
We infer from \eqref{omega1violated+} that
 \begin{equation*}\begin{split}
 L_4+L_5\leq \gamma\frac{d_0(\bar l)^{-p}}{r_0^{n-p}}\int_{B_0}g^\frac{p}{p-1}\,\mathrm {d}x+\gamma\frac{d_0(\bar l)^{1-p}}{r_0^{n-p}}\int_{B_0}|f|\,\mathrm {d}x\leq
 \gamma C^{n-p}\left(\hat B^{-p}+\hat B^{-(p-1)}\right),
  \end{split}\end{equation*}
  where the constant $\gamma$ depends only upon the data.
  Consequently, we conclude from \eqref{A01xi+}, \eqref{A02xi+} and the estimates for $L_1$-$L_5$ that
  there exists a constants $\hat \gamma_0$ depending only upon the data, such that
  \begin{equation*}\begin{split}
  A_0(\bar l)\leq \hat\gamma_0 C^{n+q}\hat B^{q-2}\nu_2+\hat\gamma_0 C^{n-p}\left(\hat B^{-p}+\hat B^{-(p-1)}\right).
   \end{split}\end{equation*}
    At this point, we  fix a number $\chi\in(0,1)$ which will be chosen later in a universal way.
    Then, we choose $\nu_2=\nu_2(\text{data},\chi)<1$ and $\hat B=\hat B(\text{data},\chi)>1$ such that
     \begin{equation}\begin{split}\label{nu0xi+}
  \hat\gamma_0 C^{n+q}\hat B^{q-2}\nu_2=\frac{\chi}{4}\qquad\text{and}\qquad
  \hat\gamma_0 C^{n-p}(\hat B^{-p}+\hat B^{1-p})<\frac{\chi}{4}.
    \end{split}\end{equation}
    This leads to $A_0(\bar l)\leq\frac{1}{2}\chi$. Taking into account that $A_0(l)\to+\infty$
    as $l\to l_0$ and $A_0(l)$ is continuous in $(\bar l, l_0)$, then there exists a number $\tilde l\in (\bar l, l_0)$ such that $A_0(\tilde l)=\chi$.
    From \eqref{omega1violated+}, we infer that for $\hat B>2$ there holds
  $l_0-\bar l\geq \frac{1}{8}\xi\omega\geq \frac{1}{4
    \hat B}\xi\omega>\frac{1}{4}(\alpha_{-1}-\alpha_0),$
    since $\hat B\alpha_{-1}<\xi\omega$.
    At this point, we set
    \begin{equation}\label{l1xi+}
	l_1=\begin{cases}
\tilde l,&\quad \text{if}\quad \tilde l<l_0-\frac{1}{4}(\alpha_{-1}-\alpha_0),\\
	l_0-\frac{1}{4}(\alpha_{-1}-\alpha_0),&\quad \text{if}\quad \tilde l\geq l_0-\frac{1}{4}(\alpha_{-1}-\alpha_0).
	\end{cases}
\end{equation}
Moreover, we define $Q_0=Q_0(l_1)$ and $d_0=l_0-l_1$. Since $\hat B\alpha_0<\xi\omega$, we have $l_1\geq  \bar l>\frac{1}{2}\hat B\alpha_0+\frac{1}{4}\xi\omega$.

Step 3:  \emph{Determine the sequence $\{l_j\}_{j=0}^\infty$.} Suppose that we have chosen two
sequences $l_1,\cdots,l_j$ and $d_0,\cdots,d_{j-1}$ such that for $i=1,\cdots,j$, there holds
    \begin{equation}\label{lixi+}\frac{1}{8}\xi\omega+\frac{1}{2}l_{i-1}+\frac{1}{4}\hat B\alpha_{i-1}<l_i\leq
    l_{i-1}-\frac{1}{4}(\alpha_{i-2}-\alpha_{i-1}),
    \end{equation}
    \begin{equation}\label{Aj-1xi+}
    A_{i-1}(l_i)\leq \chi,
     \end{equation}
      \begin{equation}\label{ljxi+}
      l_i>\frac{1}{2}\hat B\alpha_{i-1}+\frac{1}{4}\xi\omega.
       \end{equation}
       Then, we define $Q_i=Q_i(l_{i+1})$, $d_i=d_i(l_{i+1})=l_i-l_{i+1}$, $L_i=L_i(l_{i+1})$, $\varphi_i=\varphi_i(l_{i+1})$ and
       $\Theta_i=\Theta_i(l_{i+1})$ for $i=1,2,\cdots,j-1$. Moreover, we claim that, with suitable choices of $\chi$ and $\hat B$, there holds
       \begin{equation}\label{Ajxi+}
       A_j(\bar l)\leq \frac{1}{2}\chi,\qquad\text{where}\qquad \bar l=\frac{1}{2}l_j+\frac{1}{4}\hat B\alpha_j+\frac{1}{8}\xi\omega.
        \end{equation}
        First, we claim that the inclusions
        $Q_i\subseteq Q_A$ and $Q_i\subseteq Q_{r_i,r_i^2}(x_1,t_1)$ hold for $i=0,1,\cdots,j-1$.
        In the case of $p$-phase, i.e., $a_0<10[a]_\alpha R^\alpha$, we have
       $\Theta_A\geq \hat\gamma_1A^{p-2}\omega^{2-p}R^p$,
        where $\hat \gamma_1=(1+10[a]_\alpha2^{q-p}\|u\|_\infty^{q-p})^{-1}=(1+\tilde \gamma_1)^{-1}$.
        In view of \eqref{alpha1xi+}, we
        see that $\alpha_{i-1}-\alpha_i\geq\frac{1}{\hat B}4^{-i-100}\xi\omega$.
         It follows from \eqref{lixi+} that $d_i\geq \frac{1}{\hat B}4^{-i-101}\xi\omega=
        4^{-i-101}A^{-1}\omega$ and hence
        \begin{equation*}\begin{split}
        \Theta_i=d_i^2\left[\left(\frac{d_i}{r_i}\right)^p+a_1\left(\frac{d_i}{r_i}\right)^q\right]^{-1}\leq d_i^{2-p}r_i^p\leq 4^{101(p-2)}A^{p-2}\omega^{2-p}C^{-p}R^p<\frac{1}{4}\Theta_A,
        \end{split}\end{equation*}
        provided that we choose
$C>4^{101}\hat\gamma^{-\frac{1}{p}}$.
        This implies that $Q_i\subseteq Q_A$ for the $p$-phase. In the case of $(p,q)$-phase, i.e., $a_0\geq10[a]_\alpha R^\alpha$, we have
        $\frac{4}{5}a_0\leq a_1\leq \frac{6}{5}a_0$. We infer from $d_i>4^{-i-101}A^{-1}\omega$ that
         \begin{equation*}\begin{split}
        \Theta_i&\leq \frac{5}{4}d_i^2\left[\left(\frac{d_i}{r_i}\right)^p+a_0\left(\frac{d_i}{r_i}\right)^q\right]^{-1}
       \\&\leq\frac{5}{4}\left(4^{-i-101}A^{-1}\omega\right)^2\left[\left(\frac{4^{-i-101}A^{-1
      }\omega}{4^{-i}C^{-1}R}\right)^p+a_0
        \left(\frac{4^{-i-101}A^{-1}\omega}{4^{-i}C^{-1}R}\right)^q\right]^{-1}
       \\& \leq \frac{5}{4}4^{101(q-2)}C^{-p}\Theta_A<\frac{1}{4}\Theta_A,
        \end{split}\end{equation*}
 provided that we choose $C>4^{101\frac{q}{p}}$.
         This leads to $Q_i\subseteq Q_A$ for the $(p,q)$-phase.
         At this point, we choose
          \begin{equation}\label{definitionofCxi+}
          C=4^{101\frac{q}{p}}(1+10[a]_\alpha2^{q-p}\|u\|_\infty^{q-p})^\frac{1}{p}.\end{equation}
         On the other hand, we infer from \eqref{alpha1xi+} and \eqref{lixi+} that
          $d_i=l_i-l_{i+1}\geq \frac{1}{4}(\alpha_{i-1}-\alpha_i)\geq 50r_i.$ This leads to
         $\Theta_i\leq(50r_i)^2\left(50^p+a_150^q\right)^{-1}\leq 50^{2-p}r_i^2<r_i^2$ and hence $Q_i\subseteq Q_{r_i,r_i^2}(x_1,t_1)$.
       We now turn our attention to the proof of \eqref{Ajxi+}.
        Next, we observe from \eqref{lixi+} and \eqref{ljxi+} that $d_j(\bar l)\geq\frac{1}{4}d_{j-1}+\frac{1}{4}\hat B(\alpha_{j-1}-\alpha_j)$.
It is easy to check that $Q_j(\bar l)\subseteq Q_{r_j,r_j^2}(x_1,t_1)$ and $Q_j(\bar l)\subseteq Q_{j-1}\subseteq Q_A$.

Repeating the arguments in the proof of Lemma \ref{lemmaDeGiorgi1} Step 3,
we conclude that for any fixed $\epsilon_1,\epsilon_2\in(0,1)$, there exist constants
$\hat\gamma=\hat\gamma(\text{data})$, $\hat\gamma_1=\hat\gamma_1(\text{data},\epsilon_1)$ and $\hat\gamma_2=\hat\gamma_2(\text{data},\epsilon_1,\epsilon_2)$, such that
\begin{equation}\begin{split}\label{estimate for Ajbarlchixi+}
A_j(\bar l)& \leq
 \epsilon_2\hat\gamma_1\chi+\epsilon_1\hat\gamma \chi+\hat\gamma(\hat B^{-p}+\hat B^{-(p-1)})
  \\&\quad+\hat\gamma_2\chi^{1+\frac{p}{n}}+\hat\gamma_2\chi^{1+\frac{q}{n}}+\hat\gamma_2\left(\hat B^{-p}+\hat B^{-(p-1)}\right)^{1+\frac{p}{n}}
  +\left(\hat B^{-p}+\hat B^{-(p-1)}\right)^{1+\frac{q}{n}}.
\end{split}\end{equation}
Our task now is to fix the parameters. We first choose $\epsilon_1=(100\hat\gamma)^{-1}$ and this fixes $\hat\gamma_1=\hat\gamma_1(\epsilon_1)$. Next, we set $\epsilon_2=(100\hat\gamma_1)^{-1}$.
This also fixes $\hat\gamma_2$ with the choices of $\epsilon_1$ and $\epsilon_2$. At this point, we set $\hat B>8$ large enough to have
\begin{equation}\label{conditionforBxi+}
\hat B^{-p}+\hat B^{-(p-1)}<\frac{1}{100\hat\gamma}\chi.
\end{equation}
Finally, we set $\chi=\left(\frac{1}{100\hat\gamma_2}\right)^\frac{n}{p}2^{-1-\frac{n}{p}}$. With the choices of $\epsilon_1$, $\epsilon_2$, $\chi$ and $\hat B$,
      we infer from \eqref{estimate for Ajbarlchixi+} that $A_j(\bar l)\leq \frac{1}{2}\chi$,
      which proves the claim
      \eqref{Ajxi+}. Recalling that $A_j(l)\to+\infty$
    as $l\to l_j$ and $A_j(l)$ is continuous in $(\bar l, l_j)$, then there exists a number $\tilde l\in (\bar l, l_j)$ such that $A_j(\tilde l)=\chi$.
    At this point, we define
    \begin{equation}\label{definitionlj+1xi+}
	l_{j+1}=\begin{cases}
\tilde l,&\quad \text{if}\quad \tilde l<l_j-\frac{1}{4}(\alpha_{j-1}-\alpha_j),\\
	l_j-\frac{1}{4}(\alpha_{j-1}-\alpha_j),&\quad \text{if}\quad \tilde l\geq l_j-\frac{1}{4}(\alpha_{j-1}-\alpha_j).
	\end{cases}
\end{equation}
It is easily seen that
\eqref{lixi+}-\eqref{ljxi+} are valid for $i=j+1$. Consequently, we have now constructed a sequence $\{l_i\}_{i=0}^\infty$ satisfying \eqref{lixi+}-\eqref{ljxi+}.

Step 4: \emph{Proof of the inequality $v(x_1,t_1)>\frac{1}{4}\xi\omega$.} We first observe from the construction of $\{l_i\}_{i=0}^\infty$ that
this sequence is decreasing and hence that there exists a number $\hat l\geq  0$ such that
\begin{equation*}\hat l=\lim_{i\to\infty}l_i.\end{equation*}
This also implies that $d_i=l_i-l_{i+1}\to 0$ as $i\to\infty$. Moreover, we infer that $\hat l=v(x_1,t_1)$.
At this point, we claim that for any $j\geq1$ there holds
  \begin{equation}\begin{split}\label{djdj-1xi+}
  d_j\leq &\frac{1}{4}d_{j-1}+\gamma \frac{4^{-j-100}}{\hat B}\xi\omega+
  \gamma\left(r_{j-1}^{p-n}\int_{B_{j-1}} g(y)^{\frac{p}{p-1}}
  \,\mathrm {d}y\right)^{\frac{1}{p}}\\&+\gamma\left(r_{j-1}^{p-n}\int_{B_{j-1}}|f(y)|
  \,\mathrm {d}y\right)^{\frac{1}{p-1}}+\gamma r_{j-1},
  \end{split}\end{equation}
  where the constant $\gamma$ depends only upon the data. In order to prove \eqref{djdj-1xi+}, we assume that for any fixed $j\geq 1$,
  \begin{equation}\begin{split}\label{djdj-1proof}
  d_j>\frac{1}{4}d_{j-1}\qquad\text{and}\qquad d_j>\frac{1}{4}(\alpha_{j-1}-\alpha_j),
   \end{split}\end{equation}
   since otherwise \eqref{djdj-1xi+} follows from \eqref{alpha2xi+}.
   We first observe from $d_j>\frac{1}{4}(\alpha_{j-1}-\alpha_j)$ and \eqref{definitionlj+1xi+} that $A_j(l_{j+1})=A_j(\tilde l)=\chi$.
   In view of $d_j>\frac{1}{4}d_{j-1}$, we see that
$\Theta_j
<\frac{1}{16}\Theta_{j-1}$
and this yields that $Q_j\subseteq Q_{j-1}$ and $\varphi_{j-1}=1$ in $Q_j$. Recalling that $Q_i\subseteq Q_A$ and $Q_i\subseteq Q_{r_i,r_i^2}(z_1)$ hold for
all $i\geq 0$, an argument similar to the one used in step 3 shows that
\begin{equation}\begin{split}\label{estimate for Ajbarlchistep4xi+}
\chi=A_j(l_{j+1})& \leq
 \epsilon_2\hat\gamma_1\chi+\epsilon_1\hat\gamma \chi+\hat\gamma\left(\frac{r_j^{p-n}}{d_j^p}\int_{B_{j}} g(y)^{\frac{p}{p-1}}
  \,\mathrm {d}y+\frac{r_j^{p-n}}{d_j^{p-1}}\int_{B_{j}} |f(y)|
  \,\mathrm {d}y\right)
  \\&\quad+\hat\gamma_2\chi^{1+\frac{p}{n}}+\hat\gamma_2\left(\frac{r_j^{p-n}}{d_j^p}\int_{B_{j}} g(y)^{\frac{p}{p-1}}
  \,\mathrm {d}y+\frac{r_j^{p-n}}{d_j^{p-1}}\int_{B_{j}} |f(y)|
  \,\mathrm {d}y\right)^{1+\frac{p}{n}}
 \\&\quad +\hat\gamma_2\chi^{1+\frac{q}{n}}+\hat\gamma_2\left(\frac{r_j^{p-n}}{d_j^p}\int_{B_{j}} g(y)^{\frac{p}{p-1}}
  \,\mathrm {d}y+\frac{r_j^{p-n}}{d_j^{p-1}}\int_{B_{j}} |f(y)|
  \,\mathrm {d}y\right)^{1+\frac{q}{n}},
\end{split}\end{equation}
where $\hat\gamma=\hat\gamma(\text{data})$, $\hat\gamma_1=\hat\gamma_1(\text{data},\epsilon_1)$ and $\hat\gamma_2=\hat\gamma_2(\text{data},\epsilon_1,\epsilon_2)$
are the constants in \eqref{estimate for Ajbarlchixi+}. According to the choices of $\epsilon_1$, $\epsilon_2$ and $\chi$,
we infer that the  terms involving $\chi$ on the right-hand side of \eqref{estimate for Ajbarlchistep4xi+} can be re-absorbed into the left-hand side.
This implies that
  \begin{equation*}\begin{split}
  d_j\leq\gamma\left(r_{j-1}^{p-n}\int_{B_{j-1}}g(y)^{\frac{p}{p-1}}\,\mathrm {d}y\right)^{\frac{1}{p}}
\qquad\text{or}\qquad
   d_j\leq
  \gamma \left(r_{j-1}^{p-n}\int_{B_{j-1}}|f(y)|\,\mathrm {d}y\right)^{\frac{1}{p-1}},
   \end{split}\end{equation*}
   which establishes the estimate \eqref{djdj-1xi+}. At this stage, we let $J>1$ be a fixed integer and sum up the inequality \eqref{djdj-1xi+}
   for $j=1,\cdots,J-1$. Consequently, we infer that
   \begin{equation*}\begin{split}
 \sum_{j=1}^{J-1} d_j\leq& \frac{1}{4}\sum_{j=1}^{J-1}d_{j-1}+\gamma \frac{1}{\hat B}
   \sum_{j=1}^{J-1} 4^{-j-100}\xi\omega+\gamma\sum_{j=1}^{J-1}\left(r_{j-1}^{p-n}\int_{B_{j-1}}|f(y)|
  \,\mathrm {d}y\right)^{\frac{1}{p-1}}
\\&+ \gamma\sum_{j=1}^{J-1}\left(r_{j-1}^{p-n}\int_{B_{j-1}} g(y)^{\frac{p}{p-1}}
  \,\mathrm {d}y\right)^{\frac{1}{p}}+\gamma  \sum_{j=1}^{J-1}r_{j-1}
  \\ \leq& \frac{1}{4}\sum_{j=1}^{J-1}d_{j-1}+\gamma\frac{1}{\hat B}\xi\omega+\gamma\left(F_p(R)+G_p(R)+ R\right),
   \end{split}\end{equation*}
   where the constant $\gamma$ depends only upon the data. Recalling that $d_j=l_j-l_{j+1}$,
   we pass to the limit $J\to\infty$ to deduce that
    \begin{equation*}
\frac{3}{4}(l_1-v(x_1,t_1))-\frac{1}{4}d_0\leq\gamma\frac{1}{\hat B}\xi\omega+\gamma\left(F_p(R)+G_p(R)+ R\right)\leq\hat  C\frac{1}{\hat B}\xi\omega,
    \end{equation*}
    where we have used \eqref{omega1violated+} in the last step. Recalling that $l_0=\xi\omega$ and $d_0\leq d_0(\bar l)<\frac{3}{8}\xi\omega$,
    we deduce that for $\hat B>8\hat  C$ there holds $v(x_1,t_1)\geq \frac{1}{3}\xi\omega$.
    This proves the desired inequality. Finally, we fix the values of parameters $\nu_2$ and $\hat B$. According to \eqref{nu0xi+} and \eqref{conditionforBxi+}, we choose
     \begin{equation}\label{definehatB}
     \hat B=\max\left\{8\hat C,\  \left(\frac{200\hat\gamma}{\chi}\right)^\frac{1}{p-1},\ \left(\frac{8\hat\gamma_0C^{n-p}}{\chi}\right)^\frac{1}{p-1}\right\},
       \end{equation}
       where the constant $C$ is defined in \eqref{definitionofCxi+}. From \eqref{nu0xi+}, we define $\nu_2$ by
        \begin{equation}\label{definenu2}
        \nu_2=\frac{\chi}{4\hat\gamma_0  C^{n+q}\hat B^{q-2}}.
        \end{equation}
        With these choices of $\nu_2$ and $\hat B$, we conclude that the lemma holds.
 \end{proof}
 The next lemma asserts that the estimate of the measure of level sets in the form \eqref{QA1} may hold for any choice of $\nu_2$.
 According to Lemma \ref{time 2nd}, we remark that if \eqref{2ndomega assumption1} is violated, then the slicewise estimate
\begin{equation}\label{2nd measure estimate++}
\left|\left\{x\in B_{R}:u(x,t)>\mu_+-\frac{\omega}{2^{s_1}}\right\}\right|\leq
\left(1-\left(\frac{\nu_0}{2}\right)^2\right)|B_{R}|
\end{equation}
holds
for all $t\in \left(-\frac{1}{2}\Theta_A,0\right]$. With the help of this inequality we can now prove the following lemma.
\begin{lemma}\label{DeGiorgi3}
Let $u$ be a bounded weak solution to \eqref{parabolic}-\eqref{A} in $\Omega_T$. Assume that \eqref{2nd measure estimate++} holds for
all $t\in \left[-\frac{1}{2}\Theta_A,0\right]$.
For every $\bar\nu\in(0,1)$, there exists a positive integer $q_*=q_*(\text{data},\bar\nu)$ such that either
\begin{equation}\begin{split}\label{2st omega assumption}
\omega\leq 2^{s_1+q_*}F_p(R_0)+2^{s_1+q_*}G_p(R_0)
\end{split}\end{equation}
or
\begin{equation}\label{measure estimate 2nd}\left|\left\{(x,t)\in Q_A^{(1)}:u\geq\mu_+-\frac{\omega}{2^{s_1+q_*}}\right\}
\right|\leq \bar\nu|Q_A^{(1)}|,\end{equation}
where
$s_1>2$ is the constant claimed by Lemma \ref{time 2nd} and
$A=2^{s_1+q_*}\hat B$.
\end{lemma}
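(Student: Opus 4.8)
The plan is to run the classical De Giorgi iteration on the measure of the level sets (in the spirit of DiBenedetto, Chapter III, and of Lemma \ref{timeexpandlemma} above), combining the energy estimate of Lemma \ref{caclemma} on the fixed intrinsic cylinder $Q_A^{(1)}$ with De Giorgi's isoperimetric inequality applied slicewise in time, and then summing. Concretely, set $\omega_s=\omega/2^{s_1+s}$ and $k_s=\mu_+-\omega_s$ for $s=0,1,2,\dots$, so that $k_{s+1}-k_s=\omega_{s+1}$, and introduce the ``bad sets'' $A_s=\{(x,t)\in Q_A^{(1)}:u\ge k_s\}$ together with the disjoint ``rings'' $S_s=\{(x,t)\in Q_A^{(1)}:k_s<u<k_{s+1}\}$, which satisfy $\sum_s|S_s|\le|Q_A^{(1)}|$. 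Assuming \eqref{2st omega assumption} fails, so that $F_p(R_0)+G_p(R_0)<2^{-(s_1+q_*)}\omega\le\omega_s$ for every $s\le q_*$, the goal is to show that $|A_{q_*}|\le\bar\nu|Q_A^{(1)}|$ once $q_*$ is chosen large.

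First I would carry out the energy step. Apply Lemma \ref{caclemma} to $(u-k_s)_+$ over $Q_A^{(1)}=B_R\times(-\tfrac12\Theta_A,0)$ with a time-independent cutoff $\varphi=\varphi(x)$ with $\varphi\equiv1$ on $B_{(1-\sigma)R}$ and $|D\varphi|\le(\sigma R)^{-1}$. Using $(u-k_s)_+\le\omega_s$, the phase analysis \eqref{phase analysis} (so $a(x,t)\le\tfrac65 a_0$ in the $(p,q)$-phase and $a(x,t)\le12[a]_\alpha R^\alpha$ in the $p$-phase, the latter being dominated by the $p$-term since $q\le p+\alpha$ and $R<1$), and the crucial algebraic fact that $2^{s_1+s}\le 2^{s_1+q_*}=A/\hat B\le A$ — which forces the initial-time term $\int_{B_R\times\{-\Theta_A/2\}}(u-k_s)_+^2\varphi^q\,dx\le\omega_s^2|B_R|=2\omega_s^2\Theta_A^{-1}|Q_A^{(1)}|$ to be controlled by $(\omega_s^pR^{-p}+a_0\omega_s^qR^{-q})|Q_A^{(1)}|$ via the explicit form of $\Theta_A$ in \eqref{defomega} — and absorbing the $|f|$ and $g$ terms by the negation of \eqref{2st omega assumption} exactly as in Lemma \ref{timeexpandlemma}, one obtains
\begin{equation*}
\iint_{Q_A^{(1)}}\bigl(|D(u-k_s)_+|^p+a(x,t)|D(u-k_s)_+|^q\bigr)\,dx\,dt\le\gamma\,\sigma^{-q}\bigl(\omega_s^pR^{-p}+a_0\omega_s^qR^{-q}\bigr)|Q_A^{(1)}|.
\end{equation*}

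Next comes the isoperimetric step and the summation. For a.e.\ $t\in(-\tfrac12\Theta_A,0)$, De Giorgi's isoperimetric inequality on $B_{(1-\sigma)R}$ together with the slicewise hypothesis \eqref{2nd measure estimate++} (which gives $|\{u(\cdot,t)<k_0\}\cap B_{(1-\sigma)R}|\ge\tfrac12(\tfrac{\nu_0}{2})^2|B_R|$ provided $\gamma\sigma$ is small) yields $\omega_{s+1}|A_{s+1}(t)|\le\gamma R\int_{S_s(t)}|Du|\,dx$. Integrating in $t$, then applying H\"older in $x$ with exponent $p$ (always in the $p$-phase, and in the $(p,q)$-phase when $a_0(\omega_s/R)^{q-p}<1$) or with exponent $q$ (in the $(p,q)$-phase when $a_0(\omega_s/R)^{q-p}\ge1$, for $q<n$; the case $q\ge n$ is treated by slicewise Sobolev as in Lemma \ref{lemmaDeGiorgi1}), followed by H\"older in $t$, and inserting the energy bound, one arrives in every case at $|A_{s+1}|\le\gamma\sigma^{-q/p}|Q_A^{(1)}|^{1/\kappa}|S_s|^{1-1/\kappa}$ with $\kappa\in\{p,q\}$. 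Raising to the power $\kappa/(\kappa-1)$, using $|A_{q_*}|\le|A_{s+1}|$ and $p<q$ (so $p/(p-1)\ge\kappa/(\kappa-1)$), and summing over $s=0,\dots,q_*-1$ gives $q_*\,|A_{q_*}|^{p/(p-1)}\le\gamma\sigma^{-\beta}|Q_A^{(1)}|^{p/(p-1)}$. One first fixes $\sigma=\sigma(\mathrm{data},\bar\nu)$ so that the annulus contributes $\gamma\sigma|Q_A^{(1)}|\le\tfrac12\bar\nu|Q_A^{(1)}|$, and then $q_*=q_*(\mathrm{data},\bar\nu)$ so that $(\gamma\sigma^{-\beta}/q_*)^{(p-1)/p}\le\tfrac12\bar\nu$, which yields \eqref{measure estimate 2nd}.

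The main obstacle will be the double-phase bookkeeping: one must carry the mixed quantity $\omega_s^pR^{-p}+a_0\omega_s^qR^{-q}$ consistently through both the energy estimate and the slicewise H\"older step, splitting the latter according to whether the $p$- or the $q$-term dominates, and treating separately $q<n$ and $q\ge n$. The second delicate point is that the intrinsic scale $\Theta_A$ is tied to the full oscillation $\omega$ rather than to the running level $\omega_s$, so the energy estimate is uniform in $s$ only because $s\le q_*$ and $A=2^{s_1+q_*}\hat B$; this is precisely why $A$ is coupled to $q_*$ in the statement. The remaining manipulations parallel the earlier lemmas and are routine.
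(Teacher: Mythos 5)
Your proposal is essentially correct and uses the same engine as the paper's proof: an energy (Caccioppoli) estimate that gives a $p$-gradient bound on the fixed cylinder $Q_A^{(1)}$, followed by De Giorgi's discrete isoperimetric lemma applied slicewise with the measure hypothesis \eqref{2nd measure estimate++}, followed by H\"older in space and time and summation over the disjoint rings $S_s$. Two remarks on where the paper's bookkeeping is tidier. First, the paper dispenses with the $\sigma$-shrinking and the initial-time term altogether by using a cutoff that equals $1$ on all of $Q_A^{(1)}=B_R\times(-\tfrac12\Theta_A,0]$ and vanishes on the parabolic boundary of the larger cylinder $Q'=B_{R_0}\times(-\Theta_A,0]$ (recall $R=\tfrac{9}{10}R_0$); the algebraic fact $\omega_s^2\Theta_A^{-1}\le \omega_s^pR^{-p}+a_0\omega_s^qR^{-q}$, which you correctly isolate and which is exactly where the coupling $A=2^{s_1+q_*}\hat B$ enters, then controls the $\partial_t\varphi$ term instead of the initial-time term, and $q_*$ never picks up a dependence on a $\bar\nu$-small $\sigma$. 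Second, the case analysis with H\"older exponent $\kappa\in\{p,q\}$ in the isoperimetric step is unnecessary, and the aside about $q\ge n$ is a red herring: De Giorgi's isoperimetric lemma is dimension-free and the slicewise Sobolev device of Lemma \ref{lemmaDeGiorgi1} plays no role in this argument. The paper instead proves the $q$-gradient bound on $Q_j^+$ when $a_0(\omega_s/R)^{q-p}\ge1$ and then deduces the $p$-gradient bound on $Q_j^+$ by H\"older, so the downstream summation is always carried out at the single exponent $p/(p-1)$, giving the clean formula $q_*=2+(\gamma/(\nu_0^2\bar\nu))^{p/(p-1)}$.
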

\begin{proof}
In the following, we abbreviate $Q=Q_A^{(1)}$.
Let $q_*\in \mathbb{N}_+$ to be determined in the course of the proof.
For $j>2$ and $A=2^{s_1+q_*}\hat B$, we define $Q^\prime=B_{R_0}\times(-\Theta_A,0]$,
\begin{equation*}\begin{split}
A_j(t)&=\left\{x\in B_{R}:u(x,t)>\mu_+-\frac{\omega}{2^j}\right\}
\end{split}
\end{equation*}
and
\begin{equation*}Q_j^+=\left\{(x,t)\in Q:u(x,t)>\mu_+-\frac{\omega}{2^j}\right\}.\end{equation*}
Moreover, we set $k_j=\mu_+-2^{-j}\omega$ for $j=s_1,s_1+1,\cdots, s_1+q_*$.
Initially, we assume that for any $j>2$,
\begin{equation}\begin{split}\label{1st omega assumption}
\omega\geq 2^j\left(R^{p-n}\int_{B_R}|f|\,\mathrm{d}x
\right)^{\frac{1}{p-1}}+2^j\left(R^{p-n}\int_{B_R}g^{\frac{p}{p-1}}\,\mathrm{d}x
\right)^{\frac{1}{p}}.
\end{split}\end{equation}
Since $j\leq s_1+q_*$, we observe that if \eqref{1st omega assumption} is violated,
then \eqref{2st omega assumption} holds. What is left is to show that \eqref{measure estimate 2nd} holds.
We divide the proof of \eqref{measure estimate 2nd} into two steps.

Step 1: \emph{Proof of a gradient estimate.} We claim that there exists a constant $\gamma=\gamma(\text{data})$, such that
\begin{equation}\begin{split}\label{claimDu}
\iint_{Q_j^+}|Du|^p \,\mathrm {d}x\mathrm {d}t\leq \gamma\frac{1}{R^p}\left(\frac{\omega}{2^j}\right)^p|Q|.
\end{split}\end{equation}
Take a cutoff function $0\leq\varphi\leq1$,
such that $\varphi=1$ in $Q$, $\varphi=0$ on $\partial_PQ^\prime$,
\begin{equation*}\begin{split}
|D\varphi|\leq 10\frac{1}{R}\qquad\text{and}\qquad 0\leq
\frac{\partial \varphi}{\partial t}\leq 2\Theta_A^{-1}=2R^{-p}\left(\frac{\omega}{A}\right)^{p-2}+2a_0R^{-q}\left(\frac{\omega}{A}\right)^{q-2}.
\end{split}\end{equation*}
We consider the Caccioppoli estimate \eqref{Caccioppoli} for the truncated
functions $(u-k_j)_+$ over the cylinder $Q^\prime$
and obtain
\begin{equation}\begin{split}\label{DuCac}
&\iint_{Q^\prime}|D(u-k_j)_+|^p\varphi^q \,\mathrm {d}x\mathrm {d}t+\iint_{Q^\prime}a(x,t)|D(u-k_j)_+
|^q\varphi^q \,\mathrm {d}x\mathrm {d}t\\
&\leq \gamma\iint_{Q^\prime} (u-k_j)_+^p|D\varphi|^p+a(x,t)(u-k_j)_+^q|D\varphi|^q
\,\mathrm {d}x\mathrm {d}t
\\&\ \ +\gamma
\iint_{Q^\prime} (u-k_j)_+^2|\partial_t\varphi^q|\,\mathrm {d}x\mathrm {d}t
\\&\ \ +\gamma
\iint_{Q^\prime} |f|(u-k_j)_+\,\mathrm {d}x\mathrm {d}t+\gamma
\iint_{Q^\prime} g^{\frac{p}{p-1}}\,\mathrm {d}x\mathrm {d}t
\\ &=:T_1+T_2+T_3+T_4
.\end{split}\end{equation}
First, we address the estimates for the lower-order terms. In view of $u-k_j\leq 2^{-j}\omega$, we infer from \eqref{1st omega assumption} that
\begin{equation}\begin{split}\label{T3estimate}
T_3\leq \gamma\left(\frac{\omega}{2^jR^p}\right)
\left(R^{p-n}\int_{B_R}|f|\,\mathrm{d}x
\right)|Q|\leq \gamma\frac{1}{R^p}\left(\frac{\omega}{2^j}\right)^p|Q|.
\end{split}\end{equation}
Moreover, we infer from \eqref{1st omega assumption} that
\begin{equation}\begin{split}\label{T4estimate}
T_4=
\gamma\left(\frac{1}{R^p}\left(\frac{\omega}{2^j}\right)^p|Q|\right)\left[
\left(\frac{\omega}{2^j}\right)^{-p}
R^{p-n}\int_{B_R}g^{\frac{p}{p-1}}\,\mathrm{d}x\right]
\leq \gamma\frac{1}{R^p}\left(\frac{\omega}{2^j}\right)^p|Q|.
\end{split}\end{equation}
To proceed further, we distinguish two cases: $p$-phase and $(p,q)$-phase. In the case of $p$-phase, i.e., $a_0<10[a]_\alpha R_0^\alpha$.
Since $Q^\prime=B_{R_0}\times(-\Theta_A,0]\subseteq Q_{R_0,R_0^2}$, we have $a(x,t)\leq 12[a]_\alpha R_0^\alpha$. Noting that $(u-k_j)_+^q\leq \|u\|_\infty^{q-p}(u-k_j)_+^p$ in $Q^\prime$, we deduce
\begin{equation}\begin{split}\label{T1estimatepphase}
T_1&=\gamma\iint_{Q^\prime} (u-k_j)_+^p|D\varphi|^p+a(x,t)(u-k_j)_+^q|D\varphi|^q
\,\mathrm {d}x\mathrm {d}t
\\&\leq\gamma\left(\frac{\omega}{2^j}\right)^p\frac{1}{R^p}|Q|+\gamma[a]_\alpha\|u\|_\infty^{q-p}
R^\alpha\left(\frac{\omega}{2^j}\right)^p\frac{1}{R^q}|Q|
\\&\leq \gamma\left(\frac{\omega}{2^j}\right)^p\frac{1}{R^p}|Q|.
\end{split}\end{equation}
To estimate $T_2$, we observe that $A=2^{s_1+q_*}\hat B>2^{s_1+q_*}>2^j$ and hence
\begin{equation}\begin{split}\label{T2estimatepphase}
T_2&=\gamma
\iint_{Q^\prime} (u-k_j)_+^2|\partial_t\varphi^q|\,\mathrm {d}x\mathrm {d}t
\\&\leq\gamma\left(\frac{\omega}{2^j}\right)^2\left(\frac{\omega}{A}\right)^{p-2}\frac{1}{R^p}|Q|+\gamma[a]_\alpha
R^\alpha\left(\frac{\omega}{2^j}\right)^2\left(\frac{\omega}{A}\right)^{q-2}\frac{1}{R^q}|Q|
\\&\leq \gamma\left(\frac{\omega}{2^j}\right)^p\frac{1}{R^p}|Q|,
\end{split}\end{equation}
since $\omega\leq2\|u\|_\infty$. Inserting \eqref{T3estimate}-\eqref{T2estimatepphase} into \eqref{DuCac}, we arrive at
\begin{equation}\begin{split}\label{Du}
\iint_{Q^\prime}|D(u-k_j)_+|^p\varphi^q \,\mathrm {d}x\mathrm {d}t\leq \gamma\frac{1}{R^p}\left(\frac{\omega}{2^j}\right)^p|Q|,
\end{split}\end{equation}
which proves the claim \eqref{claimDu}. We now turn our attention to the case of $(p,q)$-phase, i.e., $a_0\geq10[a]_\alpha R_0^\alpha$.
Recalling that $Q^\prime\subseteq Q_{R_0,R_0^2}$, we get $\frac{4}{5}a_0\leq a(x,t)\leq \frac{6}{5}a_0$.
It follows that
\begin{equation}\begin{split}\label{upperboundpqphase}
\iint_{Q^\prime}a(x,t)|D(u-k_j)_+|^q\varphi^q \,\mathrm {d}x\mathrm {d}t\geq
\frac{4}{5}a_0\iint_{Q_j^+}|Du|^q \,\mathrm {d}x\mathrm {d}t.
\end{split}\end{equation}
In order to estimate $T_1$ and $T_2$ for the $(p,q)$-phase, we distinguish between the cases:
$\left(\frac{\omega}{2^j}\right)^pR^{-p}\leq a_0\left(\frac{\omega}{2^j}\right)^qR^{-q}$
and $\left(\frac{\omega}{2^j}\right)^pR^{-p}>a_0\left(\frac{\omega}{2^j}\right)^qR^{-q}$.
In the case $\left(\frac{\omega}{2^j}\right)^pR^{-p}> a_0\left(\frac{\omega}{2^j}\right)^qR^{-q}$, we find that
$\left(\frac{\omega}{A}\right)^{p-2}R^{-p}> a_0\left(\frac{\omega}{A}\right)^{q-2}R^{-q}$, since $A>2^j$.
In view of $(u-k_j)_+\leq 2^{-j}\omega$, we deduce
\begin{equation}\begin{split}\label{T1estimatepqphase}
T_1&=\gamma\iint_{Q^\prime} (u-k_j)_+^p|D\varphi|^p+a(x,t)(u-k_j)_+^q|D\varphi|^q
\,\mathrm {d}x\mathrm {d}t
\\&\leq\gamma\left(\frac{\omega}{2^j}\right)^p\frac{1}{R^p}|Q|+\gamma a_0
\left(\frac{\omega}{2^j}\right)^q\frac{1}{R^q}|Q|
\\&\leq \gamma
\left(\frac{\omega}{2^j}\right)^p\frac{1}{R^p}|Q|
\end{split}\end{equation}
and
\begin{equation}\begin{split}\label{T2estimatepqphase}
T_2&=\gamma
\iint_{Q^\prime} (u-k_j)_+^2|\partial_t\varphi^q|\,\mathrm {d}x\mathrm {d}t
\\&\leq\gamma\left(\frac{\omega}{2^j}\right)^2\left(\frac{\omega}{A}\right)^{p-2}\frac{1}{R^p}|Q|+\gamma a_0\left(\frac{\omega}{2^j}\right)^2\left(\frac{\omega}{A}\right)^{q-2}\frac{1}{R^q}|Q|
\\&\leq \gamma\left(\frac{\omega}{2^j}\right)^2\left(\frac{\omega}{A}\right)^{p-2}\frac{1}{R^p}|Q|<
\gamma
\left(\frac{\omega}{2^j}\right)^p\frac{1}{R^p}|Q|.
\end{split}\end{equation}
Inserting \eqref{T3estimate}, \eqref{T4estimate}, \eqref{T1estimatepqphase} and \eqref{T2estimatepqphase} into \eqref{DuCac},
we get the estimate \eqref{Du} and this proves the claim \eqref{claimDu}. Finally, we consider the case $\left(\frac{\omega}{2^j}\right)^pR^{-p}\leq a_0\left(\frac{\omega}{2^j}\right)^qR^{-q}$. We first observe from \eqref{T3estimate} and \eqref{T4estimate} that
\begin{equation}\begin{split}\label{T3T4estimatepqphase}
T_3+T_4
\leq \gamma\frac{1}{R^p}\left(\frac{\omega}{2^j}\right)^p|Q|\leq \gamma a_0\frac{1}{R^q}\left(\frac{\omega}{2^j}\right)^q|Q|.
\end{split}\end{equation}
From $(u-k_j)_+\leq 2^{-j}\omega$ and $A>2^j$, we conclude that
\begin{equation}\begin{split}\label{T1estimatepqphase2}
T_1&=\gamma\iint_{Q^\prime} (u-k_j)_+^p|D\varphi|^p+a(x,t)(u-k_j)_+^q|D\varphi|^q
\,\mathrm {d}x\mathrm {d}t
\\&\leq\gamma\left(\frac{\omega}{2^j}\right)^p\frac{1}{R^p}|Q|+\gamma a_0
\left(\frac{\omega}{2^j}\right)^q\frac{1}{R^q}|Q|
\\&\leq \gamma a_0
\left(\frac{\omega}{2^j}\right)^q\frac{1}{R^q}|Q|
\end{split}\end{equation}
and
\begin{equation}\begin{split}\label{T2estimatepqphase2}
T_2&=\gamma
\iint_{Q^\prime} (u-k_j)_+^2|\partial_t\varphi^q|\,\mathrm {d}x\mathrm {d}t
\\&\leq\gamma\left(\frac{\omega}{2^j}\right)^2\left(\frac{\omega}{A}\right)^{p-2}\frac{1}{R^p}|Q|+\gamma a_0\left(\frac{\omega}{2^j}\right)^2\left(\frac{\omega}{A}\right)^{q-2}\frac{1}{R^q}|Q|
\\&\leq\gamma\left(\frac{\omega}{2^j}\right)^p\frac{1}{R^p}|Q|+\gamma a_0
\left(\frac{\omega}{2^j}\right)^q\frac{1}{R^q}|Q|\\&\leq \gamma a_0
\left(\frac{\omega}{2^j}\right)^q\frac{1}{R^q}|Q|.
\end{split}\end{equation}
Combining \eqref{upperboundpqphase}, \eqref{T3T4estimatepqphase}, \eqref{T1estimatepqphase2} and \eqref{T2estimatepqphase2} with \eqref{DuCac},
we arrive at
\begin{equation}\begin{split}\label{claimDu11}
a_0\iint_{Q_j^+}|Du|^q \,\mathrm {d}x\mathrm {d}t\leq \gamma a_0\frac{1}{R^q}\left(\frac{\omega}{2^j}\right)^q|Q|.
\end{split}\end{equation}
According to \eqref{claimDu11}, the claim \eqref{claimDu} follows from the H\"older's inequality.

Step 2: \emph{Proof of \eqref{measure estimate 2nd}.}
We apply a De Giorgi-type lemma (see \cite[Lemma 2.2]{Di93}) to the function $u(\cdot,t)$,
for all $-\frac{1}{2}\Theta_A\leq t\leq0$, and with $l=\mu_+-2^{-(j+1)}\omega$
and $k=\mu_+-2^{-j}\omega$. From \eqref{2nd measure estimate++}, we deduce that for $j\geq s_1$,
\begin{equation*}\begin{split}
\left|\left\{x\in B_{R}:u(x,t)\leq \mu_+-\frac{\omega}
{2^j}\right\}\right|=|B_{R}|-|A_j(t)|\geq \left(\frac{\nu_0}{2}\right)^2|B_{R}|
\end{split}\end{equation*}
holds
for all $t\in \left(-\frac{1}{2}\Theta_A,0\right]$.
This yields that
the estimate
\begin{equation}\begin{split}\label{precedinginequality}
\frac{\omega}{2^{j+1}}|A_{j+1}(t)|\leq \gamma\frac{1}{\nu_0^2}\frac{R^{n+1}}{|B_R|}
\int_{A_s(t)\setminus A_{s+1}(t)}|Du|\,\mathrm{d}x
\leq \gamma \frac{R}{\nu_0^2}
\int_{A_s(t)\setminus A_{s+1}(t)}|Du|\,\mathrm{d}x
\end{split}\end{equation}
holds for any $-\frac{1}{2}\Theta_A\leq t\leq0$.
Integrating the inequality \eqref{precedinginequality}
over the interval $\left(-\frac{1}{2}\Theta_A,0\right]$, we infer from \eqref{claimDu} that
\begin{equation*}\begin{split}
\frac{\omega}{2^{j+1}}|Q_{j+1}^+|&\leq \gamma\frac{ R}{\nu_0^2}\iint_{Q_j^+\setminus Q_{j+1}^+}|Du|\,\mathrm{d}x\mathrm{d}t
\\&\leq \gamma\frac{ R}{\nu_0^2}\left(\iint_{Q_j^+\setminus Q_{j+1}^+}|Du|^p\,\mathrm{d}x\mathrm{d}t\right)^{\frac{1}{p}}
|Q_j^+\setminus Q_{j+1}^+|^{\frac{p-1}{p}}
\\&\leq \gamma\frac{1}{\nu_0^2}\left(\frac{\omega}{2^j}\right)|Q|^{\frac{1}{p}}
|Q_j^+\setminus Q_{j+1}^+|^{\frac{p-1}{p}}.
\end{split}\end{equation*}
Consequently, we infer that
\begin{equation*}\begin{split}
|Q_{j+1}^+|^{\frac{p}{p-1}}\leq \gamma\nu_0^{-\frac{2p}{p-1}}|Q|^{\frac{1}{p-1}}|Q_j^+\setminus Q_{j+1}^+|.
\end{split}\end{equation*}
Furthermore, we add up these inequalities for $j=s_1,s_1+1,\cdots,s_1+q_*-1$ and deduce that
\begin{equation*}\begin{split}
(q_*-1)|Q_{s_1+q_*}^+|^{\frac{p}{p-1}}\leq \gamma \nu_0^{-\frac{2p}{p-1}}|Q|^{\frac{p}{p-1}},
\end{split}\end{equation*}
where the constant $\gamma$ depends only upon the data.
From the definition of $Q_j^+$, the above inequality reads
\begin{equation*}\begin{split}
\left|\left\{(x,t)\in Q:u(x,t)>\mu_+-\frac{\omega}{2^{s_1+q_*}}\right\}\right|\leq \gamma\frac{1}{\nu_0^2}
\frac{1}{(q_*-1)^{\frac{p-1}{p}}}|Q|.
\end{split}\end{equation*}
At this stage, we take $q_*=q_*(\text{data},\bar\nu)\geq2$ according to
\begin{equation}\begin{split}\label{q*}
q_*=q_*(\bar\nu)=2+\left(\frac{\gamma}{\nu_0^2\bar\nu}\right)^\frac{p}{p-1}.
\end{split}\end{equation}
For such a choice of $q_*$, the inequality \eqref{measure estimate 2nd} holds true.
The proof of the lemma is now complete.
\end{proof}
We are now in a position to establish a decay estimate of the form \eqref{osc1st} for the second alternative
and the following proposition is our main result in this section.
\begin{proposition}\label{2nd proposition}
Let $\tilde Q_0=Q_{\frac{1}{16}R_0}^-(0)$
and let $u$ be a bounded weak solution to \eqref{parabolic}-\eqref{A} in $\Omega_T$.
There exist $0<\xi_2<1$ and $B_2>1$ depending only upon the data such that
\begin{equation*}\begin{split}
\essosc_{\tilde Q_0} u\leq (1-4^{-1}\xi_2)\omega+B_2\xi_2^{-1}\left(
F_p(R_0)+G_p(R_0)+R_0\right).
 \end{split}\end{equation*}
\end{proposition}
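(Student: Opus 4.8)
The plan is to mirror the structure of Proposition \ref{1st proposition}, now using the machinery developed for the second alternative. We work in the setting where \eqref{2nd} holds for all $-\frac{8}{9}\Theta_A\le\bar t\le0$, so that the measure-theoretic input Lemma \ref{endalterfirstlemma} applies. First I would invoke Lemma \ref{time 2nd}: either \eqref{2ndomega assumption1} holds, in which case $\omega$ is already controlled by $F_p(R_0)+G_p(R_0)$ up to a constant depending only on the data, or else the slicewise estimate \eqref{2nd measure estimate} holds for all $t\in[t^*,\bar t]$. Since $t^*\in[\bar t-\Theta_\omega(R),\bar t-\frac12\nu_0\Theta_\omega(R)]$ and $-\frac12\Theta_A$ is far below $\bar t$, the time-propagation estimate in fact upgrades to \eqref{2nd measure estimate++}, valid for all $t\in(-\frac12\Theta_A,0]$; this is exactly the hypothesis needed for Lemma \ref{DeGiorgi3}.

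Next, with $\bar\nu=\nu_2$ as furnished by Lemma \ref{lemmaDeGiorgi1+} (recording $\nu_2=\nu_2(\text{data})$ via \eqref{definenu2}), I apply Lemma \ref{DeGiorgi3}: either \eqref{2st omega assumption} gives $\omega\le 2^{s_1+q_*}(F_p(R_0)+G_p(R_0))$, or the measure estimate \eqref{measure estimate 2nd} holds with the specific level $\mu_+-\omega/2^{s_1+q_*}$ on the cylinder $Q_A^{(1)}$, where $A=2^{s_1+q_*}\hat B$. Here $q_*=q_*(\text{data},\nu_2)$ is fixed by \eqref{q*}, so $A$ depends only on the data. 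This is precisely assumption \eqref{QA1} of Lemma \ref{lemmaDeGiorgi1+} with $\xi=\xi_2:=2^{-(s_1+q_*)}$, which satisfies $0<\xi_2<4^{-1}$, and with $A=\hat B\xi_2^{-1}$. One must check that this $A$ meets the structural conditions \eqref{firstcondition forA}, \eqref{firstconditionforA}, \eqref{secondconditionforA}; since $A=2^{s_1+q_*}\hat B$ can be taken arbitrarily large by enlarging $q_*$ (or $\hat B$) if necessary, this is a matter of bookkeeping, not a genuine obstacle — I would note that the constants $s_1,\hat B,q_*$ all depend only on the data and so does the resulting $A$.

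Then Lemma \ref{lemmaDeGiorgi1+} yields the dichotomy: either \eqref{omega1+}, i.e. $\xi_2\omega\le\hat B(F_p(R)+G_p(R)+100R)$, or the pointwise bound \eqref{DeGiorgi1+}, namely $u(x,t)<\mu_+-\frac14\xi_2\omega$ for a.e.\ $(x,t)\in Q_A^{(2)}=B_{R/2}\times(-\frac14\Theta_A,0]$. In the latter case, since $\tilde Q_0=Q_{\frac1{16}R_0}^-(0)=B_{R_0/16}\times(-\Theta_\omega(\frac1{16}R_0),0]$ and $R=\frac9{10}R_0$, I need the inclusion $\tilde Q_0\subseteq Q_A^{(2)}$; the spatial inclusion $B_{R_0/16}\subseteq B_{R/2}$ is immediate, and for the time direction one checks $\Theta_\omega(\frac1{16}R_0)<\frac14\Theta_A$ using $\Theta_A\ge A^{p-2}\Theta_\omega(R)$ and the comparison of $\Theta_\omega$ at different radii (as done repeatedly in Section 4), which holds once $A$ is large — consistent with the previous paragraph. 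This gives $\essosc_{\tilde Q_0}u\le\mu_+-\frac14\xi_2\omega-\mu_-\le(1-\frac14\xi_2)\omega$, using $\essosc_{Q_{R_0,R_0^2}}u=\mu_+-\mu_-\le\omega$.

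Finally, collecting the alternatives: whenever one of \eqref{2ndomega assumption1}, \eqref{2st omega assumption}, \eqref{omega1+} holds, we have $\omega\le C(\text{data})\,\xi_2^{-1}(F_p(R_0)+G_p(R_0)+R_0)$, and since trivially $\essosc_{\tilde Q_0}u\le\omega$, the claimed estimate follows with a suitable $B_2$; when none of them holds, \eqref{DeGiorgi1+} applies and gives $\essosc_{\tilde Q_0}u\le(1-\frac14\xi_2)\omega$. Setting $B_2=B_2(\text{data})$ to be a large multiple of $\max\{\hat B,2^{s_1+q_*}\}$ absorbs all cases into the single inequality
\begin{equation*}
\essosc_{\tilde Q_0}u\le(1-4^{-1}\xi_2)\omega+B_2\xi_2^{-1}\left(F_p(R_0)+G_p(R_0)+R_0\right),
\end{equation*}
and both $\xi_2=2^{-(s_1+q_*)}$ and $B_2$ depend only on the data. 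The main technical point to be careful about — though not deep — is the chain of geometric inclusions $\tilde Q_0\subseteq Q_A^{(2)}\subseteq Q_A^{(1)}\subseteq Q_A\subseteq Q_{R,R^2}$ together with verifying the lower bounds on $A$ demanded by \eqref{firstcondition forA}, \eqref{firstconditionforA}, \eqref{secondconditionforA} and by Lemma \ref{lemmaDeGiorgi1+}; since $A$ is a free large parameter tied only to the data, all these can be met simultaneously, and I would state this explicitly at the start of the proof to avoid circularity.
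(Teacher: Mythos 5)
Your proof is correct and follows essentially the same route as the paper's: violate \eqref{2ndomega assumption1} to invoke Lemma \ref{time 2nd} and obtain \eqref{2nd measure estimate++}; feed $\bar\nu=\nu_2$ from \eqref{definenu2} into Lemma \ref{DeGiorgi3} to produce \eqref{measure estimate 2nd} with $\xi_2=2^{-(s_1+q_*)}$; then apply Lemma \ref{lemmaDeGiorgi1+} and use the inclusion $\tilde Q_0\subseteq Q_A^{(2)}$, collecting the exceptional alternatives into the potential-type term via a suitable $B_2$. The only minor imprecision is the remark that $A$ "can be taken arbitrarily large by enlarging $q_*$ (or $\hat B$)" — $q_*$ is pinned down by \eqref{q*}, so the slack lives in $\hat B$, which is chosen as a maximum in \eqref{definehatB} and can absorb the lower bounds \eqref{firstcondition forA}, \eqref{firstconditionforA}, \eqref{secondconditionforA}; this is the bookkeeping the paper implicitly performs at the start of Lemma \ref{lemmaDeGiorgi1+}, and your instinct to state it explicitly is sound.
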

\begin{proof}
First, we assume that
\eqref{2ndomega assumption1} is violated.
Let $s_1>2$ be the constant claimed by Lemma \ref{time 2nd}.
According to Lemma \ref{time 2nd}, we get \eqref{2nd measure estimate++}
and this enables us to apply Lemma \ref{DeGiorgi3}.
At this stage, we take $\bar\nu=\nu_2$ in Lemma \ref{DeGiorgi3}, where $\nu_2$ is the constant in \eqref{definenu2}. This also fixes
$\xi=2^{-s_1-q_*}$, where $q_*$ is the constant in \eqref{q*}.
If \eqref{2st omega assumption} is violated, then
we infer from Lemma \ref{DeGiorgi3} that
\begin{equation*}\left|\left\{(x,t)\in Q_A^{(1)}:u\geq\mu_+-\xi\omega\right\}
\right|\leq \nu_2|Q_A^{(1)}|,\end{equation*}
where $Q_A^{(1)}=B_{R}\times\left(-\frac{1}{2}\Theta_A,0\right]$. This is actually the condition \eqref{QA1} for Lemma \ref{lemmaDeGiorgi1+}. If \eqref{omega1+} is violated, then the inequality \eqref{DeGiorgi1+} reads
\begin{equation*}\begin{split}
\essosc_{\tilde Q_0} u\leq (1-4^{-1}\xi)\omega,
 \end{split}\end{equation*}
 since $\tilde Q_0\subseteq B_{\frac{1}{2}R}\times\left(-\frac{1}{4}\Theta_A,0\right]=Q_A^{(2)}$.
 On the other hand, if either \eqref{2ndomega assumption1}, \eqref{omega1+} or \eqref{2st omega assumption} holds, then we conclude that the inequality
\begin{equation*}\begin{split}
\essosc_{\tilde Q_0} u\leq\omega\leq B_2\xi^{-1}\left(
F_p(R_0)+G_p(R_0)+R_0\right)
 \end{split}\end{equation*}
 holds for a constant $B_2=100\max\left\{\hat B,2^{\frac{2s_1}{p-1}},2^{s_1+q_*}\right\}$.
 Here, $\hat B$ is the constant defined in \eqref{definehatB}.
We have thus proved the proposition.
\end{proof}
\section{Proof of the main result}
In this section, we give the proof of Theorem \ref{main1}.
Our argument follows the idea from
\cite{BDG1, KMS, Qifanli}. We just sketch the proof.
First, we set up an iteration scheme.
Let $\{\xi_1,B_1\}$ and $\{\xi_2,B_2\}$ be the parameters stated by Propositions \ref{1st proposition} and \ref{2nd proposition},
respectively.
Let $\omega_0=\omega$, $Q_0=Q_{R_0,R_0^2}$ and $\Theta_A^{(0)}=\Theta_A$. Moreover, we set
\begin{equation}\begin{split}\label{define omega1}
\omega_1= \eta\omega_0+\gamma\left(
F_p(R_0)+G_p(R_0)+R_0\right),
 \end{split}\end{equation}
 where $\eta=\max\left\{1-4^{-1}\xi_1,\ 1-4^{-1}\xi_2\right\}$ and $\gamma=A^{\frac{q-2}{p-2}}\max\left\{B_1\xi_1^{-1},\ B_2\xi_2^{-1}\right\}$.
 Then, we have $\frac{1}{2}<\eta<1$ and $\omega_1>A^{\frac{q-2}{p-2}} R_0$.
 Let
$Q_1=Q_{\frac{1}{16}R_0}^-(0)=B_{\frac{1}{16}R_0}\times\left(-\Theta_{\omega_0}\left(\frac{1}{16}R_0\right),0\right].$
  According to Propositions \ref{1st proposition} and \ref{2nd proposition}, we conclude that
   \begin{equation}\begin{split}\label{osc1}
   \essosc_{Q_1}u=\esssup_{Q_1}u-\essinf_{Q_1}u\leq\omega_1.
    \end{split}\end{equation}
Furthermore, we define
$\delta=\frac{1}{16}2^{-\frac{3}{p}}\eta^{\frac{q-2}{p}}A^{\frac{2-q}{p}}$, $R_1=\delta R_0$, $\tilde R_1=\frac{9}{10}R_1$, \begin{equation*}\begin{split}\Theta_A^{(1)}=\left(\frac{\omega_1}{A}\right)^2\left[\left(\frac{\omega_1}{A\tilde R_1}\right)^p
    +a_0\left(\frac{\omega_1}{A\tilde R_1}\right)^q\right]^{-1},\qquad
     Q_2=B_{\frac{1}{16}R_1}\times\left(-\Theta_{\omega_1}\left(\frac{1}{16}R_1\right),0\right]
    \end{split}\end{equation*}
    and $\widehat Q_2=B_{R_1}\times\left(-\Theta_A^{(1)},0\right]$.
At this stage, we claim that $\Theta_A^{(1)}\leq \tilde R_1^2$ and $\widehat Q_2\subseteq Q_1$. In view of $\omega_1>A^{\frac{q-2}{p-2}} R_0>
A^{\frac{q-2}{p-2}} \tilde R_1$, we deduce
\begin{equation*}\begin{split}\Theta_A^{(1)}=\left(\frac{\omega_1}{A}\right)^2\left[\left(\frac{\omega_1}{A\tilde R_1}\right)^p
    +a_0\left(\frac{\omega_1}{A\tilde R_1}\right)^q\right]^{-1}<A^{-(q-p)}\tilde R_1^2<\tilde R_1^2.
    \end{split}\end{equation*}
Since $\omega_1>\eta \omega_0$ and $R_1=\delta R_0$, we have
    \begin{equation*}\begin{split}\Theta_A^{(1)}&<\left(\frac{\omega_1}{A}\right)^2\left[\left(\frac{\omega_1}{A R_1}\right)^p
    +a_0\left(\frac{\omega_1}{AR_1}\right)^q\right]^{-1}
    <\eta^{2-q}\left(\frac{\omega_0}{A}\right)^2\left[\left(\frac{\omega_0}{A \delta R_0}\right)^p
    +a_0\left(\frac{\omega_0}{A\delta R_0}\right)^q\right]^{-1}
    \\&\leq \eta^{2-q}A^{q-2}(16\delta)^p\Theta_{\omega_0}\left(\frac{1}{16}R_0\right)<\Theta_{\omega_0}\left(\frac{1}{16}R_0\right),
    \end{split}\end{equation*}
    which proves the claim $\widehat Q_2\subseteq Q_1$. In the case $\essosc_{Q_1}u\geq \frac{1}{2}\omega_1$.
    We can now proceed analogously to the proofs of Propositions \ref{1st proposition} and \ref{2nd proposition}.
This leads us to
       \begin{equation}\begin{split}\label{oscillation}
   \essosc_{Q_2}u\leq\omega_2,\qquad\text{where}\qquad
 \omega_2= \eta\omega_1+\gamma\left(
F_p(R_1)+G_p(R_1)+R_1\right).
    \end{split}\end{equation}
    In the case $\essosc_{Q_1}u< \frac{1}{2}\omega_1$. It follows immediately that \eqref{oscillation} holds, since $\eta>\frac{1}{2}$.
    At this point, we set $R_k=\delta R_{k-1}=\delta^kR_0$, $\omega_k= \eta\omega_{k-1}+\gamma\left(
F_p(R_{k-1})+G_p(R_{k-1})+R_{k-1}\right)$ and
\begin{equation*}\begin{split}
     Q_k=B_{\frac{1}{16}R_{k-1}}\times\left(-\Theta_{\omega_{k-1}}\left(\frac{1}{16}R_{k-1}\right),0\right].
    \end{split}\end{equation*}
Repeating the previous argument leads to
    \begin{equation*}\begin{split}
    \essosc_{Q_n}u\leq \omega_n
    \end{split}\end{equation*}
    for all $n=0,1,\cdots$.
    Our task now is to establish a continuity estimate for the weak solutions.
Fix $\bar\rho\in(0,R_0)$, $\bar r\in (0,\bar\rho)$,
    and let $0<b<\min\{\delta,\eta\}$. Then, there exist two integers $k,l\in\mathbb{N}$ such that
    \begin{equation}\label{kl}k-1<\frac{1}{\ln\delta}\ln\frac{\bar \rho}{R_0}\leq k\qquad\text{and}\qquad
      l-1<\frac{1}{\ln b}\ln\frac{\bar r}{\bar \rho}\leq l.
    \end{equation}
    At this point, we set $n=k+l$ and note that
    \begin{equation}\begin{split}\label{etank}
    \eta^{n-k}=\eta^l=b^{\alpha l}\leq \left(\frac{\bar r}{\bar \rho}\right)^\alpha,\qquad\text{where}\quad
    \alpha=\frac{\ln \eta}{\ln b}.
     \end{split}\end{equation}
On the other hand, we infer from $\omega_k= \eta\omega_{k-1}+\gamma\left(
F_p(R_{k-1})+G_p(R_{k-1})+R_{k-1}\right)$ and \eqref{etank} that $\omega_k\leq \hat\omega_0$ and
     \begin{equation}\begin{split}\label{omegan}
\omega_n&\leq \eta^{n-k}\omega_k+\gamma\frac{1}{1-\eta}\left(
F_p(R_{k})+G_p(R_{k})+R_{k}\right)
\\&\leq \left(\frac{\bar r}{\bar \rho}\right)^\alpha\hat\omega_0+\gamma\frac{1}{1-\eta}
\left(F_p(\bar\rho)+G_p(\bar\rho)+\bar\rho\right)
\end{split}\end{equation}
holds for any $0<\bar r<\bar \rho<R_0$ and $n=k+l$, where $k$ and $l$ satisfy \eqref{kl}.
Here, we set
\begin{equation*}\hat\omega_0=2\|u\|_\infty+A^{\frac{q-p}{p-2}}R_0+\gamma\frac{1}{1-\eta}\left(
F_p(R_0)+G_p(R_0)+R_0\right).\end{equation*}
Furthermore, we take $0<\bar r<R_0$ and
choose $\bar \rho=R_0^{1-\beta}\bar r^\beta$.
Here $\beta\in(0,1)$ is a fixed constant.
In view of \eqref{kl}, we define $k_*$ and $l_*$ by
 \begin{equation*}k_*-1<\frac{\beta}{\ln\delta}\ln\frac{\bar r}{R_0}\leq k_*\qquad\text{and}\qquad
      l_*-1<\frac{(1-\beta)}{\ln b}\ln\frac{\bar r}{R_0}\leq l_*.
    \end{equation*}
    For $n_*=k_*+l_*$, we infer from \eqref{omegan} that
    \begin{equation*}\begin{split}
\omega_{n_*}\leq \left(\frac{\bar r}{R_0}\right)^{\alpha(1-\beta)}\hat\omega_0+\gamma
\left(F_p(R_0^{1-\beta}\bar r^\beta)+G_p(R_0^{1-\beta}\bar r^\beta)+R_0^{1-\beta}\bar r^\beta\right)
\end{split}\end{equation*}
and the integer $n_*$ satisfies
\begin{equation}\label{n*}
n_*-2<\left(\frac{\beta}{\ln\delta}+\frac{1-\beta}{\ln b}\right)\ln\frac{\bar r}{R_0}\leq n_*.
\end{equation}
At this stage, we define $b_0=\hat\omega_0^{2-p}(1+\|a\|_\infty)^{-1}$ and $\bar Q_n=B_{R_n}\times(-b_0R_n^q,0]$.
It is easy to check that $\bar Q_n\subset Q_n$. Moreover, we set
 \begin{equation*}\begin{split}\sigma=\beta+(1-\beta)\frac{\ln\delta}{\ln b}\qquad\text{and}\qquad
 r=\delta^2R_0\left(\frac{\bar r}{R_0}\right)^\sigma.\end{split}\end{equation*}
It follows easily that
 \begin{equation*}\begin{split}
 \bar r=R_0\left(\delta^{-2}R_0^{-1}r\right)^{\frac{1}{\sigma}}\qquad\text{and}\qquad
  \bar \rho=R_0^{1-\beta}\bar r^\beta=\delta^{-2\frac{\beta}{\beta+\frac{1-\beta}{\ln b}\ln\delta}}r^{\frac{\beta}{\sigma}}
  R_0^{\frac{(1-\beta)\ln \delta}{\sigma\ln b}}\leq c(\delta)r^{\frac{\beta}{\sigma}}.
 \end{split}\end{equation*}
According to \eqref{n*}, we find that
 \begin{equation*}\begin{split}
 r=R_0\exp\left(\sigma\ln\frac{\bar r}{R_0}+2\ln\delta\right)<R_0e^{n_*\ln\delta} =R_{n_*}
 \end{split}\end{equation*}
 and hence, $Q_{r,b_0r^q}\subset \bar Q_{n_*}\subset Q_{n_*}$.
 Consequently, we deduce the continuity estimate as follows:
 \begin{equation*}\begin{split}
 &\essosc_{Q_{r,b_0r^q}}u\leq \essosc_{\bar Q_{n_*}}u\leq\essosc_{Q_{n_*}}u\leq \omega_{n_*}
 \\&\leq \gamma\left(\frac{r}{R_0}\right)^{\frac{\alpha(1-\beta)}{\sigma}}\hat\omega_0+\gamma
\left(F_p(cr^{\frac{\beta}{\sigma}}
)+G_p(cr^{\frac{\beta}{\sigma}})+r^{\frac{\beta}{\sigma}}\right).
 \end{split}\end{equation*}
 Thus, we conclude that the weak solution $u$ is locally continuous in $\Omega_T$. This completes the proof of Theorem \ref{main1}.

 Finally, we consider the case when $a(x_0,t_0)>0$, $f\in K_q$ and $g\in \tilde K_q$. As already mentioned in Remark \ref{remarka>0}, the weak solution $u$
 is not uniformly continuous in the set $\{a(x,t)>0\}$. Repeating the previous argument, we are thus led to the following version of Theorem \ref{main1}:
\begin{theorem}\label{main2}
 Let $u$ be a locally bounded weak solution to the parabolic double-phase equation \eqref{parabolic}
 in the sense of Definition \ref{weak solution}, where the vector field $A$ fulfills the structure conditions \eqref{A}.
 Let $z_0=(x_0,t_0)\in\Omega_T$, $a(x_0,t_0)>0$ and $R_0>0$ be such that
 \begin{equation*}Q_{R_0,R_0^2}(x_0,t_0)\Subset\Omega_T\qquad\text{and}\qquad R_0\leq \left(\frac{a(x_0,t_0)}{10[a]_\alpha}\right)^\frac{1}{\alpha}.\end{equation*}
 Assume that $2<p<q\leq p+\alpha$, $q<n$, $f\in K_q$ and $g\in \tilde K_q$.
Then,
there exists a constant $\delta_2=\delta_2(\text{data},R_0)<1$, such that the oscillation
estimate
  \begin{equation}\label{theorem2oscillation1}
 \essosc_{Q_{r,c_0r^q}(z_0)}u\leq \gamma\left(\frac{r}{R_0}\right)^{\beta_1}+\gamma
\left(a_0^{-\frac{1}{q-1}}F_q(cr^{\beta_2}
)+a_0^{-\frac{1}{q-1}}G_q(cr^{\beta_2})+r^{\beta_2}\right),
 \end{equation}
 holds for any $r<\delta_2R_0$. Here, $a_0=a(x_0,t_0)$ and the constants $c_0$, $\beta_1$, $\beta_2$, $\gamma$ and $c$
 depend only upon the data.
 \end{theorem}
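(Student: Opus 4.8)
The plan is to exploit that the hypothesis $R_0\le\bigl(a_0/(10[a]_\alpha)\bigr)^{1/\alpha}$, with $a_0:=a(x_0,t_0)$, forces the uniform $(p,q)$-phase on $Q_{R_0,R_0^2}(z_0)$, and then to rerun the entire argument of Sections~3--6 in this single, much simpler regime. First I would record, exactly as in Remark~\ref{phase analysis 1} and Remark~\ref{remarka>0}, that $a_0\ge 10[a]_\alpha R_0^\alpha$ together with the H\"older continuity of $a$ gives $\tfrac45 a_0\le a(x,t)\le\tfrac65 a_0$ on $Q_{R_0,R_0^2}(z_0)$, so that by Young's inequality the structure condition \eqref{A} may be replaced on this cylinder by the $q$-Laplace--type condition \eqref{Avariant}. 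The proof of Theorem~\ref{main1} is then repeated with two changes: (i) the intrinsic cylinder is the $(p,q)$-phase one, $\Theta_\omega(r)=\omega^2\bigl[a_0(\omega/r)^q\bigr]^{-1}=a_0^{-1}\omega^{2-q}r^q$ (up to a universal factor between $\tfrac56$ and $\tfrac54$), and the terminal cylinders are $Q_{r,c_0r^q}$; (ii) every lemma in Sections~3--5 is needed only in its $(p,q)$-phase branch, since no $p$-phase ever occurs, which deletes all the more delicate case distinctions.

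Concretely, I would use the variant of the Caccioppoli inequality (Lemma~\ref{Cac1}), the logarithmic estimate (Lemma~\ref{logestimatelemma}) and the energy estimate (Lemma~\ref{caclemma}) adapted to \eqref{Avariant}: the auxiliary functions $\phi_+,\Phi,\widetilde\psi_\nu,\psi_\nu$ are taken with $\nu=q$, the Young-splitting of the lower-order term now produces $g^{q/(q-1)}$ instead of $g^{p/(p-1)}$, together with a harmless extra term coming from the constant $1$ in \eqref{Avariant}, which lies in $\widetilde K_q$ with $G_q$-potential $c(n,q)\rho$. With these at hand the two-alternative scheme of Section~3 and the De Giorgi--type lemmas --- the analogues of Lemmas~\ref{lemmaDeGiorgi1}, \ref{lemmaDeGiorgi2}, \ref{timeexpandlemma}, \ref{endalterfirstlemma}, \ref{time 2nd}, \ref{lemmaDeGiorgi1+} and \ref{DeGiorgi3} --- go through, with the Kilpel\"ainen--Mal\'y sequences $\{\alpha_j\}$ now built from the $q$-potentials,
\begin{equation*}
\alpha_j\approx\frac{4^{-j}}{\tilde B}\,\xi\omega
+a_0^{-1/q}\int_0^{r_j}\Bigl(r^{q-n}\!\!\int_{B_r(x_1)}g^{\frac{q}{q-1}}\Bigr)^{\frac1q}\frac{\diff r}{r}
+a_0^{-\frac1{q-1}}\int_0^{r_j}\Bigl(r^{q-n}\!\!\int_{B_r(x_1)}|f|\Bigr)^{\frac1{q-1}}\frac{\diff r}{r}+r_j,
\end{equation*}
the factor $a_0^{-1}$ entering each Caccioppoli error term $\Theta d^{-2}r^{-n}\!\int g^{q/(q-1)}$ and $\Theta d^{-1}r^{-n}\!\int|f|$ through $\Theta\le\tfrac54a_0^{-1}d^{2-q}r^q$, and surviving, after extracting $d\ge\tfrac14\tilde B(\alpha_{j-1}-\alpha_j)$ and telescoping, as $a_0^{-1/q}$ resp. $a_0^{-1/(q-1)}$; since $a_0\le\|a\|_\infty$ both can be absorbed into $a_0^{-1/(q-1)}$. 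Summing over dyadic scales converts the scale-by-scale estimates into the $q$-versions of Propositions~\ref{1st proposition} and \ref{2nd proposition}, i.e. $\essosc u\le\eta\omega+\gamma\bigl(a_0^{-1/(q-1)}F_q(R_0)+a_0^{-1/(q-1)}G_q(R_0)+R_0\bigr)$ on the appropriate smaller cylinder.

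Finally, the iteration of Section~6 is run unchanged, with $R_n=\delta^nR_0$ and the intrinsic $q$-cylinders $Q_n=B_{R_{n-1}/16}\times(-\Theta_{\omega_{n-1}}(R_{n-1}/16),0]$; the geometric-plus-potential recursion for $\omega_n$ together with the substitution $\bar\rho=R_0^{1-\beta}\bar r^\beta$ yields \eqref{theorem2oscillation1}, with $\beta_1,\beta_2$ determined by $\eta,\delta,b$ and $c_0$ the $q$-scaled analogue of the constant $b_0$ of Section~6. The genuinely new points beyond Theorem~\ref{main1} are two. First, one must check that the $q$-scaled cylinders $Q_j(\bar l)$, $Q_A^{(1)}$, $Q_A^{(2)}$ nest correctly; here the hypothesis $q<n$ is essential, because the slicewise Sobolev inequality must now be applied with exponent $q$ and there is no $p$-part to fall back on when $q\ge n$ --- this is precisely why the present theorem requires $q<n$ rather than $2<p<n$. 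Second, one must track every power of $a_0$ through the estimates so that only $a_0^{-1/(q-1)}$ remains in front of $F_q$ and $G_q$. I expect this last bookkeeping of the $a_0$-dependence to be the main technical obstacle, albeit a routine one; the remainder is a strict simplification of the proofs already carried out above.
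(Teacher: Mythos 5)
Your proposal is essentially the approach the paper intends: the paper compresses it into one remark (``apply the previous argument with $f$ and $g$ replaced by $f/a_0$ and $g/a_0$''), and what you describe is that replacement carried out in the $(p,q)$-phase regime, so the two proofs coincide up to presentation. One small correction in the $a_0$-bookkeeping. When you Young-split the $g\,D\varphi$ term against the $q$-energy $\tfrac12 C_0 a_0 |D\psi_q|^q$ coming from \eqref{Avariant} (or against $a(x,t)|D\psi_q|^q\geq\tfrac45 a_0|D\psi_q|^q$ if you keep the full structure), the Caccioppoli error term is $\gamma\,a_0^{-1/(q-1)}\Theta d^{-2}r^{-n}\int g^{q/(q-1)}$, not $\gamma\,\Theta d^{-2}r^{-n}\int g^{q/(q-1)}$; combined with $\Theta\leq\tfrac54 a_0^{-1}d^{2-q}r^q$ and the extraction $d\gtrsim\tilde B(\alpha_{j-1}-\alpha_j)$, this yields the prefactor $a_0^{-1/(q-1)}$ on $G_q$ directly, exactly matching \eqref{theorem2oscillation1}. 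Your version, which tracks only the $a_0^{-1}$ from $\Theta$, produces $a_0^{-1/q}$ and then has to absorb it into $a_0^{-1/(q-1)}$ using $a_0\leq\|a\|_\infty$; that absorption is unnecessary and introduces a dependence on $\sup a$ that is not part of the paper's ``data''. Your observation that $q<n$ is forced by the $q$-slicewise Sobolev inequality once the $p$-energy has been absorbed via Young is sound and is the concrete mechanism; note also that $q<n$ is already forced by Definition \ref{Katoclass}, since $K_q$ and $\widetilde K_q$ are only defined for exponents strictly below $n$.
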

 Here, the inequality \eqref{theorem2oscillation1} provides an explicit dependence of the essential oscillation
 of $u$ on the parameter $a_0$. The proof of Theorem \ref{main2} can be handled in much the same way,
the only difference being in the analysis of the terms involving $f$ and $g$. In fact, we are concerned with the $(p,q)$-phase
and we can apply the previous argument with $f$ and $g$ replaced by $f/a_0$ and $g/a_0$, respectively. The proof is quite similar to that of
 Theorem \ref{main1} and so is omitted.

\bibliographystyle{abbrv}

\end{document}